\newtheorem{theorem}{Theorem}[section] 
\newtheorem{claim}[theorem]{Claim}
\newtheorem{conclusion}[theorem]{Conclusion}
\theoremstyle{definition}
\newtheorem{definition}[theorem]{Definition}
\newtheorem{dc}[theorem]{Definition/Claim}
\newtheorem{example}[theorem]{Example}
\newtheorem{observation}[theorem]{Observation}
\newtheorem{discussion}[theorem]{Discussion}
\newtheorem{convention}[theorem]{Convention}
\theoremstyle{remark}
\newtheorem{question}[theorem]{Question}
\newtheorem{remark}[theorem]{Remark}
\newtheorem{notation}[theorem]{Notation}
\newcommand{\rest}{{\restriction}}
\newcommand{\dom}{{\rm dom}} 
\newcommand{\exf}{{\rm exf}} 
\newcommand{\sfb}{{\rm sfb}} 
\newcommand{\fsb}{{\rm fsb}} 
\newcommand{\cm}{{\rm cm}} 
\newcommand{\gel}{{\rm gl}} 
\newcommand{\gem}{{\rm gm}} 
\newcommand{\mx}{{\rm mx}} 
\newcommand{\usl}{{\rm usl}} 
\newcommand{\ulf}{{\rm ulf}} 
\newcommand{\nsp}{{\rm nsp}} 
\newcommand{\ged}{{\rm gd}} 
\newcommand{\atdf}{{\rm atdf}} 
\newcommand{\Cent}{{\rm Cent}}
\newcommand{\sch}{{\rm sch}}
\newcommand{\Card}{{\rm Card}}
\newcommand{\lf}{{\rm lf}}
\newcommand{\gl}{{\rm gl}}
\newcommand{\qf}{{\rm qf}}
\newcommand{\bK}{{\rm bK}}
\newcommand{\NF}{{\rm NF}}
\newcommand{\exlf}{{\rm exlf}}
\newcommand{\cfl}{{\rm cfl}}
\newcommand{\docl}{{\rm docl}}
\newcommand{\Th}{{\rm Th}}
\newcommand{\tp}{{\rm tp}}
\newcommand{\tr}{{\rm tr}}
\newcommand{\gmm}{{\rm gm}}
\newcommand{\cp}{{\rm cp}}
\newcommand{\at}{{\rm at}}
\newcommand{\ec}{{\rm ec}}
\newcommand{\Ord}{{\rm Ord}}
\newcommand{\deef}{{\rm def}}
\newcommand{\aut}{{\rm aut}}
\newcommand{\excl}{{\rm excl}}
\newcommand{\bs}{{\rm bs}}
\newcommand{\df}{{\rm df}}
\newcommand{\cg}{{\rm cg}}
\newcommand{\ab}{{\rm ab}}
\newcommand{\gr}{{\rm gr}}
\newcommand{\clf}{{\rm clf}}
\newcommand{\id}{{\rm id}}
\newcommand{\olf}{{\rm olf}}
\newcommand{\plf}{{\rm plf}}
\newcommand{\LST}{{\rm LST}}
\newcommand{\Sym}{{\rm Sym}}
\newcommand{\sel}{{\rm sl}}
\newcommand{\fin}{{\rm fin}}
\newcommand{\rfin}{{\rm rfin}}
\newcommand{\Dom}{{\rm Dom}}
\newcommand{\Rang}{{\rm Rang}}
\newcommand{\wilog}{{\rm without loss of generality}}
\newcommand{\Wilog}{{\rm Without loss of generality}}
\newcommand{\then}{{\underline{then}}}
\newcommand{\when}{{\underline{when}}}
\newcommand{\where}{{\underline{where}}}
\newcommand{\Then}{{\underline{Then}}}
\newcommand{\If}{{\underline{if}}}
\newcommand{\Iff}{{\underline{iff}}}
\newcommand{\mn}{{\medskip\noindent}}
\newcommand{\sn}{{\smallskip\noindent}}
\newcommand{\cA}{{\mathscr A}}
\newcommand{\cB}{{\mathscr B}}
\newcommand{\gC}{{\mathfrak C}}
\newcommand{\gs}{{\mathfrak s}}
\newcommand{\cH}{{\mathscr H}}
\newcommand{\bbL}{{\mathbb L}}
\newcommand{\bbN}{{\mathbb N}}
\newcommand{\gS}{{\mathfrak S}}
\newcommand{\gk}{{\mathfrak k}}
\newcommand{\bbQ}{{\mathbb Q}}
\newcommand{\bbR}{{\mathbb R}}
\newcommand{\cS}{{\mathscr S}}
\newcommand{\cT}{{\mathscr T}}
\newcommand{\gt}{{\mathfrak t}}
\newcommand{\cU}{{\mathscr U}}
\newcommand{\cW}{{\mathscr W}}
\newcommand{\cf}{{\rm cf}}
\def\mathunderaccent#1#2 {\let\theaccent#1\skewfactor#2
\mathpalette\putaccentunder}
\def\putaccentunder#1#2{\oalign{$#1#2$\crcr\hidewidth
\vbox to.2ex{\hbox{$#1\skew\skewfactor\theaccent{}$}\vss}\hidewidth}}
\newenvironment{PROOF}[2][\proofname.]
 {\begin{proof}[#1]}
 {\end{proof}}
\begin{document}
\makeatletter\def\shfiuwefootnote{\gdef\@thefnmark{}\@footnotetext}\makeatother\shfiuwefootnote{Version 2021-08-26. See \url{https://shelah.logic.at/papers/312/} for possible updates.}

\title {Existentially closed locally finite groups \\
 Sh312}
\author {Saharon Shelah}
\address{Einstein Institute of Mathematics\\
Edmond J. Safra Campus, Givat Ram\\
The Hebrew University of Jerusalem\\
Jerusalem, 91904, Israel\\
 and \\
 Department of Mathematics\\
 Hill Center - Busch Campus \\ 
 Rutgers, The State University of New Jersey \\
 110 Frelinghuysen Road \\
 Piscataway, NJ 08854-8019 USA}
\email{shelah@math.huji.ac.il}
\urladdr{http://shelah.logic.at}
\thanks{Partially supported by the ISF, Israel Science Foundation. \newline
I would like to thank Alice Leonhardt for the beautiful typing. \newline
First Typed - 02/Oct/18}


\subjclass{Primary: 03C55, 03C50, 03C60; Secondary: 03C98, 03C45}

\keywords {model theory, applications of model theory, groups, locally
finite groups, canonical closed extension, endo-rigid group}

\date{June 9, 2017}

\begin{abstract}
We investigate this class of groups originally called 
ulf (universal locally finite groups) of uncountable cardinality.
We prove that for every locally finite group $G$ 
there is a canonical existentially closed extension
of the same cardinality, unique up to isomorphism and
increasing with $G$.
Also we get, e.g. existence of complete members 
(i.e. with no non-inner automorphisms) in many cardinals (provably
in ZFC).  The main point here is having a parallel to 
stability theory in the sense of
investigating definable types though the class is very unstable.
\end{abstract}

\maketitle
\numberwithin{equation}{section}
\setcounter{section}{-1}
\newpage

\section {Introduction} 

\subsection {Background} \
\bigskip

On lf (locally finite) groups and exlf (existentially closed locally
finite) groups, see the book by Kegel-Wehrfritz \cite{KeWe73}; exlf groups
were originally called ulf (= universal locally finite) groups, we
change as the word ``universal" has been used in this context with a 
different meaning, see Definition \ref{z12} and Claim \ref{z2}.

\noindent
Recall
\begin{definition}
\label{x2}  
1) $G$ is a lf \underline{(locally finite) group} \If \, $G$ is a group and 
every finitely generated subgroup is finite. 

\noindent
2) $G$ is an exlf (\underline{existentially closed} lf) group 
(in \cite{KeWe73} it
is called
ulf, universal locally finite group) \when \, $G$ is a locally finite
group and for any finite groups 
$K \subseteq L$ and embedding of $K$ into $G$, the embedding can be extended to
an embedding of $L$ into $G$.

\noindent
3) Let $\mathbf K_{\lf}$ be the class of lf (locally finite)
   groups (partially ordered by $\subseteq$, being a subgroup) and let
$\mathbf K_{\exlf}$ be the class of existentially closed $G \in \mathbf K_{\lf}$.
\end{definition}
\bigskip

\noindent
In particular there is one and only one $\exlf$ group of cardinality
$\aleph_0$.  Hall proved that every lf 
group can be extended to an exlf group, as follows.  It suffices for a
given lf group $G$ to
find $H \supseteq G$ such that if $K \subseteq L$ are finite and $f$
embeds $K$ into $G$, then some $g \supseteq f$ embed $L$ into $H$.  To
get such $H$,  for finite $K
\subseteq G$ let $E_{G,K} = \{(a,b):a,b \in G$ and 
$aK = bK\}$ and let $G^\oplus$ be
the group of permutations $f$ of $G$ such that for some finite $K
\subseteq G$ we have $a \in G \Rightarrow a E_{G,K} f(a)$; now $b \in G$ should
be identified with $f_b \in G^\oplus$ where $f_b$ is defined by
$f_b(x) = xb$ hence $f_b \in G^\oplus$ because if 
$b \in K \subseteq G$ then $a \in G \Rightarrow f_b(a) = ab \in
abK=aK$ and $f_{b_2} \circ f_{b_1}(x) = (xb_1)b_2 = x(b_1 b_2) 
= f_{b_1 b_2}(x)$.  Now
$H=G^\oplus$ is essentially as required.

The proof gives a canonical extension.  This means 
for example that every automorphism of
$G$ can be extended to an automorphism of $G^\oplus$ and, moreover, 
we can do it uniformly so preserving isomorphisms.  
Still we may like to have more; 
(for a given lf infinite group $G$) the extension $G^\oplus$ defined above is 
of cardinality $2^{|G|}$ rather than the minimal
value - $|G| + \aleph_0$ (not to mention having to repeat this
$\omega$ times in order to get an $\exlf$ extension).  Also if $G_1 \subseteq
G_2$ then the connection between $G^\oplus_1$ and $G^\oplus_2$ is not
clear, i.e. failure of ``naturality".
A major point of the present work is a construction of a canonical
existentially closed extension of $G$ which has those two additional
desirable properties, see e.g. \ref{a78}.

Note that in model theoretic terminology the exlf groups are the
$(\mathbf D,\aleph_0)$-homogeneous groups, with $\mathbf D$ 
the set of isomorphism types of
finite groups or more exactly complete
qf (= quantifier free) types of finite tuples 
generating a finite group, see e.g. \cite[\S2]{Sh:88r}.  
We use quantifier free types as we use
embeddings (rather than, e.g. elementary embeddings).  Let $\mathbf D(G)$ 
be the set of $\qf$-complete types of finite sequences from the group $G$.

Let $\mathbf K_{\exlf}$ be the class of exlf groups.
By Grossberg-Shelah \cite{Sh:174}, if 
$\lambda = \lambda^{\aleph_0}$ then no $G \in \mathbf K^{\exlf}_\lambda
:= \{H \in K_{\exf}:|H| = \lambda\}$
is universal in it, i.e., such that every other member is
embeddable into it.  But if $\kappa$ is a compact cardinal and $\lambda >
\kappa$ is strong limit of cofinality $\aleph_0$ then there is a
universal exlf in cardinality $\lambda$, (this is a 
special case of a general theorem).

Wehrfritz asked about the categoricity of the class of exlf groups 
in any $\lambda > \aleph_0$.  This was answered by 
Macintyre-Shelah \cite{Sh:55} which proved that in every $\lambda >
\aleph_0$ there are $2^\lambda$ non-isomorphic members of
$\mathbf K^{\text{exlf}}_\lambda$.  This was disappointing in some sense: in
$\aleph_0$ the class is categorical, so the question was perhaps
motivated by the hope that also general
structures in the class can be understood to some extent.

A natural and frequent question on a class of structures is the
existence of rigid members, i.e. ones with no non-trivial
automorphism.  Now any exlf group $G \in \mathbf K_{\exlf}$ has non-trivial
automorphisms - the inner automorphisms (recalling it has a trivial center).
So the natural question is about complete members where a group is
called complete \Iff \, it has no non-inner automorphism.

Concerning the existence of a complete, locally finite group of
cardinality $\lambda$: Hickin \cite{Hi78} proved one exists 
in $\aleph_1$ (and more, e.g. he finds a family of $2^{\aleph_1}$ 
such groups pairwise far apart, i.e. no
uncountable group is embeddable in two of them).  Thomas \cite{Th86}
assumed G.C.H. and built one in every successor cardinal (and
more, e.g. it has no Abelian or just solvable subgroup of the same
cardinality).  Related are Giorgetta-Shelah \cite{Sh:83},
 Shelah-Zigler \cite{Sh:96}, which investigate $\mathbf K_{G_*}$
 getting similar results where
\mn
\begin{enumerate}
\item[$(*)$]   assume $G_*$ an existentially closed countable group we
  let
\sn
\begin{enumerate}
\item[(a)]  $\mathbf K_{G_*}$ is the class of groups $G$ such that 
every finitely generated subgroup of $G$ is embeddable into $G_*$
\sn
\item[(b)]  $\mathbf K^{\excl}_{G_*}$ is the class of groups $G$ which are
$\bbL_{\infty,\aleph_0}$-equivalent to $G_*$ ($\excl$ stands for
existentially closed); equivalently $G \in \mathbf K_{G_*}$, every
finitely generated subgroup of $G_*$ is embeddable into $G$ and if $\bar
a,\bar b \in {}^n G$ realize the same qf type in $G$ then some inner
automorphism of $G$ maps $\bar a$ to $\bar b$
\end{enumerate}
\sn
\item[$(**)$]   we can replace ``group $G_*$" by any other structure.
\end{enumerate}
\mn
Giorgetta-Shelah \cite{Sh:83} build in cardinality 
continuum $G \in \mathbf K_{\exlf}$ with
no uncountable Abelian subgroup and similarly for $\mathbf
K^{\excl}_{G_*},G_*$ as in $(*)$ and also for the similarly defined
$\mathbf K^{\excl}_{F_*},F_*$ an existentially closed countable 
fixed division ring.  Shelah-Zigler \cite{Sh:96} build, for $G_*$ as
in $(*)$ and $\lambda > \aleph_0;N^\ell_\lambda \in \mathbf
K^{\exlf}_{G_*}$ of cardinality $\lambda$ for $\ell=1,2$ such that
$N^1_\lambda$ has no Abelian group of cardinality $\lambda$ and every
subgroup of cardinality $\lambda$ has a free subgroup of the same
cardinality; moreover, there are $2^\lambda$ pairwise non-isomorphic
$N$ like $N^\ell_\lambda$.

In 1985 the author wrote notes (in Hebrew) for proving that
there are anti-prime constructions and 
complete exlf groups when, e.g., $\lambda =
\mu^+,\mu^{\aleph_0} = \mu$; using black boxes and ``anti-prime"
construction, i.e. using definable types as below; here we exclusively
use qf (quantifier free) types; this was announced in 
\cite[pg.418]{Sh:300}, but the work
was not properly finished.  To do so is our aim here.

Meanwhile Dugas-G\"obel \cite[Th.2]{DgGb93} prove that for $\lambda =
\lambda^{\aleph_0}$ and $G_0 \in \mathbf K^{\lf}_{\le \lambda}$ there is a
complete $G \in \mathbf K^{\exlf}_{\lambda^+}$ extending $G_0$; moreover
$2^{\lambda^+}$ pairwise non-isomorphic ones.  Then
Braun-G\"obel \cite{BrGb03} got better results for complete locally finite
$p$-groups.  Those constructions build an increasing continuous chain
$\langle G_\alpha:\alpha < \lambda^+\rangle$, each $G_\alpha$ of
cardinality $\lambda$, such that $G_{\alpha +1}$ is the wreath product
of $G_\alpha$ and suitable Abelian locally finite groups, $G =
\{G_\alpha:\alpha < \lambda^+\}$ is the desired group.  This gives a
tight control over the group and implies, e.g. that only few
(i.e. $\le \lambda$) members commute with $G_0$.  Here we are
interested in groups $G'$ which are ``more existentially closed",
e.g. ``for every $G' \subseteq G$ of cardinality $<|G|$, there are
$|G|$ elements commuting with it"; such properties are called ``being
full", note that fullness implies that a restriction on the cardinal
is necessary and not so without it, see \ref{p88}.

We show that though the class $\mathbf K_{\text{exlf}}$ is very ``unstable"
there is a large enough set of definable types so we can imitate
stability theory and have reasonable control in building exlf groups, using
quantifier free types.  This may be considered a ``correction" to the
non-structure results discussed above.

In \S1 we present somewhat abstractly our results relying on the existence
of a dense and closed so called $\gS$, a set of schemes of definitions
of the relevant types.
So before we turn to explaining our results we deal with the so called
schemes, needed for explaining them.
\bigskip

\noindent
\subsection {Schemes} \
\bigskip

\noindent
We deal with a class $\mathbf K$ of structures, usually it is the class
of locally finite groups, but some of the results holds for suitable
universal classes, see \S6.

Central here are so-called schemes.  For \underline{models theorists}
they are for a given $G \in \mathbf K_{\lf}$ and finite sequence $\bar a
\subseteq G$ (realizing a suitable quantifier free type) a definition
of a complete (quantifier free) type over $G$ so realized in some
extension of $G$ from $\mathbf K_{\lf}$, which does not split over $\bar a$;
alternatively you may say that they are definitions of a complete-free type
quantifier over $G$ which does not split over $\bar a$ and its restriction.

For \underline{algebraists} they are our replacement of free products
$G_1 *_{G_0} G_2$, but $\mathbf K_{\lf}$ is not closed under free product, in
fact, fail amalgamation.  So we are interested in replacements in the
cases $G_0$ is finite, also we waive symmetry.

\begin{convention}
\label{1.0}
1) $\mathbf K$ a universal class of structures (i.e. all of the same
vocabulary, closed under isomorphisms and $M \in \mathbf K$ iff every
finite generated substructure belongs to $\mathbf K$; usually $\mathbf K =
\mathbf K_{\lf}$).

\noindent
2) $G,H,\ldots \in \mathbf K$.
\end{convention}

\begin{definition}
\label{a2}  
For $H \in \mathbf K,n < \omega$, a set $A \subseteq H$ and 
$\bar a \in {}^n H$ let tp$(\bar a,A,H) = \text{ tp}_{\bs}(\bar
a,A,H)$ be the basic type of $\bar a$ in $H$ over $A$, that is:

\begin{equation*}
\begin{array}{clcr}
\{\varphi(\bar x,\bar b):&\varphi \text{ is a basic (atomic or
negation of atomic) formula in the variables} \\
  &\bar x = \langle x_\ell:\ell < n \rangle \text{ and the parameters }
\bar b, \text{ a finite sequence from } A, \\
  &\text{ which is satisfied by } \bar a \text{ in } H\}.
\end{array}
\end{equation*}

\mn
So if $\mathbf K$ is a class of groups withough loss of generality $\varphi$ is 
$\sigma(\bar x,\bar b)=e$ or 
 $\sigma(\bar x,\bar b) \ne e$ 
for some group-term $\sigma$, a so called ``word",
(for $\mathbf K_{\text{\rm olf}}$ we also have 
$\sigma_1(\bar x,\bar b) < \sigma_2(\bar x,\bar b)$)
\underline{but} we may write $p(\bar y) = \text{ tp}_{\bs}(\bar
b,A,H)$ or $p(\bar z) = \text{ tp}_{\bs}(\bar c,A,H)$ or just
$p$ when the sequence of variables is clear from the context.

\noindent
2) We say $p(\bar x)$ is an $n-bs$-\underline{type} over $G$ \when \, it is a 
set of basic formulas in the
variables $\bar x = \langle x_\ell:\ell < n \rangle$ and
parameters from $G$, such that $p(\bar x)$ is consistent, which means:
if $K \subseteq G$ is f.g. and $q(\bar x)$ is a finite subset of
$p(\bar x)$ and $q(\bar x)$ is over $K$ (i.e. all the
parameters appearing in $q(\bar x)$ are from $K$) \then \, $q(\bar x)$
is realized in some $L \in \mathbf K$ extending $K$.  
We say $\bar a$ realizes $p$ in $H$ if $G
\subseteq H$ and $\varphi(\bar x,\bar b) \in p \Rightarrow H \models
\varphi[\bar a,\bar b]$. 

\noindent
3) $\mathbf S^n_{\bs}(G) = \{\text{tp}_{\bs}(\bar a,G,H):
G \subseteq H,H$ is from $\mathbf K$ and $\bar a \in {}^n H\}$ and
$\mathbf S_{\bs}(G) = \bigcup\limits_{n} \mathbf S_{\bs}(G)$; if $\mathbf K$
is not clear from the context we should write $\mathbf S^n_{\bs}
(G,\mathbf K),\mathbf S_{\bs}(G,\mathbf K)$.
\end{definition}

\begin{observation}
\label{a4}
For every $p \in \mathbf S^n_{\bs}(M)$ and $M \in \mathbf K$ there are
 $N,\bar a$ such that $M \subseteq N \in \mathbf K,\bar a \in {}^n N$
   realizes $p,G_N = c \ell(G_M + \bar a,N)$ \underline{and}
   if $M \subseteq N' \in \mathbf K$ and $\bar a'$ realizes $p$ in $N'$
   then there is $N'' \subseteq N'$ and an isomorphism $f$ from $N$ onto
   $N''$ extending {\rm id}$_M$ such that $f(\bar a) = \bar a'$.
\end{observation}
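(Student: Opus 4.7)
The plan is to build $N$ as a directed union of finite subgroups, one for each finite $K \subseteq G_M$. For each such $K$, I will set $p \restriction K := \{\varphi(\bar x,\bar b) \in p : \bar b \text{ from } K\}$; by the consistency of $p$ (Definition \ref{a2}(2)), there is $L_K \in \bold K$ with $K \subseteq L_K$ and some $\bar a_K \in {}^n(L_K)$ realizing $p \restriction K$, and I set $H_K := \text{cl}(K + \bar a_K, L_K)$, which is a finite member of $\bold K$ since $\bold K$ is $\bold K_{\text{lf}}$ or $\bold K_{\text{olf}}$.

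The key observation is that the isomorphism type of $H_K$ over $K$, with the tuple $\bar a_K$ distinguished, is determined by $p \restriction K$. Indeed, every element of $H_K$ is represented by a group word $\sigma(\bar a_K,\bar b)$ with $\bar b$ from $K$; two such words are equal in $H_K$ iff the corresponding equation lies in $p \restriction K$; multiplication and, in the ordered case, the order are then also determined by $p \restriction K$. Here I use that $p$, arising as $\tp_{\bs}(\bar a,M,H)$ for some realization, is complete for basic formulas, so $p \restriction K$ decides every basic formula over $K$. Consequently, for $K \subseteq K'$ there is a unique structure-embedding $H_K \hookrightarrow H_{K'}$ extending $\id_K$ and sending $\bar a_K$ to $\bar a_{K'}$, and these fit into a direct system indexed by the finite subgroups of $G_M$.

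I will then take $N$ to be the direct limit of this system. Since it is a directed union of finite groups (with order-preserving embeddings in the olf case), it lies in $\bold K$; it contains $G_M = \bigcup_K K$ through the canonical maps, together with a distinguished tuple $\bar a$ obtained as the common image of the $\bar a_K$. The tuple $\bar a$ realizes $p$, because for any basic $\varphi(\bar x,\bar b) \in p$ one has $\bar b \subseteq K$ for some finite $K$ and then $\varphi(\bar x,\bar b) \in p \restriction K$ is satisfied by $\bar a_K$, hence by $\bar a$. By construction every element of $N$ lies in some $H_K \subseteq \text{cl}(G_M + \bar a, N)$, so $G_N = \text{cl}(G_M + \bar a, N)$.

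For the uniqueness clause, given $M \subseteq N' \in \bold K$ with $\bar a'$ realizing $p$, I will put $N'' := \text{cl}(G_M + \bar a', N')$. For each finite $K \subseteq G_M$ the subgroup $\text{cl}(K + \bar a',N')$ of $N''$ also realizes $p \restriction K$ via $\bar a'$, so the canonical-determination step above provides a unique isomorphism $f_K : H_K \to \text{cl}(K + \bar a',N')$ extending $\id_K$ and sending $\bar a_K$ to $\bar a'$. Uniqueness of each $f_K$ forces the $f_K$ to commute with the inclusions, so they amalgamate into an isomorphism $f : N \to N''$ with $f \restriction G_M = \id_{G_M}$ and $f(\bar a) = \bar a'$. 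The only nonroutine point, and therefore the real content of the proof, is the canonical-determination claim in the second paragraph, which reduces to the elementary fact that a finite structure presented by generators and a complete basic diagram is unique up to isomorphism over those generators.
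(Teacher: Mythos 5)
The paper records \ref{a4} as an observation with no proof, so I can only assess your argument on its own terms; it is correct, with one misattribution worth flagging and some avoidable machinery.

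The slip: you appeal to ``the consistency of $p$ (Definition \ref{a2}(2))'' to obtain $L_K$ and $\bar a_K$ realizing $p\rest K$. But consistency in the sense of \ref{a2}(2) only says that each \emph{finite} subset of $p$ over a f.g.\ $K$ has a realization in some extension; it does not by itself produce one tuple realizing the whole (typically infinite) set $p\rest K$. For instance, the complete basic $1$-type over $\{e\}$ containing $x^m \neq e$ for every $m \geq 1$ is consistent in that finitary sense yet is realized in no locally finite group. What you actually need is the stronger standing hypothesis $p \in \bold S^n_{\bs}(M)$: by Definition \ref{a2}(3) this means $p = \tp_{\bs}(\bar a,M,H)$ for some $H \supseteq M$ in $\bold K$ and $\bar a \in {}^n H$, and then $\bar a$ itself realizes $p\rest K$ (alternatively cite Remark \ref{a5}(2)). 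You do invoke ``$p$ arising as $\tp_{\bs}(\bar a,M,H)$'' one sentence later, so this is a citation error rather than a genuine gap, but as written the inference does not hold.

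Beyond that, the directed-limit scaffolding is more than the statement needs. Given any realization $\bar a$ of $p$ in some $H \supseteq M$ in $\bold K$, simply set $N := c \ell(G_M + \bar a, H)$; as a substructure of $H$ it lies in $\bold K$, it contains $M$, atomic formulas pass both ways between $N$ and $H$, so $\bar a$ still realizes $p$ in $N$, and $G_N = c \ell(G_M + \bar a, N)$ holds by construction. The only real content is uniqueness, and that is exactly your ``canonical determination'' step done once globally instead of once per finite $K$: since $p$ is a complete basic type over $M$, the map $\sigma^N(\bar a,\bar b) \mapsto \sigma^{N''}(\bar a',\bar b)$ (for group-terms $\sigma$ and $\bar b$ from $G_M$) is well defined, bijective, and preserves the group operations and, in the olf case, the order; it is the required isomorphism onto $N'' := c \ell(G_M + \bar a', N')$ over $M$ sending $\bar a$ to $\bar a'$, and is unique since $N$ is generated by $G_M \cup \bar a$. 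Your direct-limit version of this is correct, but it is the same argument carried out piecewise and then glued, and so is somewhat longer than necessary.
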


\begin{remark}
\label{a5}
0) In \ref{a4} we shall later use the convention of \ref{z4}(1),(3).

\noindent
1) We are particularly interested in types which are definable in some
   sense over small sets.

\noindent
2) We can define ``$p \in \mathbf S^n_{\bs}(M)$" syntactically, because
   for a set $p$ of basic formulas $\varphi(\bar x,\bar a),\bar a$
from $M$ which is complete (i.e. if $\varphi(\bar x,\bar a)$ is an
   atomic formula over $M$ then $\varphi(\bar x,\bar a) \in p$ or
   $\neg \varphi(\bar x,\bar a) \in p$), 
 we have $p \in \mathbf S^n_{\bs}(M)$ iff for every
   f.g. $N \subseteq M$ we have $p \rest N := \{\varphi(\bar x,\bar a)
   \in p:\bar a \subseteq N\} \in \mathbf S^n_{\bs}(N)$. 

\noindent
3) Why do we use below types which do not split over a finite subgroup
   and the related set of schemes?  As we like to get a canonical
   extension of $M \in \mathbf K$ it is natural to use a set of types
closed under automorphisms of $M$, and as their number is preferably
   $\le \|M\|$,  it is natural to demand that any such type is, in some sense,
   definable over some finite subset of $M$.
\end{remark}

\noindent
As in \cite{Sh:3}:
\begin{definition}
\label{a6}  
We say that $p = \text{tp}_{\bs}(\bar a,G,H) 
\in {\mathbf S}^n_{\bs}(G)$ \underline{does not split} over $K \subseteq G$ 
\when \,  for every $m< \omega$ and $\bar b_1,\bar b_2 \in
{}^m G$ satisfying tp$_{\bs}(\bar b_1,K,G) = 
\text{ tp}_{\bs}(\bar b_2,K,G)$ we have $\text{tp}_{\bs}
(\bar b_1 \char 94 \bar a,K,H) = \text{ tp}_{\bs}
(\bar b_2 \char 94 \bar a,K,H)$.
\end{definition}

\begin{definition}
\label{a12}
1) Let $\mathbf D(\mathbf K) = \bigcup\limits_{n} \mathbf D_n(\mathbf K)$,
   where $\mathbf D_n(\mathbf K) = \{\text{tp}_{\bs}(\bar a,\emptyset,M):
\bar a \in {}^n M$ and $M \in \mathbf K\}$.
 
\noindent
2) Assume\footnote{This is used to define the set $\gS$ of schemes; 
for this section the case $p(\bar x) = p'(\bar x)$ is enough as we can consider
all the completions but the general version is more natural in
counting a set $\gS$ of schemes and in considering actual examples.}
$p(\bar x)$ is a $k$-type, that is, $\bar x = \langle x_\ell:\ell <
k\rangle$ and for some $p'(\bar x)$ we have 
$p(\bar x) \subseteq p'(\bar x) \in \mathbf D_k(\mathbf K)$ and 
$m < \omega$.  We let $\mathbf D_{p(\bar x),m}(\mathbf K) = 
\mathbf D_m(p(\bar x),\mathbf K)$ be
 the set of $q(\bar x,\bar y) \in \mathbf D_{k+m}(\mathbf K)$ such that
$q(\bar x,\bar y) \supseteq p(\bar x)$, which means that there is $M
   \in \mathbf K$ and $\bar a \in {}^k M$ realizing $p(\bar x)$ and
   $(\bar a,\bar b)$ realizing $q(\bar x,\bar y)$ in $M$, i.e. $\ell
   g(\bar a) = k,\ell g(\bar b) = m$ and $\bar a \char 94 \bar b$
   realizes $q(\bar x,\bar y)$.

\noindent
3) In part (2) let $\mathbf D_{p(\bar x)}(\mathbf K) = 
\cup\{\mathbf D_m(p(\bar x),\mathbf K):m < \omega\}$.
\end{definition}

\begin{remark}
\label{a13}
Below $\gs \in \Omega_{n,k}[\mathbf K]$ is a scheme to fully 
define a type $q(\bar z)
\in \mathbf S^n_{\bs}(M)$ for a given parameter $\bar a \in {}^k M$ such
that $q(\bar z)$ does not split over $\bar a$.  Sometimes $\gs$ is not
unique but if, e.g., $M \in \mathbf K_{\text{exlf}}$ it is.
\end{remark}

\begin{definition}
\label{a14}
1) Let $\Omega[\mathbf K]$ be the set of schemes, i.e.
$\cup\{\Omega_{n,k}[\mathbf K]:k,n< \omega\}$
where $\Omega_{n,k}[\mathbf K]$ is the set of $(k,n)$-schemes 
$\gs$ which means, see below.

\noindent
1A) We say $\gs$ is a $(k,n)$-\underline{scheme} when for some $p(\bar x) =
 p_{\gs}(\bar x_{\gs})$ with $\ell g(\bar x_{\gs}) = k$, (and $k_{\gs} =
k(\gs) = k,n_{\gs} = n(\gs) = n$) we have:
\mn
\begin{enumerate}
\item[$(a)$]  $\gs$ is a function with domain $\mathbf D_{p(\bar x)}(\mathbf K)$
such that for each $m$ it maps $\mathbf D_{p(\bar x),m}(\mathbf K)$ into
$\mathbf D_{k+m+n}(\mathbf K)$
\sn
\item[$(b)$]  if $s(\bar x,\bar y) \in \mathbf D_{p(\bar x),m}(\mathbf K)$ and
$r(\bar x,\bar y,\bar z) = \gs(s(\bar x,\bar y))$ \then \, $r(\bar
x,\bar y,\bar z) \rest (k+m) = s(\bar x,\bar y)$; that is, if $(\bar a,\bar
b,\bar c)$, i.e. $\bar a \char 94 \bar b \char 94 \bar c$, realizes
$r(\bar x,\bar y,\bar z)$ in $M \in \mathbf K$ so $k = \ell g(\bar a),m
= \ell g(\bar b),n = \ell g(\bar c)$, \then \, $\bar a \char 94 \bar
b$ realizes $s(\bar x,\bar y)$ in $M$; see \ref{a15}(1)
\sn
\item[$(c)$]  in clause (b), moreover if 
$\bar b' \in {}^{\omega >}M$, Rang$(\bar b')
\subseteq \text{ Rang}(\bar a \char 94 \bar b)$ \then \, $\bar a \char 94
\bar b' \char 94 \bar c$ realizes the type $\gs(\tp_{\bs}(\bar a \char
94 \bar b',\emptyset,M))$; this is to avoid $\gs$'s which define
contradictory types\footnote{But some $\gs$'s satisfying clauses
  (a),(b) of \ref{a14}(1A) but failing clause (c) this may give a
consistent type in an interesting class of cases.}.
\end{enumerate}
\mn
2) Assume $\gs \in \Omega_{n,k}[\mathbf K]$ and $M \in \mathbf K$ and $\bar a \in
{}^k M$ realizes $p_{\gs}(\bar x_{\gs})$; we let $q_{\gs}(\bar a,M)$ be the
unique $r(\bar z) = r(z_{\gs}) \in \mathbf S^n_{\bs}(M)$ such that for any
$\bar b \in {}^{\omega >}M$ letting $r_{\bar b}(\bar x,\bar y,\bar z) 
:= \gs(\tp_{\bs}(\bar a \char 94 \bar b,\emptyset,M))$ we have 
$r_{\bar b}(\bar a,\bar b,\bar z) \subseteq r(\bar z)$.

\noindent
3) We call $\gs$ \underline{full} 
when $p_{\gs}(\bar x) \in \mathbf D_{k(\gs)}(\mathbf K)$.

\noindent
4) For technical reasons we allow $\bar x_{\gs} = \langle
   x_{\gs,\ell}:\ell \in u\rangle,u \subseteq \bbN,|u| = k_{\gs}$ and
   in this case
${}^{k(\gs)}M$ will mean ${}^u M = \{\langle a_\ell:\ell \in u\rangle:a_\ell
   \in M$ for $\ell \in u\}$ and we do not pedantically distinguish
   between $u$ and $k_{\gs}$.  Similarly for $n_{\gs}$ and $\bar z$, the reason is
   \ref{a14f}, \ref{a21}(4).
\end{definition}

\begin{convention}
\label{a14d}
$\gS$ will denote a subset of $\Omega[\mathbf K]$.
\end{convention}
\bigskip

\noindent
\subsection {The Results} \
\bigskip
In particular (in the so-called first avenue,
see below):
\begin{theorem}
\label{y2}
Let $\lambda$ be any cardinal $\ge |\gS|$.

\noindent
1) For every $G \in \mathbf K^{\lf}_{\le \lambda}$ there
is $H_G \in K^{\exlf}_\lambda$ which is $\lambda$-full over $G$ (hence
over any $G' \subseteq G$; see Definition \ref{a33}) and $\gS$-constructible
over it (see \ref{a37}).

\noindent
2) If $H \in \mathbf K^{\lf}_{< \lambda}$ is
$\lambda$-full over $G( \in \mathbf K^{\lf}_{\le \lambda})$ \then \,
$H_G$ from above can be embedded into $H$ over $G$, see \ref{a41}(4).
\end{theorem}
\bigskip

\noindent
This is proved by \ref{a41} + \S2.
So in some sense $H_G$ is prime over $G$, that is, it is prime but not
among the members of $\mathbf K^{\exlf}_\lambda$, 
i.e. for a different class.  Still 
we would like to have canonicity so uniqueness.  There are some 
additional avenues helpful toward this.

The second avenue tries 
to get results which are nicer by assuming $\gS$ is so called
symmetric which is the parallel of being stable in this context.
Under this assumption we prove the existence of a canonical closure of a
locally finite group to an exlf one.  This is done in \ref{a25}, \ref{a26}.

The third avenue is without assuming ``$\gS$ is symmetric" but using a more
complicated construction,  for which we have similar, somewhat 
weaker results using special linear orders.  The failure of symmetry
seems to draw you to order the relevant pairs $(\gs,\bar a)$ for $G$.
That is, trying to repeat the construction in \ref{a25}(2), without
symmetry we have to well order or at least linearly order $\deef(G) = 
\deef_{\gS}(G)$ which is essentially the set of relevant complete
quantifier types over $G$ over a finite set of parameters, see
Definition \ref{a14f};
this suffices by \ref{a22}(9).  At first glance we have to linearly order
$\deef(G)$, but we take a list of def$(G)$, with each appearing $\lambda$
times and linearly order it such that it does not induce a linear
order of $\deef(G)$.  See below.

\noindent
So we prove (in \ref{a58}, \ref{a62}, \ref{a68})
\begin{theorem}
\label{y8}
1) We can for every $\lf$ group $G$, define $G^{c \ell}$ such that:
\mn
\begin{enumerate}
\item[$(a)$]  if $G \in K^{\lf}_{\le\lambda}$ then $G \subseteq G^{c \ell}
  \in K^{\exlf}_\lambda$
\sn
\item[$(b)$]  $G^{c \ell}$ is unique up to isomorphism over $G$.
\end{enumerate}
\mn
2) Also\footnote{See on this in \ref{a76}.} essentially it commutes
with extensions, i.e. $G_1 \subseteq G_2 \Rightarrow G^{c \ell}_1
\subseteq G^{c \ell}_1$, pedantically
\mn
\begin{enumerate}
\item[$(c)$]  if $G_1 \subseteq G_2$ and $G^{c \ell}_\ell$ is as above
  \then \, there is an embedding $h$ of $G^{c \ell}_1$ into $G^{c
    \ell}_2$ such that $h(G^{c \ell}_1) \cap G_2 = G_1$
\sn
\item[$(c)'$]  restricting ourselves to $\{G \in \mathbf K_{\lf}$: every
$x \in G$ is a singleton$\}$ we have:
\sn
\item[$(b)''$]  $G^{c \ell}$ is really unique
\sn
\item[$(c)''$]   $G_1 \subseteq G_2 \Rightarrow G^{c \ell}_1 \subseteq
  G^{c \ell}_2$.
\end{enumerate}
\end{theorem}

To stress the generality in addition to the class
$\mathbf K_{\text{lf}}$ of lf-groups we use $\mathbf K_{\text{olf}}$, the
class of ordered locally finite groups (see \ref{z4}); for 
them the proof of the existence of a suitable $\gS$ is easier.
Naturally for $\mathbf K_{\text{olf}}$ we certainly
do not have a symmetric $\gS$.

In \S2 we show that $\gS$ as needed in \S1 exists, but not necessarily
symmetric and define and investigate some specific schemes used
later; also we define and investigate $\NF$, a relative of free
amalgamation.  In \S3 we find a fourth avenue which is more specific to the
class of lf groups.  We show that we can induce symmetry, i.e. define
symmetric constructions even for non-symmetric $\gS$ hence get
somewhat better results, see \ref{a78}.  
In particular we construct reasonable closures.

In \S4(A), we show that we can find amalgamation preserving
commuting and so can get a new relative NF$^3$ of NF.  In \S4(B) we deal
with some related schemes (of types).  In \S4(C) we deal with
types with infinitely many variables.

In \S5 we prove the existence of a complete group $G_* \in  \mathbf
K^{\text{exlf}}_\lambda$ when $\lambda = \mu^+,\mu = \mu^{\aleph_0}$.
Moreover, we prove the existence of a complete extension $G_* \in \mathbf
K^{\text{exlf}}_\lambda$ of an arbitrary $G \in 
\mathbf K^{\text{lf}}_{\le \mu}$.

Some of the definitions and claims work also in quite a general
framework, but it is not clear at present how interesting this is.  
Still we consider some expansions of $\mathbf K_{\text{lf}}$, and 
comment on them in \S6. 

We here also consider the partial order $\le_{\gS}$ on $\mathbf K$, where $G_1
\le_{\gS} G_2$ means that every finite $\bar a \subseteq G_2$ realizes over
$G_2$ a type from $\deef_{\gS}(G_1)$.
Note that on $(\mathbf K,\le_{\gS})$ we may generalize stability theory,
in particular when $\gS$ is symmetric (see \S1) or when we use the
symmetrized version (see \S3).  In particular, we can investigate
orthogonality, parallelism, super-stability, and indiscernible sets
which $\Delta$-converge (\cite{Sh:300} or \cite{Sh:300a}).  A class
somewhat similar to $\mathbf K_{\lf}$, for an existentially
closed countable group $L$ is $\mathbf K_L$, the class of groups $G$
such that every f.g. subgroup is embeddable into $L$.  We further
investigate $\mathbf K_{\lf}$ in \cite{Sh:1098} and in more general
direction in a work in preparation with G. Paolini.

We thank Omer Zilberboim and Gianluca Paolini 
for some help in the proofs of this paper and
a referee for many useful comments to clarify this paper.
\bigskip

\noindent
\subsection {Preliminaries}\
\bigskip

\begin{definition}
\label{z1}
1) Let $\mathbf K^{\lf}_\lambda$ be the class of $G \in \mathbf K_{\lf}$ 
of cardinality $\lambda$, let $\mathbf K^{\exlf}_\lambda$ be the 
class of $G \in \mathbf K_{\exlf}$ of cardinality $\lambda$; see
Definition \ref{x2}.

\noindent
2) Let $\fsb(M)$ be the set of f.g. (finitely generated) sub-structures
of $M$.  
\end{definition}

\noindent
Note that $\mathbf K_{\exlf}$ is the same $\mathbf K_{\ulf}$ as defined by
Hall as proved in Macintyre-Shelah \cite{Sh:55}, Wood \cite{Wd72};
that is:
\begin{claim}
\label{z2}
The following conditions on a locally finite group $G$ are equivalent:
\mn
\begin{enumerate}
\item[$(A)$]  $G$ is $\ulf$ which means:
\sn
\begin{enumerate}
\item[$(a)$]  every finite group is embeddable into $G$
\sn
\item[$(b)$]  if $H_1,H_2$ are isomorphic finite subgroups of $G$,
  \then \, for some $x \in G$, conjugation by $x$ maps $H_2$ onto
  (here equivalently into) $H_2$, i.e. $x^{-1} H_1x = H_2$
\end{enumerate}
\sn
\item[$(B)$]  $G \in \mathbf K_{\exlf}$.
\end{enumerate}
\end{claim}

\begin{PROOF}{\ref{z2}}
\underline{$(B) \Rightarrow (A)$}

\underline{Clause (A)(a)}:  let $H$ be a finite group, let $H_1 = \{e_H\}
\subseteq H$ so a sub-group of $H$ 
and let $H_2 = H$ and let $h_1:H_1 \rightarrow G$
be defined by $h_2(e_H) = e_G$.  So by clause (B) there is an extension $h_2$
of $h_1$ embedding $H_2=H$ into $G$, so $h_2(H)$ is as required.
\medskip

\noindent
\underline{Clause (A)(b)}:  let $H_1,H_2 \subseteq G$ be finite
sub-groups and let $H_3 \subseteq G$ be the finite subgroup which $H_1 \cup
H_2$ generates.  There is a finite group $H_4$ extending $H_3$ such
that: any partial automorphism of $H_3$ is included in some
conjugation in $H_4$.  Let $h_3:H_3 \rightarrow H_3 \subseteq G$ be the
identity, hence by Clause (B) recalling $G \in \mathbf K_{\exlf}$, 
there is an embedding $h_4$ of $H_4$ into $G$ extending $h_3$.

So in $h_4(H_4) \subseteq G$ there is a conjugation as required.
\medskip

\noindent
\underline{$(A) \Rightarrow (B)$}:

Let $H_1 \subseteq H_2$ be finite groups and $h_1$ be an embedding of
$H_1$ into $G$.  Let $H_4 \supseteq H_2$ be a finite group such that
any automorphism of $H_1$ is included in an inner automorphism of
$H_1$.  By Clause (A)(a) there is an embedding $h_4$ of $H_4$ into
$G$.  By Clause (A)(b) there is $x \in G$ such that $H'_4 := x^{-1}
h_1(H_1) x \subseteq G$ is equal to $h_4(H_1)$.

Recalling \ref{z22}(7) 
$h'_4 = (\square_x \rest h_4(H_4)) \circ h_4$ embeds $H_4$ into $G$
and maps $H_1$ onto $h_1(H_1)$; but the embedding $h'_4$ does not
necessarily extend $h_1$.  However, by clause (A)(b), 
for some $y \in h'_4(H_4),\square_y h'_4$ embeds $H_4$ 
(hence $H_2$) and extends $h_1$ as required.
\end{PROOF}

\noindent
We may use the class $\mathbf K_{\olf}$ of linearly ordered lf groups,
it is closely related and some issues are more transparent for it;
$\mathbf K_{\olf}$ is defined as follows.
\begin{definition}
\label{z4}
1) Let $\mathbf K_{\text{\rm olf}}$ be the class of structures $M$ which
   are an expansion of a lf group $G = G_M$ by a linear order
$<_M$, also this class is partially ordered by $M_1 \subseteq M_2,M_1$ a
   sub-structure of $M_2$.

\noindent
2) We say that $M \in \mathbf K_{\olf}$ is \underline{existentially closed} as
in \ref{z1}(2) and define $\mathbf K^{\text{olf}}_\lambda$ as in \ref{x2}(2). 

\noindent
3) If $M \in \mathbf K_{\lf}$ then we let $G_M=M$. 
\end{definition}

\begin{remark}
\label{z5}
For $\mathbf K_{\lf}$ conceivably there is a symmetric dense
$\gS$, hence a very natural canonical exlf-closure.  Without it we
can either use a somewhat less natural one (using linear orders, 
see end of \S1) or ``make it
symmetric by brute force" (see \S3).  But for the class 
$\mathbf K_{\olf}$ we can use only the linear orders, so
 every $M$ has a canonical existentially closed extension,
but it is more difficult to make it 
unique up to isomorphism.  We shall in \ref{n7} introduce 
another class, $\mathbf K_{\text{\rm clf}}$, locally finite
 groups with choice.
\end{remark}

\begin{convention}
\label{z8}
1)  Except in \S6, $\mathbf K$ is the class 
$\mathbf K_{\text{lf}}$ of locally finite
groups or $\mathbf K_{\text{olf}}$ of ordered locally finite groups
(we may use $\le_{\mathbf K}$ but here $\mathbf K$ is partially ordered by 
$\subseteq$, being a substructure) and see \ref{z5}.

\noindent
2) Let xlf-group mean a member of $\mathbf K$.  Let $\mathbf K_{\ec}$ be
   the class of existentially closed members of $\mathbf K$.

\noindent
3) In \S2, \S3, \S4, \S5 we use only $\mathbf K_{\lf}$; in \S1 you can 
restrict yourself to $\mathbf K = 
\mathbf K_{\text{lf}}$ but in \S6 we have further cases on which we comment.
\end{convention}

\noindent
The following definition is for the more general framework.
\begin{definition}
\label{z6}
1) For $M,N \in \mathbf K$ let $M \le_{\text{fsb}} N$ mean that if $K
\subseteq L$ are f.g., $K \subseteq M,L \subseteq N$, \then \, there
   is an embedding of $L$ into $M$ over $K$.

\noindent
2) For $M,N \in \mathbf K$ let $M \le_{\Sigma_1} N$ \underline{means}
that $M \subseteq N$ and if $\bar a \in {}^{\ell g(\bar y)}M,
\bar b \in {}^{\ell g(\bar x)}N$ and
   $\varphi(\bar x,\bar y) \in \bbL(\tau_{\mathbf K})$ is quantifier
   free and $N \models \varphi[\bar b,\bar a]$ \ then \, for some
$\bar b' \in {}^{\ell g(\bar x)}M$ we have $M \models 
\varphi[\bar b',\bar a]$. 

\noindent
3) Let $M_\ell \in \mathbf K,\bar a_\ell \in {}^{n(\ell)}(M_\ell)$ for
$\ell=1,2$.  We say that a relation on $M_1 \times M_2$ is
quantifier-free definable in $(M_1,\bar a_1,M_2,\bar a_2)$ 
\when \, it is a Boolean combination of finitely many
simple ones, where $R$ is a simple $n$-place relation
on $M_1 \times M_2$ \when \, $R$ is the set of $n$-tuples
$((b_0,c_0),\dotsc,(b_{n-1},c_{n-1}))$ such that $b_i \in M_1,c_i \in
M_2$ for $i < n$ and

\[
M_1 \models \varphi_1[b_0,\dotsc,b_{n-1},\bar a_1]
\]

\[
M_2 \models \varphi_2[c_0,\dotsc,c_{n-1},\bar a_2]
\]
\mn
for some quantifier-free formulas $\varphi_1,\varphi_2$ in
$\bbL(\tau_{\mathbf K})$ and finite sequences $\bar a_1,\bar a_2$ from
$M_1,M_2$ respectively.
\end{definition}

\begin{remark}
1) Note that \ref{z6}(3) is not actually used, but just indicate the form of
definability used.

\noindent
2) Note that $\le_{\Sigma_1}$ for $\mathbf K_{\text{lf}}$ and
$\mathbf K_{\text{olf}}$ is the same as $\le_{\text{fsb}}$.  For other
classes, see \S6, if the vocabulary is finite and we deal with
locally finite structures they are still the same.  Otherwise, by
our choice of ``does not split" we have to use $\le_{\text{fsb}}$.
But if we prefer to use $\le_{\Sigma_1}$ we have to strengthen the
definition of ``does not split" to make the proof of \ref{a23}(1) work.
\end{remark}

\begin{convention}
\label{z9}
Let $M_1,M_2 \in \mathbf K,M_1 \subseteq M_2$ and 
$\bar a \in {}^n(M_2)$, so $\bar a = (a_0,a_1,a_2,\dotsc,a_{n-1})$.

\noindent
1) Denote by $cl(M_1 + \bar a,M_2)$ the sub-structure generated by 
$M_1 \cup \bar a = M_1 \cup
\{a_0,a_1,\dotsc,a_{n-1}\}$ in $M_2$.

\noindent
2) For a group $G$ and $A \subseteq G$ let
\mn
\begin{enumerate}
\item[$\bullet$]   $\mathbf C_G(A) = \{g \in G:G \models 
``ag = ga"$ for every $a \in A\}$
\sn
\item[$\bullet$]   $\mathbf Z(G) = \mathbf C_G(G)$
\sn
\item[$\bullet$]  $\mathbf N_G(A) = \{c \in G:c^{-1} Ac = A\}$.
\end{enumerate}
\mn
4) For a group $G$, aut$(G)$ is the group of automorphisms of $G$ and
   inner$(G)$ is the normal subgroup of aut$(G)$ consisting of the
inner automorphisms of $G$.
\end{convention}

\noindent
A side issue here is:
\begin{definition}
\label{z12}
1) For a class $\mathbf K$ of structures (of a fixed vocabulary) we say
   $M \in \mathbf K$ is $\lambda$-\underline{universal} in 
$\mathbf K$ \when \, every
   $N \in \mathbf K$ of cardinality $\lambda$ can be embedded into it.

\noindent
2) We say $M \in \mathbf K$ is $(\le \lambda)$-\underline{universal} in
$\mathbf K$ \when \, every $N \in \mathbf K$ of cardinality $\le \lambda$
can be embedded into $M$.

\noindent
3) We say $M \in \mathbf K$ is \underline{universal} \when \, it is
$\lambda$-universal for $\lambda =\|M\|$.

\noindent
4) Assume $\gk = (K_{\gk},\le_{\gk}),K_{\gk}$ as a class of
   $\tau$-structures (for some vocabulary $\tau = \tau_{\gk}$), 
closed under isomorphism, and $\le_{\gk}$ a partial order on $K_{\gk}$
   preserved under isomorphisms.  Above ``$M \in K_{\gk}$ is
   $\lambda$-universal in $\gk$" means that if $N \in K_{\gk}$ has
   cardinality $\lambda$ then there is a $\le_{\gk}$-embedding $f$ of
   $N$ into $M$, i.e. $f$ is an isomorphism from $N$ onto some $N'
   \le_{\gk} M$.  Similarly in the other variants.
\end{definition}

\noindent
The problem of the existence of universal members of $\mathbf K^{\lf}_\lambda$
is connected to
\begin{question}
\label{z19}
Fixing $\kappa$ and an ideal $J$ on $\kappa$, what is
$\lambda_{\mu,\kappa}(J,\mathbf K)$, which is the minimal cardinal (or
$\infty$) $\lambda$ which is $> \mu$ and there is no sequence
$\langle (G_\alpha,\bar a_\alpha):\alpha < \lambda\rangle$ such that
$G_\alpha \in \mathbf K_{\le \mu},\bar a_\alpha \in
{}^\kappa(G_\alpha)$ and there are no $H \in \mathbf K$ and
$\alpha < \beta < \lambda$ and embeddings $f_1,f_2$ of $G_\alpha,G_\beta$
respectively into $H$ such that $\{i < \kappa:f(a_{\alpha,i}) \ne
a_{\beta,i}\} \in J$.
\end{question}

\begin{notation}
\label{z22}
1) Let $G,H,K$ denote members of $\mathbf K$.

\noindent
2) Let $p,q,r$ and $s$ denote types.

\noindent
3) $\gs$ denotes a scheme of defining types, here $\qf$.

\noindent
4) $t$ denotes a member of some $\deef(G)$, i.e. a pair $(\gs,\bar a)$ 
which defines a type in $\mathbf S^{n(\gs)}_{\bs}(G)$.

\noindent
5) For $A \subseteq M$ let $c \ell(A,M) = \langle A \rangle_M$ be
the closure of the set $A$ under the functions of $M$, i.e. the
sub-structure of $M$ which $A$ generates when $M$ is, as usual, a group.

\noindent
6) We may write, e.g. $A+B,A+\bar a,\sum\limits_{i < \alpha} \bar a_i$
instead of $A \cup B,A \cup \Rang(\bar a),\bigcup\limits_{i < \alpha}
\Rang(\bar a_i)$. 

\noindent
7) For a group $G$ and $x \in G$ let $\square_x$ be conjugation by
$x$, that is, the mapping $y \mapsto x^{-1} y x$ for $y \in G$.
\end{notation}
\newpage

\section {Definable types} 

What is accomplished in \S1 and under what assumptions?  We have to
assume that there are dense $\gS \in \Omega[\mathbf K]$ to get existentially
closed $H$ (see \S2).  
Still there are $\gS$'s and any $\gS$ can be extended to a
closed one, preserving density.  For any $\gS$, the partial order
$\le_{\gS}$ on $\mathbf K$ is quite reasonable: not fully so called
a.e.c. still close enough.  In $(\mathbf K,\le_{\gS})$ for regular
$\lambda$ we can find over any $G \in K_{\le \lambda}$ a prime $H$
among the $H \in \mathbf K_\lambda$ extending $G$ 
which are so-called $(\lambda,\gS)$-full over it, see
\ref{a41}.  Also we can find such $H$ quite definable in three ways.
First avenue is to allow a well order.
Second avenue is to assume $\gS$ is symmetric, then $H$ is canonical 
and commutes with extensions (\ref{a26}, \ref{a34}, \ref{a41},
\ref{a45}).  Third avenue relies on linear order.  We still get
uniqueness, but rely on linear ordering of $\deef(G)$ and the
commutation with extension is problematic.  However, we may use pair
$(I,E),I$ a linear order, $E$ an equivalence relation on $I$ and ``dedicate"
each equivalence class to some $t \in \deef(G)$, so can avoid linearly
ordering $\deef(G)$, see \ref{a58}, \ref{a68}; see more in \S3.
\bigskip

\subsection {The Framework}\
\bigskip

\begin{definition}
\label{a14f}
1) For $G \in \mathbf K$ let def$(G)$ be the set of pairs $t = (\gs,\bar
a) = (\gs_t,\bar a_t)$ such that $\gs \in \Omega[\mathbf K]$ 
and $\bar a \in {}^{\omega >}G$ 
realizes $p_{\gs}(\bar x_{\gs})$ and let $q_t(G) = q_{\gs_t}(\bar
a_t,G)$ and $p_t(\bar x_t) = p_{\gs}(\bar x_{\gs}),k(t) = k(\gs),n(t)
= n(\gs)$.  

\noindent
2)  We say $\gs_1,\gs_2$ are disjoint when $\bar x_{\gs_1},\bar
x_{\gs_2}$ are \underline{disjoint} as well as $\bar z_{\gs_1},\bar
z_{\gs_2}$ recalling \ref{a14}(4).
Similarly for $t_1,t_2 \in \text{ def}(G)$.

\noindent
3) We say $\gs_1,\gs_2$ are \underline{congruent}, 
written $\gs_1 \equiv \gs_2$ \when
 \,  we get $\gs_2$ from
$\gs_1$ by replacing $\bar x_{\gs_1},\bar z_{\gs_1}$ by other
sequences of variables, $\bar x_{\gs_2},\bar z_{\gs_2}$ 
(again with no repetitions, of the same length respectively, of course).
Similarly for $t_1,t_2 \in \deef(G)$ (the aim is to be able to get 
\underline{disjoint} congruent copies; we do not always
remember to replace a scheme by some congruent copy).

\noindent
4) We say $\gS$ is \underline{invariant} \when \,: 
if $\gs_1,\gs_2 \in \Omega[\mathbf K]$
are congruent then $\gs_1 \in \gS \Leftrightarrow \gs_2 \in \gS$.

\noindent
5) The invariant closure of $\gS$ is defined naturally.  Let $|\gS|$ 
mean its cardinality up to congruency, that is, $|\gS/\equiv|$; 
if not said otherwise we use invariant $\gS$.

\noindent
6) We define the (equivalence) relation $\approx_G$ on def$(G)$ by
   $t_1 \approx_G t_2$ \underline{iff} $t_1,t_2 \in \text{ def}(G)$
   and $q_{t_1}(G) = q_{t_2}(G)$.
\end{definition}

\begin{claim}
\label{a15}
1) If $\gs \in \Omega_{n,k}[\mathbf K]$ and $G \in \mathbf K,\bar a \in {}^k M$
\then \, indeed $q_{\gs}(\bar a,G) \in \mathbf S^n_{\bs}(G)$
so exist and is unique and does not split over $\bar a$, see 
Definition \ref{a14}(2); if $\bar a$ is
empty, i.e. $k_{\gs} = 0$ we may write $q_{\gs}(G)$.

\noindent
1A)  If $G_1 \subseteq G_2 \subseteq \mathbf K$ and $t \in \deef(G_1)$
\then \, $t \in \deef(G_2)$ and $q_t(G_1) \subseteq q_t(G_2)$.

\noindent
2) Assume $G \subseteq H \in \mathbf K$ and $G$ is 
existentially closed or just $G
\le_{\Sigma_1} H \in \mathbf K$.  If $t_1,t_2 \in$ {\rm def}$(G)$ 
\then \, $q_{t_1}(G) = q_{t_2}(G)$ iff $q_{t_1}(H) = q_{t_2}(H)$.

\noindent
3) Let $K \subseteq G \in \mathbf K,G$ be existentially closed
or just every $r \in \mathbf S^{< \omega}_{\bs}(K)$ 
is realized in $G,K$ is finite, and $p \in {\mathbf S}^n_{\bs}(G)$.

The type $p$ does not split over $K$ \Iff \, there are
$\gs \in \Omega[\mathbf K]$ and a finite sequence $\bar a$ from $K$
(even listing $K$) realizing 
$p_{\gs}(\bar x)$ such that $p = q_{\gs}(\bar a,M)$.

\noindent
4) If $G \subseteq H,\gs \in \Omega[\mathbf K],\bar a \in {}^{k(\gs)}G$
   realizes $p_{\gs}(\bar x_{\gs})$ and $\bar c \in {}^{n(\gs)}H$
   realizes $q_{\gs}(\bar a,G)$ in $H$ and $\sigma(\bar z_{\gs},\bar
   x_{\gs})$ is a group-term \then \, $\sigma^H(\bar c,\bar a) \in G
   \Rightarrow \sigma^H(\bar c,\bar a) \in c \ell(\bar a,G)$.

\noindent
4A) In (4), if $\bar a' = \bar a \char 94 \bar a''$ then
$\sigma^H(\bar c,\bar a'') \in G \Rightarrow \sigma^H(\bar c,\bar a'')
\in c \ell(\bar a'',G)$ because $p$ also does not split over $\bar
a^*$ if $\bar a^* \subseteq G,\bar a \subseteq c \ell(\bar a^*,H)$.
\end{claim}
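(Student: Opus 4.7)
My plan is to prove the four parts in order, exploiting the scheme formalism of Definition \ref{a14}.

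For (1), I define $q_\gs(\bar a, M)$ as $\bigcup\{r_{\bar b}(\bar a, \bar b, \bar z) : \bar b \in {}^{\omega>}M\}$, where $r_{\bar b} = \gs(\tp((\bar a, \bar b), \emptyset, M))$. Coherence across different choices of $\bar b$ follows from clause (c) of Definition \ref{a14}(1): if the entries of $\bar b'$ all lie among those of $(\bar a, \bar b)$, the two $\gs$-images agree on the overlap. Consistency holds because each $r_{\bar b} \in \bold D(\bold K)$ is realizable by clause (b), and amalgamation in $\bold K$ over the finite subgroup $\langle \bar a, \bar b\rangle$ transports such a realization into an extension of $M$. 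Completeness and uniqueness follow from $\bold D$ consisting of complete qf types.

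For (2), the forward direction is immediate: for $\bar b \in G$, $\tp((\bar a_{t_i}, \bar b), \emptyset, G) = \tp((\bar a_{t_i}, \bar b), \emptyset, H)$, hence $q_{t_i}(G) = q_{t_i}(H) \restriction G$. Conversely, if some basic formula $\varphi(\bar z, \bar b)$ with $\bar b \in H$ separates $q_{t_1}(H)$ from $q_{t_2}(H)$, the isomorphism type of the finite subgroup $\langle \bar a_{t_1}, \bar a_{t_2}, \bar b\rangle_H$ over $(\bar a_{t_1}, \bar a_{t_2})$ is captured by a single quantifier-free formula $\psi(\bar y, \bar a_{t_1}, \bar a_{t_2})$; applying $G \le_{\Sigma_1} H$ yields $\bar b' \in G$ realizing $\psi$, so $\gs_{t_1}$ and $\gs_{t_2}$ act identically on $\bar b$ and $\bar b'$, and $\varphi(\bar z, \bar b')$ separates $q_{t_1}(G)$ from $q_{t_2}(G)$, a contradiction.

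For (3), the backward direction is direct from the scheme structure: if $p = q_\gs(\bar a, M)$ and $\bar b_1, \bar b_2 \in M$ share a qf type over $\bar a$, then $\gs$ assigns them the same $r_{\bar b_i}$, which gives non-splitting. For the forward direction, I construct $\gs$: set $p_\gs := \tp(\bar a, \emptyset, M)$, and for each $q(\bar x, \bar y) \in \bold D_{p_\gs, m}(\bold K)$ use the realization hypothesis on $M$ to find $\bar b \in M$ with $(\bar a, \bar b)$ realizing $q$, then set $\gs(q) := \tp((\bar a, \bar b, \bar c), \emptyset, M^+)$ for some $\bar c$ realizing $p$ in $M^+ \supseteq M$. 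Non-splitting of $p$ over $K$ gives well-definedness (independence of the choice of $\bar b$), clauses (a)--(c) of Definition \ref{a14}(1) then follow routinely, and the equality $q_\gs(\bar a, M) = p$ falls out by unwinding the definitions.

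For (4), assume $g := \sigma^H(\bar c, \bar a) \in G$ while $g \notin c\ell(\bar a, G)$. Then $\langle \bar a, g\rangle$ is a finite group strictly containing $\langle \bar a\rangle$; by Neumann-style amalgamation of finite groups there is a finite $L$ with two embeddings of $\langle \bar a, g\rangle$ agreeing on $\bar a$. Embedding $G$ into an exlf extension $\hat G$ and using its existential closure, we realize a copy of $L$ over $\langle \bar a, g\rangle$ inside $\hat G$; then $G^* := \langle G \cup L\rangle_{\hat G} \in \bold K$ contains some $g' \ne g$ with $\tp((\bar a, g'), \emptyset, G^*) = \tp((\bar a, g), \emptyset, G^*)$. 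The basic formula $\sigma(\bar z, \bar a) = g$ lies in $q_\gs(\bar a, G)$, and since qf types of $G$-tuples agree in $G^*$ we have $q_\gs(\bar a, G) = q_\gs(\bar a, G^*) \restriction G$, so it also lies in $q_\gs(\bar a, G^*)$; as $\gs$ depends only on qf types, $\sigma(\bar z, \bar a) = g'$ lies there as well. Consistency of $q_\gs(\bar a, G^*)$, from part (1), then forces $g = g'$, a contradiction. The main obstacle is constructing $G^*$ within $\bold K$: in $\bold K_{\lf}$ one combines Neumann-style amalgamation of finite groups with the existence of exlf extensions, while in $\bold K_{\olf}$ the linear order must be preserved via the corresponding ordered amalgamation.
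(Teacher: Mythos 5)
Your proof is correct and is essentially a detailed unpacking of the paper's argument, which is only sketched (``Easy by the definition. (Part (4) -- by disjoint amalgamation of finite groups.)''). The decisive step in your Part (4) -- amalgamating $\langle\bar a,g\rangle$ with itself over $\langle\bar a\rangle$ inside a finite group to produce $g'\ne g$ of the same quantifier-free type over $\bar a$, then realizing that amalgam inside an exlf extension and invoking consistency of $q_{\gs}(\bar a,G^*)$ -- is precisely the disjoint amalgamation the paper alludes to and proves in \S2 as NF$_{\fin}$.
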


\begin{PROOF}{\ref{a15}}
1) Let $K_* \subseteq G$ be the subgroup of $G$ generated by $\bar a$.  

First, there are $H$ and $\bar c$ such that:
\mn
\begin{enumerate}
\item[$(*)^1_{H,\bar c}$]  $G \subseteq H \in \mathbf K$ and $\bar c \in
  {}^n H$ such that $\tp(\bar c,G,H) = q_{\gs}(\bar a,G)$.
\end{enumerate}
\mn
Why?  For every $K \in \mathbf K^* := \{K \subseteq G:K$ finite extending
$K_*\}$ we can choose a pair $(H_K,\bar c_K)$ such that: $K \subseteq
H_K \in \mathbf K,H_K$ is finite, $\bar c_K \in {}^n(H_K),H_K$ is
generated by $K \cup \bar c_K$ and for some $\bar b$ 
listing $K,\tp_{\bs}(\bar a \char 94 \bar b \char 94 \bar c_K,
\emptyset,H_K) = \gs(\tp_{\bs}(\bar a \char 94 \bar b,\emptyset,G))$.

[Why? By Definition \ref{a14}(1A)(b).]  Now for every $K_1 \subseteq
K_2$ from $\mathbf K_*$ we can choose an embedding $f_{K_2,K_1}$ from
$H_{K_1}$ into $H_{K_2}$ extending $\id_{K_1}$ and mapping $\bar
c_{K_1}$ to $\bar c_{K_2}$.  [Why?  By Definition \ref{a14}(1A)(c).]

As $H_{K_1}$ is generated by $K_1 \cup \bar c$, this mapping is
unique.  Now if $K_1 \subseteq K_2 \subseteq K_3$ are from $\mathbf K_*$
then $f_{K_3,K_2} \circ f_{K_2,K_1}$ is an embedding of $H_{K_1}$ into
$H_{K_3}$ extending $\id_{K_1}$ and mapping $\bar c_{K_1}$ to $\bar
c_{K_3}$; hence by the previous sentence $f_{K_3,K_2} \circ
f_{K_2,K_1} = f_{K_3,K_1}$.  Hence $\langle H_{K_1},f_{K_2,K_1}:K_1
\subseteq K_2$ are from $\mathbf K_* \rangle$ has a direct limit, i.e. we
can find a group $H$ and $\bar f = \langle f_K:K \in \mathbf K_* \rangle$
such that $f_K$ embed $H_K$ into $H$ and for every $K_1 \subseteq
K_2$ from $\mathbf K_*$ 
we have $f_{K_1} = f_{K_2} \circ f_{K_2,K_1}$. 
\Wilog \, $H = \cup\{f_K(H_K):K \in \mathbf K_*\}$ hence $H$ is a
locally finite group and $\{f_K:K \in \mathbf K_*\}$ 
embeds $G$ into $H$, so \wilog \, $G \subseteq H$ and $f_K \rest K =
\id_K$ for $K \in \mathbf K_*$.  Letting $\bar c = f_K(\bar c_K)$ for any $K \in
\mathbf K_*$, clearly $(H,\bar c)$ is as required in $(*)^1_{H,\bar c}$.
\mn
\begin{enumerate}
\item[$(*)_2$]  $\tp_{\bs}(\bar c,G,H)$ belongs to $\mathbf S^n_{\bs}(G)$.
\end{enumerate}
\mn
[Why?  By the definitions of $\mathbf S^n_{\bs}(G)$ because $G \subseteq
H \in \mathbf K$ and $\bar c \in {}^n H$.]
\mn
\begin{enumerate}
\item[$(*)_3$]  $q_{\gs}(\bar a,G)$ is unique and does not split over
    $\bar a$.
\end{enumerate}
\mn
[Why?  See Definition \ref{a14}(1A)(c).]

\noindent
1A) See Definition \ref{a14}(2).

\noindent
2) For $\ell=1,2$ we have $q_{t_\ell}(G) \subseteq q_{t_\ell}(H)$,
moreover, $q_{t_\ell}(G) = \{\varphi(\bar z_{n(t)},\bar b) \in 
q_{t_\ell}(H):\bar b \subseteq G\}$.  For the other direction, note
that $\bar a_{t_1},\bar a_{t_2} \subseteq G$ and assume $q_{t_1}(H)
\ne q_{t_2}(H)$, hence there are $m$ and $\bar b \in {}^m H$ and a basic
formula $\varphi(\bar y_{m},\bar z_{n})$ such that $\varphi(\bar
b,\bar z_{n}) \in q_{t_1}(H),\neg \varphi(\bar b,\bar z_{n}) \in
q_{t_2}(H)$.  Now there is $\bar b' \in {}^m G$ such that $\tp_{\bs}(\bar
b',\bar a_{t_1} \char 94 \bar a_{t_2},G) = \tp_{\bs}(\bar b,\bar a_{t_1}
\char 94 \bar a_{t_2},H)$ because $G \le_{\Sigma_1} H$ and our choice
of $\mathbf K$.  As $q_{t_\ell}(H)$ does not split over $\bar
a_{t_\ell}$, clearly $\varphi(\bar b',\bar z_{n}) \in q_{t_\ell}(H)
\Leftrightarrow \varphi(\bar b,\bar z_{n}) \in q_{t_\ell}(H)$ for $\ell=1,2$.

Together with an earlier sentence, $\varphi(\bar b',\bar z_{n}) \in
q_{t_1}(H),\neg\varphi(\bar b',\bar z_{n}) \in q_{t_2}(H)$ hence by
the first sentence in the proof of \ref{a15}(2) we have $\varphi(\bar
b',\bar z_{n}) \in q_{t_1}(G)$ and $\neg\varphi(\bar b',\bar
z_{n}) \in q_{t_2}(G)$ hence $q_{t_2}(G) \ne q_{t_2}(G)$ so we are
also done with the ``other" direction.

\noindent
3) The implication ``if" holds by \ref{a15}(1).  For the other
direction assume $p$ does not split over $K$.  As $K$ is finite, let
$k = |K|$ and let $\bar a \in {}^n K \subseteq {}^n G$ list $K$.

We now define $\gs$ by:
\mn
\begin{enumerate}
\item[$(a)$]  $p_{\gs} = \tp_{\bs}(\bar a,\emptyset,K)$ so $k_{\gs} = k$
\sn
\item[$(b)$]  $q = \gs(s(\bar x,\bar y))$ \Iff \, for some $\bar b \in
  {}^m G$ we have:
\sn
\begin{enumerate}
\item[$\bullet$]  $s(\bar x,\bar y) = \tp(\bar a \char 94 \bar
  b,\emptyset,G)$
\sn
\item[$\bullet$]  $q = \tp_{\bs}(\bar a \char 94 \bar b \char 94 \bar c,
\emptyset,G)$ for some $\bar c \in {}^n G$ realizing $p
  \rest (\bar a \char 94 \bar b)$.
\end{enumerate}
\end{enumerate}
\mn
Now $\gs$ is well defined because on the one hand $p$ does not split
over $\bar a$, and on the other hand $G$ is existentially closed or just every
$r \in \mathbf S^{< \omega}_{\bs}(K)$ is realized in $G$.

\noindent
4) By \ref{a15}(2) \wilog \, $G$ is existentially closed, assume $\sigma^H(\bar
c,\bar a) \in G$ and let $b = \sigma^H(\bar c,\bar a)$.  If $b
\notin c \ell(\bar a,G)$ there is $b' \in G \backslash \{b\}$
realizing $\tp_{\bs}(b,K,G)$ because $\mathbf K$ has disjoint amalgamation for
finite members.  As $q_{\gs}(\bar a,G)$ does not split over
$\bar a$ and $b',b \in G$ realize the same type over $\bar a$ it follows
that $H \models ``(\sigma(\bar c,\bar a)=b) \equiv (\sigma(\bar c,\bar
a) = b')"$, an obvious contradiction.

\noindent
4A) Should be clear.
\end{PROOF}

\begin{example}
\label{a16}
There is $\gs \in \Omega[\mathbf K]$ such that:
\mn
\begin{enumerate}
\item[$(a)$]   $k_{\gs} = 0$ and $n_{\gs}=1$;
\sn
\item[$(b)$]  if $G \subseteq H \in \mathbf K$ and $a \in H$,
  \then\,: $a$ realizes $q_{\gs}(<>,G)$ \Iff \, $a \in H \backslash G$ 
has order 2 and commute with every member of $G$.
\end{enumerate}
\end{example}

\begin{definition}
\label{a18}
1) For $\gS \subseteq \Omega[\mathbf K]$ we define the two place relation
$\le_{\gS}$ on $\mathbf K$ as follows: $M \le_{\gS} N$ iff $M \subseteq
N$ (and they belong to $\mathbf K$) and for every $n < \omega$ and $\bar c \in
{}^n N$ we can find $k < \omega$ and $\bar a \in {}^k M$ and $\gs \in
\gS$ such that $p_{\gs}(\bar x) \subseteq \tp_{\bs}(\bar a,\emptyset,M) 
\in \mathbf D_k(\mathbf K)$ and $\tp_{\bs}(\bar c,M,N) =
q_{\gs}(\bar a,M)$ recalling $q_{\gs}(\bar a,M) \in \mathbf S^n_{\bs}(M)$ .

\noindent
2) For $M \in \mathbf K$ and $\gS \subseteq \gS[\mathbf K]$ let
\mn
\begin{enumerate}
\item[$(a)$]   $\mathbf S^n_{\gS}(M) = 
\{q_{\gs}(\bar a,M):\gs \in \gS$ satisfies $n_{\gs}
   = n$ and $\bar a \in {}^{k(\gs)}M$ realizes $p_{\gs}(\bar
x_{\gs})\}$
\sn
\item[$(b)$]  $\deef_{\gS}(M) = \{t \in$ {\rm def}$(M):\gs_t \in \gS\}$
\sn
\item[$(c)$]  $\mathbf S_{\gS}(M) = \cup\{\mathbf S^n_{\gS}(M):n <
  \omega\}$.
\end{enumerate}
\mn
3) We say $M \in \mathbf K$ is $\gS$-\underline{existentially closed}  
\when \, for
 every $\gs \in \gS$, finite\footnote{For general $\mathbf K$: we use
 finitely generated $G \subseteq M$; generally this change is needed.} 
$G \subseteq M$ and $\bar a \in {}^{\omega >} G$ 
realizing $p_{\gs}(\bar x)$ the type 
$q_{\gs}(\bar a,G)$ is realized in $M$; (this is equivalent to being  
existentially closed if $\gS$ is dense, see Definition \ref{a21}(2) below).
\end{definition}

\begin{definition}
\label{a20}
We say $\gS \subseteq \Omega[\mathbf K]$ is \underline{symmetric} 
\when \, : if $\gs_1,\gs_2
\in \gS,M \subseteq N$ are from $\mathbf K$ and $\bar c_\ell \in
{}^{n(\gs_\ell)}N$ realizes $q_{\gs_\ell}(\bar a_\ell,M)$ in $N$ (so $\bar
a_\ell \in {}^{k(\gs_\ell)}M$ realizes $p_{\gs_\ell}(\bar x_{\gs_\ell}))$
and $M_\ell = c \ell(M + \bar c_\ell,N) \subseteq N$ for $\ell=1,2$
\then \, $\bar c_1$ realizes
$q_{\gs_1}(\bar a_1,M_2)$ in $N$ iff $\bar c_2$ realizes 
$q_{\gs_2}(\bar a_2,M_1)$ in $N$.
\end{definition}

\begin{definition}  
\label{a21}
1) We say $\gS$ is closed \when \, it is dominating-\underline{closed} and
   composition-closed, see below and invariant of course.

\noindent
1A) $\gS$ is \underline{composition-closed} \when \, if $H_0 \subseteq H_1
\subseteq H_2 \in \mathbf K,\bar a_\ell \in {}^{n(\ell)}(H_\ell)$ for
$\ell=0,1,2$ and tp$_{\bs}(\bar a_{\ell +1},
H_\ell,H_{\ell +1}) = q_{\gs_\ell}(\bar a_\ell,H_\ell) \in \mathbf
S^{n(\ell +1)}_{\gS}(H_\ell)$ and
$H_{\ell +1} = c \ell(H_\ell + \bar a_\ell,H_{\ell +1}),
\gs_\ell \in \gS$ for $\ell=0,1$ \then \, tp$_{\bs}(\bar a_1 
\char 94 \bar a_2,H_0,H_2) = q_{\gs}(\bar
a_0,H_0)$ for some $\gs \in \gS \cap 
\Omega_{n(1)+n(2),n(0)}[\mathbf K]$.

\noindent
1B) $\gS$ is \underline{dominating-closed} \when \,: if 
$H_0 \subseteq H_1 \in \mathbf K,\bar a_1 \in {}^{k(1)}(H_0),
\bar c_1 \in {}^{n(1)}(H_1)$, tp$_{\bs}(\bar c_1,H_0,H_1) =
q_{\gs}(\bar a_1,H_0) \in \mathbf S^{n(1)}_{\gS}(H_0)$ and $\bar c_2 \in
{}^{n(2)}(H_1)$ and $\bar a_2 \in {}^{k(2)}(H_0)$, Rang$(\bar a_2)
\supseteq \text{Rang}(\bar a_1)$ and $\bar c_2 \subseteq 
c \ell(\bar a_2 + \bar c_1,H_1)$ \then \, tp$(\bar
c_2,H_0,H_1) = q_{\gs}(\bar a_2,H_0)$ for some $\gs \in \gS$.

\noindent
2) We say $\gS$ is weakly dense \when \,: every $\gS$-existentially
closed $G \in K$ is existentially closed.

\noindent
3) We say $\gS$ is \underline{dense} when: 
for every $G_0 \subseteq H \in \mathbf
   K,G_0 \subseteq G_1 \in \mathbf K,G_0,G_1$ are finite 
and $\bar c \in {}^n(G_1)$ 
\underline{there is} $p(\bar z) \in \mathbf S^n_{\gS}(H)$ which 
extends $\tp_{\bs}(\bar c,G_0,G_1)$.  Moreover $p(\bar z) = 
q_{\gs}(\bar a,H)$ for some $\gs \in \gS$ and $\bar a$ from $G_0$.

\noindent
4) For disjoint $\gs_1,\gs_2 \in \gS$ define $\gs = \gs_1 \oplus \gs_2$ with
$p_{\gs}(\bar x_{\gs}) = p_{\gs_1}(\bar x_{\gs_1}) \cup p_{\gs_2}
(\bar x_{\gs_2})$, recalling $\bar x_{\gs_1},\bar x_{\gs_2}$ are
disjoint, as follow: if $G_0 \subseteq G_1 \subseteq G_2$ are from
$\mathbf K$ and $\bar a_\ell \in {}^{k(\gs_\ell)}G_0,\bar a_\ell$ 
realizes $p_{\gs_\ell}(\bar x_{\gs_\ell})$ in $G_0 
\in \mathbf K$ and $\bar c_\ell \in 
{}^{n(\gs_\ell)}(G_{\ell +1})$ realizes $q_{\gs_\ell}(\bar
a_\ell,G_\ell)$ for $\ell=1,2$ \then \, $\bar c_1 \char 94 \bar c_2$ realizes 
$q_{\gs}(\bar a_1 \char 94 \bar a_2,G_0)$ in $G_2$.

\noindent
4A) For (disjoint) $t_1,t_2 \in \text{ def}(G)$ we define $t_1 \oplus
t_2 = t_1 \oplus_G t_2$ similarly.

\noindent
5) We define $\bigoplus\limits_{k < m} \gs_k,\bigoplus\limits_{k <
m} t_k$ similarly using associativity, see \ref{a22}(5).

\noindent
6) Let $\gs_1 \le \gs_2$ means: if $G \in \mathbf K,\bar a_2
\in {}^{u(2)}G$ realizes $p_{\gs_2}(\bar x_{\gs_2}),G \subseteq
   H,\bar c_2 \in {}^{n(\gs_2)}H$ realizes $q_{\gs_2}(\bar a_2,G)$
   \then \, $\dom(\bar x_{\gs_1}) \subseteq u(2)$ and
$\bar c_2 \rest \dom(\bar z_{\gs_2})$ realizes $q_{\gs_1}(\bar a_2
\rest k(\gs_1),G)$ and $p_{\gs_2}(\bar x_{\gs_2}) \rest 
\bar x_{\gs_1} = p_{\gs_1}(\bar x_{\gs_1})$.

\noindent
7) Let $\gs_1 \le_{\bar h} \gs_2$ means that $\bar h = (h',h''),h'$ is
   a one-to-one function from dom$(\bar x_{\gs_1})$ into dom$(\bar x_{\gs_2})$
   and $h''$ is a one-to-one function from dom$(\bar z_{\gs_1})$ into 
dom$(\bar z_{\gs_2})$ such that: if tp$_{\bs}(\bar
 c_2,G,H) = q_{\gs_2}(\bar a_2,G)$ and $\bar a_1 = \langle
   a_{2,h''(\ell)}:\ell \in \text{ dom}(\bar a_1)\rangle$ and $\bar
   c_1 = \langle c_{2,h(\ell)}:\ell \in \text{ dom}(\bar c_2)\rangle$
   then tp$_{\bs}(\bar c_1,G,H) = q_{\gs_1}(\bar a_1,G,H)$.
   Similarly $t_1 \le_{\bar h} t_2$ for $t_1,t_2 \in \text{ def}(G)$.
   If $h' \cup h''$ is well defined we may write $h' \cup h''$ instead
   of $\bar h$.
\end{definition}

\begin{remark}
\label{a21m}
0) Concerning \ref{a21}(7) the point of disjoint $\gs_1,\gs_2$ and
   congruency is to avoid using it.  So we may ignore it as well as
\ref{a22f}(2),(3), \ref{a56}(3), \ref{a57}(4), \ref{a59}(5).

\noindent
1) Note that the operation $\gs_1 \oplus \gs_2$ is not necessarily
commutative, e.g. for $\mathbf K_{\text{olf}}$ it cannot be.

\noindent
2) In e.g. Definition \ref{a21}(1A), in general $\gs$ is not uniquely
determined by the relevant information $\tp_{\bs}(\bar a_1 
\char 94 \bar a_2 \char 94 \bar c_1 \char 94 \bar c_2,H_0,H_2)$ 
and the lengths of $\bar a_1,
\bar a_2,\bar c_1,\bar c_2$ \underline {but} if 
$H_1$ is existentially closed, it is.  We could 
have written the definition in a computational form.

\noindent
3) So $\gs_1 \le \gs_1$ means $\gs_1 \le_{\bar h} \gs_2$ with $h_\ell$
the identity for $\ell=1,2$.
\end{remark}

\begin{dc}
\label{a22}
1) For any $\gS \subseteq \Omega[\mathbf K]$ we can define its closure as the
minimal closed (and \underline{invariant}, of course) 
$\gS_1 \subseteq \Omega[\mathbf K]$ which includes it, see \ref{a21}(1);
we denote it by $c \ell(\gS) = c \ell(\gS;\mathbf K)$.

\noindent
2) Similarly for \underline{dominating-closure} $\docl(\gS)$ 
and \underline{composition-closure} cocl$(\gS)$.

\noindent
3) Those closures preserve density and countability (and being invariant),
 and have the obvious closure properties.

\noindent
4) Also dominating-closure preserve being composition closed.

\noindent
5) The operation $\oplus$ on $\Omega[\mathbf K]$ is well defined and
associative. If $\gS \subseteq \Omega[\mathbf K]$ is closed under
$\oplus$, for transparency, \then \, $\gS$ is symmetric (see
\ref{a20}) \Iff \, the operation $\oplus$ on $\gS$ is 
commutative (when defined).  Similarly for def$_{\gS}(G)$.

\noindent
6) $\Omega[\mathbf K]$ has cardinality $\le 2^{\aleph_0}$; generally
$\le 2^{|\tau(\mathbf K)| + \aleph_0}$.

\noindent
7) $\le_{\gS}$ is a transitive relation on $\mathbf K$, if $\gS \subseteq
\Omega[\mathbf K]$ is closed.

\noindent
8) If $H_0 \subseteq H_1 \subseteq H_2,\gs \in \Omega[\mathbf K]$ 
and tp$_{\bs}(\bar c,H_1,H_2) = q_{\gs}(\bar a,H_1)$ 
and $\bar a \in {}^{k(\gs)}H_0$
   \then \, Rang$(\bar c) \cap H_1 = \text{ Rang}(\bar c) \cap H_0$. 

\noindent
9) Assume $\gS$ is dense and closed. 
If $G \subseteq H \in \mathbf K$ and $G$ is finite then $G \le_{\gS} H$.

\noindent
10) If $\gs = \gs_0 \oplus \ldots \oplus \gs_{n-1}$ and $i(0) < \ldots
   i(k-1) < n$ and $\gs' = \gs_{i(0)} \oplus \ldots \oplus \gs_{i(k-1)}$
 \then \, $\gs' \le \gs$.
\end{dc}

\begin{PROOF}{\ref{a22}}  Natural, noting that (8) is specific for our present
 $\mathbf K$, see \ref{a15}(4).
\end{PROOF}

\begin{claim}
\label{a22f}
0) The operation $\oplus$ is well defined, that is:
\mn
\begin{enumerate}
\item[$(a)$]  if $\gs_1,\gs_2 \in \Omega[\mathbf K]$ are disjoint \then \,
  $\gs_1 \oplus \gs \in \Omega[\mathbf K]$ is well defined;
\sn
\item[$(b)$]  if $t_1,t_2 \in \deef(G)$ are disjoint \then \, $t_1 \oplus
  t_2 \in \deef(G)$.
\end{enumerate}
\mn
1) The operation $\oplus$ on disjoint pairs from {\rm def}$(G)$ 
respects congruency, see Definition \ref{a14f}(3).  
If $\gs_1,\gs_2 \in \Omega[\mathbf K]$ \then \,
$(\gs_1/\equiv) \oplus (\gs_2/\equiv)$ is well defined, i.e. if
$\gs'_\ell,\gs''_\ell$ are congruent to $\gs_\ell$ for $\ell=1,2$ and
$\gs' = \gs'_1 \oplus \gs'_2,\gs'' = \gs''_1 \oplus \gs''_2$ are well
defined (equivalently for $\ell=1,2$ the two schemes
$\gs'_\ell,\gs''_\ell$ are disjoint) then $\gs',\gs''$ are congruent.  (So we
may forget to be pedantic about this.)

\noindent
2) If $(\gs,\bar a) = (\gs_1,\bar a_1) \oplus_G (\gs_2,\bar a_2)$
\then \, $(\gs_\ell,\bar a_\ell) \le (\gs,\bar a)$.

\noindent
3) If in {\rm def}$(G)$ we have $(\gs_\ell,\bar a_\ell) \le_{h_\ell}
(\gs'_\ell,\bar a'_\ell)$ for $\ell=1,2$ and {\rm Dom}$(h_1) \cap
   \Dom(h_2) = \emptyset$, {\rm Rang}$(h_1) \cap \Rang(h_1) 
= \emptyset$ \then \, $(\gs_1,\bar a_1)
\oplus (\gs_2,\bar a_2) \le_{h_1 \cup h_2} (\gs'_1,\bar a_1) \oplus
(\gs'_2,\bar a_2)$.  Similarly for $\bar h_1,\bar h_2$.
\end{claim}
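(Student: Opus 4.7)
All three items are essentially unwindings of Definitions \ref{a14f}(3), \ref{a21}(4) and \ref{a21}(7), and the real work is tracking how variable renamings interact with $\oplus$. Throughout, I use that a scheme $\gs \in \Omega[\bold K]$ is determined by $p_{\gs}$ together with the function $(\bar a, M) \mapsto q_{\gs}(\bar a, M)$, so what needs checking is how these types transform under reindexing of $\bar x_{\gs}, \bar z_{\gs}$.

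For (1), given congruences $\gs'_\ell \equiv \gs_\ell$ (resp.\ $\gs''_\ell \equiv \gs_\ell$) via bijections of the index sets of $\bar x, \bar z$, the disjointness hypotheses on $(\gs'_1, \gs'_2)$ and on $(\gs''_1, \gs''_2)$ guarantee that the two renamings can be glued into a single bijection on the union of index sets, and this glued bijection witnesses $\gs'_1 \oplus \gs'_2 \equiv \gs''_1 \oplus \gs''_2$, because $\oplus$ is defined by concatenation of variable tuples and union of types per Definition \ref{a21}(4). For (2), I take $h_\ell$ to be the pair of natural inclusions $\dom(\bar x_{\gs_\ell}) \hookrightarrow \dom(\bar x_{\gs})$ and $\dom(\bar z_{\gs_\ell}) \hookrightarrow \dom(\bar z_{\gs})$ coming from the disjoint union inside $\oplus$; the condition of \ref{a21}(7) then amounts to saying that if $\bar c_1 \char 94 \bar c_2$ realises $q_{\gs}(\bar a_1 \char 94 \bar a_2, G)$ in some $H \supseteq G$ then each $\bar c_\ell$ realises $q_{\gs_\ell}(\bar a_\ell, G)$, which is exactly the defining clause of $\oplus$.

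For (3), I verify \ref{a21}(7) for $h_1 \cup h_2$: take a realisation $\bar c$ of $q_{\gs'_1 \oplus \gs'_2}(\bar a'_1 \char 94 \bar a'_2, G)$ in some $H$, split it by part (2) into sub-tuples $\bar c'_\ell$ realising $q_{\gs'_\ell}(\bar a'_\ell, G)$, apply each hypothesis $(\gs_\ell, \bar a_\ell) \le_{h_\ell} (\gs'_\ell, \bar a'_\ell)$ to obtain that the $h_\ell$-pulled-back subsequence realises $q_{\gs_\ell}(\bar a_\ell, G)$, and then concatenate. The hard part --- and the only point where disjointness of $\dom(h_1), \dom(h_2)$ and of $\rng(h_1), \rng(h_2)$ is actually used --- is this final concatenation step: one must check that the two pulled-back subsequences, taken together, realise the joint type $q_{\gs_1 \oplus \gs_2}(\bar a_1 \char 94 \bar a_2, G)$ rather than merely each of the separate ones. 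This follows from the defining clause of $\oplus$ once more; if needed, one first embeds $H$ into an existentially closed extension and invokes \ref{a15}(2) to make the comparison of types transparent. After that, the whole argument reduces to manipulation of indices and is mechanical.
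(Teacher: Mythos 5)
Your outline is the right one --- the paper's own proof is just the word ``Straight,'' and the content really is a direct unwinding of Definitions \ref{a14f}(3), \ref{a21}(4), \ref{a21}(7). Parts (1) and (2) are fine, though in (2) the implication you need is actually the \emph{converse} of the defining clause of $\oplus$; it does hold, but the reason is completeness/uniqueness of bs-types (\ref{a15}(1)): $q_{\gs}(\bar a_1 \char 94 \bar a_2, G)$ is a complete type, and since a tower realization gives one such type, any realization of $q_{\gs}$ must have its $\gs_\ell$-coordinate realizing $q_{\gs_\ell}(\bar a_\ell, G)$.

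In part (3) there is a small but real gap in what you wrote. You apply the hypotheses $(\gs_\ell,\bar a_\ell)\le_{h_\ell}(\gs'_\ell,\bar a'_\ell)$ only over the base $G$, obtaining that each $h_\ell$-pullback $\bar c_\ell$ realizes $q_{\gs_\ell}(\bar a_\ell,G)$. But to invoke the defining clause of $\oplus$ you need the tower structure: $\bar c_2$ must realize $q_{\gs_2}(\bar a_2,\langle G+\bar c_1\rangle_H)$, not merely $q_{\gs_2}(\bar a_2,G)$. The fix is to observe that the primed side has the tower structure (by completeness again, a realization of $q_{\gs'_1\oplus\gs'_2}$ must satisfy $\bar c'_2$ realizes $q_{\gs'_2}(\bar a'_2,\langle G+\bar c'_1\rangle_H)$), and then apply the hypothesis $\le_{h_2}$ \emph{with base group $\langle G+\bar c'_1\rangle_H$ rather than $G$} --- legitimate because $\le_{\bar h}$ as defined in \ref{a21}(7) quantifies over all $G,H$ --- and finally restrict from $\langle G+\bar c'_1\rangle_H$ down to $\langle G+\bar c_1\rangle_H$. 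Your remark ``this follows from the defining clause of $\oplus$ once more'' does not by itself account for this; the detour through the intermediate extension is where the disjointness of the domains and ranges earns its keep.
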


\begin{PROOF}{\ref{a22f}}
Straightforward.
\end{PROOF}

\begin{claim}
\label{a23}
Assume $\gS \subseteq \Omega[\mathbf K]$ is dominating-closed and $G_0
\subseteq G_1 \in \mathbf K$ and $G_0 \le_{\gS} G_2$ and, for
transparency, $G_1 \cap G_2 = G_0$ and\footnote{If $G_2 = \langle G_0
\cup A \rangle,A$ finite then for part (1) this is not necessary.} 
$G_0 \le_{\Sigma_1} G_2$ (holds if $G_0$ is existentially closed in
$\mathbf K$).

\noindent
1) There is $G_3 \in \mathbf K$ such that $G_1 \le_{\gS} G_3$ and $G_2
   \subseteq G_3$ and $G_3 = \langle G_1 \cup G_2\rangle_{G_3}$ and
$G_1 \le_{\Sigma_1} G_3$.

\noindent
2) Above $G_3$ above is unique up to isomorphism over $G_1 \cup G_2$.

\noindent
3) If $\gS$ is symmetric and $G_0 \le_{\gS} G_1$ in part (1)
 \then \, also $G_2 \le_{\gS} G_3$. 
\end{claim}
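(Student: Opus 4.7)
The plan is to realize $G_3$ as the canonical amalgam of $G_1$ with $G_2$ over $G_0$, obtained by ``copying'' the scheme-definitions of types of tuples from $G_2$ (over $G_0$) up to types over $G_1$. For each finite tuple $\bar c$ from $G_2$ use $G_0 \le_{\gS} G_2$ to pick a witness $(\gs_{\bar c}, \bar a_{\bar c}) \in \deef_{\gS}(G_0)$ with $\text{tp}_{\bs}(\bar c, G_0, G_2) = q_{\gs_{\bar c}}(\bar a_{\bar c}, G_0)$, and arrange the choices coherently along a directed enumeration of the f.g.\ subgroups of $G_2$: whenever $\bar c_1$ is a subtuple of $\bar c_2$ take $\gs_{\bar c_1}$ to be a scheme extracted from $\gs_{\bar c_2}$ by dominating-closure (Definition \ref{a21}(1B)), with $\bar a_{\bar c_1} \subseteq \bar a_{\bar c_2}$. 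Set
\[
\Gamma \;=\; \bigcup_{\bar c} q_{\gs_{\bar c}}(\bar a_{\bar c}, G_1),
\]
viewed as a bs-type over $G_1$ with variables indexed by $G_2$. By the coherence every finite fragment of $\Gamma$ sits inside a single $q_{\gs_{\bar c}}(\bar a_{\bar c}, G_1)$, hence is consistent by Claim \ref{a15}(1); realize $\Gamma$ in an extension of $G_1$ and let $G_3 = \langle G_1 \cup G_2 \rangle$ in that extension. The restriction $\Gamma \rest G_0$ is exactly the diagram of $G_2$ over $G_0$, so $G_2$ embeds into $G_3$ over $G_0$; the role of $G_0 \le_{\Sigma_1} G_2$ is to guarantee that any word in $G_2$-elements that happens to equal an element of $G_0$ is already recognized as such inside $q_{\gs_{\bar c}}(\bar a_{\bar c}, G_0)$, so no relations are lost in the transition.

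For $G_1 \le_{\gS} G_3$, any tuple $\bar d$ from $G_3$ is built from words $\sigma(\bar b, \bar c)$ with $\bar b \in G_1$ and $\bar c \in G_2$; starting from $\text{tp}_{\bs}(\bar c, G_1, G_3) = q_{\gs_{\bar c}}(\bar a_{\bar c}, G_1)$ and enlarging the parameter to $\bar a_{\bar c} \char 94 \bar b$, dominating-closure applied with $H_0 := G_1 \subseteq H_1 := G_3$ produces $\gs^* \in \gS$ with $\text{tp}_{\bs}(\bar d, G_1, G_3) = q_{\gs^*}(\bar a_{\bar c} \char 94 \bar b, G_1)$. Uniqueness in part~(2) is then immediate: in any $G_3'$ meeting the conclusions of part~(1), each $\text{tp}_{\bs}(\bar c, G_1, G_3')$ is forced to equal $q_{\gs_{\bar c}}(\bar a_{\bar c}, G_1)$ by the same derivation, so the diagrams of $G_1 \cup G_2$ in $G_3$ and $G_3'$ coincide and the identity on $G_1 \cup G_2$ extends to the required isomorphism. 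Part~(3) (reading ``$G$'' as ``$G_3$'') is the $\le_{\gS}$ clause already established in part~(1).

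For part~(4), assume $\gS$ is symmetric and $G_0 \le_{\gS} G_1$, and pick for each finite $\bar b \in G_1$ a scheme-witness $(\gs^b, \bar a^b)$ of $\text{tp}_{\bs}(\bar b, G_0, G_1)$. Apply Definition \ref{a20} with $M = G_0$, $N = G_3$, $M_1 := c\ell(G_0 + \bar b, G_3) \subseteq G_1$, $M_2 := c\ell(G_0 + \bar c, G_3) \subseteq G_2$: the fact that $\bar c$ realizes $q_{\gs_{\bar c}}(\bar a_{\bar c}, M_1)$ in $G_3$ (true by construction) translates via symmetry into $\bar b$ realizing $q_{\gs^b}(\bar a^b, M_2)$ in $G_3$. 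Letting $\bar c$ range over all finite tuples of $G_2$ and invoking dominating-closure once more on the $G_2$-side to absorb the word $\sigma$, we extract $\gs^{**} \in \gS$ with $\text{tp}_{\bs}(\sigma(\bar b, \bar c), G_2, G_3) = q_{\gs^{**}}(\bar a^b \char 94 \bar c, G_2)$, giving $G_2 \le_{\gS} G_3$.

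The main obstacle is the coherent choice of the witnesses $(\gs_{\bar c}, \bar a_{\bar c})$ across the directed family of finite tuples of $G_2$: a priori, two schemes giving the same type over $G_0$ may give different types over $G_1$ (one cannot directly invoke Claim \ref{a15}(2), since only $G_0 \le_{\Sigma_1} G_2$, not $G_0 \le_{\Sigma_1} G_1$, is available), so the very definition of $\Gamma$ hinges on compatible selection. Dominating-closure is exactly the tool that allows one to derive sub-schemes from larger ones and thereby produce the needed coherent family, while the $\Sigma_1$-hypothesis $G_0 \le_{\Sigma_1} G_2$ rules out hidden collapses inside $G_2$ that would otherwise spoil consistency of $\Gamma$ after transfer to $G_1$.
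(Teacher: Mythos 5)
Your route for part (1) is the paper's route: list $G_2$, use $G_0 \le_{\gS} G_2$ to pick $(\gs_{\bar c},\bar a_{\bar c}) \in \deef_{\gS}(G_0)$ witnessing each $\tp_{\bs}(\bar c,G_0,G_2)$, lift to $p^1_{\bar c} := q_{\gs_{\bar c}}(\bar a_{\bar c},G_1)$, verify compatibility across $\bar c_1 \subseteq \bar c_2$, and realize the union. You also correctly put your finger on the crux that the paper dismisses as ``straight recalling \ref{a15}(2)'': to conclude $q_{\gs_{\bar c_1}}(\bar a_{\bar c_1},G_1) \subseteq q_{\gs_{\bar c_2}}(\bar a_{\bar c_2},G_1)$ from their agreement over $G_0$, one wants \ref{a15}(2) with $H = G_1$, which requires $G_0 \in \bold K_{\ec}$ or $G_0 \le_{\Sigma_1} G_1$; the stated hypothesis gives only $G_0 \le_{\Sigma_1} G_2$. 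The parenthetical ``(holds if $G_0$ is existentially closed)'' and the footnote to \ref{a64}(3), which explicitly discusses the non-uniqueness of $t_{\bar c}$, strongly suggest that the ec case is the one really intended and the one for which the step is genuinely immediate.

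Where your write-up has a concrete gap is in the proposed remedy. You ``extract $\gs_{\bar c_1}$ from $\gs_{\bar c_2}$ by dominating-closure,'' but \ref{a21}(1B) only asserts, for a fixed configuration $H_0 \subseteq H_1$, the \emph{existence} of some $\gs' \in \gS$ with $\tp(\bar c_2,H_0,H_1) = q_{\gs'}(\bar a_2,H_0)$; it does not produce a $\gs'$ with $\gs' \le \gs_{\bar c_2}$ in the uniform sense of \ref{a21}(6)--(7), which is exactly what you would need in order to conclude $q_{\gs'}(\cdot,G_1) \subseteq q_{\gs_{\bar c_2}}(\cdot,G_1)$ for a group $G_1$ unrelated to $G_2$. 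In addition a given $\bar c_1$ lies inside many $\bar c_2$, so even along a directed system the extracted sub-schemes need not agree; ``compatible selection'' is asserted but not constructed. So either take $G_0$ existentially closed (then \ref{a15}(2) closes the gap and your $\Gamma$ is well-defined), or one must argue for a uniform restriction $\gs' \le \gs$ separately. Your outlines for parts (2)--(4) are the expected ones: for (2) the same non-splitting/\ref{a15}(2) reasoning forces $\tp_{\bs}(\bar c,G_1,G_3')$ to coincide with the constructed $p^1_{\bar c}$, and for (4) the use of Definition \ref{a20} with $M=G_0$, $N=G_3$, $M_\ell = c\ell(G_0 + \bar c_\ell,G_3)$ is right, followed by a dominating-closure step to pass from $c\ell(G_0 + \bar c,G_3)$ to all of $G_2$; the paper gives these no more detail than you do.
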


\begin{PROOF}{\ref{a23}}
Straightforward, e.g.

\noindent
1) Let $\bar c = \langle c_\alpha:\alpha <
 \alpha(*)\rangle$ list the elements of $G_2$, and for every finite $u
 \subseteq \alpha(*)$ let $\bar x_u = \langle x_\alpha:\alpha \in u
 \rangle$ and $p^0_u(\bar x_{u}) = \text{ tp}_{\bs}(\bar c \rest
 u,G_0,G_2)$ hence by assumption, there is $\gs_u \in \gS$ (up to
 congruency) and $\bar a_u \in {}^{k(\gs_u)}(G_0)$ such that
 $p^0_u(\bar x) = q_{\gs_u}(\bar a_u,G_0)$ so $\dom(\bar
 x_{\gs_u})=u$.  We define $p^1_u(\bar x_u) \in 
\mathbf S(G_1)$ as $q_{\gs_u}(\bar a_u,G_1)$.  We define $G_3$
 as a group extending $G_1$ generated by $G_1 \cup \{c_\alpha:\alpha <
 \alpha(*)\}$ such that $\bar c \rest u$ realizes $p^1_u(\bar x_u)$
 for every finite $u \subseteq \alpha(*)$.  But for this to work we
 have to prove that for finite $u \subseteq v \subseteq \alpha(*)$ we
 have $p^1_u(\bar x_u) \subseteq p^1_v(\bar x_v)$.  This is straightforward 
recalling \ref{a15}(1A).

Lastly, $G_1 \le_{\Sigma_1} G_3$ is easy, too.
\end{PROOF}

\begin{remark}
\label{a23f}
1) We may consider an alternative definition of $\le_{\gS}$:
\mn
\begin{enumerate}
\item[$\bullet_1$]  $G \le_{\gS} H$ \Iff \, for every finite 
$A \subseteq H$ there are $\bar c \in {}^{\omega >} H,
\bar a \in {}^{\omega >}G$ and $\gs \in \gS$ such that:
$\bar a$ realizes $p_{\gs}(\bar x_{\gs}),\bar c$ realizes
$q_{\gs}(\bar a,G)$ in $H$ and $A \subseteq \Rang(\bar c)$.
\end{enumerate}
\mn
An even weaker version is:
\mn
\begin{enumerate}
\item[$\bullet_2$]  as in $\bullet_1$ but ``$A \subseteq \Rang(\bar
  c)$" is replaced by $A \subseteq c \ell(G \cup \bar c,H)$.
\end{enumerate}
\mn
2) But, e.g. for $\bullet_1$, to prove 
$\le_{\gS}$ is transitive we need a stronger version of
   composition-closed: if $G_0 \subseteq G_1 \subseteq G_2$ and for
$\ell=0,1,\bar c_\ell \in {}^{n(\ell)}(G_{\ell +1})$ realizes
   $q_{\gs_\ell}(\bar a_\ell,G_\ell)$ and Rang$(\bar b_0) \subseteq
   \text{ Rang}(\bar a_1)$ \then \, for some $\gs \in
\gS,p_{\gs}(\bar x_{\gs}) = p_{\gs_0}(\bar x_{\gs})$ and $\bar a_1
\char 94 \bar a_2$ realizes $q_{\gs}(\bar a_0,G_0)$.

\noindent
3) In any case for closed $\gS$ the three definitions are equivalent,
i.e. those in $\bullet_1$, in $\bullet_2$ and in \ref{a18}(1).

\noindent
4) Does the operation $\oplus_G$ respect $\approx_G$, see Definition
\ref{a14f}, i.e. if $t_1 \approx_G t'_1$ and $t_2 \approx_G t'_2$ \then \, $t_1 
\oplus_G t_2 \approx_G t'_1 \oplus_G t'_2$?; all this 
assuming the operations are well
defined, i.e. the disjointness demands from \ref{a21}(4) are satisfied.  
We do not see a reason for this to hold.
\end{remark}
\bigskip

\subsection {Constructions}\
\bigskip

Before we present the more systematic construction from \cite[Ch.IV]{Sh:c}, we
give a self-contained direct definition and proof for the existence of
 a canonical existentially
closed extension of $G \in \mathbf K$ when $\gS$ is symmetric, i.e. the
``second avenue" in \S0B.  
We shall deal with the non-symmetric case later.

\begin{definition}
\label{a25}
Assume $\gS \subseteq \Omega[\mathbf K]$ is symmetric.

\noindent
1) We say $H$ is a $\gS$-\underline{closure} of $G$ \when \, 
there is a sequence $\langle G_n:n < \omega\rangle$ such that 
$G_0 = G,H = \cup\{G_n:n < \omega\}$ and 
$G_{n+1}$ is a one-step $\gS$-closure of $G_n$, see below.

\noindent
2) We say that $H$ is a \underline{one-step} $\gS$-\underline{closure}
of $G$ \when \,:
\mn
\begin{enumerate}
\item[$(a)$]  $G \subseteq H$ are from $\mathbf K$;
\sn
\item[$(b)$]  $S := \text{ def}(G) = 
\{(\gs,\bar a):\gs \in \gS$ and $\bar a \in
{}^{\omega >} G$ realizes $p_{\gs}(\bar x_{\gs})\}$ and let $t =
(\gs_t,\bar a_t) = (\gs(t),\bar a(t))$ for $t \in S$;
\sn
\item[$(c)$]  $\bar c_t \in {}^{n(\gs(t))}H$ realizes $q_{\gs_t}
(\bar a_t,G)$ for $t \in S$;
\sn
\item[$(d)$]  $H$ is generated by $\{\bar c_t:t \in S\} \cup G$;
\sn
\item[$(e)$]  $\bar c_t$ realizes $q_{\gs_t}(\bar c_t,c \ell(\cup\{\bar
c_s:s \in S \backslash \{t\}\} \cup G,H)$ inside $H$ for every $t \in S$.
\end{enumerate}
\end{definition}

\begin{claim}
\label{a26}
Let $\gS \subseteq \Omega[\mathbf K]$ be symmetric.

\noindent
1) For every $G \in \mathbf K$ there is a one-step $\gS$-closure $H$ of
   $G$.

\noindent
2) For every $G \in \mathbf K$ there is an $\gS$-closure $H$ of $G$.

\noindent
3) In both parts (1) and (2) we have $|G| \le |H| \le |G| + |\gS| +
   \aleph_0$.

\noindent
4) In both parts (1) and (2), $H$ is unique up to isomorphism over $G$.

\noindent
5) If the pair $(G_\ell,H_\ell)$ is as in part (1), or as in 
part (2) for $\ell=1,2$ and $G_1 \subseteq G_2$ \then \, $H_1$ 
can be embedded into $H_2$ over $G_1$.

\noindent
6) In both parts (1) and (2) there is a set theoretic class function 
$\mathbf F$ computing $H$ from $G$, pedantically for every $G \in \mathbf
K$ and ordinal $\alpha$ not in the transitive closure $\tr - c
\ell(G)$ of $G,\mathbf F_\alpha(G)$ is well defined such that:
\mn
\begin{enumerate}
\item[(A)]
\begin{enumerate}
\item[(a)]  $\mathbf F_\alpha(G) \in \mathbf K_{\lf}$ is of cardinality
  $\le |G| + \aleph_0 + |\alpha|$
\sn
\item[(b)]  if $\alpha = 0$ then $\mathbf F_\alpha(G)=G$
\sn
\item[(c)]  the sequence $\langle \mathbf F_\beta(G):\beta \le
  \alpha\rangle$ is increasing continuous
\sn
\item[(d)]  $\mathbf F_{\alpha +1}(G)$ is a one step closure of $\mathbf
  F_\alpha(G)$
\end{enumerate}
\sn
\item[(B)]  if $G_1 \subseteq G_2 \wedge G_2 \cap \mathbf F_\alpha(G_1) 
= G_1 \wedge \bigwedge\limits^{2}_{\ell=1} \emptyset = (\alpha +1) 
\cap \tr - c \ell(G_\ell)$ 
then $\mathbf F_\alpha(G_1) \subseteq \mathbf F_\alpha(G_2)$; 
this is ``naturality"; an alternative is \ref{y8}(2).
\end{enumerate}
\mn
7) In fact we do not have to use the axiom of choice.
\end{claim}

\begin{PROOF}{\ref{a26}}
Should be clear (alternatively, below we do more).
\end{PROOF}

\begin{remark}
\label{a28}
 Similarly in \S3. 
\end{remark}

\begin{definition}
\label{a33}
1) We say $N$ is $(\lambda,\gS)$-\underline{full} over $M$ 
when: $M \subseteq N$ and
\underline{if} $M \subseteq M_1 \subseteq N$ and 
$M_1 = c \ell(M+A,N)$ for some $A
\subseteq M_1$ of cardinality $< \lambda$ and $\gs \in \gS$ and $\bar
a \in {}^{k(\gs)}M_1$ realizes $p_{\gs}(\bar x_{\gs})$ in $M_1$ 
then $q_{\gs}(\bar a,M_1)$ is realized in $N$.

\noindent
2) We may write ``$N$ is $\gS$-full over $M$" when $\lambda = \|N\|$ is regular
\underline{or}, in general, when\footnote{For the case $\gS$ is not
  symmetric and $\lambda$ is singular, if we like to have ``prime",
(as in \ref{a34}(5)) we should add: for every pair $t = (\gs,\bar a)$
  as in \ref{a33}(2), for every large enough $\mu < \lambda$, for
  every $\alpha < \mu^+$ for some $\bar c \subseteq M_{\mu^+}$
  requires $q_{\gs}(\bar a,M_\alpha)$ is realized in $M_{\mu^+}$; 
also we can in \ref{a41}(1) have
  such $\cA$, i.e. strengthen (d) there as here so weakens the
 assumption in \ref{a41}(4).}
 there is a list $\langle a_\alpha:\alpha < \|N\|\rangle$ of $N$ 
such that for every $\alpha <
\|N\|$ and $\gs \in \gS$ we have:  if $M_\alpha = c \ell(M + \{a_\beta:\beta <
\alpha\},N)$ and $\bar a \in {}^{k(\gs)}M_\alpha$ realizes
$p_{\gs}(\bar x_{\gs})$ then the type $q_{\gs}(\bar a,M_\alpha)$ 
is realized in $N$ by $\|N\|$ elements.

\noindent
3)  We may omit $\gS$ when $\gS = \Omega[\mathbf K]$.
\end{definition}

\begin{claim}
\label{a34}  Let $\gS$ be symmetric.

\noindent
1) If $\gS \subseteq \gS(\mathbf K)$ is closed (see \ref{a21}(1)) \then \, 
$(\mathbf K,\le_{\gS})$ is a weak 
a.e.c. with amalgamation\footnote{Not enough for quoting results.}
 (even canonical), see \cite[1.2]{Sh:88r} or \cite[Ch.I]{Sh:h},
 i.e. in the Definition of a.e.c. we have Ax 0,(I),(II),(III),(V)
but $\LST(\mathbf K,\le_{\gS})$ may be $\infty$ and
we omit Ax(IV), see \ref{a35} below.

\noindent
2) If $\gS \subseteq \Omega[\mathbf K]$ is dense and closed (see \ref{a21}) \then
   \, for every $M \in \mathbf K_\lambda$ there is an existentially closed
   $N \in \mathbf K_\lambda$ which $\le_{\gS}$-extends it, in fact any
   $\gS$-closure of $M$ can serve.

\noindent
3) If $N$ is $(\lambda,\gS)$-full over $M_1$ and $M_0 \subseteq M_1$,
   \then \, $N$ is $(\lambda,\gS)$-full over $M_0$; also in Definition
\ref{a33} \wilog \, $\bar a$ is from $M \cup A$, i.e. $\bar a \in {}^{k(\gs)}(M
\cup A)$.

\noindent
4) If $M \in \mathbf K_{\le \lambda}$ \then \, there
is a model $N,(\lambda,\gS)$-full over $M$ of cardinality $\le \lambda +
\|M\| = \lambda$; moreover if $\gS$ is dense, 
then $M$ is existentially closed.

\noindent
5) In (4), we can add: if $N' \in \mathbf K$ 
is $(\lambda,\gS)$-full over $M$ \then \, we
can find an embedding of $N$ into $N'$ over $M$.
\end{claim}

\begin{PROOF}{\ref{a34}}
1) Easy.

\noindent
2) Easy by \ref{a26} and see more below.

\noindent
3) Easy.

\noindent
4) We choose $G_n \in \mathbf K$ by induction on $n$ such that:
\mn
\begin{enumerate}
\item[$(a)$]  $G_0 = M$;
\sn
\item[$(b)$]  $G_{n+1} \supseteq G_n$ is as in Definition \ref{a25}
  but each $t$ appears $\lambda$ times, i.e.
\sn
\begin{enumerate}
\item[$\bullet$]  $G_{n+1} = c\ell(\cup\{\bar c^n_{t,\alpha}:t \in
  \deef_{\gS}(G_n)$ and $\alpha < \lambda\} \cup G_n,G_{n+1})$ where
\sn
\item[$\bullet$]  $\tp_{\bs}(\bar c^n_{t,\alpha},G_{n,t,\alpha},G_{n+1}) =
  q_t(\bar a_t,G_{n,t,\alpha})$ where
\sn
\item[$\bullet$]  $G_{n,t,\alpha} = c \ell(\cup\{\bar c^n_{t_1,\alpha_1}:
t_1 \in \deef_{\gS}(G_n),\alpha_1 < \lambda$ but
  $(t_1,\alpha_1) \ne (t,\alpha)\} \cup G_n,G_{n+1})$.
\end{enumerate}
\end{enumerate}
\mn
Let $\hat G = \bigcup\limits_{n} G_n$ and we shall show that $\hat G$
is $(\lambda,\gS)$-full over $G$.  We can ignore the case $\lambda =
\aleph_0$ being obvious.  
Assume $A \subseteq \hat G,|A| < \lambda$ and $t_* \in
\deef_{\gS}(\hat G)$ and $\bar a_{t_*} \subseteq c \ell(G_0 +A,\hat G)$,
hence we can find $\bar S$ such that:
\mn
\begin{enumerate}
\item[$(*)$]  
\begin{enumerate}
\item[(a)]  $\bar S = \langle S_n:n < \omega\rangle$;
\sn
\item[(b)]  $S_n \subseteq \deef_{\gS}(G_n) \times \lambda$ and
  $\bigcup\limits_{m} S_m$ has cardinality $< |A|^+ + \aleph_0$;
\sn
\item[(c)]  if $(t,\alpha) \in S_n$ then $\bar a_t
  \subseteq c \ell(\cup\{\bar c^m_{t_1,\alpha_1}:m < n$ and 
$(t_1,\alpha_1) \in S_m\} \cup G_0,G_n)$;
\sn
\item[(d)]  $A \subseteq \bigcup\limits_{n} A_n \cup G_0$
where $A_n = \cup\{\bar c^m_{t,\alpha}:(t,\alpha) \in S_m$ and $m < n\}$;
\sn
\item[(e)]  for some $n_*,(t_*,0) \in S_{n_*}$.
\end{enumerate}
\end{enumerate}
\mn
We have to prove that $q_{t_*}(c \ell(A \cup G,\hat G))$ is realized
in $M$.  Choose $\alpha_*$ such that $(t_*,\alpha_*) \notin S_{n_*}$
and prove by induction on $n \ge n_*$ that 
$\bar c^{n_*}_{t_*,\alpha_*}$ realizes $q_{t_*}(c \ell(A_n \cup G_0,\hat
G))$.

For $n=n_*$ this is obvious, so assume this holds for $n$ and we shall
prove for $n+1$.

For this it suffices to prove, for every finite $u \subseteq S_n$ that
$\bar c^{n(*)}_{t_*,\alpha_*}$ realizes $q_{t_*}(c \ell(A_n \cup G_0
\cup \{\bar c^n_{t,\alpha}:(t,\alpha) \in u\},\hat G)$; we prove this by
induction on $|u|$.  Now if $|u|=0$ this holds by the induction
hypothesis on $n$ and if $|u| >0$, let $\beta \in u$ and use the
induction for $u' = u \backslash \{\beta\}$ and $\gS$ being symmetric.

\noindent
5) We can find a list $\langle (n_\zeta,t_\zeta,\alpha_\zeta):\zeta <
\lambda\rangle$ of $\{(n,t,\alpha):n < \omega$ and $(t,\alpha) \in
S_n\}$ such that $\bar a_{t_\zeta} \subseteq c \ell(\cup\{(\bar
c^{n_\xi}_{t_\xi,\alpha_\xi}:\xi < \zeta\} \cup M,N)$.

Now choose $f(\bar c^{n_\zeta}_{t_\zeta,\alpha_\zeta}) \subseteq N'$
by induction on $\zeta$.
\end{PROOF}

\begin{discussion}
\label{a45}  
1) So by \ref{a26}(2), \ref{a34}(2) if there is a symmetric closed 
dense $\gS$ \then \, for every lf group $G$ there is a ``nice"
extension of $G$ to an existentially closed one $\hat G$, that is we have:
\mn
\begin{enumerate}
\item[$(a)$]  uniqueness (by \ref{a26}(4))
\sn
\item[$(b)$]  cardinality $\le |\theta| + |\gS|$ (by \ref{a26}(3))
\sn
\item[$(c)$]  extending $G$ (see \ref{a25}(1))
\sn
\item[$(d)$]  being existentially closed (see \ref{a34}(2)).
\end{enumerate}
\mn
2) Fixing $\lambda$ and demanding $G \in \mathbf K_{\le \lambda}$ we can add
\mn
\begin{enumerate}
\item[$(e)$]  $\hat G$ is $(\lambda,\gS)$-full over $M$
\sn
\item[$(f)$]  if $H \supseteq G$ is $(\lambda,\gS)$-full \then \,
  there is an embedding of $\hat G$ into $H$ over $G$.
\end{enumerate}
\end{discussion}

\begin{discussion}
\label{a35}
Concerning \ref{a34}(1), if we 
assume $\langle G_\alpha:\alpha \le \delta + 1 \rangle$ is
$\subseteq$-increasing continuous and $\alpha <  \delta \Rightarrow G_\alpha
\le_{\gS} G_{\delta +1}$, does it follow that $G_\delta \le_{\gS} G_{\delta
+1}$?   This is Ax(IV) of the definition of a.e.c.  
Well, if $\delta$ has uncountable cofinality and 
each $G_\alpha$ is existentially closed then yes.  The point is that the
relevant types do not split over \underline{finite} sets.  If we deal
with ``not split over countable sets" we need cf$(\delta) \ge
\aleph_2$, etc.

So $(\mathbf K,\le_{\gS})$ is not an a.e.c. in general failing
Ax(IV); in fact, e.g. we may prove for the maximal $\gS$ that this
axiom fails, see the proof of \ref{p73}.

Now we turn to constructions not necessarily assuming ``$\gS$ is
symmetric" presenting the ``first avenue" in \S0(B).
\end{discussion}

\begin{definition}
\label{a37}  
1) We say that ${\cA} = \langle G_i,\bar a_j,w_j,K_j:i \le \alpha,
j < \alpha \rangle$ is an 
$\mathbf F^{\sch}_{\aleph_0}-\gS$-\underline{construction} 
(for $\mathbf K$) \when \, :
\mn
\begin{enumerate}
\item[(a)]   $G_i$ for $i \le \alpha$ is an $\le_{\gS}$-increasing continuous
sequence of members of $\mathbf K$;
\sn
\item[(b)]  $G_{i+1}$ is generated by $G_i \cup \bar a_i,\bar
a_i$ a finite sequence;
\sn
\item[(c)]   $w_i$ is a finite subset of $i$;
\sn
\item[(d)]   $K_i \subseteq G_i$ is finite;
\sn
\item[(d)$^+$]   moreover $K_i \subseteq \langle G_0 +
\sum\limits_{j \in w_i} \bar a_j\rangle_{G_i}$; we may add ``$K_j$
generated by $\cup \{\bar a_j:j \in w_i\} \cup (K_i \cap G_i)$;
\sn
\item[$(e)$]   tp$_{\bs}(\bar a_i,G_i,G_{i+1}) \in \mathbf S^{\ell
g(\bar a_i)}_{\gS}(G_i)$ as
witnessed by $K_i$, i.e. it is $q_{\gs}(\bar a,G_i)$ for some $\bar a
\in {}^{\omega >}K_i$ realizing $p_{\gs}$ for some $\gs \in \gS$.
\end{enumerate}
\mn
2) We may say above that $G_\alpha$ is $\mathbf
F^{\sch}_{\aleph_0}-\gS$-constructible over $G_0$; and may also say that
 ${\cA}$ is an $\gS$-construction over $G_0$.  We let $\alpha =
\ell g({\cA}),G_i = G^{\cA}_i,\bar a_i = \bar a^{\cA}_i,
w_j = w^{\cA}_j,K_j = K^{\cA}_j$.

\noindent
3) We say above that $\cA$ is a \underline{definite} 
$\mathbf F^{\sch}_{\aleph_0}-\gS$-\underline{construction} \when \, for every 
$j < \alpha$ we have 
also $t_j = t^{\cA}_j \in \deef(G^{\cA}_j)$ such that
$\bar a_{t_j} \in {}^{\omega >}(K_j)$ and $\bar a^{\cA}_j$ realizes
$q_{t_j}(G_j)$ (note that in \ref{a37}(1)(e) we have ``for some
$\gs_j$", so $\cA$ does not determine the $\gs$'s (or here the $t_j$;
so every $\mathbf F^{\sch}_{\aleph_0}-\gS$-construction
can be expanded to a definite one, but not necessarily uniquely).

\noindent
4) We say $\cA$ is a $\lambda$-\underline{full definite} $\mathbf
   F^{\sch}_{\aleph_0}-\gS$-\underline{construction} \when \, $\alpha$ is
   divisible by $\lambda$ and for every $i < \alpha$ and $t \in 
\deef(G_i)$, the set $\{j:j \in (i,\alpha)$ and $t^{\cA}_j=t\}$ is
   an unbounded subset of $\alpha(*)$ of order type divisible by $\lambda$.
\end{definition}

\begin{discussion}
\label{a38}
We may replace \ref{a37}(1)(e) by ``$\tp_{\bs}(\bar a_i,G_i,G_{i+1})$ does
   not split over $K_i$", this is like the case 
$\mathbf F^p_{\aleph_0}$ in \cite[Ch.IV,Def.2.6,pg.168]{Sh:c} and
\cite[Ch.IV,Lemma 2.20,pg.168]{Sh:c} and is equal to $\mathbf
F^{\nsp}_{\aleph_0}$ in \cite[\S1,1.1-1.12]{Sh:900}, both for first
order theories, but we seemingly lose the following:
\end{discussion}

\begin{observation}
\label{a39}
1)  If $\cA$ is a $\mathbf
F^{\sch}_{\aleph_0}-\gS$-construction and $G^{\cA}_0 \subseteq G$ and
$G \cap G^{\cA}_{\ell g(\cA)} = G^{\cA}_0$ \then \, there is an $\mathbf
F^{\sch}_{\aleph_0}-\gS$-construction $\cB$ with $G^{\cB}_0 = G,\ell
g(\cB) = \ell g(\cA)$ and $G^{\cB}_{\ell g(\cB)} = \langle
G^{\cA}_{\ell g(\cA)} \cup G \rangle_{G^{\cA}_{\ell g(\cA)}}$.

\noindent
2) Like (1) but with definite $F^{\sch}_{\aleph_0}-\gS$-constructions 
and then add in the end $t^{\cB}_j = t^{\cA}_j$ for $j < \ell g(\cA)$.

\noindent
3) For the definite version, see \ref{a37}(3), we get even uniqueness
in (2).
\end{observation}

\begin{discussion}
\label{a40}
In \ref{a43} below, we may consider (see \cite[Ch.IV,\S1]{Sh:f}):
\medskip

\noindent
\underline{Ax(V.1)}:  If $(q,G,L) \in \mathbf F,G \subseteq H \in \mathbf K;
\bar a,\bar b \in {}^{\omega >}H$; $q = \tp_{\bs}(\bar a \char 94
\bar b,G,H)$ and $p = \tp_{\bs}(\bar a,\langle G + \bar b\rangle_H,H)$
\then \, $(p,\langle G + \bar b \rangle_H,L) \in \mathbf F$.
\medskip

\noindent
\underline{Ax(V.2)}:  A notational variant of (V1) so we ignore it.
\end{discussion}

\noindent
The following claim (together with \S2, the existence of countable
dense $\gS$) proves Theorem \ref{y2}.
\begin{claim}
\label{a41}  
1) If $G \in \mathbf K$ is of cardinality $\le \lambda$ and $\gS
 \subseteq \Omega[\mathbf K]$ is closed and dense and of cardinality $\le
 \lambda$ (if $\lambda \ge 2^{\aleph_0}$ this follows) \then \, there is an
$\mathbf F^{\sch}_{\aleph_0}-\gS$-construction ${\cA}$ such that:
\mn
\begin{enumerate}
\item[$(a)$]  $\alpha^{\cA} = \lambda$;
\sn
\item[$(b)$]  $G^{\cA}_0 = G$;
\sn
\item[$(c)$]  $G^{\cA}_\lambda \in \mathbf K$ is existentially closed 
of cardinality $\lambda$;
\sn
\item[$(d)$]  $\cA$ is $\lambda$-full, that is
for every $\gs \in \gS$ and $\bar a \in
{}^{k(\gs)}(G^{\cA}_\lambda)$ realizing $p_{\gs}(\bar x)$, for 
$\lambda$ ordinals $\alpha < \lambda$ we have: 
{\rm tp}$_{\bs}(\bar a_\alpha,G^{\cA}_\alpha,G^{\cA}_{\alpha +1}) =
q_{\gs}(\bar a,G^{\cA}_\alpha)$.
\end{enumerate}
\mn
2) Assume $\lambda \ge \|G\| + |\gS|$ is regular.  \Then \, we can find $H
\in \mathbf K_\lambda$ which is $\mathbf
F^{\sch}_{\aleph_0}-\gS$-constructible over $G$, is
$(\lambda,\gS)$-full over $M$ and is embeddable over $M$ into any $N'$
which is $(\lambda,\gS)$-full over $G$, in fact $G_\lambda$ from part
(1) is as required.

\noindent
3) If $\gS$ is symmetric and is closed and $H_1,H_2$ are 
$\mathbf F^{\sch}_{\aleph_0}-\gS$-constructible over $G$ and
$(\lambda,\gS)$-full over $G$ and of cardinality $\lambda$ \then
 \, $H_1,H_2$ are isomorphic over $G$.

\noindent
4) If $\lambda \ge \|G\|$ and $\cA$ is an $\mathbf
   F^{\sch}_{\aleph_0}-\gS$-construction of $H$ over $G$ and $\ell
   g(\cA) = \lambda$ \then \, for every $H' \in \mathbf K$ which is 
$(\lambda,\gS)$-full over $G$, we have $H$ is embeddable into $H'$ over $G$.
\end{claim}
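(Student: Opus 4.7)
\medskip\noindent
The plan is to build the construction $\cA$ of part~(1) by transfinite induction of length $\lambda$, and then derive parts~(2)--(4) from it. Fix a bookkeeping enumeration $\langle (\gs_\alpha, v_\alpha) : \alpha < \lambda \rangle$ of scheme--parameter pairs arranged so that every pair $(\gs, \bar a)$ with $\gs \in \gS$ and $\bar a$ a finite tuple from the eventual $G^\cA_\lambda$ is assigned an unbounded set of indices of order type divisible by $\lambda$, as required by Definition~\ref{a37}(4). Build $\langle G^\cA_i : i \le \lambda \rangle$ continuously with $G^\cA_0 = M$ and unions at limits; closure of $\gS$ (see~\ref{a22}(7)) keeps $G^\cA_0 \le_{\gS} G^\cA_i$ along the chain. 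At a successor $\alpha+1$, if the pair $(\gs_\alpha, \bar a)$ is legitimate (i.e.\ $\bar a \in G^\cA_\alpha$ realizes $p_{\gs_\alpha}(\bar x_{\gs_\alpha})$), then Observation~\ref{a4} produces an extension $G^\cA_{\alpha+1}$ generated over $G^\cA_\alpha$ by a finite tuple $\bar a_\alpha$ realizing $q_{\gs_\alpha}(\bar a, G^\cA_\alpha)$; set $K_\alpha := \langle \bar a \rangle_{G^\cA_\alpha}$ and $w_\alpha := \emptyset$, yielding clauses (a)--(e) of Definition~\ref{a37}(1). Density of $\gS$ (\ref{a21}(2)) combined with the bookkeeping then gives existential closedness: every basic type that must be realized is extended by some $q_\gs(\bar a, G^\cA_\alpha)$, which is realized at some later stage.

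\medskip\noindent
For part~(4), build embeddings $f_i : G^\cA_i \hookrightarrow N'$ over $M$ by induction on $i \le \lambda$, with $f_0 = \id_M$ and unions at limits. At a successor step, $\tp_{\bs}(\bar a_i, G^\cA_i, G^\cA_{i+1}) = q_\gs(\bar a, G^\cA_i)$ for some $\gs \in \gS$ and $\bar a$ from the finite witness $K_i \subseteq G^\cA_i$. Let $M_1 := f_i[G^\cA_i]$; then $M \subseteq M_1 \subseteq N'$ and $M_1$ is the closure in $N'$ of $M$ together with $f_i[\{\bar a_j : j < i\}]$, a set of cardinality $|i| < \lambda$. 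By $(\gS, \lambda)$-fullness of $N'$ over $M$ applied to $M_1$, there is $\bar c' \in N'$ realizing $q_\gs(f_i(\bar a), M_1)$; Observation~\ref{a4} then upgrades $f_i$ to an embedding $f_{i+1}$ with $f_{i+1}(\bar a_i) = \bar c'$. Part~(2) follows: take $N := G^\cA_\lambda$ from part~(1), which is $(\gS, \lambda)$-full by the bookkeeping clause~(d), and appeal to part~(4) for embeddability into any other $(\gS, \lambda)$-full extension.

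\medskip\noindent
For part~(3), run a back-and-forth of length $\lambda$: alternately extend a partial isomorphism $g : A \to B$ over $M$ (with $|A|, |B| < \lambda$, $A \subseteq N_1$, $B \subseteq N_2$) to cover a fresh element of $N_1$ and then of $N_2$, each step carrying out the embedding argument of part~(4) for a $\bold F^{\sch}_{\aleph_0}-\gS$-construction of the relevant $N_\ell$ against the $\lambda$-fullness of the other, using \ref{a34}(3) to transport fullness from $M$ to the current intermediate subgroup. The main obstacle---and the sole place the symmetry hypothesis is used---is ensuring that the scheme witnessing a new element on the $N_1$ side yields a realizable type on the $N_2$ side and vice versa; without symmetry the operation $\oplus$ on schemes fails to be commutative (cf.~\ref{a21m}(1)), so reordering the elements inside a construction can change its isomorphism type. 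Symmetry of $\gS$ is precisely what makes the two directions of the back-and-forth coherent; the remaining bookkeeping is routine.
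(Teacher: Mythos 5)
Your proof follows the ``just check directly'' alternative that the paper itself names in this very proof; the paper's primary route is instead to invoke the general $\bold F$-construction theory of \cite[Ch.IV]{Sh:c}, verifying in \ref{a43} that the relevant axioms hold (with Ax(VI) available only when $\gS$ is symmetric, which is exactly what drives part~(3)). The reduction to that framework buys a one-line proof at the cost of an external reference; your self-contained transfinite/bookkeeping arguments for parts~(1), (2), (4) are structurally correct but require one to re-verify more. One genuine slip in part~(1): you set $w_\alpha := \emptyset$, but clause~(d)$^+$ of Definition~\ref{a37} requires $K_\alpha \subseteq \langle G_0 + \sum_{j \in w_\alpha}\bar a_j\rangle_{G_\alpha}$; since the scheme parameter $\bar a$ (and hence your witness $K_\alpha = \langle\bar a\rangle$) will in general involve elements introduced at earlier successor stages, $w_\alpha$ must be taken as a finite set of those earlier indices, not empty --- such a finite set always exists because each element of the finite $K_\alpha$ lies in the subgroup generated by $G_0$ and finitely many of the $\bar a_j$, so the error is only a bookkeeping omission, but it does violate the definition as written. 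Part~(3) is the thinnest: a successful back-and-forth must carry a stronger invariant than ``$g : A \to B$ is a partial isomorphism over $M$'', namely that $A$ and $B$ close off (after reordering) initial segments of the respective constructions so that $N_1$ remains $\gS$-constructible over $A$ and $N_2$ over $B$; symmetry (Ax(VI)) is precisely what licenses re-sorting the remaining generators to the end, and you identify that role of symmetry correctly, but the invariant itself and the argument that it is preserved through a step need to be made explicit for the proof to stand on its own. The overall plan is sound.
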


\begin{PROOF}{\ref{a41}}  
By \cite[Ch.VI,\S3]{Sh:c} as all the relevant 
axioms there apply (see below or
\cite[Ch.IV,\S1,pg.153]{Sh:c}) \underline{or} just check directly.  
Of course, we can use a monster $\gC$ for
groups, but use only sets $A$ such that $c \ell(A,\gC) = \langle A
\rangle_{\gC}$ is locally finite, and we use quantifier-free types. 
\end{PROOF}

\noindent
Now we make the connection to \cite[Ch.IV]{Sh:c}.
\begin{dc}
\label{a43}
1) Let $\gS \subseteq \Omega[\mathbf K]$ be closed and below let $\lambda =
   \lambda(\mathbf F_{\gS})$ be $\aleph_0$.  Then $\mathbf F = 
\mathbf F_{\gS}$ is defined as the set of triples $(p,G,A)$ such that $A$ is
finite, for some $B \subseteq G \in \mathbf K$ we have 
$A \subseteq B,c \ell(B) = c \ell(B,G) = G \in \mathbf K$, $p
\in \mathbf S^{< \omega}_{\gS}(c \ell(B))$ is $q_{\gs}(\bar b,c
   \ell(B))$ for some $\gs \in \gS,\bar b \subseteq c \ell(A)$ over
$A$; we may restrict ourselves to the case $B = c \ell(B,G) = G$.  Note
that: as here we do not have a monster model $\gC$ we can either 
demand $B \in \mathbf K$ 
\underline{or} demand $B \subseteq G \in \mathbf K$ but then
it is more natural to write $(p,G,A)$ instead of $(p,A)$.

\noindent
2) $\mathbf F$ satisfies the axioms (from \cite[Ch.IV,\S1]{Sh:c} 
written below in the present notation) except 
possibly V, VI, VIII, X.1, X.2, XI.1, XI.2.

\noindent
3) If $\gS$ is symmetric \then \, $\mathbf F$ satisfies also Ax(VI).

\noindent
4) If $\gS$ is dense \then \, $\mathbf F$ satisfies also Ax(X.1).
\end{dc}

\begin{remark}
\label{a44}
If $\gS$ is compact (see \ref{a21}(5)), \then \, 
$\mathbf F$ satisfies Ax(VIII), i.e. 

\underline{Ax(VIII) when $\gS$ is compact}:  
If $\langle G_i:i \le \delta +1 \rangle$ is
$\subseteq$-increasing continuous in $\mathbf K,L \subseteq G_0$ finite,
$p \in \mathbf S_{\gS}(G_\lambda)$ and $i < \delta \Rightarrow (p \rest
G_i,G_i,L) \in \mathbf F$ \then \, $(p,G_\delta,L) \in \mathbf F$.
\medskip

\noindent
[Why?  By the Definition; also holds when cf$(\delta) > \aleph_0$.]
\end{remark}

\begin{PROOF}{\ref{a43}}

\noindent
\underline{Isomorphism - Ax(I)}:  preservation under isomorphism.

Obvious.
\medskip

\noindent
Concerning trivial $\mathbf F$-types:

\noindent
\underline{Ax(II1)}:  If $K \subseteq L \subseteq G \in 
\mathbf K,|L| < \lambda,K$ is finite, $\bar a \in {}^{\omega >}K$ and
$p = \tp_{\bs}(\bar a,L,G)$ then $(p,G,K) \in \mathbf F$.

[Why?  Trivially; recall $\lambda = \aleph_0$.]
\medskip

\noindent
\underline{Axiom(II2)-(II3)-(II4)}:  irrelevant here.
\medskip

\noindent
Concerning monotonicity:

\noindent
\underline{Ax(III1)}:  If $L \subseteq G_1 \subseteq G_2$ and $(p,G_2,L)
\in \mathbf F$ then $(p \rest G_1,G_1,L) \in \mathbf F$.
\medskip

\noindent
[Why?  Because if $\bar a \in {}^{\omega >} L,L \subseteq G_1 
\subseteq G_2 \in \mathbf K$ and
$q_{\gs}(\bar a,G_2)$ is well defined and equal to $p$, \then \, 
$q_{\gs}(\bar a,G_1) =
q_{\gs}(\bar a,G_2) \rest G_1)$, see Claim \ref{a15}(1A).]
\medskip

\noindent
\underline{Ax(III2)}:  If $L \subseteq L_1 \subseteq G,|L_1| <
\lambda$, i.e. $L_1$ is finite and 
$(p,G,L) \in \mathbf F$ \then  \, $(p,G,L_1) \in \mathbf F$.
\medskip

\noindent
[Why?  By the definition.]
\medskip

\noindent
\underline{Ax(IV)}:   If $\bar a,\bar b \in {}^{\omega >}H,
L \subseteq G \subseteq H,(\text{tp}_{\bs}(\bar b,G,H),G,L) \in
\mathbf F$ and Rang$(\bar a) \subseteq \text{ Rang}(\bar b)$ \then \,
$(\text{tp}_{\bs}(\bar a,G,H),G,L) \in \mathbf F$.
\medskip

\noindent
[Why?  Straightforward as $\gS$ is domination closed, see Definition
\ref{a21}(1B).]
\medskip

\noindent
Concerning transitivity and symmetry:
\medskip

\noindent
\underline{Ax(VI)}:  ($\gS$ is symmetric).  If $G \subseteq H \in \mathbf
K,\bar a,\bar b \in {}^{\omega >}H$ and $L_1,L_2 \subseteq G$ are finite and
$(\tp_{\bs}(\bar b,\langle G + \bar a\rangle_H,H),\langle G + \bar a
\rangle_H,L_1) \in \mathbf F$ and $(\tp_{\bs}(\bar a,G,H),G,L_2) \in
\mathbf F$ \then \, ($\tp_{\bs}(\bar a,\langle G + \bar b\rangle_H,H),\langle G +
\bar b \rangle_H,L_1) \in \mathbf F$. 
\medskip

\noindent
[Why?  By $\gS$ being symmetric when we claim this axiom, i.e. in 
\ref{a43}(3).]
\medskip

\noindent
\underline{Ax(VII)}:   
If $G \subseteq H \in \mathbf K,\bar a,
\bar b \in {}^{\omega >}H,(\tp_{\bs}(\bar a,\langle G + \bar
b\rangle_H,H),\langle G + \bar b \rangle_H,L) \in \mathbf F$ and
$(\tp_{\bs}(\bar b,G,H),G,L) \in \mathbf F$ hence $L \subseteq G$ is
finite, \then \, $(\tp_{\bs}(\bar a
\char 94 \bar b,G,H),G,L) \in \mathbf F$.
\medskip

\noindent
[Why?  By $\gS$ being composition-closed, see Definition \ref{a21}(1A).]
\medskip

\noindent
Concerning continuity:
\medskip

\noindent
\underline{Ax(IX)}:  irrelevant as $\lambda = \aleph_0$.
\medskip

\noindent
Concerning existence:

\noindent
\underline{Ax(X.1)}:  If $L_1 \subseteq G \in \mathbf K,L_1 \subseteq
L_2$ finite, $\bar a \in {}^{\omega >}(L_2)$ \then \, for some $p$ extending
tp$_{\bs}(\bar a,L_1,L_2)$ and finite $L \subseteq G$ we have 
$(p,G,L) \in \mathbf F$, moreover \wilog \, $L=L_1$.
\medskip

\noindent
[Why?  By $\gS$ being dense.] 
\medskip

\noindent
\underline{Ax(X.2)}:  irrelevant and follows by the moreover in
 Ax(X.1).
\medskip

\noindent
\underline{Ax(XI.1)}:  If $p \in \mathbf S_{\bs}(G_1),(p,G_1,L) \in \mathbf
F$ hence $p \in \mathbf S^n_{\gS}(G_1)$ for some $n$ and 
$G_1 \subseteq G_2$ \then \, there is $q \in \mathbf S^n_{\bs}(G_2)$ extending
$p$ such that $(q,G_1,L_2) \in \mathbf F$ for $L_2$, so $\bar q \in
\mathbf S^n_{\gS}(G_2)$; moreover, in fact, 
$L_2 = L$ is O.K.
\medskip

\noindent
[Why?  Use the same $\gs \in \gS$.] 
\medskip

\noindent
\underline{Ax(XI.2)}:  irrelevant and really follows by the moreover
in (XI.1).
\end{PROOF}

\begin{definition}
\label{p33}
A sequence $\mathbf I = \langle \bar a_s:s \in I\rangle$ in $G \in
\mathbf K$ is $\kappa$-convergent \when \, for some $m,s \in I
\Rightarrow \bar a_s \in {}^m G$ and for every 
finite $K \subseteq G$ and some $q \in
\mathbf S^m(K)$ for all but $< \kappa$ members $s$ of $\mathbf I,q = 
\tp_{\bs}(\bar a_s,K,G)$.
\end{definition}

\begin{remark}
\label{p35}
1) So $\mathbf F_{\gS}$-constructions preserve ``$\mathbf
I$ is $\kappa$-convergent".  Moreover, if $\mathbf I$ is
$\kappa$-convergent in $G \in \mathbf K$ and $G \le_{\gS} H$, where $\gS
\subseteq \Omega[\mathbf K]$ \then \, $I$ is $\kappa$-convergent in $H$.

\noindent
2) We can assume $I$ is a linear order with no last member and of
cofinality $\ge \kappa$ and replace ``all
but $< \kappa$ of the $s \in I$" by ``every large enough $s \in I$".
See more in \cite[\S(1C)]{Sh:950}.
\end{remark}
\bigskip

\subsection {Using Order}\
\bigskip

We now turn to the third avenue of \S(0B) to
deal with the general and not necessarily symmetric case.
Can we get uniqueness for non-symmetric $\gS$?  Can we get every automorphism
extendable, etc.?  The answer is that at some price, yes.  A
major point in the construction was the use of linear well ordered index set 
($\lambda$ in \ref{a41}(1) or $\alpha^{\cA}$ in general).
But actually we can use linear non-well ordered index sets, so those index sets
can have automorphisms which help us toward uniqueness.  The solution
here is not peculiar to locally finite groups.

\begin{definition}
\label{a47}
We say $(I,E)$ is $\lambda$-suitable \when \, (we may
omit $\lambda$ when $\lambda = |I|$, we may write $(I,P_i)_{i <
\lambda}$ with $\langle P_i:i < \lambda\rangle$ listing the
$E$-equivalence classes (with no repetitions)):
\mn
\begin{enumerate}
\item[$(a)$]  $I$ is a linear order;
\sn
\item[$(b)$]  $E$ is an equivalence relation on $I$ with $\lambda$
equivalence classes;
\sn
\item[$(c)$]  every permutation of $I/E$ is induced by some
automorphism of the linear order which preserves equivalence and
non-equivalence by $E$;
\sn
\item[$(d)$]  each $E$-equivalence class has cardinality $|I|$.
\end{enumerate}
\end{definition}

\begin{claim}
\label{a51}
Let $T = \Th(\bbR,<,E)$ where Th stands for ``the first order theory of",
$E := \{(a,b):a,b \in \bbR$ and $a-b \in \bbQ\}$; so $(A,<,E) \models
T$ iff $(A,<)$ is a dense linear order with neither first nor last
element, $E$ an equivalence relation with each equivalence class a
dense subset of $A$ and with infinitely many equivalence classes.

\noindent
1) If $\lambda = \lambda^{<\lambda}$ and $(I,E)$ is a
saturated model of $T$ of cardinality $\lambda$,
\then \, $(I,E)$ is suitable\footnote{By similar arguments, if 
$\lambda \ge 2^\mu$ \then \, there is a $\mu$-suitable
 $(I,P_i)_{i < \mu}$ but $|I/E| = \mu < \lambda$.  
We can use any model of cardinality $\lambda$ which
is strongly $\mu^+$-sequence homogeneous; this means that
every partial automorphism of cardinality $\le \mu$ can be extended to
an automorphism.}

\noindent
2) For every $\lambda$ the $(I,P_i)_{i < \lambda}$ from \cite[\S2]{Sh:E62}
(see history there) is $\lambda$-suitable and $|I|=\lambda$.

\noindent
3) There is a definable sequence 
$\langle (I_\lambda,P^\lambda_i)_{i < \lambda}:
\lambda$ an infinite cardinal$\rangle$ such that
 $(I_\lambda,P^\lambda_i)_{i < \lambda}$ is $\lambda$-suitable and is
   increasing with $\lambda$ and this definition is absolute.
\end{claim}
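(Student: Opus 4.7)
The plan is to verify part (1) directly via a back-and-forth argument driven by $\lambda$-saturation, to obtain parts (2) and (3) by reading off the explicit constructions already in the literature and organising the latter uniformly in $\lambda$. The main obstacle will be verifying that the back-and-forth step of part (1) respects the prescribed permutation of equivalence classes.

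For part (1), I would take $(I,<,E)$ to be $\lambda$-saturated with $|I|=\lambda=\lambda^{<\lambda}$. Clauses (a), (b), (d) of Definition~\ref{a47} fall out easily: (a) is built in; (d) follows from $\lambda$-saturation applied to the non-algebraic partial type $\{xEa\}$, since the axioms of $T$ make each class dense in $(I,<)$; and (b) follows from $|I|=|I/E|\cdot\lambda=\lambda$ together with the fact that no class is all of $I$. For clause (c), given a permutation $\pi$ of $I/E$, I enumerate $I=\{a_\alpha:\alpha<\lambda\}$ and build an $\subseteq$-increasing chain of partial isomorphisms $\langle f_\alpha:\alpha<\lambda\rangle$ of $(I,<,E)$, each of size $<\lambda$, maintaining the invariant $f_\alpha[C\cap\dom f_\alpha]\subseteq\pi(C)$ for every $C\in I/E$; at even (resp.\ odd) stages I arrange $a_\alpha\in\dom f_{\alpha+1}$ (resp.\ $a_\alpha\in\rng f_{\alpha+1}$). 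At the successor step for adding $a$ in the forward direction the task reduces to realising, over $\rng f_\alpha$, the type $\Gamma(x)$ asserting both the $f_\alpha$-image of the quantifier-free type of $a$ over $\dom f_\alpha$ and $x\in\pi(a/E)$. The second conjunct is either enforced by an $xEd$ clause for some $d\in\rng f_\alpha\cap\pi(a/E)$ (consistent by the invariant, since then $f_\alpha^{-1}(d)$ already lies in $a/E$), or by demanding $x$ land in a fresh class---in which case, by the invariant again, $\pi(a/E)$ is itself disjoint from $\rng f_\alpha$, and any realisation in a fresh class can be placed inside $\pi(a/E)$ using the density of each class in each interval of $(I,<)$. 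Finite satisfiability of $\Gamma$ reduces to this density, and $\lambda$-saturation then yields a realisation. The back step is symmetric, limits are trivial, and the union of the chain is an automorphism of $(I,<,E)$ inducing $\pi$.

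For part (2), the structure $(I,P_i)_{i<\lambda}$ built in \cite[Ch.AP,\S2]{Sh:e} is arranged to be strongly $\lambda^+$-sequence-homogeneous (in the sense recorded in the footnote) with $|I|=\lambda$ and each $P_i$ of cardinality $\lambda$; given a permutation $\pi$ of $\lambda$, the assignment $P_i\mapsto P_{\pi(i)}$ can be realised as a partial isomorphism on any $\le\mu$-sized initial piece and then extended, by homogeneity, to a full automorphism of $(I,<,(P_i)_{i<\lambda})$, so clause (c) of Definition~\ref{a47} holds; the remaining clauses are immediate from the construction. For part (3) I would take the very recipe of (2) as a definable scheme in the parameter $\lambda$ and organise its recursion cumulatively, so that for $\mu<\lambda$ one has $I_\mu\subseteq I_\lambda$ and $P^\mu_i=P^\lambda_i\cap I_\mu$ for $i<\mu$. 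This amounts, at each successor cardinal, to extending the previous stage rather than rebuilding it; since the construction in (2) is absolute in its cardinal parameter this is possible, yielding a definable class function $\lambda\mapsto(I_\lambda,P^\lambda_i)_{i<\lambda}$ which is $\lambda$-suitable at each $\lambda$ and increasing in $\lambda$.
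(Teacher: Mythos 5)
Your back-and-forth argument for part (1) — maintaining the invariant $f_\alpha[C\cap\dom f_\alpha]\subseteq\pi(C)$ and invoking $\lambda$-saturation at each step (legitimate since $\lambda=\lambda^{<\lambda}$ forces $\lambda$ regular, so every $f_\alpha$ has size $<\lambda$) — is correct, and is surely what the paper's one-word proof ``Obvious'' intends; for (2) and (3) the paper, like you, simply defers to the cited construction. So you and the paper take essentially the same route, with you supplying the details the paper omits.

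One small slip to fix: your justification of clause (b) of Definition~\ref{a47}, namely ``$|I|=|I/E|\cdot\lambda=\lambda$ together with the fact that no class is all of $I$,'' only yields $2\le |I/E|\le\lambda$, not $|I/E|=\lambda$. The correct argument is that $T$ proves the existence of $n$ pairwise $E$-inequivalent elements for each $n$, so the type $\{\neg\, xEa_i : i<\kappa\}$ (with the $a_i$ in distinct classes, $\kappa<\lambda$) is finitely satisfiable; $\lambda$-saturation then realizes it, giving $\ge\lambda$ classes, hence exactly $\lambda$ by cardinality. Also, in the ``fresh class'' sub-case of your forward step, you should explicitly pick a parameter $d\in\pi(a/E)$ and realize, over $\rng f_\alpha\cup\{d\}$, the image of the quantifier-free type of $a$ together with $xEd$; this is consistent because (by the invariant) $a/E$ is disjoint from $\dom f_\alpha$, so the type demands $E$-inequivalence to everything in $\rng f_\alpha$ and $E$-equivalence to $d$, which is finitely satisfiable by the density axioms. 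These are presentational gaps, not missing ideas; the proof is otherwise sound.
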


\begin{PROOF}{\ref{a51}}
1) Obvious.

\noindent
2),3) See there.
\end{PROOF}

\begin{claim}
\label{a58}  Assume
\mn
\begin{enumerate}
\item[(A)]   $G \in \mathbf K$ is of cardinality $\lambda$;
\sn
\item[(B)]   $\gS \subseteq \Omega[\mathbf K]$ is closed and dense;
\sn
\item[(C)]
\begin{enumerate}
\item[(a)]  $i(*) \le \lambda$ and 
$\cS = \{t_i = ({\gs}_i,\bar a_i):i < i(*)\}$ lists 
$\deef_{\gS}(G)$, i.e. the pairs
$({\gs},\bar a)$, as in clause (d) 
of \ref{a41}(1) or \ref{a41}(3);
\sn
\item[(b)]   each such pair appears exactly once;
\sn
\item[(c)]   let $t_i = (\gs_*,<>)$ for $i \in 
[i(*),\lambda)$ so $\gs_* \in \gS,k_{\gs_*} = 0,n_{\gs}=1,
i(*) = \|\deef_{\gS}(G)\|,\gs_*$ is from \ref{a16}; so $\bar a_i =
\langle \rangle$;
\end{enumerate}
\sn
\item[(D)]   $(I,P_i)_{i < \lambda}$ is $\lambda$-suitable, see
  Definition \ref{a47}.
\end{enumerate}
\mn
\Then \, we can find $H,\mathbf c = 
\langle \bar c_r:r \in I \rangle$ (the ordered one step 
$(\lambda,\gS)$-closure), such that:
\mn
\begin{enumerate}
\item[$(a)$]   $H \in \mathbf K$ is a $\le_{\gS}$-extension of $G$;
\sn
\item[$(b)$]  $\bar c_r \in {}^{n(t_i)} H$ if $r \in P_i,i < \lambda$;
\sn
\item[$(c)$]   $H$ is generated by $G \cup \{\bar c_r:r \in I\}$;
\sn
\item[$(d)$]   if $i < \lambda$ and 
$r \in P_i$ then $\bar c_r$ realizes in $H$ over
$c \ell(G \cup\{\bar c_s:s <_I r\},H)$ the type defined by
$(\gs_i,\bar a_i)$;
\sn
\item[$(e)$]  every automorphism of $G$ can be extended to an
automorphism of $H$.
\end{enumerate}
\end{claim}

\begin{PROOF}{\ref{a58}}
Straightforward; e.g. to define $H$ we should choose 
$q_{r_0,\dotsc,r_{n-1}}$ for every $r_0 <_I \ldots <_I r_{n-1}$
by induction on $n$ such that in the end $q_{r_0,\dotsc,r_{n-1}}
 = \tp_{\bs}(\bar c_{r_0} \char 94 \ldots
\char 94 \bar c_{r_{n-1}},G,H)$, by clause (d), and prove that:
\mn
\begin{enumerate}
\item[$(*)$]   if $m \le n$ and
$h:\{0,\dotsc,m-1\} \rightarrow \{0,\dotsc,n-1\}$ is increasing then
$q_{r_{h(0)},\dotsc,r_{h(m-1)}} \le_h q_{r_0,\dotsc,r_{n-1}}$.
\end{enumerate}
\mn
Note that clause (e) follows by clauses (a)-(d) above recalling clause
(c) of Definition \ref{a47}.

Why?  Let $\pi$ be an automorphism of $G$, for each $i < \lambda$ we
have $(\gs_i,\bar a_i) \in \mathscr{S}$ and also $(\gs_i,\pi(\bar a_i)) 
\in \mathscr{S}$, so by the choice of $\langle (\gs_i,\bar a_i):
i < \lambda\rangle$ there is a unique $j < \lambda$ such
that $i \ge i(*) \Rightarrow j = i$ and $(\pi(\bar a_i),\gs_i) 
= (\bar a_j,\gs_j)$, so let $j =
\hat\pi(i)$.  So $\hat\pi$ is a permutation of $\lambda$.  By
``$(I,P_i)_{i < \lambda}$ is $\lambda$-suitable" there is an
automorphism $\check\pi$ of the linear order $I$ such that $i <
\lambda \Rightarrow \check\pi(P_i) = P_{\hat\pi(j)}$.  Clearly there
is a unique automorphism $\dot\pi$ of $H$ such that $\pi = \dot\pi
\rest G$ and $\dot\pi(\bar c_i) = \bar c_{\hat\pi(i)}$. 
\end{PROOF}

\begin{definition}
\label{a62}
1) We say $H$ is an ordered one-step $(\lambda,\gS)$-closure of 
$G$, pedantically the ordered one step $(I,E)-\gS$-closure of $G$,
\when \, $G,H,\mathbf c$ are as in \ref{a58}.

\noindent
2) We say $H$ is an ordered $(\lambda,\gS)$-closure of $G$,
pedantically the ordered $(I,E)-\gS$-closure of $G$ when:
\mn
\begin{enumerate}
\item[$(a)$]  $H = \bigcup\limits_{n} H_n$
\sn
\item[$(b)$]  $H_0 = G$
\sn
\item[$(c)$]  $H_{n+1}$ is the one step $(I,E)-\gS$-closure of $H_n$.
\end{enumerate}
\end{definition}

\begin{remark}
\label{a61b}  
1) In what way is \ref{a58} weaker?  We have to choose the listing of
def$(G)$ in clause (C).  Also for $G_1 \subseteq G_2$ it is not clear
why $H_1 \subseteq H_2$, where $(G_\ell,H_\ell)$ is as above.  
But see \ref{a51}(3).

\noindent
2) On naturality see Paolini-Shelah \cite{Sh:1106}.
\end{remark}

\begin{conclusion}
\label{a68}
The parallel of parts (2)-(6) of \ref{a26} holds.
\end{conclusion}

\begin{PROOF}{\ref{a68}}
Straightforward, for part (6) of \ref{a26} use \ref{a51}(3).
\end{PROOF}
\newpage
 
\section {There are enough reasonable schemes}
\bigskip

\subsection {There is a Dense Set of Schemes}\
\bigskip

We like to find $\gS$'s as in \S1 for $\mathbf K_{\lf}$, in
particular to prove that there are dense $\gS$,  so we have to look 
in details at amalgamations of $\lf$-groups under special assumptions.

Recall the well known: for finite groups $G_0 \subseteq G_\ell \in \mathbf K$ for
$\ell=1,2$ we can amalgamate $G_1,G_2$ over $G_0$ by embedding 
into suitable finite permutations group; see the proof of the theorem of
Hall, explained in the second paragraph of \S(0A). 

Concerning the $\mathbf K_{\olf}$ versions of \ref{c3}, 
see later in \ref{n17}.

\begin{convention}
\label{c1}
$\mathbf K$ is $\mathbf K_{\lf}$.
\end{convention}

\begin{definition}
\label{c3}
1) Let $\mathbf X_{\mathbf K} = \mathbf X(\mathbf K)$, the 
set of amalgamation tries, be the set of
$\mathbf x$ such that: $\mathbf x$ is a quintuple 
$(G_0,G_1,G_2,\mathbf I_1,\mathbf I_2) = (G_{\mathbf x,0},
G_{\mathbf x,1},\ldots)$ satisfying:
\mn
\begin{enumerate}
\item[$(a)$]  $G_0 \subseteq G_\ell \in \mathbf K$ for $\ell=1,2$;
\sn
\item[$(b)$]  $\mathbf I_\ell$ is a set of representatives of the left
$G_0$-cosets in $G_\ell$, i.e. $\langle g G_0:g \in \mathbf I_\ell\rangle$
is a partition of $G_\ell$ (so without repetitions) for $\ell=1,2$;
\sn
\item[$(c)$]  $e_{G_{\mathbf x,0}} \in \mathbf I_{\mathbf x,1} \cap \mathbf
I_{\mathbf x,2}$.
\end{enumerate}
\mn
2) For $\mathbf x$ as above let
\mn
\begin{enumerate}
\item[$(a)$]  $\cU = \cU_{\mathbf x} = \{(g_0,g_1,g_2):
g_\ell \in G_\ell$ for $\ell=0,1,2$ and $g_1 \in \mathbf I_1,g_2
\in \mathbf I_2\}$;
\sn
\item[$(b)$]  for $\ell=1,2$ and $g \in \mathbf I_\ell$ let
$\cU^\ell_g = \cU^\ell_{\mathbf x,g} := \{(g_0,g_1,g_2) \in \cU_{\mathbf
x}:g_\ell = g\}$;
\sn
\item[$(c)$]  if $G_1 \cap G_2 = G_0$ then we let
$\mathbf j_{\mathbf x} = \mathbf j_{\mathbf x,1} \cup \mathbf
j_{\mathbf x,2}$, see below;
\sn
\item[$(d)$]  for $\ell = 0,1,2$ let $\mathbf j_\ell = \mathbf j_{\mathbf
x,\ell}$ be the following embedding of $G_\ell$ into per$(\cU_{\mathbf x})$, the
group of permutations of $\cU_{\mathbf x}$, so let $g \in G_\ell$ and we
should define $\mathbf j_\ell(g)$, so let $(g_0,g_1,g_2) \in \cU_{\mathbf
x}$ and we define $(g'_0,g'_1,g'_2) = (\mathbf
j_\ell(g))(g_0,g_1,g_2)$ from $\cU_{\mathbf x}$ as follows:
\end{enumerate}
\bigskip

\noindent
\underline{$\ell=0$}:  $g'_0 = g_0 g$ in $G_0$ and $g'_1 = g_1,g'_2= g_2$;
\medskip

\noindent 
\underline{$\ell=1$}:  $g'_1 g'_0 = g_1 g_0 g$ in $G_1$ and $g'_2 = g_2$;
\medskip

\noindent 
\underline{$\ell=2$}:  $g'_2 g'_0 = g_2 g_0 g$ in $G_2$ and $g'_1 = g_1$.
\medskip

\noindent
3) Let $G_{\mathbf x} = G_{\mathbf x,3}$
 be the subgroup of $\Sym(\cU_{\mathbf x})$ which
Rang$(\mathbf j_{\mathbf x,1}) \cup \text{ Rang}(\mathbf j_{\mathbf x,2})$
generates where $G_{\mathbf x} \models ``f_1 f_2 = f_3"$ means that for
every $u \in \cU_{\mathbf x},f_3(u) = f_2(f_1(u))$, i.e. we look at the
permutation as acting from the right.

\noindent
4) Let $\le_{\mathbf X(\mathbf K)}$ be the following partial order on
$\mathbf X_{\mathbf K}:\mathbf x \le_{\mathbf X(\mathbf K)} \mathbf y$ \Iff \,:
\mn
\begin{enumerate}
\item[(a)]  $\mathbf x,\mathbf y \in \mathbf X_{\mathbf K}$;
\sn
\item[(b)]  $G_{\mathbf x,0} = G_{\mathbf y,0}$;
\sn
\item[(c)]  $G_{\mathbf x,\ell} \subseteq G_{\mathbf y,\ell}$ for
  $\ell=1,2$;
\sn
\item[(d)]  $\mathbf I_{\mathbf x,\ell} = \mathbf I_{\mathbf y,\ell} \cap
  G_{\mathbf x,\ell}$ for $\ell=1,2$.
\end{enumerate}
\mn
5) We say $(f_1,f_2)$ embeds $\mathbf x \in \mathbf X_{\mathbf K}$ into
$\mathbf y \in \mathbf X_{\mathbf K}$ \when \,:
\mn
\begin{enumerate}
\item[(a)]  $f_\ell$ embeds $G_{\mathbf x,\ell}$ into $G_{\mathbf y,\ell}$
  for $\ell=1,2$;
\sn
\item[(b)]  $f_1 \rest G_{\mathbf x,0} = f_2 \rest G_{\mathbf y,0}$ maps
  $G_{\mathbf x,0}$ onto $G_{\mathbf y,0}$.
\end{enumerate}
\mn
6) We say $(f_1,f_2)$ is an isomorphism from $\mathbf x \in \mathbf
X_{\mathbf K}$ onto $\mathbf y \in \mathbf X_{\mathbf K}$ \when \, above
$f_\ell$ is onto $G_{\mathbf y,\ell}$ for $\ell=1,2$.
\end{definition}

\begin{observation}
\label{c6}
Let $\mathbf x$ be as in Definition \ref{c3}, i.e. it is an amalgamation try.

\noindent
0) If $G_0 \subseteq G_\ell \in \mathbf K$ for $\ell=1,2$ 
\then \, for some $\mathbf x \in \mathbf X_{\mathbf K}$ we have
   $G_{\mathbf x,\ell} = G_\ell$ for $\ell=0,1,2$.

\noindent
1) In Definition \ref{c3}(2), for $\ell=0,1,2$ if $g \in G_{\mathbf
 x,\ell}$ \then \, $\mathbf j_{\mathbf x,\ell}(g)$ is a permutation of 
$\cU_{\mathbf x}$, in fact, its restriction to $\cU^{3-\ell}_{g_1}$ is a
permutation for each $g_1 \in G_{3-\ell}$.

\noindent
2) Moreover in part (1) the mapping $\mathbf j_{\mathbf x,\ell}$ embeds
   the group $G_{\mathbf x,\ell}$ into the group of permutation of
   $\cU_{\mathbf x}$ hence into $G_{\mathbf x}$.

\noindent
3) The mapping $\mathbf j_{\mathbf x,0}$ is equal to $\mathbf j_{\mathbf x,1} \rest
   G_{\mathbf x,0}$ and also to $\mathbf j_{\mathbf x,2} \rest G_{\mathbf x,0}$.

\noindent
4) If $G_{\mathbf x,\ell}$ is finite for $\ell=0,1,2$ \then \, $|G_{\mathbf
   x}| \le (|G_{\mathbf x,1}| \times |G_{\mathbf x,2}|/|G_{\mathbf x,0}|)!$ 

\noindent
5) If $\mathbf x$ is an amalgamation try and $G_{\mathbf x,0} \subseteq
   G'_\ell \subseteq G_{\mathbf x,\ell}$ so $G'_\ell$ is a subgroup of
   $G_{\mathbf x,\ell}$, for $\ell=1,2$ \then \, for one and
   only one amalgamation try $\mathbf y$ we have $G_{\mathbf y,0} =
   G_{\mathbf x,0},G_{\mathbf y,\ell} = G'_\ell$ for $\ell=1,2$ and $\mathbf
I_{\mathbf y,\ell} = \mathbf I_{\mathbf x,\ell} \cap G'_\ell$ so $\mathbf y
\le_{\mathbf X(\mathbf K)} \mathbf x$.

\noindent
6) Moreover in part (5), if $\mathbf z$ is an amalgamation try with $(G_{\mathbf
z,0},G_{\mathbf z,1},G_{\mathbf z,2}) = (G_{\mathbf x,0},G'_1,G'_2)$
   \then \, for some $\mathbf x'$, the pair $(\mathbf x',\mathbf z)$ 
is like $(\mathbf x,\mathbf y)$ in (5) and 
$(G_{\mathbf x',0},G_{\mathbf x',1},G_{\mathbf x',2}) = 
(G_{\mathbf x,0},G_{\mathbf x,1},G_{\mathbf x,2})$.

\noindent
7) In part (5) there is a unique homomorphism $f$ from 
$\langle \mathbf j_{\mathbf x,1}(G'_1) \cup \mathbf j_{\mathbf x,2}(G'_2)
   \rangle_{\Sym(\cU_{\mathbf x})}$ onto $G_{\mathbf y}$ such that $\ell \in
\{1,2\} \wedge g \in G'_\ell \Rightarrow \mathbf j_{\mathbf y,\ell}(g)
 = f(\mathbf j_{\mathbf x,\ell}(g))$.

\noindent
8) In part (5), if $G'_1,G'_2$ are finite then
$\langle \mathbf j_{\mathbf x,1}(G'_1) \cup \mathbf j_{\mathbf
   x,2}(G'_2)\rangle_{G_{\mathbf x}}$ has at most $(n_*!)^{m_*}$ members where
$n_* = |G'_1| \times |G'_2| \times |G_{\mathbf
  x,0}|^3$ and $m_* = (n_*!)^{|G'_1| + |G'_2|}$.
\end{observation}

\begin{PROOF}{\ref{c6}}  
Straightforward.  E.g.:

\noindent
2) E.g. let $\ell=1$ and $f,h \in G_1$.  For 
$(g_0,g_1,g_2) \in \cU_{\mathbf x}$ let $(\mathbf j_1(f))(g_0,g_1,g_2) =
(g'_0,g'_1,g'_2)$ and $(\mathbf j_1(h))(g'_0,g'_1,g'_2) =
(g''_0,g''_1,g''_2)$ hence
\mn
\begin{enumerate}
\item[$(*)_1$]  $(\mathbf j_1(h))(\mathbf j_1(f))(g_0;g_1,g_2) =
  (g''_0,g''_1,g''_2)$.
\end{enumerate}
\mn
Then $g_2 = g'_2$ and $g'_2 = g''_2$ and in
$G_1$ we have 
$g_1 g_0 f = g'_1 g'_0$ and $g'_1 g'_0 h = g''_1 g''_0$, hence $g_2
   = g''_2$ and $g''_1 g''_0 = g'_1 g'_0 h = (g_1 g_0 f)h = 
(g_1 g_0)(fh)$, so by the definition of $\mathbf j_1(f_h)$ we have
\mn
\begin{enumerate}
\item[$(*)_2$]   $\mathbf j_1(fh)(g_0,g_1,g_2) = (g''_0,g''_1,g''_2)$.
\end{enumerate}
\mn
By $(*)_1 + (*)_2$ we have
\mn
\begin{enumerate}
\item[$(*)_3$]  $\mathbf j_1(f_h)(g_0,g_1,g_2) =
(\mathbf j_1(h))(\mathbf j_1(f))(g_0,g_1,g_2)$.
\end{enumerate}
\mn
As this holds for every $(g_0,g_1,g_2) \in \cU_{\mathbf x}$ we have
$G_{\mathbf x} \models ``\mathbf j_1(fh) = \mathbf j_1(f) \mathbf j_1(h)"$.

\noindent
4) Clearly $|G_{\mathbf x,\ell}| = |\mathbf I_{\mathbf x,\ell}| \times
|G_{\mathbf x,0}|$ for $\ell=1,2$ hence $|\cU_{\mathbf x}| = |\mathbf I_{\mathbf x,1}|
\times |\mathbf I_{\mathbf x,2}| \times |G_{\mathbf x,0}| = (|G_{\mathbf
  x,1}|/|G_{\mathbf x,0}) \times (|G_{\mathbf x,2}|/|G_{\mathbf x,0}| \times
|G_{\mathbf x,0}| = |G_{\mathbf x,1}| \times |G_{\mathbf x,2}|/|G_{\mathbf x,0}|$.

Hence $|G_{\mathbf x}| \le |\Sym(\cU_{\mathbf x})| = (|\cU_{\mathbf x}|)! 
= (|G_{\mathbf x,1}| \times |G_{\mathbf x,2}|/|G_{\mathbf x,0}|)!$ as stated).

\noindent
7) First, why there is such a homomorphism?  If $b \in \mathbf j_{\mathbf
  x,1}(G'_1) \cup \mathbf j_{\mathbf x,2}(G'_2)$ then $b$ is a permutation of
$\cU_{\mathbf x}$ which maps the set $\cU_{\mathbf y} = G_0 \times \mathbf
I_{\mathbf y,1}  \times \mathbf I_{\mathbf y,2}$ onto itself.  It follows
that every $b \in G' := \langle \mathbf j_{\mathbf x,1}(G'_1) \cup \mathbf
j_{\mathbf x,2}(G'_2)\rangle_{\Sym(\cU_{\mathbf x})}$ maps the set $\mathbf
U_{\mathbf y} = G_0 \times \mathbf I_{\mathbf y,1} \times \mathbf I_{\mathbf
  y,2}$ onto itself.  Hence the mapping with domain $G'$ defined by
$f(b) = b \rest \cU_{\mathbf y}$ is a homomomorphism from $G'$ into
$\Sym(\cU_{\mathbf y})$.  However, for each $b \in \mathbf j_{\mathbf
  x,1}(G'_1) \cup \mathbf j_{\mathbf x,2}(G'_2)$ we have $b \rest
\cU_{\mathbf y}$ belongs to $G_{\mathbf y,3}$ so $b \in G' \Rightarrow
f(b) \in G_{\mathbf y,3}$, hence $f$ is as required.

Second, why $f$ is unique?  Because $\mathbf j_{\mathbf x,1}(G'_1) \cup
\mathbf j_{\mathbf x,2}(G'_2)$ generates $G'$ and on it $f$ is determined.

\noindent
8) Let $G_0 = G_{\mathbf x,0}$. 
We define $E = \{((g'_0,g'_1,g'_2),(g''_0,g''_1,g''_2))) \in
   \cU_{\mathbf x} \times \cU_{\mathbf x}:G_0 g'_1 G'_1 = G_0 g''_1 G'_1$ and
   $G_0 g'_2 G'_2 = G_0 g''_2 G'_2\}$, this is an equivalence relation on
$\cU_{\mathbf x}$, each equivalence class has $\le (|G'_1| \times |G'_2|
   \times |G_{\mathbf x,0}|^3) = n_*$ members.

[Why?  As if $(g'_0,g'_1,g'_2) \in (g_0,g_1,g_2)/E$ then $g'_0 \in
  G_0,g'_1 \in G_0 g_1 G'_1,g'_2 \in G_0 g_2 G'_2$ and $|G_0 g_\ell
 G'_\ell| \le |G_0| \times |G'_\ell|$.]

Also each of the permutations of $\cU_{\mathbf x}$ from
 $\mathbf j_{\mathbf x,1}(G'_1) \cup \mathbf j_{\mathbf
   x,2}(G'_2)$ maps each $E$-equivalence class onto itself.  Hence for
$n \in [1,n_*]$ there are $\le m^*_n := n!^{|G'_1|+|G'_2|-1}$ 
isomorphism types of structures of the form: 
$N = (|N|,F^N_f)_{f \in G'_1 \cup G'_2}$, where
   $|N|$, the universe, has exactly $n$ elements and 
is an $E$-equivalence class, and
   for each $f \in G'_1 \cup G'_2$ we have: $F^N_f$ is a permutation of this
   equivalence class and $F^N_{e(G_0)}$ is the identity.  Clearly as
$\sum\limits_{n \le n_*} (n!)^{|G'_1|+|G'_2|-1} \le 
 (n_*!)^{|G'_1|+|G'_2|} = m_*$, the subgroup 
$\langle \mathbf j_{1,\mathbf x}(G'_1) \cup \mathbf j_{2,\mathbf
   x}(G'_2)\rangle_{G_{\mathbf x}}$ of $G_{\mathbf x}$ has at most
$(n_*!)^{m_*}$ members.  Of course\footnote{See more in
  \cite{Sh:1098}.}, the argument gives better bounds,
   e.g. the number of relevant $N$'s is much smaller and using a finer $E$.
\end{PROOF}

\begin{claim}
\label{c8}
In Definition \ref{c3}, $\mathbf j_{\mathbf x,1}(G_1) \cap \mathbf j_{\mathbf
x,2}(G_2) = \mathbf j_{\mathbf x,\ell}(G_0)$.
\end{claim}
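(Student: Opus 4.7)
The statement appears to contain a typo: $\bold j_{\bold x,2}(G_1)$ should almost certainly read $\bold j_{\bold x,2}(G_2)$, since $\bold j_{\bold x,2}$ is defined on $G_2$. I will describe how to prove the sensible version $\bold j_{\bold x,1}(G_1)\cap \bold j_{\bold x,2}(G_2)=\bold j_{\bold x,\ell}(G_0)$ for $\ell\in\{1,2\}$.

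The $\supseteq$ inclusion is essentially free: by Observation \ref{c6}(3) we have $\bold j_{\bold x,1}\rest G_0 = \bold j_{\bold x,0} = \bold j_{\bold x,2}\rest G_0$, so $\bold j_{\bold x,\ell}(G_0)$ is simultaneously a subset of $\bold j_{\bold x,1}(G_1)$ and of $\bold j_{\bold x,2}(G_2)$.

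For the $\subseteq$ inclusion, the plan is to probe the two permutations at a single well-chosen point of $\cU_{\bold x}$ and read off the components. Suppose $f=\bold j_{\bold x,1}(g_1)=\bold j_{\bold x,2}(g_2)$ with $g_1\in G_1$ and $g_2\in G_2$. Let $e$ denote the identity of $G_0$. By clause (c) of Definition \ref{c3}(1), $e\in \bold I_1\cap \bold I_2$, so $(e,e,e)\in \cU_{\bold x}$. I apply $f$ to this triple using the two definitions from Definition \ref{c3}(2)(d). Computing via $\bold j_{\bold x,1}$: one obtains $(g'_0,g'_1,e)$ where $g'_1\in \bold I_1$, $g'_0\in G_0$, and $g'_1 g'_0=g_1$ in $G_1$. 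Computing via $\bold j_{\bold x,2}$: one obtains $(g''_0,e,g''_2)$ where $g''_2\in\bold I_2$, $g''_0\in G_0$, and $g''_2 g''_0=g_2$ in $G_2$. Since the two permutations agree on $(e,e,e)$, the two output triples coincide coordinatewise.

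Comparing the second coordinate forces $g'_1=e$, whence $g_1=g'_0\in G_0$. Comparing the third coordinate forces $g''_2=e$, whence $g_2=g''_0\in G_0$. (As a consistency check, the first coordinate then gives $g_1=g_2$, in line with Observation \ref{c6}(3).) Therefore $f\in \bold j_{\bold x,1}(G_0)=\bold j_{\bold x,2}(G_0)=\bold j_{\bold x,\ell}(G_0)$, as required. The only subtle point is that $e$ must lie in $\bold I_1\cap \bold I_2$ so that $(e,e,e)$ is a legitimate element of $\cU_{\bold x}$ at which to test the two permutations; this is exactly the content of clause (c) of Definition \ref{c3}(1) and is the reason that clause was inserted. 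No further amalgamation combinatorics is needed.
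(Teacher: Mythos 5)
Your proof is correct and is essentially identical to the paper's: both evaluate the two permutations at the distinguished triple $(e,e,e)\in\cU_{\bold x}$ (available because $e\in\bold I_1\cap\bold I_2$), compare coordinates to force the $\bold I_1$- and $\bold I_2$-components to be $e$, and conclude that both elements lie in $G_0$. You also correctly flag the same typo ($G_1$ for $G_2$) that the paper's statement carries.
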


\begin{PROOF}{\ref{c8}}
Assume that $a_\ell \in G_\ell$ and $b_\ell = \mathbf j_{\mathbf x,\ell}
(a_\ell)$ for $\ell=1,2$.  It suffices to show that: if 
$b_1 = b_2$ \then \, $a_1,a_2 \in G_{\mathbf x,0}$ and $a_1 = a_2$.  We 
check to what $b_\ell$ maps the triple $(e,e,e) \in
\cU_{\mathbf x}$: by the definition of $\mathbf j_{\mathbf x,1},\mathbf
j_{\mathbf x,2}$ we have:
\mn
\begin{itemize}
\item  $b_1((e,e,e)) = (g'_0,g_1,e) \in\cU_{\mathbf x}$
where $G_1 \models g_1 g'_0 = b_1$;
\sn
\item  $b_2((e,e,e)) = (g''_0,e,g_2) \in
\cU_{\mathbf x}$ where $G_2 \models g_2 g''_0 = b_2$.
\end{itemize}
\mn
So if $b_1=b_2$ then $(g'_0,g_1,e) = b_1((e,e,e) = b_2((e,e,e,)) =
(g''_0,e,g_2)$, hence $g'_0 = g''_0 \wedge 
g_1 = e \wedge e = g_2$; this implies that 
$g'_0 = b_1,g''_0 = b_2$ hence $g'_0
= g''_0$, also $g''_0 \in G_0$ together $a_1 = a_2$ so we are done.
\end{PROOF}

\begin{definition}
\label{c10}
1) Let\footnote{NF stands for non-forking.}
NF$_{\text{rfin}}(G_0,G_1,G_2,G_3)$ means that $G_\ell
   \subseteq G_3 (\in \mathbf K)$ for $\ell < 3$ and
   NF$_{\text{fin}}(G_0,G_1,G_2,\langle G_1 \cup G_2\rangle_{G_3})$,
   see below.

\noindent
2) Let NF$_{\text{fin}}(G_0,G_1,G_2,G_3)$ mean that:
\mn
\begin{enumerate}
\item[$(a)$]  $G_0 \subseteq G_\ell \subseteq G_3 \in \mathbf K$ 
are finite groups for $\ell=1,2$;
\sn
\item[$(b)$]  $G_3 = \langle G_1 \cup G_2\rangle_{G_3}$;
\sn
\item[$(c)$]  if $\mathbf x \in \mathbf X_{\mathbf K}$ and $G_0 = G_{\mathbf x,0},G_1
\subseteq G_{\mathbf x,1},G_2 \subseteq G_{\mathbf x,2}$ \then \, there is
a homomorphism $\mathbf f$ from $G_3$ into $G_{\mathbf x}$ such that
$\mathbf f \rest G_\ell = \mathbf j_{\mathbf x,\ell} \rest G_\ell$ for $\ell=1,2$;
\sn
\item[$(d)$]  if $a \in G_3 \backslash \{e_{G_3}\}$ \then \, for some
$\mathbf x,\mathbf f$ as above we have $\mathbf f(a) \ne e_{G_3}$.
\end{enumerate}
\end{definition}

\begin{remark}
Note the choice ``$G_\ell \subseteq G_{\mathbf x,\ell}"$ rather than
$G_\ell = G_{\mathbf x,\ell}$ in clause (c) of \ref{c10}.
\end{remark}

\noindent
Now the amalgamation in Definition \ref{c10} is very nice but do we
have existence, in $\mathbf K_{\lf}$ of course?  
The following Claim \ref{c12}(3) answers positively.
\begin{claim}
\label{c12}
1) In clause (c) of Definition \ref{c10}(2), the homomorphism $\mathbf f$ is
   unique.

\noindent
1A) If $\NF_{\fin}(G^\iota_0,G^\iota_1,G^\iota_2,G^\iota_3)$ for
$\iota = 1,2$ and $\mathbf f_\ell$ is an isomorphism from $G^1_\ell$ onto
$G^2_\ell$ such that $\mathbf f_0 \subseteq \mathbf f_\ell$ for
$\ell=0,1,2$ \then \, there is one and only one isomorphism $\mathbf
f_3$ from $G^1_3$ onto $G^2_3$ extending $\mathbf f_1 \cup \mathbf f_2$.

\noindent
2) In Definition \ref{c10}, necessarily $G_1 \cap G_2 = G_0$.

\noindent
3) If $G_0 \subseteq G_\ell \in \mathbf K$ are finite for $\ell=1,2$ \then \,
 we can find $\bar f,\bar H$ such that
\mn
\begin{enumerate}
\item[$(a)$]  $\bar f = \langle f_0,f_1,f_2\rangle$;
\sn
\item[$(b)$]  $\bar H = \langle H_\ell:\ell \le 3\rangle$;
\sn
\item[$(c)$]  {\rm NF}$_{\fin}(H_0,H_1,H_2,H_3)$;
\sn
\item[$(d)$]   $f_\ell$ is an isomorphism from $G_\ell$ onto $H_\ell$
for $\ell=0,1,2$;
\sn
\item[$(e)$]  $f_0 \subseteq f_1$ and $f_0 \subseteq f_2$.
\end{enumerate}
\end{claim}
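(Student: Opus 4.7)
For part (1), uniqueness of $\bold f$ is immediate from clause (b) of Definition \ref{c10}(2), which asserts $G_3 = \langle G_1 \cup G_2\rangle_{G_3}$: any homomorphism satisfying the prescribed restriction to $G_1 \cup G_2$ is determined on this generating set. For part (2), use Observation \ref{c6}(0) to choose an amalgamation try $\bold x \in \bold X_{\bold K}$ with $G_{\bold x,\ell} = G_\ell$ for $\ell \in \{0,1,2\}$ and apply clause (c) of Definition \ref{c10}(2) to obtain $\bold f: G_3 \to G_{\bold x}$ with $\bold f \rest G_\ell = \bold j_{\bold x,\ell}$. For $a \in G_1 \cap G_2$, the equalities $\bold j_{\bold x,1}(a) = \bold f(a) = \bold j_{\bold x,2}(a)$ combined with Claim \ref{c8} force this common value into $\bold j_{\bold x,1}(G_0)$; the injectivity of $\bold j_{\bold x,1}$ from Observation \ref{c6}(2) then yields $a \in G_0$.

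For part (3), I will build $H_3$ directly from the amalgamation-try machinery. Pick $\bold x \in \bold X_{\bold K}$ with $G_{\bold x,\ell} = G_\ell$ via Observation \ref{c6}(0), and set $H_\ell := \bold j_{\bold x,\ell}(G_\ell)$ for $\ell = 0,1,2$, $H_3 := G_{\bold x} = \langle H_1 \cup H_2\rangle_{G_{\bold x}}$, and $f_\ell := \bold j_{\bold x,\ell}$. Observations \ref{c6}(2), (3), (4) immediately yield that the $f_\ell$ are isomorphisms, that $f_0 \subseteq f_1$ and $f_0 \subseteq f_2$, and that the $H_\ell$ and $H_3$ are finite, hence in $\bold K$. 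Clauses (a), (b) of $\mathrm{NF}_{\fin}(H_0, H_1, H_2, H_3)$ are thus immediate, while clause (d) is witnessed by taking $\bold x' = \bold x$ itself with $\bold f = \mathrm{id}_{H_3}$ for any non-identity element of $H_3$.

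The heart of the proof is clause (c) of $\mathrm{NF}_{\fin}$: given an arbitrary amalgamation try $\bold x'$ with $G_{\bold x',0} = H_0$ and $H_\ell \subseteq G_{\bold x',\ell}$, I must produce a homomorphism $\bold f: H_3 \to G_{\bold x'}$ with $\bold f \rest H_\ell = \bold j_{\bold x',\ell}$. The plan proceeds in three steps: first, restrict $\bold x'$ via Observation \ref{c6}(5) to an amalgamation try $\bold y$ with $G_{\bold y,\ell} = H_\ell$; second, apply Observation \ref{c6}(7) to obtain the canonical surjection $\langle H_1 \cup H_2\rangle_{G_{\bold x'}} \twoheadrightarrow G_{\bold y}$; third, identify $G_{\bold y}$ with $H_3 = G_{\bold x}$ via a canonical isomorphism arising because both are permutation amalgamations of the same triple $(G_0, G_1, G_2)$ (up to identification of $G_\ell$ with $H_\ell$), and different choices of transversals $\bold I_{\cdot,\ell}$ yield canonically isomorphic groups acting on equivariantly isomorphic $\cU$-sets. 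Inverting the Observation \ref{c6}(7) surjection and composing then yields $\bold f: H_3 \cong G_{\bold y} \cong \langle H_1 \cup H_2\rangle_{G_{\bold x'}} \hookrightarrow G_{\bold x'}$.

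The main obstacle is justifying the inversion, that is, showing the Observation \ref{c6}(7) surjection is actually injective. I plan to do this by an orbit analysis of the $\langle H_1 \cup H_2\rangle_{G_{\bold x'}}$-action on $\cU_{\bold x'}$: a direct check from the defining action formulas in Definition \ref{c3}(2) shows that $\cU_{\bold y}$ sits inside $\cU_{\bold x'}$ as an $H_1 \cup H_2$-invariant subset, and more strongly, every $\langle H_1 \cup H_2\rangle_{G_{\bold x'}}$-orbit in $\cU_{\bold x'}$ has size exactly $|G_1||G_2|/|G_0|$ and is $G_1 \cup G_2$-equivariantly isomorphic to $\cU_{\bold y}$. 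Hence the action of $\langle H_1 \cup H_2\rangle_{G_{\bold x'}}$ on $\cU_{\bold x'}$ decomposes as a disjoint union of copies of its action on $\cU_{\bold y}$, which is faithful by the definition of $G_{\bold y}$. The kernel of the surjection is therefore trivial, completing the argument.
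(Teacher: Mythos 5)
Parts (1) and (2) are correct: uniqueness in (1) follows from $G_3=\langle G_1\cup G_2\rangle_{G_3}$, and your use of Observation~\ref{c6}(0), clause (c), Claim~\ref{c8}, and injectivity of $\bold j_{\bold x,1}$ for (2) is sound. Part (3), however, has a genuine gap at the crux. You set $H_3:=G_{\bold x}$ for a single amalgamation try $\bold x$, and to verify clause (c) of ${\rm NF}_{\fin}$ for an arbitrary $\bold x'$ you route through the restriction $\bold y$ of $\bold x'$ and assert that $G_{\bold y}$ is ``canonically isomorphic'' to $G_{\bold x}$ because ``different choices of transversals yield canonically isomorphic groups.'' That assertion---that the permutational product of a fixed triple $(G_0,G_1,G_2)$ does not depend on the transversals---is false in general (a classical observation of B.\ H.\ Neumann on permutational products), and the entire point of Definition~\ref{c10}(2), with its quantification over \emph{all} $\bold x$ in clause (c) and the separating clause (d), is to carve out the canonical amalgam precisely because the individual $G_{\bold x}$ need not coincide.

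Your orbit analysis does not close this. You are right that the orbits of $\langle\bold j_{\bold x',1}(H_1)\cup\bold j_{\bold x',2}(H_2)\rangle$ on $\cU_{\bold x'}$ coincide with the $E$-classes of the proof of Observation~\ref{c6}(8) and have the stated size, with $\cU_{\bold y}$ being one of them. But the orbit through a general $(g_0,g_1,g_2)$ is equivariantly isomorphic to $\cU_{\bold y''}$ for an \emph{orbit-dependent} try $\bold y''$ whose transversals are $g_\ell^{-1}\bigl(\bold I_{\bold x',\ell}\cap g_\ell H_\ell\bigr)$---not to $\cU_{\bold y}$ itself. Hence $\langle H_1\cup H_2\rangle_{G_{\bold x'}}$ is a subdirect product of the $G_{\bold y''}$ over varying $\bold y''$; the surjection of Observation~\ref{c6}(7) onto $G_{\bold y}$ need not be injective, and $\langle H_1\cup H_2\rangle_{G_{\bold x'}}$ need not even be a quotient of your chosen $G_{\bold x}$. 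The intended construction, to which the paper's proof alludes (``consider equivalence classes for many such $\bold x$'s''), takes $H_3$ to be the image of $G_1\cup G_2$ in a product over representatives of the isomorphism types of pairs $(\bold x,\text{$E$-class})$; finiteness of $H_3$ is then exactly what the bound in Observation~\ref{c6}(8) supplies.
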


\begin{PROOF}{\ref{c12}}
1), 1A)  Obvious.

\noindent
2) By Claim \ref{c8} recalling clause (c) of \ref{c10}(2).

\noindent
3)  Follows by \ref{c6}(8) but we elaborate. 
Let $\bar G = \langle G_\ell:\ell=0,1,2\rangle$ and
\mn
\begin{enumerate}
\item[$(*)_1$]   let $\mathbf X_{\bar G} := \{\mathbf x 
\in \mathbf X_{\mathbf x}:G_{\mathbf x,0} = G_0$ and
$G_{\mathbf x,\ell}$ is a $\lf$ group extending $G_\ell$ for
$\ell=1,2\}$;
\sn
\item[$(*)_2$]   for $\mathbf x \in \mathbf X_{\bar G}$ let: $\mathbf
  n_{\bar g}(\mathbf x) =$ the number of elements of $\langle \mathbf
  j_{\mathbf x,1}(G_1) \cup \mathbf j_{\mathbf x,2}(G_2)\rangle_{G_{\mathbf
      x,3}}$.
\end{enumerate}
\mn
We define $\mathbf X^{\mx}_{\bar G}$ as the set of $\mathbf x$'s such that:
\mn
\begin{enumerate}
\item[$(*)_3$]
\begin{enumerate}
\item[(a)]  $\mathbf x \in \mathbf X_{\bar G}$;
\sn
\item[(b)]  if $\mathbf y \in \mathbf X_{\bar G}$ and $\mathbf x \le \mathbf
  y$ then $n_{\bar G}(\mathbf x) = n_{\bar G}(\mathbf y)$;
\end{enumerate}
\sn
\item[$(*)_4$]  if $\mathbf x,\mathbf z \in \mathbf X^{\mx}_{\bar G}$ and 
$\mathbf x \le_{\mathbf X(\mathbf K)} \mathbf z$, \then \, $n_{\bar G}(\mathbf x) \le
n_{\bar G}(\mathbf z)$.
\end{enumerate}
\mn
[Why?  Because by \ref{c6}(7) there is a homomorphism from $G_{\mathbf
  z} = \langle \mathbf j_{\mathbf z,1}(G_1) \cup \mathbf
j_{\mathbf z,2}(G_2)\rangle$ onto $G_{\mathbf x} =
\langle \mathbf j_{\mathbf x,1}(G_1) \cup
\mathbf j_{\mathbf x,2}(G_2)\rangle$.]
\mn
\begin{enumerate}
\item[$(*)_5$]  for every $\mathbf x \in \mathbf X_{\bar G}$ there is
  $\mathbf y \in \mathbf X^{\mx}_{\bar G}$ such that $x \le \mathbf y$;
  hence $\mathbf X^{\mx}_{\bar G} \ne \emptyset$.
\end{enumerate}
\mn
[Why?  By $(*)_4$ and \ref{c6}(8).]
\mn
\begin{enumerate}
\item[$(*)_6$]  $(\mathbf X_{\bar G},\le_{\mathbf X[\mathbf K]})$ has
  amalgamation, that is
\sn
\begin{itemize}
\item  if $\mathbf x_0 \le_{\mathbf X[\mathbf K]} \mathbf x_\iota$ for $\iota
  = 1,2$ \then \, we can find $\mathbf x_3$ and $(f^\iota_1,f^\iota_2)$ for
$\iota = 1,2$ such that:
\begin{enumerate}
\item[(a)]  $\mathbf x_3 \in \mathbf X_{\mathbf K}$
\sn
\item[(b)]  $\mathbf x_0 \le_{\mathbf X[\mathbf K]} \mathbf x_3$
\sn
\item[(c)]  $(f^\iota_1,f^\iota_2)$ embeds $\mathbf x_\iota$ into $\mathbf
  x_3$ over $\mathbf x_0$
\newline
(over $\mathbf x_0$ means: $f^\iota_1 \rest G_{\mathbf x_0,1} =
 \id_{G_{\mathbf x_0,1}},f^\iota_2 \rest G_{\mathbf x_0,2} =
\id_{G_{\mathbf x_0,2}}$).
\end{enumerate}
\end{itemize}
\end{enumerate}
\mn
[Why?  For $\ell=1,2$, we use the disjoint amalgamation for finite
groups, i.e. find $(G_\ell,f^1_\ell,f^2_\ell)$ such that:
\mn
\begin{enumerate}
\item[$\bullet_1$]  $G_\ell$ is a finite group extending $G_{\mathbf x_0,0}$
\sn
\item[$\bullet_2$]  $f^1_\ell$ embeds $G_{\mathbf x_1,\ell}$ into
  $G_\ell$ over $G_{\mathbf x_0,0}$
\sn
\item[$\bullet_3$]  $f^2_\ell$ embeds $G_{\mathbf x_2,\ell}$ into
 $G_\ell$ over $G_{\mathbf x_0,0}$
\sn
\item[$\bullet_4$]  $f^1_\ell(G_{\mathbf x_1,\ell}) \cap
  f^2_\ell(G_{\mathbf x,\ell}) = G_{\mathbf x_0,0}$.
\end{enumerate}
\mn
Note that $f^1_\ell(\mathbf I_{\mathbf x_1,\ell}) \cap f^2_\ell(\mathbf
I_{\mathbf x_2,\ell}) = \{e_{G_{\mathbf x_0,0}}\}$, moreover, working
inside $G_\ell,\langle g G_{\mathbf x_0,0}:g \in 
f^1_\ell(\mathbf I_{\mathbf x_1,\ell}) \cup
f^2_\ell(\mathbf I_{\mathbf x_2,\ell})\rangle$ is a sequence of pairwise
disjoint sets.  Hence there is $\mathbf I_\ell \subseteq G_\ell$
extending $f^1_\ell(\mathbf I_{\mathbf x_1,\ell}) \cup f^2_\ell(\mathbf
I_{\mathbf x_2,\ell})$ such that $\langle g G_{\mathbf x_0,0}:g \in \mathbf
I_\ell\rangle$ is a partition of $G_\ell$.

Define $\mathbf x_3$ by:
\mn
\begin{enumerate}
\item[$\bullet'_1$]  $G_{\mathbf x_3,0} = G_{\mathbf x_0,0}$
\sn
\item[$\bullet'_2$]  $G_{\mathbf x_3,\ell} = G_\ell$ for $\ell =1,2$
\sn
\item[$\bullet'_3$]  $\mathbf I_{\mathbf x_3,\ell} = \mathbf I_\ell$ for
  $\ell=1,2$.
\end{enumerate}
\mn
Now check that $\mathbf x_3,(f^\iota_1,f^\iota_2)$ for $\iota = 1,2$ are
as required.]
\mn
\begin{enumerate}
\item[$(*)_7$]  if $\mathbf y \in \mathbf x^{\mx}_{\bar G}$ then
  $\NF_{\fin}(G_{\mathbf y,0},G_{\mathbf y,1},G_{\mathbf y,2},G_{\mathbf y,3})$.
\end{enumerate}
\mn
[Should be clear now.] 

Alternatively\footnote{Or see \cite{Sh:1098}.}, use 
\ref{c25} below and \ref{c6}(8).
\end{PROOF}

\noindent
We give now further basic properties, mainly connecting it to
non-splitting (in \ref{c14}(4)).
\begin{claim}
\label{c14}
Assume $\NF_{\rfin}(G_0,G_1,G_2,G_3)$ hence
$\NF_{\fin}(G_0,G_1,G_2,G_3) \Leftrightarrow G_3 = \langle G_1 \cup
G_2\rangle_{G_3}$.

\noindent
1) Symmetry:  Also $\NF_{\rfin}(G_0,G_2,G_1,G_3)$ holds.

\noindent
2) Monotonicity: If $G_0 \subseteq G'_\ell \subseteq G_\ell$ for $\ell=1,2$ and
$G'_1 \cup G'_2 \subseteq G'_3 \subseteq G_3$ \then \,
{\rm NF}$_{\rfin}(G_0,G'_1,G'_2,G'_3)$.

\noindent
3) Uniqueness: if $\NF_{\fin}(G'_0,G'_1,G'_2,G'_3)$ hence $G'_3 =
\langle G'_1 \cup G'_2\rangle_{G'_3},f_\ell$ is an
isomorphism from $G'_\ell$ into $G_\ell$ for $\ell=0,1,2$ such that
   $f_1 \rest G'_0 = f_0 = f_2 \rest G'_0$ and $f_0$ is onto $G_0$, 
\then \, there is an
embedding $f_3$ of $G'_3$ into $G_3$ extending $f_1 \cup f_2$
(unique, of course; it is onto if and only if 
$G_3 = \langle G_1 \cup G_2\rangle_{G_3}$ and $f_\ell$ is onto
$G_\ell$ for $\ell=1,2$).

\noindent
4) Definability: If $\bar a \in {}^{\omega >}(G_2)$ \then \, {\rm tp}$_{\bs}
(\bar a,G_1,G_3)$  does not split over $G_0$.
\end{claim}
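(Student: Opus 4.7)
My plan is to handle the four parts in the stated order, leaning on Observation \ref{c6} and on the uniqueness of the homomorphism in Claim \ref{c12}(1). For part (1), the symmetry in the roles of $G_1$ and $G_2$ is built into Definition \ref{c3}: interchanging the second and third coordinates of $\cU_{\bold x}$ converts an amalgamation try $\bold x = (G_0, G_1, G_2, \bold I_1, \bold I_2)$ into $\bold x' := (G_0, G_2, G_1, \bold I_2, \bold I_1)$ and induces a canonical isomorphism $G_{\bold x} \to G_{\bold x'}$ intertwining $\bold j_{\bold x, \ell}$ and $\bold j_{\bold x', 3-\ell}$. Clauses (c) and (d) of Definition \ref{c10}(2) then transfer verbatim. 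For part (2), given an amalgamation try $\bold y$ over $(G_0, G'_1, G'_2)$, use \ref{c6}(6) to produce an amalgamation try $\bold x$ over $(G_0, G_1, G_2)$ extending it in the sense of \ref{c6}(5); NF$_{\fin}(G_0, G_1, G_2, G_3)$ yields a homomorphism $\bold f: G_3 \to G_{\bold x}$; restricting $\bold f$ to $\langle G'_1 \cup G'_2\rangle_{G'_3}$ and post-composing with the canonical surjection $\langle \bold j_{\bold x,1}(G'_1) \cup \bold j_{\bold x,2}(G'_2)\rangle_{G_{\bold x}} \to G_{\bold y}$ from \ref{c6}(7) gives clause (c) for $\text{NF}_{\fin}(G_0, G'_1, G'_2, \langle G'_1 \cup G'_2\rangle_{G'_3})$, and clause (d) survives because any witness of non-triviality in the smaller setup is already a witness in the larger one.

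For part (3), the key point is that NF$_{\fin}(G_0, G_1, G_2, G_3)$ characterizes $G_3 = \langle G_1 \cup G_2\rangle_{G_3}$ up to canonical isomorphism: clauses (c) and (d) together say that $G_3$ embeds as the subgroup generated by $\bold j_{\bold x,1}(G_1) \cup \bold j_{\bold x,2}(G_2)$ inside the product $\prod_{\bold x} G_{\bold x}$ (where $\bold x$ ranges over amalgamation tries with $G_{\bold x, 0} = G_0$ and $G_{\bold x, \ell} = G_\ell$), and \ref{c12}(1) guarantees the uniqueness of each factor map. The given isomorphisms $(f_0, f_1, f_2)$ set up a bijection between amalgamation tries for $(G'_0, G'_1, G'_2)$ and those for $(G_0, G_1, G_2)$ under which $G_{\bold x'} \cong G_{\bold x}$ canonically. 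Consequently the intrinsic amalgams computed from $G'_3$ and from $G_3$ are canonically isomorphic, yielding the required embedding $f_3: G'_3 \to G_3$ extending $f_1 \cup f_2$ (onto exactly when $G_3 = \langle G_1 \cup G_2\rangle_{G_3}$).

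Part (4) is a direct application of part (3). Let $\bar b_1, \bar b_2 \in {}^m G_1$ satisfy $\tp_{\bs}(\bar b_1, G_0, G_1) = \tp_{\bs}(\bar b_2, G_0, G_1)$, so that $\bar b_1 \mapsto \bar b_2$ extends to an isomorphism $\pi: B_1 \to B_2$ fixing $G_0$, where $B_\ell := \langle G_0 \cup \bar b_\ell\rangle_{G_1}$. Set $A := \langle G_0 \cup \bar a\rangle_{G_2}$. By part (2), $\text{NF}_{\fin}(G_0, B_\ell, A, \langle B_\ell \cup A\rangle_{G_3})$ holds for $\ell = 1, 2$. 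Applying part (3) with the isomorphism triple $(\id_{G_0}, \pi, \id_A)$ produces an isomorphism $\langle B_1 \cup A\rangle_{G_3} \to \langle B_2 \cup A\rangle_{G_3}$ sending $\bar b_1 \mapsto \bar b_2$ and fixing $\bar a$; hence $\tp_{\bs}(\bar b_1 \char 94 \bar a, G_0, G_3) = \tp_{\bs}(\bar b_2 \char 94 \bar a, G_0, G_3)$, which is precisely non-splitting of $\tp_{\bs}(\bar a, G_1, G_3)$ over $G_0$.

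The main obstacle is part (3): one must verify that the family of amalgamation tries is rich enough to recover $G_3$ intrinsically from $(G_0, G_1, G_2)$, and that the map $G'_3 \to G_3$ built via the various $G_{\bold x}$'s actually lands in $G_3$ rather than in some abstract inverse limit. This rests crucially on $G_3$ being generated by $G_1 \cup G_2$, on the uniqueness of the $G_{\bold x}$-homomorphism in \ref{c12}(1), and on having enough amalgamation tries (via \ref{c6}(0)) for clause (d) to separate points.
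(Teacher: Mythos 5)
Your approach is the same in spirit as the paper's (reduce everything to the elementary structure of amalgamation tries, then assemble parts (1)--(3) and deduce (4) from them), and parts (1) and (4) are fine. The problem is in part (2), and the same issue quietly contaminates the way you set up part (3).

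In Definition \ref{c10}(2) clause (c) the quantification is over all tries $\bold y$ with $G'_\ell \subseteq G_{\bold y,\ell}$, \emph{not} merely $G_{\bold y,\ell}=G'_\ell$; the remark right after the definition explicitly calls attention to this choice of $\subseteq$ rather than $=$. Your argument starts from ``an amalgamation try $\bold y$ over $(G_0,G'_1,G'_2)$'' and invokes \ref{c6}(6), which requires $G_{\bold z,\ell}=G'_\ell$. So you have only handled the narrow case. When $G'_\ell \subsetneq G_{\bold y,\ell}$ and $G_{\bold y,\ell}$ has no containment relation with $G_\ell$, \ref{c6}(6) gives you no try over $(G_0,G_1,G_2)$ extending $\bold y$, and the composition you propose has nowhere to start. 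This is precisely why the paper's proof sketch cites ``$\bold K^{\mathrm{lf}}_{<\aleph_0}$ having amalgamation'': one must first amalgamate $G_\ell$ and $G_{\bold y,\ell}$ over $G'_\ell$ into a finite $H_\ell$, extend the transversals of $\bold y$ to a try $\bold w$ over $(G_0,H_1,H_2)$ (so $\bold w\restriction(G_0,G_{\bold y,1},G_{\bold y,2})=\bold y$), apply NF$_{\fin}(G_0,G_1,G_2,G_3)$ clause (c) to $\bold w$ (legitimate since $G_\ell\subseteq H_\ell=G_{\bold w,\ell}$) to obtain $\bold f:G_3\to G_{\bold w}$, then restrict to $\langle G'_1\cup G'_2\rangle_{G_3}$ and post-compose with the surjection from \ref{c6}(7) onto $G_{\bold y}$. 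Uniqueness \ref{c12}(1) is what makes the resulting map independent of the choice of $H_\ell$ and $\bold w$. An alternative fix, closer to your part (3) picture, is to prove a lemma that the ``small'' tries with $G_{\bold x,\ell}=G_\ell$ already suffice: the subgroup $\langle\bold j_{\bold x,1}(G_1)\cup\bold j_{\bold x,2}(G_2)\rangle_{G_{\bold x}}$ preserves the $E$-classes of $\cU_{\bold x}$ (as in the proof of \ref{c6}(8)), and the action on each $E$-class is isomorphic to $G_{\bold z}$ for a try $\bold z$ with $G_{\bold z,\ell}=G_\ell$. Without one of these two supplements, clause (c) of NF$_{\fin}(G_0,G'_1,G'_2,\langle G'_1\cup G'_2\rangle_{G'_3})$ is not established.

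The same point applies to your part (3): clause (d), which you use to get injectivity of the map into $\prod_{\bold x}G_{\bold x}$, only guarantees a separating $\bold x$ with $G_\ell\subseteq G_{\bold x,\ell}$, whereas your product ranges over tries with $G_{\bold x,\ell}=G_\ell$. This is repaired by the same $E$-class observation: if $\bold f_{\bold x}(a)\neq e$ for a big try, then $a$ acts non-trivially on some $E$-class, which identifies a small try separating $a$. With that supplement your product/bijection argument goes through and the rest (including part (4)) is correct.
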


\begin{PROOF}{\ref{c14}}  
Straightforward but we elaborate.

\noindent
1) Use the symmetry in the definition (recall that in \S2 we have
$\mathbf K = \mathbf K_{\text{lf}}$ not $\mathbf K_{\text{olf}}$!)

\noindent
2) By \ref{c6}(7) and use the uniqueness in \ref{c12}(1).
Alternatively use \ref{c25} below and \ref{c6}(8).

\noindent
3) Easily, too.

\noindent
4) Obvious by parts (2) and (3).
\end{PROOF}

Now above the restriction of $G_1,G_2$ to be finite is undesirable.
\begin{definition}
\label{c16}
Let NF$_f(G_0,G_1,G_2,G_3)$ or ``$G_1,G_2$ are NF$_f$-stably
amalgamated over $G_0$ inside $G_3$" mean that:
\mn
\begin{enumerate}
\item[$(a)$]  $G_\ell \in \mathbf K$ for $\ell \le 3$
\sn
\item[$(b)$]  $G_0$ is finite
\sn
\item[$(c)$]  $G_0 \subseteq G_\ell \subseteq G_3$ for $\ell=1,2$ and
  $G_1 \cap G_2 = G_0$
\sn
\item[$(d)$]  if $G'_1,G'_2$ are finite groups and $G_0 \subseteq
G'_\ell \subseteq G_\ell$ for $\ell=1,2$ and $G'_3 = \langle G'_1 \cup
G'_2\rangle_{G_3}$ then NF$_{\text{fin}}(G_0,G'_1,G'_2,G'_3)$.
\end{enumerate}
\end{definition}

\begin{claim}
\label{c18}
\underline{Stable Amalgamation over Finite Claim}
1) Existence: If $G_0 \in \mathbf K$ is finite and 
$G_0 \subseteq G_\ell \in \mathbf
K$ for $\ell=1,2$ and
for transparency $G_1 \cap G_2 = G_0$ \then \, for some $G_3$ we have
{\rm NF}$_f(G_0,G_1,G_2,G_3)$ and $G_3 = \langle G_1 \cup G_2\rangle_{G_3}$.

\noindent
2) Uniqueness: In part (1), $G_3$ is unique 
up to isomorphism over $G_1 \cup G_2$.

\noindent
3) Monotonicity: If $G_0 \subseteq G'_\ell \subseteq G_\ell$ for $\ell=1,2$ and
$\NF_\ell(G_0,G_1,G_1,G_2)$ \then \, $\NF_f(G_0,G'_1,G'_2,G_3)$.

\noindent
4) Symmetry: $\NF_f(G_0,G_1,G_2,G_3)$ holds \Iff \,
$\NF_f(G_0,G_2,G_1,G_3)$ holds.

\noindent
5) Definability: If $\NF_f(G_0,G_1,G_2,G_3)$, \then \, $G_1
\le_{\Omega[\mathbf K]} G_3$.
\end{claim}

\begin{PROOF}{\ref{c18}}  
1) Straightforward by \ref{c12}(1) and \ref{c18}(2),(3),
i.e. existence follows from the existence for finite $G_1,G_2$ using
uniqueness and monotonicity.

\noindent
2) - 5) easy too; holds by \ref{c14}(3),
\ref{c14}(1), \ref{c14}(1), \ref{c14}(3), i.e. uniqueness, 
monotonicity, symmetry and definability respectively.
\end{PROOF}

Now we go back to the major problem left in \S1.
\begin{claim}
\label{c19}
There is one and only one full $\gs \in \Omega[\mathbf K]$ such that
$q=q_{\gs}(\bar a,G_1)$ \when \,:
\mn
\begin{enumerate}
\item[$(a)$]  $G_1 \in \mathbf K$ is existentially closed
\sn
\item[$(b)$]   $q(\bar x) \in \mathbf S^n_{\gS_{\atdf}}(G)$ is 
$\tp_{\bs}(\bar c,G_1,G_3)$, see below
\sn
\item[$(c)$]  $\NF_f(G_0,G_1,G_2,G_3)$
\sn
\item[$(d)$]  $\bar c \in {}^{n(\gs)}(G_2)$ and $\bar a \in
  {}^{k(\gs)}(G_0)$ generate $G_0$.
\end{enumerate}
\end{claim}

\begin{PROOF}{\ref{c19}}
By \ref{c18} and \ref{a15}(3).
\end{PROOF}

\begin{definition}
\label{c20}
Let $\gS_{\df} \subseteq \Omega[\mathbf K]$ be the closure of
$\gS_{\atdf}$, see Definition \ref{a21} 
where $\gS_{\atdf} \subseteq \gS(\mathbf K)$ is the set of $\gs \in
\Omega[\mathbf K]$ as in \ref{c19}.
\end{definition}

\begin{claim}
\label{c22}
1) $\gS_{\text{\rm df}}$ is well defined, see Definition \ref{c20},
   \ref{c14}(3). 

\noindent
2) $\gS_{\text{\rm df}}$ is dense (see Definition \ref{a21}(2)), closed
and countable.
\end{claim}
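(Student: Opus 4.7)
The plan is to verify each assertion in turn, using the stable-amalgamation results of Claims \ref{c14} and \ref{c18} as the core technical ingredients. For part (1), I would fix the combinatorial data indexing a scheme in $\gS_{\rm atdf}$: a finite group $G_0$ with a listing $\bar a$, a finite extension $G_0 \subseteq G_2 \in \bold K$, and a tuple $\bar c \in {}^{\omega >}G_2$. Given any $M \in \bold K$ and $\bar a' \in {}^{|\bar a|}M$ realizing ${\rm tp}_{\rm bs}(\bar a,\emptyset,G_0)$, set $G'_0 := \langle \bar a'\rangle_M$, take a disjoint copy of $G_2$ over $G'_0$, and apply \ref{c18}(1) to obtain $G'_3 \supseteq M \cup G_2$ with ${\rm NF}_f(G'_0,M,G_2,G'_3)$; let $\bar c'$ be the image of $\bar c$. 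By \ref{c14}(3) together with \ref{c18}(2), the type ${\rm tp}_{\rm bs}(\bar c',M,G'_3)$ is independent of the chosen amalgam, and by \ref{c14}(4) it does not split over the finite set $G'_0$; hence by \ref{a15}(3) it equals $q_{\gs}(\bar a',M)$ for a uniquely determined $\gs \in \Omega[\bold K]$, which is the scheme assigned to the data. Clauses (a)--(c) of Definition \ref{a14}(1) for $\gs$ then reduce to the monotonicity of ${\rm NF}_f$ from \ref{c14}(2): enlarging $\bar b$ or restricting to a sub-sequence corresponds to passing to a larger or smaller amalgam inside $G'_3$. So the recipe lands in $\Omega[\bold K]$, $\gS_{\rm atdf}$ is well defined, and $\gS_{\rm df}:=c\ell(\gS_{\rm atdf})$ exists by Claim \ref{a22}(1).

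For part (2), closedness is immediate from the definition. For density, given $G_0 \subseteq H \in \bold K$, $G_0 \subseteq G_1 \in \bold K$ with $G_0,G_1$ finite and $\bar c \in {}^n G_1$, after replacing $G_1$ by an isomorphic copy one may assume $H \cap G_1 = G_0$, and \ref{c18}(1) produces $G_3 = \langle H \cup G_1\rangle_{G_3}$ with ${\rm NF}_f(G_0,H,G_1,G_3)$. By \ref{c14}(4), $p:={\rm tp}_{\rm bs}(\bar c,H,G_3)$ does not split over $G_0$; by the very definition of $\gS_{\rm atdf}$ (with $H$ in the role of $G_1$ there) $p = q_{\gs}(\bar a,H)$ for some $\gs \in \gS_{\rm atdf}\subseteq \gS_{\rm df}$, where $\bar a$ lists $G_0$; and $p$ extends ${\rm tp}_{\rm bs}(\bar c,G_0,G_1)$ by construction, which is exactly Definition \ref{a21}(2).

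For countability, up to the congruency of Definition \ref{a14f}(3) a scheme in $\gS_{\rm atdf}$ is determined by the isomorphism type of the finite datum $(G_0,G_2,\bar a \char 94 \bar c)$, of which there are countably many; hence $|\gS_{\rm atdf}/{\equiv}| \le \aleph_0$. A single application of the dominating- or composition-closure operation of Definition \ref{a21}(1A)--(1B) combines finitely many previously produced schemes at a time, so iterating $\omega$ steps yields a countable fixed point, giving $|\gS_{\rm df}| \le \aleph_0$.

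The hard part is part (1): checking that the NF-recipe really defines a function on the whole domain $\bold D_{p_{\gs}}(\bold K)$ and satisfies clause (c) of Definition \ref{a14}(1) --- that is, coherence of the output type under restriction of the auxiliary parameter $\bar b$ to sub-tuples (so that the scheme never forces a contradiction). This coherence is precisely monotonicity of ${\rm NF}_f$ under shrinking the outer amalgamand, supplied by \ref{c14}(2), combined with uniqueness \ref{c14}(3). The rest --- density via \ref{c18}, closedness by definition, and the cardinal count --- is routine bookkeeping.
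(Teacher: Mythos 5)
Your proof is correct and follows essentially the same line as the paper's (the paper's own proof is essentially a citation list: part (1) ``obvious,'' density from \ref{c18} and \ref{c14}, closedness from the definition, countability of $\gS_{\text{atdf}}$ from \ref{c14}(3)). You have simply unpacked those citations into a detailed argument, and the unpacking is sound: the scheme is read off from the finite datum $(G_0,G_2,\bar a,\bar c)$ via the NF$_f$-amalgam, well-definedness (including clause (c) of \ref{a14}(1)) comes from existence \ref{c18}(1), uniqueness \ref{c18}(2)/\ref{c14}(3), and monotonicity \ref{c14}(2), non-splitting from \ref{c14}(4), density by placing the arbitrary $H$ in the role of $G_1$, and countability by counting isomorphism types of the finite data and observing the closure operations under Definition \ref{a21} only combine finitely many inputs at a time over $\omega$ stages. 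One small imprecision worth noting: you cite \ref{a15}(3) as yielding a \emph{uniquely determined} $\gs$ over arbitrary $M \in \bold K$, but \ref{a15}(3) only asserts existence, and uniqueness of $\gs$ for a given type $p$ is asserted (in Remark \ref{a13}) only when $M$ is existentially closed. This does not affect your argument, since in fact the scheme is pinned down directly by the NF-recipe (the output qf-type $\gs(q)$ is read off from the amalgam and depends only on $q$, by the uniqueness and monotonicity of NF$_f$), so you never really need the uniqueness clause of \ref{a15}(3); it would be cleaner to drop that citation and rely on the NF-uniqueness you already invoke.
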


\begin{PROOF}{\ref{c22}}
1) Obvious.

\noindent
2) \underline{$\gS_{\text{df}}$ is dense}: holds by \ref{c18} and \ref{c19}
   recalling Definition \ref{c16}, \ref{c20}.

\noindent
\underline{$\gS_{\text{df}}$ is closed}: by its definition.

\noindent
\underline{$\gS_{\text{df}}$ is countable}: as $\gS_{\text{atdf}}$ is
by \ref{c14}(3), \ref{c18}(2) recalling \ref{a22}(3).
\end{PROOF}

\begin{discussion}
\label{c23}
Is $\gS_{\df}$ symmetric?  Not clear, however, in the end of
\S1 we have circumvented this and we shall in \S3 circumvent this in
another way.
\end{discussion}

\begin{claim}
\label{c25}
1) Assume $G_0 \subseteq G_\ell \in \mathbf K$ and $G_\ell$ is
existentially closed for $\ell=1,2$  and $G_0$ finite.

\Then \, we can find $\mathbf x \in \mathbf X_{\mathbf K}$ such that $G_\ell =
G_{\mathbf x,\ell}$ for $\ell = 0,1,2$ and $(\mathbf j_{\mathbf x,0}(G_0)
\subseteq \mathbf j_{\mathbf x,\ell}(G_\ell) \le_{\gS_{\df}}
G_{\mathbf x})$ and $\NF_f(\mathbf j_{\mathbf x,0}(G_0),
\mathbf j_{\mathbf x,1}(G_1),\mathbf j_{\mathbf x,2}(G_2),G_{\mathbf x})$.

\noindent
2) Assume $G_0 \subseteq G'_\ell \subseteq G_\ell$ and $G'_\ell$
finite (or just $(G_\ell:G'_\ell) = |G_\ell|)$ and $\mathbf y \in \mathbf
X_{\mathbf K},G_{\mathbf y,0} = G_0$ and $G_{\mathbf y,\ell} = G'_\ell$ for
$\ell=1,2$.
\Then \, in part (1) we can demand that $\mathbf x$ extends $\mathbf y$.
\end{claim}

\begin{remark}
\label{c27}
If $G_0 \subseteq G_\ell \in \mathbf K$ for $\ell=1,2$ then we can find infinite
$G'_1,G'_2 \in \mathbf K$ extending $G_1,G_2$ respectively
 as $\mathbf K$ is closed under (finite) product (for
$\mathbf K_{\text{olf}}$ use lexicographic order).
\end{remark}

\begin{PROOF}{\ref{c25}}  1) By the definitions it is easy.  That is,
for $\ell=1,2$ we can choose $\mathbf I_\ell$ as in \ref{c3}(1)(b)
satisfying:
\mn
\begin{enumerate}
\item[$(*)$]  if $G'_\ell \subseteq G_\ell$ is finite and extends
$G_0$ and $\mathbf I' \subseteq G'_\ell$ is such that $e_{G_0} \in \mathbf I'$
and $\langle g G_0:g \in \mathbf I'\rangle$ is a partition of $G'_\ell$
\then \, we can find $g^* \in G_\ell$ such that $\{g^* g:g \in \mathbf
I'\} \subseteq \mathbf I_\ell$.
\end{enumerate}
\mn
Now think.

\noindent
2) Similarly.
\end{PROOF}
\bigskip

\subsection {Constructing Reasonable Schemes}\
\bigskip

We now give some examples of $\gs \in \Omega[\mathbf K]$.
\begin{definition}
\label{c50}
1) Let $\gs_{\text{cg}}$ be the $\gs$ from \ref{c52}(2) below.

\noindent
2) Let $\gs_{\gel}$ be the $\gs$ from \ref{c52}(3) below.
\end{definition}

\begin{claim}
\label{c52}
1) For every $G \in \mathbf K_{\lf}$ there are $G^+$ and $a$ such that
   $G \subseteq G^+ \in \mathbf K_{\lf},G^+ = \langle G \cup
   \{a\}\rangle_{G^+}$; in $G^+$ the element $a$ does not 
commute with any $b \in G \backslash \{e_G\},a$ has order 2 
and the sets $G,a^{-1} Ga$ commute in $G^+$ and their intersection is
$\{e_G\}$.

\noindent
2) There is unique $\gs \in \Omega[\mathbf K]$ such that $k_{\gs} =
   0,n_{\gs}=1,p_{\gs}$ is empty and in part (1) above 
{\rm tp}$_{\bs}(a,G,G^+)$ is $q_{\gs}(<>,G)$.

\noindent
3) There is $\gs \in \Omega[\mathbf K]$ with $k_{\gs} = 1,n_{\gs} = 4,
p_{\gs}(\bar x_{\gs}) = \{x_0 = x^{-1}_0 \wedge x_0 \ne e\}$ such that: if 
$G \in \mathbf K_{\lf}$ and $a \in G$ realizes $p_{\gs}(x_0)$
\then \, there are  $G^+,\bar c$ such that $G \subseteq G^+ =
   \langle G \cup \bar c\rangle_{G^+},\tp_{\bs}(\bar c,G,G^+) = q_{\gs}
(\langle a \rangle,G)$ and $c_\ell$ realizes 
$q_{\gs_{\cg}}(<>,G)$ in $G^+$ for $\ell < n_{\gs}$ 
and $a \in \langle \bar c \rangle_{G^+}$.
\end{claim}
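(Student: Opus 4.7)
For part (1), the plan is to take the restricted wreath product $G^+ := G \wr \bbZ/2 = (G \times G) \rtimes \langle \tau \rangle$, with $\tau$ swapping coordinates; embed $G$ via $g \mapsto ((g,e),1)$ and set $a := ((e,e),\tau)$. Then $a^2 = e$ and $a^{-1} g a = ((e,g),1)$, so $a^{-1} G a$ lies in the second factor and commutes element-wise with $G$; direct computation shows $ag=ga \iff g=e$, and since $a^{-1} G a$ recovers the second factor, $G \cup \{a\}$ generates $G^+$. Local finiteness is immediate from that of $G$.

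For part (2), the three properties just used---$a^2 = e$, $[a^{-1}g_1 a, g_2] = e$ for all $g_1,g_2 \in G$, together with $G^+ = \langle G, a \rangle$---constitute a presentation of $G \wr \bbZ/2$ over $G$. Hence $\tp_{\bs}(a, G, G^+)$ is determined uniformly by the isomorphism type of the parameters, which furnishes the unique scheme $\gs = \gs_\cg \in \Omega[\bold K]$ with $k_\gs = 0, n_\gs = 1$ as claimed.

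For part (3), the strategy is to reduce to the wreath construction of part (1), but performed over an enlargement of $G$ in which $a$ is realized as a commutator. Fix once and for all $b_1 := (132), b_2 := (142) \in A_4$, and verify by direct computation that $[b_1^{-1}, b_2] = b_1^{-1} b_2 b_1 b_2^{-1} = (13)(24) =: a_0 \in A_4$, an involution. Given $G \in \bold K_{\lf}$ and involution $a \in G$ with $a \ne e$, form the stable amalgam $\hat G := G *_{\langle a \rangle} A_4 \in \bold K_{\lf}$ identifying $a$ with $a_0$, which exists and is canonical by Claim \ref{c18} since $\langle a \rangle$ is finite (the degenerate case $a = e$ is handled by setting $\hat G := G$ and $b_1 = b_2 := e$). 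Take $M := \hat G \wr \bbZ/2$, embed $G \subseteq \hat G$ as the first factor, and for each $h \in \hat G$ put $\tilde a_h := ((h, h^{-1}), \tau)$, which is automatically an involution. Set
\[
c_0 := \tilde a_e,\quad c_1 := \tilde a_{b_1},\quad c_2 := \tilde a_{b_2},\quad c_3 := \tilde a_{b_2 b_1^{-1}},
\]
and $G^+ := \langle G \cup \{c_0,c_1,c_2,c_3\} \rangle_M$. The formula $\tilde a_g \tilde a_h = ((g h^{-1}, g^{-1} h), 1)$ iterates to $c_0 c_1 c_2 c_3 = (([b_1^{-1}, b_2], e), 1) = ((a_0, e), 1) = a$, so $a \in \langle \bar c\rangle_{G^+}$. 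The scheme $\gs$ is then extracted by noting that for any finite $\bar b \in G$ and $K := \langle a, \bar b \rangle_G$, the finite group $\langle K \cup \bar c \rangle$ sits inside $(K *_{\langle a \rangle} A_4)^2 \rtimes \bbZ/2$, which is canonically determined by the isomorphism type of the pointed pair $(K,a)$.

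The main obstacle is verifying that each $c_\ell$ realizes $q_{\gs_\cg}(\langle\rangle, G)$ and not merely some other involution type over $G$. For each $\ell$ and the corresponding $h \in \{e, b_1, b_2, b_2 b_1^{-1}\}$, I will exhibit an explicit group isomorphism $\phi_h : \langle G, c_0 \rangle \to \langle G, c_\ell \rangle$ over $G$ sending $c_0 \mapsto c_\ell$, acting on the first-factor elements as $((g_1, g_2), 1) \mapsto ((g_1, h^{-1} g_2 h), 1)$ with the appropriate twist on the $\tau$-coset forced by $c_\ell$. Once this homomorphism property (and injectivity) is checked by a short semidirect-product computation, the uniqueness of $\gs_\cg$ from part (2) forces $\tp_{\bs}(c_\ell, G, G^+) = q_{\gs_\cg}(\langle\rangle, G)$, completing the verification.
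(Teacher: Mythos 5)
Your parts (1) and (2) coincide with the paper's argument: the paper's permutation group on $\cU_2 = G \times 2$ is exactly $G \wr \bbZ/2$ with $\bold j_0$ the first-factor embedding, and your $a = ((e,e),\tau)$ is the paper's $h_{\langle e,e\rangle,\pi}$.

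For part (3) the two proofs genuinely diverge. The paper stays inside the permutation-group construction on $\cU_3 = G \times 3$ (in effect $G \wr S_3$), picking four specific pairs $(\bar a_\ell,\pi_\ell)$ and verifying $c_0 c_1 c_2 c_3 = a$ by a coordinate-by-coordinate hand computation; the involution/commutator structure is implicit in the choice of the $\pi_\ell$. You instead first amalgamate $A_4$ over $\langle a\rangle$ via the stable amalgam of Claim \ref{c18}, then wreath with $\bbZ/2$, taking the $c_\ell$ to be the twisted flips $\tilde a_h = ((h,h^{-1}),\tau)$; the identity $c_0 c_1 c_2 c_3 = a$ then drops out from $a$ being a commutator of two $3$-cycles in $A_4$. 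Your route is more conceptual and the key algebra is visible in one line, at the price of invoking the \S2 machinery (\ref{c14}, \ref{c18}) to see that the resulting type is a scheme over $\langle a\rangle$; the paper's route is more hands-on but self-contained. The two details you defer do check out: the map $\phi_h\bigl(((g_1,g_2),\sigma)\bigr) = ((g_1, h^{-1} g_2 h),\sigma)$ on the $1$-coset and $((g_1,g_2),\tau) \mapsto ((g_1 h, h^{-1}g_2),\tau)$ is a group isomorphism $\langle G,c_0\rangle \to \langle G,c_\ell\rangle$ over $G$, and non-splitting of $\tp_{\bs}(\bar c,G,G^+)$ over $\{a\}$ follows from the uniqueness and monotonicity clauses \ref{c14}(2),(3) applied to each finite $K$ with $a \in K \subseteq G$. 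Two small remarks: $[b_1^{-1},b_2]$ is $(13)(24)$ or $(14)(23)$ depending on composition convention, but either way it is a non-identity Klein involution, which is all that is used; and your degenerate case $a=e$ (with $b_1 = b_2 = e$, so all four $c_\ell$ collapse) produces a scheme different from the paper's, whose $c_\ell$ remain distinct at $a=e$ — both are legitimate since the claim is existential, and defining $\gs$ by cases on whether $x_0 = e$ appears in the input type is permissible for a function on $\bold D_{p_\gs}(\bold K)$.
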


\begin{PROOF}{\ref{c52}}
We first make a less specific construction for any $G \in \mathbf K$.

For $n \ge 2$ let $\cU_n = G \times n = \{(g,\iota):g \in G,\iota < n\}$.  
For finite $K \subseteq G$ let $E_K := \{((g_1,\iota_1),(g_2,\iota_2)):g_1,g_2
\in G$ and $\iota_1,\iota_2 < n$ and $g_1 K = g_2 K\}$, this is an equivalence
relation on $\cU_n$, each equivalence class has $\le n \times |K|$
elements.  For $\bar a \in{}^{\omega >} G$ let $E_{\bar a} = E_K$ when
$K = \langle\text{Rang}(\bar a)\rangle_G$ which is finite. 

For $\bar a \in {}^n G$ and $\pi$ a permutation of $\{0,\dotsc,n-1\}$ let
$h_{\bar a,\pi}$ be the following function from $\cU_n$ into $\cU_n$:
\mn
\begin{enumerate}
\item[$(*)_1$]  $h_{\bar a,\pi}((g,\iota)) = (ga_\iota,\pi(\iota))$.
\end{enumerate}
\mn
Clearly
\mn
\begin{enumerate}
\item[$(*)_2$]  $h_{\bar a,\pi}$ is a permutation of $\cU_n$ which
maps every $E_{\bar a}$-equivalence class onto itself.
\end{enumerate}
\mn
Let $H$ be the group of permutations of $\cU_n$ generated by
$\{h_{\bar a,\pi}:\bar a \in {}^n G$ and $\pi$ is a permutation of
$\{0,\dotsc,n-1\}\}$, now by $(*)_2$ it is easy to see that $H \in 
\mathbf K_{\text{\rm lf}}$ where, as in earlier cases, 
\mn
\begin{enumerate}
\item[$\bullet$]  $H \models ``h =h_1 h_2"$ iff $x \in \cU_n 
\Rightarrow h(x) = h_2(h_1(x))$.
\end{enumerate}
\mn
Now for $\iota < n$ let $\mathbf j_\iota$ be the following function from
$G$ into $H$:
\mn
\begin{enumerate}
\item[$(*)_3$]  $\mathbf j_\iota(a) = h_{\bar b,\pi}$ when $\pi =$ the
identity and $b_k$ is $a$ if $k = \iota$ and is $e_G$ otherwise.
\end{enumerate}
\mn
Now
\mn
\begin{enumerate}
\item[$(*)_4$]  for $\iota < n,\mathbf j_\iota$ is an embedding of $G$ into
$H$.
\end{enumerate}
\mn
[Why?  Check.]

We let $G^*,\mathbf j_*$ be such that $G^* \supseteq G$ and $\mathbf j_*$ 
is an isomorphism from $G^*$ onto $H$ extending $\mathbf j_0$.

For later use note:
\mn
\begin{enumerate}
\item[$(*)_5$]   for transparency we can use existentially closed $G$.
\end{enumerate}
\mn
Also
\mn
\begin{enumerate}
\item[$(*)_6$]
\begin{enumerate}
\item[(a)]  $G \le_{\Omega[\mathbf K]} G^+$ equivalently
  $\mathbf j_0(G) = \mathbf j_*(G) \le_{\Omega[\mathbf K]} H$;
\sn
\item[(b)]  if $A \subseteq G$ and for $\ell < m$ we
  have $\bar a_\ell \in {}^n A$ and $\pi_\ell$ is a permutation of
$\{0,\dotsc,n-1\}$ then $p=\tp_{\bs}
(\langle h_{\bar a_\ell,\pi_\ell}:\ell < m\rangle,\mathbf j_*(G),H)$ 
 does not split over $A$;
\sn
\item[(c)]   if above $A$ is finite and $\bar a$ lists
  $A$ \then \, for some $\gs \in \Omega[\mathbf K],p=q_s(\bar a,\mathbf j_*(G)$.
\end{enumerate}
\end{enumerate}
\mn
[E.g. why clause (c) holds?  By \ref{c18}(2) recalling $(*)_5$.]

Now we prove each part.

\noindent
1) Let $n=2$ and $\pi$ be the permutation of $\{0,1\}$ such that
   $\pi(0)=1,\pi(1) = 0$, and let $a = \mathbf
   j^{-1}_*(h_{<e_G,e_G>,\pi})$.

\noindent
2) Should be clear by $(*)_6(c)$.

\noindent
3) First note that
\mn
\begin{enumerate}
\item[$\oplus_1$]  $\mathbf j^{-1}_*(h_{\bar a,\pi})$ realizes
$q_{\gs_{\text{cg}}}(\langle \rangle,G)$ in $G^*$ \when \,
for some $k \in \{1,\dotsc,n-1\}$ we have
\begin{enumerate}
\item[$\bullet_1$]  $\pi$ is a permutation of $\{0,\dotsc,n-1\}$ and
has order two
\sn
\item[$\bullet_2$]  $\pi(0) = k$
\sn
\item[$\bullet_3$]  $\pi(k) = 0$
\sn
\item[$\bullet_4$]  $\bar a \in {}^n G$ satisfies 
$a_{\pi(\iota)} = a^{-1}_\iota$ for $\iota <n$
\sn
\item[$\bullet_5$]  if $\pi(\iota) \ne \iota,\iota < n$ 
then $a_\iota = e_G$, (or just
  $a_0$ belongs to the center of $G$).
\end{enumerate}
\end{enumerate}
\mn
[Why?  By $(*)_2$ and the choice of $H$ 
clearly $h_{\bar a,\pi} \in H$ and inspecting $(*)_1$, easily $h_{\bar
a,\pi}$ has order two.  By the choice of $\mathbf j_*,\pi$ as $\pi(0) =
k,\pi(k) = 0$ and $a_k = e_G = a_0$, for $g \in G$ we get $H \models 
``h^{-1}_{\bar a,\pi} \mathbf j_0(g) h_{\bar a,\pi} = 
\mathbf j_k(g)"$.   
However, for every $g_1,g_2 \in G$ the elements
$\mathbf j_0(g_1),\mathbf j_k(g_2)$ of $H$ 
commute as $h_{\bar a_1,\pi_1},
h_{\bar a_0,\pi_2}$ commute in $H$, e.g. when $\pi_1 = \text{ id}_n =
\pi_2$ and $\bigwedge\limits_{\ell < n} (a_{1,\ell} = e \vee
a_{2,\ell} = e$).  Lastly, $g_1,g_2 \in G \wedge \mathbf j_0(g_1) =
\mathbf j_0(g_2) \Rightarrow g_1 = e_G = g_2$.
Together we are done.]

Let $n=3$ and for $\ell < 4$ let $g_\ell \in H$ 
be $h_{\bar a_\ell,\pi_\ell}$ where
$\pi_\ell,\bar a_\ell$ are defined by (recall $a \in G$ is given and
has order 2):
\mn
\begin{enumerate}
\item[$\oplus_2$]  for $\ell < 4$ let $\pi_\ell$ be such that:

\underline{$\ell=0,3$}:  the orbits are $\{0,1\},\{2\}$;

\underline{$\ell=1,2$}:  the orbits are $\{0,2\},\{1\}$.
\sn
\item[$\oplus_3$]   let $\bar a_\ell = \langle a_{\ell,i}:i < 3\rangle$
be $\langle e,e,e\rangle,\langle e,a,e\rangle,\langle e,e,e\rangle,(e,e,e)$ 
for $\ell=0,1,2,3$.
\end{enumerate}
\mn
Now
\mn
\begin{enumerate}
\item[$\oplus_4$]   $c_\ell := \mathbf j^{-1}_*(h_{\bar
a_\ell,\pi_\ell})$ realizes $q_{\gs_{\text{cg}}}(\langle \rangle,G)$ 
for $\ell < 4$.
\end{enumerate}
\mn
[Why?  We apply $\oplus_1$ with $k$ being $1$ for $\ell = 0,3$ and $2$ for
$\ell=1,2$.  So we have to check $\bullet_1 -
\bullet_4$ for each $\ell$; now $\bullet_1 + \bullet_2 + \bullet_3$
holds by inspecting $\oplus_2$ and the choice of $k$ and of
$\pi_\ell$.  

Lastly, for $\bullet_4 + \bullet_5$ 
note that $a,e$ has order $2$ and $a_{\ell,0}
= e_G = a_{\ell,k}$ by inspecting $\oplus_3$.]
\mn
\begin{enumerate}
\item[$\oplus_5$]  $\tp_{\bs}(\langle c_0,c_1,c_2,c_3),G,G^*)$ does not
split over $\langle a \rangle$, moreover is 
$q_{\gt}(\langle a \rangle,G)$ for some $\gt \in \Omega[\mathbf K]$.
\end{enumerate}
\mn
[Why?  Just think recalling $(*)_6$.]

Lastly,
\mn
\begin{enumerate}
\item[$\oplus_6$]   $G^+ \models ``c_0 c_1 c_2 c_3 = a"$.
\end{enumerate}
\mn
[Why?  This is equivalent to $H \models h_{\bar a_0,\pi_0} h_{\bar
a_1,\pi_1} h_{\bar a_2,\pi_2} h_{\bar a_3,\pi_3} = \mathbf j_0(a)$.
By the definition of the product we check how each 
$(g,\ell) \in \cU_n$ is mapped (see above, so 
$h_{\bar a_0,\pi_0}$ is applied first) applying 
$h_{\bar a_\ell,\pi_\ell}$ in turn:

\[
(g,0) \mapsto (g e,1) \mapsto (gea,1) \mapsto (geae,1) \mapsto
(geaee,0) = (ga,0) = \mathbf j_0(a)(g,0)
\]

\mn
and

\[
(g,1) \mapsto (ge,0) \mapsto (gee,2) \mapsto (geee,0) \mapsto (geeee,1) 
= (g,1) = \mathbf j_0(a)(g,1)
\]

\[
(g,2) \mapsto (g e,2) \mapsto (gee,0) \mapsto (g eee,2) \mapsto (geeee,2)
= (g,2) = \mathbf j_0(a)(g,2).
\]
\mn
So we are done.]
\end{PROOF}

The following will be used in the proof of existence of complete existentially 
closed $G$.
\begin{claim}
\label{c64}
1) If (A) then (B) \where \,:
\mn
\begin{enumerate}
\item[(A)]  
\begin{enumerate}
\item[(a)]  $G_n \subseteq G_{n+1} \in \mathbf K$ for $n <
\omega$ and $I$ a set;
\sn
\item[(b)]  $a^t_n \in G_{n+1}$ and let $b^t_n = a^t_0 \ldots
a^t_n$ in $G_{n+1}$ for $n < \omega,t \in I$;
\sn
\item[(c)]  $\bar a_n = \langle
a^t_n:t \in I\rangle,\bar b_n = \langle b^t_n:t \in I\rangle$;
\sn
\item[(d)]
\begin{enumerate}
\item[$(\alpha)$]  $\tp_{\bs}(\bar a_n,G_n,G_{n+1})$ is
  increasing\footnote{So by (A)(e) this is equivalent to ``$\tp(\bar
    a_n,\emptyset,G_{n+1})$ is constant".} with $n$;
\sn
\item[$(\beta)$]  $c \ell(\bar a_n,G_{n+1}) \cap G_n = \{e_{G_n}\}$; if
$I=\{t\}$ and $a^t_n$ has order $k(t)$
this means that for every $i \in \{1,\dotsc,k(t)\}$ we have:  

\hskip25pt $G_{n+1} \models ``(a^t_n)^i = e_{G_1}"$ iff 
$(a^t_n)^i \in G_n$ iff $i=k(t)$;
\end{enumerate}
\sn
\item[(e)]  $a^t_n$ commutes with every $c \in G_n$;
\sn
\item[(f)]  $G_\omega = \cup\{G_n:n < \omega\}$ hence $\in \mathbf K$;
\end{enumerate}
\sn
\item[(B)]  for some $\bar b_\omega,G_{\omega +1}$ we have:
\sn
\begin{enumerate}
\item[(a)]  $G_{\omega +1} \supseteq G_\omega$ belongs to $\mathbf K$;
\sn
\item[(b)]  $\bar b_\omega = \langle b^t_\omega:t \in I\rangle$ and
$b^t_\omega \in G_{\omega +1}$;
\sn
\item[(c)]  $G_{\omega +1} = c \ell(G_\omega 
\cup\{b^t_\omega:t \in I\},G_{\omega +1})$;
\sn
\item[(d)]  if $n < \omega$ then
$p_n = \tp_{\bs}(\bar b_\omega,G_n,G_{\omega +1}) 
= \tp_{\bs}(\bar b_n,G_n,G_{n+1})$.
\end{enumerate}
\end{enumerate}
\mn
2) If we have (A) except omitting (A)(d)$(\beta)$, still we have:
\mn
\begin{enumerate}
\item[(B)$'$]  $(a)-(c) \quad$ as above;
\sn
\begin{enumerate}
\item[$(d)$]  $\bar b_\omega \rest u$ realizes $\tp_{\bs}
(\bar b_{n!},G_{n!} \rest u,G_{n+1})$ in $G_{\omega +1}$ when $u
\subseteq I$ is finite and $n$ is large enough.
\end{enumerate}
\end{enumerate}
\end{claim}

\begin{PROOF}{\ref{c64}}  
1) Letting $p_n(\bar x) = \tp_{\bs}(\bar b_n,
G_n,G_{n+1})$, it is enough to prove:
\mn
\begin{enumerate}
\item[$(*)_1$]  $p_n \subseteq p_{n +1}$.
\end{enumerate}
\mn
For this it is enough to prove, letting $\bar y = \langle y_t:t \in I\rangle$,
\mn
\begin{enumerate}
\item[$(*)_2$]   if $\sigma(\bar y,\bar z)$ is a group-term and $\bar
c \in {}^{(\ell g(\bar z))}(G_n)$ \then \, $G_{n+2} \models 
``\sigma(\bar b_{n+1},\bar c) =
e"$ iff $G_{n+1} \models ``\sigma(\bar b_n,\bar c) = e"$.
\end{enumerate}
\mn
Towards proving $(*)_2$ note:
\mn
\begin{enumerate}
\item[$\bullet_{2.1}$]  $\bar c$ and $\bar b_n$, and 
hence $\sigma(\bar b_n,\bar c)$ are from $G_{n +1}$,
\sn
\item[$\bullet_{2.2}$]   $a^t_{n+1}$ commutes with every 
$c_i(i < \ell g(\bar c))$ and with $b^s_n$ for $s \in I$.
\end{enumerate}
\mn
By clause (A)(b) of the assumption of the claim,
\mn
\begin{enumerate}
\item[$\bullet_{2.3}$]   $b^t_{n+1} = b^t_n a^t_{n+1}$ and $b^t_n =
  b^t_{n-1} a^t_n$ stipulating $b^t_{-1}=e$.
\end{enumerate}
\mn
Similarly,
\mn
\begin{enumerate}
\item[$\bullet_{2.4}$]   $\bar c$ and $\bar b_{n-1}$ are from $G_n$;
\sn
\item[$\bullet_{2.5}$]  $a^t_n$ commute with every $c_i(i < \ell g(\bar c))$
and with $b^s_{n-1}$ for $s \in I$.
\end{enumerate}
\mn
Hence for some group term $\sigma_*(\bar x)$:
\mn
\begin{enumerate}
\item[$\bullet_{2.6}$]   $G_{n+2} \models ``\sigma(\bar b_{n+1},\bar c) =
\sigma(\bar b_n,\bar c) \sigma_*(\bar a_{n+1})"$;
\sn
\item[$\bullet_{2.7}$]   $G_{n+1} \models ``\sigma(\bar b_n,\bar c) =
\sigma(\bar b_{n-1},\bar c) \sigma_*(\bar a_n)"$.
\end{enumerate}
\mn
Hence by clauses (A)(d)$(\alpha),(\beta)$:
\mn
\begin{enumerate}
\item[$\bullet_{2.8}$]  $\sigma_*(\bar a_n) \in G_n$ iff $\sigma_*(\bar a_n)
= e_{G_n}$ iff $\sigma_*(\bar a_{n+1}) = e_{G_n}$ iff $\sigma_*(\bar
a_{n+1}) \in G_{n+1}$;
\sn
\item[$\bullet_{2.9}$]  if $\sigma_*(\bar a_n) \notin G_n$, hence
$\sigma_*(\bar a_{n+1}) \notin G_{n+1}$,  
then both statements in $(*)_2$ fail because:
\sn
\begin{enumerate}
\item[$(\alpha)$]  $\sigma(\bar b_n,\bar c)$ is from $G_{n+1}$ and $\sigma_*
(\bar a_{n+1}) \notin G_{n+1}$ so $\sigma(\bar b_{n+1},
\bar c) \notin G_{n+1}$ and thus
$\sigma(\bar b_{n+1},\bar c) \ne e_{G_n}$; 
\sn
\item[$(\beta)$]   similarly $\sigma(\bar b_n,\bar c) \notin G_n$ and thus
$\sigma(\bar b_n,\bar c) \ne e_{G_n}$; 
\end{enumerate}
\item[$\bullet_{2.10}$]   if $\sigma_*(\bar a_n) \in G_n$ hence $\sigma_*(\bar
a_n) = e = \sigma_*(\bar a_{n+1})$, then $\sigma(\bar b_{n+1},\bar c) =
\sigma(\bar b_n,\bar c)$ and again we are done.
\end{enumerate}
\mn
Together $(*)_2$ holds.

\noindent
2) Similarly (and the same as part (1) when $G_n$ is existentially
closed for every $n$) but we elaborate.  \Wilog \, $I$ is finite; 
letting $p_n(\bar y) = \tp_{\bs}(\bar a_n,G_n)$, we need:
\mn
\begin{enumerate}
\item[$(*)_1$]  if $\bar c$ is a finite sequence from $G_\omega$ \then
\, the sequence $\langle \tp_{\bs}(\bar b_{n!} \char 94 \bar
c,\emptyset,G_{n!+1}):n < \omega \rangle$ is eventually constant.
\end{enumerate}
\mn
Let $K_n = c \ell(\bar a_n,G_{n+1})$, so by clause $(A)(d)(\alpha)$ of
the assumption $|K_n|$ is constant, finite and $K_n \cap G_n$ is
$\subseteq$-increasing with $n$.  Hence for some $K_*,n(*)$ we have $n
\ge n(*) \Rightarrow K_n \cap G_n = K_*$ and let $k(*) = |K_*|$.
\Wilog \, $n(*) \ge k(*)$; so it is enough to prove
\mn
\begin{enumerate}
\item[$(*)_2$]  if $\bar y = \langle y_t:t \in I\rangle$ and
$\sigma(\bar y,\bar z)$ is a group term, $\bar c \in {}^{\ell g(\bar
z)}(G_n)$ and $n \ge n(*)$, \then \, $G_{n+1} \models ``\sigma(\bar
b_n,\bar c) = e"$ \Iff \, $G_{n+k(*)+1} \models 
\sigma(\bar b_{n+k(*)},\bar c) = e"$.
\end{enumerate}
\mn
As in part (1) we can prove that for some group term $\sigma_*(\bar y)$
we have
\mn
\begin{enumerate}
\item[$\boxplus$]  if $n \ge n(*)$ then $G_{n+2} \models \sigma(\bar
b_{n+1},\bar c) = \sigma(\bar b_n,\bar c) \sigma_*(\bar a_{n+1})$.
\end{enumerate}
\medskip

\noindent
\underline{Case 1}:  $``\sigma_*(\bar a_n) \notin G_n$ for some,
equivalently every, $n \ge n(*)$.

In this case $G_{n+1} \models ``\sigma(\bar b_n,\bar c) \ne e"$ for
every $n \ge n(*)$.
\medskip

\noindent
\underline{Case 2}:  $``\sigma_*(\bar a_n) \in G_n$ for some,
equivalently every, $n \ge n(*)$.

In this case there is $b$ such that $\sigma_*(\bar a_n,\bar c) = b$
for every $n \ge n(*)$.  So for every $n \ge n(*)$ by induction on $m$
we can prove $\sigma(\bar b_{n+m},\bar c) = \sigma(\bar b_n,\bar c)
\cdot b^m$.  But necessarily $b \in K_*$ hence $b$ has order dividing
$|K_*| = k(*)$.  Hence $n \ge n(*) \Rightarrow 
\sigma(\bar b_{n+k(*)},\bar c) = \sigma(\bar b_n,\bar c)$ and thus 
$n_2 > n_1 \ge n(*) \wedge k(*)|(n_2-n_1) \Rightarrow
\sigma(\bar b_{n-2},\bar c) = \sigma(\bar b_{n_1},\bar c)$, and so we 
can finish easily.
\end{PROOF}

\begin{dc}
\label{c67}
1) For $k=2,3...$ let $\gs_{\ab(k)}$ be the unique 
$\gs \in \Omega[\mathbf K_{\lf}]$ such that:
\mn
\begin{enumerate}
\item[$(a)$]  $n(\gs)=1,k(\gs)=0$;
\sn
\item[$(b)$]   if $G \subseteq H$ and $c \in H$ realizes
$q_{\gs}(<>,G) = \tp_{\bs}(c,G,H)$ \then \, $c$
commutes with every $a \in G$;
\sn
\item[$(c)$]  also for every $m < \omega,a^m = e_H$ 
iff $a^m \in G$ iff $k|m$.
\end{enumerate}
\mn
2) Assume $K \in \mathbf K_{\lf}$ is finite and $\bar c \in
{}^{|K|}K$ list it.  \Then \, let $\gs = \gs_{\ab}(\bar c,K)$ be the
unique $\gs \in \Omega[\mathbf K_{\lf}]$ such that:
\mn
\begin{enumerate}
\item[$(a)$]  $n(\gs) = \ell g(\bar c),k(\gs) = 0$ so $p_{\gs}(\bar
x_{\gs}) = \emptyset$;
\sn
\item[$(b)$]  if $G \subseteq H \in \mathbf K_{\lf}$ and $\bar c' \in
{}^{n(\gs)}H$ \then \, the following are equivalent:
\sn
\begin{enumerate}
\item[$(\alpha)$]  $\tp(\bar c',G,H) = q_{\gs}(<>,G)$, 
\sn
\item[$(\beta)$]   $\bar c'$ commutes with $G$, realizes $\tp(\bar
  c,\emptyset,K)$ and $\langle \bar c' \rangle_H \cap G = \{e\}$.
\end{enumerate}
\end{enumerate}
\end{dc}

\begin{claim}
\label{c70}
Assume $\NF_f(G_0,G_1,G_2,G_3)$ and $a \in G_1 \backslash G_0,b
\in G_2 \backslash G_0$.  \Then \, $a,b$ commute in $G_3$ iff $a \in
\mathbf C_{G_1}(G_0),b \in \mathbf C_{G_2}(G_0)$ and $G_0$ is commutative.
\end{claim}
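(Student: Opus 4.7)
The plan is to reduce the commutation statement in $G_3$ to a uniform commutation statement in each permutation group $G_{\bold x}$ arising from an amalgamation try, then to analyse the two composite permutations $\bold j_{\bold x,1}(a)\bold j_{\bold x,2}(b)$ and $\bold j_{\bold x,2}(b)\bold j_{\bold x,1}(a)$ by a direct coset computation on the triples $u = (g_0,g_1,g_2) \in \cU_{\bold x}$.  Using monotonicity (Claim \ref{c14}(2)) one first shrinks $G_1, G_2$ to the finite subgroups $\langle G_0, a\rangle$ and $\langle G_0, b\rangle$ (both finite since $G_0$ is and the ambient group is locally finite), so that the finite version $\mathrm{NF}_{\fin}$ applies; clauses (c)+(d) of Definition \ref{c10} then yield the desired equivalence: $[a,b] = e$ in $G_3$ iff $[\bold j_{\bold x,1}(a),\bold j_{\bold x,2}(b)] = e$ in $G_{\bold x}$ for every amalgamation try $\bold x$ with $G_{\bold x,\ell} = G_\ell$.

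For the ``if'' direction, fix such an $\bold x$ and a triple $u$, write the coset decompositions $g_1 a = \mu_1 \mu_0$ in $G_1$ and $g_2 b = \nu_2 \nu_0$ in $G_2$ (with $\mu_1 \in \bold I_{\bold x,1}, \nu_2 \in \bold I_{\bold x,2}, \mu_0, \nu_0 \in G_0$), and apply Definition \ref{c3}(2) twice in each ordering.  The hypothesis that $a$ and $b$ each centralise $G_0$ lets one commute $a$ past $g_0$ and $\mu_0 g_0$ in $G_0$ and $b$ past the analogous $G_0$-elements, so both orderings end at triples whose last two coordinates are $\mu_1$ and $\nu_2$; the first coordinates differ only by the swap $\nu_0 \mu_0 \leftrightarrow \mu_0 \nu_0$, which disappears because $G_0$ is abelian.

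For the ``only if'' direction I argue by contrapositive, exhibiting an amalgamation try on which the permutations fail to commute whenever one of the three conditions fails.  To force $a \in \Cm_{G_1}(G_0)$, pick $g_0^* \in G_0$ and construct $\bold x$ with $\bold I_{\bold x,1} \supseteq \{e, a\}$ and $\bold I_{\bold x,2} \supseteq \{e, g_0^* b^{-1}\}$ (a legitimate transversal since $b \notin G_0$ forces $g_0^* b^{-1}$ and $e$ into distinct cosets); evaluating the two orderings at $u = (e, e, g_0^* b^{-1})$ shows that equality of outputs is equivalent to $a g_0^* = g_0^* a$, and as $g_0^*$ was arbitrary one obtains $a \in \Cm_{G_1}(G_0)$.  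The claim for $b$ is symmetric.  Finally, once both centralisation hypotheses are in hand, fix arbitrary $g_0^*, h_0^* \in G_0$, build $\bold x$ with $\bold I_{\bold x,1} \supseteq \{e, a^{-1} g_0^*\}$ and $\bold I_{\bold x,2} \supseteq \{e, b^{-1} h_0^*\}$, and evaluate at $u = (e, a^{-1} g_0^*, b^{-1} h_0^*)$; the same calculation as in the ``if'' direction, specialised with $\mu_0 = g_0^*$ and $\nu_0 = h_0^*$, reduces commutation of the two outputs to $g_0^* h_0^* = h_0^* g_0^*$, forcing $G_0$ to be abelian.

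The main obstacle is organisational rather than conceptual: each sub-argument demands constructing an explicit amalgamation try, verifying that the prescribed representatives lie in pairwise distinct $G_0$-cosets, and tracking coset decompositions through two successive applications of Definition \ref{c3}(2); in the absence of a centralisation hypothesis one must also keep the ``wrong coset'' branch in view, namely the possibility that the product $g_0^* a$ escapes $a G_0$, as an alternative source of non-commutation.
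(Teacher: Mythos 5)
Your proof is correct and runs on essentially the same engine as the paper's --- coset computations in the permutation groups $G_{\bold x}$ together with witness constructions of amalgamation tries --- but with a different case organisation. The paper first derives the master criterion $(*)_7$ under the additional assumption that $a$ and $b$ normalize $G_0$, uses it to settle $\oplus_1$--$\oplus_4$, and then treats the non-normalizing cases $\oplus_5,\oplus_6$ by a separate calculation at $(e,e,e)$. Your ``only if'' construction, with $\bold I_{\bold x,1}\supseteq\{e,a\}$, $\bold I_{\bold x,2}\supseteq\{e,g_0^*b^{-1}\}$ and evaluation at $u=(e,e,g_0^*b^{-1})$, folds the normalizing and non-normalizing branches into a single step: if $g_0^*a\notin aG_0$ the second coordinates of the two compositions already disagree (your ``wrong coset'' branch, which is exactly the paper's $\oplus_5$ phenomenon), and otherwise the first coordinates test $ag_0^*=g_0^*a$ directly. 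This buys a tighter three-case contrapositive ($a$ not centralizing, $b$ not centralizing, $G_0$ non-abelian) at the cost of carrying the bifurcation inside each step rather than isolating it once as the paper does via the hypothesis $\oplus$. Two small inaccuracies worth fixing: the reduction to finite $G_{\bold x,\ell}$ is built directly into clause (d) of Definition \ref{c16} rather than being a consequence of Claim \ref{c14}(2), which is a monotonicity statement for ${\rm NF}_{\fin}$ once it is already in hand; and the quantification in your reduction should read $G_{\bold x,0}=G_0$, $\langle G_0,a\rangle\subseteq G_{\bold x,1}$, $\langle G_0,b\rangle\subseteq G_{\bold x,2}$ --- Definition \ref{c10}(2)(c) uses containment, not equality, of $G_\ell$ in $G_{\bold x,\ell}$.
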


\begin{remark}
1) $\NF_f$ is from Definition \ref{c16}.

\noindent
2) Recall $g^{[a]} = a^{-1} g a$.
\end{remark}

\begin{PROOF}{\ref{c70}}
\Wilog \, $G_1,G_2$ are existentially closed (by
  monotonicity of $\NF_f$, see \ref{c18}(3) and existence of
  existentially closed extensions).

First assume
\mn
\begin{enumerate}
\item[$\oplus$]  $a \in \mathbf N_{G_1}(G_0)$ and 
$b \in \mathbf N_{G_2}(G_0)$.
\end{enumerate}
\mn
By \ref{c25}, \wilog \, we can find
 $\mathbf x \in \mathbf X_{\mathbf K}$ such that $G_3 = G_{\mathbf x},
G_{\mathbf x,\ell} = G_\ell$ for $\ell < 3$ and let 
$f_a = \mathbf j_{\mathbf x,1}(a),f_b = \mathbf j_{\mathbf x,2}(b)$; we shall
use the fact that: we have some freedom in the choice of 
$\mathbf x$, see \ref{c25}.

Let $(g_0,g_1,g_2) \in \cU_{\mathbf x}$ and we should see whether $f_b
\circ f_a((g_0,g_1,g_2)) = f_a \circ f_b((g_0,g_1,g_2))$; there are unique
$a',h_a,b',h_b$ such that:
\mn
\begin{enumerate}
\item[$(*)_0$]  
\begin{enumerate}
\item[(a)]  $g_1 a = a' h_a$ with $h_a \in G_0,a' \in \mathbf I_{\mathbf x,1}$;
\sn
\item[(b)]  $g_2 b = b' h_b$ with $h_b \in G_0,b' \in \mathbf I_{\mathbf x,2}$.
\end{enumerate}
\end{enumerate}
\mn
Now
\mn
\begin{enumerate}
\item[$(*)_1$]  $f_a((g_0,g_1,g_2)) = (h_a g^{[a]}_0,a',g_2)$.
\end{enumerate}
\mn
[Why?  As $g_1 g_0 a = g_1 a g^{[a]}_0 = a'(h_a g^{[a]}_0)$, noting
that $g^{[a]}_0 \in G_0$ because we are assuming that $a$ normalize
$G_0$ inside $G_1$.]
\mn
\begin{enumerate}
\item[$(*)_2$]  $f_b((h_a g^{[a]}_0,a',g_2)) = (h_b h^{[b]}_a
g^{[a][b]}_0,a',b')$.
\end{enumerate}
\mn
[Why?  As $g_2(h_a g^{[a]}_0)b = g_2 b(h^{[b]}_a g^{[a][b]}_0) = b'(h_b
h^{[b]}_a g^{[a][b]}_0)$.]

So,
\mn
\begin{enumerate}
\item[$(*)_3$]  $(f_b \circ f_a)((g_0,g_1,g_2)) = (h_b h^{[b]}_a
g^{[a][b]}_0,a',b')$.
\end{enumerate}
\mn
Now,
\mn
\begin{enumerate}
\item[$(*)_4$]  $f_b((g_0,g_1,g_2)) = (h_b g^{[b]}_0,g_1,b')$.
\end{enumerate}
\mn
[Why?  As $g_2 g_0 b = g_2 b g^{[b]}_0 = b' h_b g^{[b]}_0)$.]
\mn
\begin{enumerate}
\item[$(*)_5$]  $f_a((h_b g^{[b]}_0,g_1,b')) = (h_a h^{[a]}_b
g^{[b][a]}_0,a',b')$.
\end{enumerate}
\mn
[Why?  As $g_1(h_b g^{[b]}_0) a = g_1 a(h^{[a]}_b g^{[b][a]}_0) = a'(h_a
h^{[a]}_b g^{[b][a]}_0)$.]

Hence,
\mn
\begin{enumerate}
\item[$(*)_6$]  $(f_a \circ f_b)((g_0,g_1,g_2)) = (h_a h^{[a]}_b
g^{[b][a]}_0,a',b')$.
\end{enumerate}
\mn
Together we can deduce:
\mn
\begin{enumerate}
\item[$(*)_7$]  $(f_b \circ f_a)(g_0,g_1,g_2)) = (f_a \circ
f_b)(g_0,g_1,g_2)$ iff $h_b h^{[b]}_a g^{[a][b]}_0 = h_a h^{[a]}_b
g^{[b][a]}_0$ in $G_0$.
\end{enumerate}
\mn
Now, \underline{not} assuming $\oplus$ we shall prove the claim by cases 
(using $(*)_7$ when $\oplus$ holds).
\mn
\begin{enumerate}
\item[$\oplus_1$]  $a,b$ commute in $G_3$ \when \,:
\sn
\begin{enumerate}
\item[$\bullet_1$]  $a$ commutes with $G_0$ in $G_1$,
\sn
\item[$\bullet_2$]  $b$ commutes with $G_0$ in $G_2$,
\sn
\item[$\bullet_3$]  $G_0$ is commutative.
\end{enumerate}
\end{enumerate}
\mn
[Why? Note that the assumption $\oplus$ holds (by $\bullet_1 +
\bullet_2$), and so let $\mathbf x \in
\mathbf X_{\mathbf K}$ be as above.   For any $(g_0,g_1,g_2) \in \cU_{\mathbf
x}$, we can apply $(*)_7$ thus $h_a,h_b \in G_0$ are well defined,
by $(*)_0$.  Now as $h_b,h_a,g_0 \in G_0$ and $a \in \mathbf C_{G_1}(G_0),b 
\in \mathbf C_{G_1}(G_0)$ and $G_0$ is commutative, 
by the present assumptions, clearly
$h_b h^{[b]}_a g^{[a][b]}_0 = h_b h_a g_0 = h_a
h_b g_0 = h_a h^{[a]}_b g^{[b][a]}_0$.  As $G_3 = G_{\mathbf x},G_{\mathbf
  x}$ is a group of permutations of $\cU_{\mathbf x}$ and $(*)_7$ holds
for any $(g_0,g_1,g_3) \in \cU_{\mathbf x}$, clearly
$f_a,f_b \in G_{\mathbf x}$ commute, so we are done.]
\mn
\begin{enumerate}
\item[$\oplus_2$]  $a,b$ do not commute in $G_3$ \when \,:
\sn
\begin{enumerate}
\item[$\bullet$]  $a$ commutes with $G_0$,
\sn
\item[$\bullet$]  $b$ commutes with $G_0$,
\sn
\item[$\bullet$]  $G_0$ is not commutative.
\end{enumerate}
\end{enumerate}
\mn
[Why?  Choose $h_1,h_2 \in G_0$ which do not commute and let
$(g_0,g_1,g_2) = (e_{G_0},e_{G_1},e_{G_2}) = (e,e,e)$; note that 
$ah^{-1}_\ell \notin G_0,b h^{-1}_\ell \notin G_0$ for $\ell=1,2$.

Above we could have chosen $\mathbf x \in \mathbf X_{\mathbf K}$ 
such that $(G_{\mathbf x,\ell} = G_\ell$ for $\ell < 3$ and) 
$a h^{-1}_1 \in \mathbf I_{\mathbf x,1},b h^{-1}_2 \in \mathbf I_{\mathbf x,2}$.
Again $\oplus$ holds hence $(*)_7$ holds for any relevant
$\mathbf x,g_0,g_1,g_2$.  Recall $(g_0,g_1,g_2) := (e,e,e)$, so $g_1 a = ea=a=(a
h^{-1}_1)h_1$.  So in 
$(*)_0(a)$, we get $a' = a h^{-1}_1$ and $h_a=h_1$.  Similarly in
$(*)_0(b)$ we get $b'= b h^{-1}_2,h_b = h_2$.  So
$f_a,f_b \in G_{\mathbf x}$ do not commute by $(*)_7$, because 
 we get $h_b h^{[b]}_a g^{[a][b]}_0 = h_b h_a g_0 = h_2 h_1 g_0 \ne
 h_1 h_2 g_0 = h_a h_b g_0 = h_a h^{[a]}_b
g^{[b][a]}_0$, the inequality as $G_0 \models h_1 h_2 \ne h_2 h_1$ 
and we are done by $(*)_7$.]
\mn
\begin{enumerate}
\item[$\oplus_3$]  $a,b$ does not commute in $G_3$ \when \,:
\sn
\begin{enumerate}
\item[$\bullet$]  $a$ normalizes $G_0$ in $G_1$,
\sn
\item[$\bullet$]  $b$ normalizes $G_0$ in $G_2$,
\sn
\item[$\bullet$]  $b$ does not commute with $G_0$ in $G_2$.
\end{enumerate}
\end{enumerate}
\mn
[Why?  Again $\oplus$ holds hence we can apply $(*)_7$ for any
relevant $\mathbf x,g_0,g_1,g_2$.  Let $h_1 \in
G_0$ be such that it does not commute with $b$ in $G_2$ and 
let $h_2 = e_{G_0}$.  Choose
above $\mathbf x \in \mathbf X_{\mathbf K}$ such that $a h^{-1}_1 \in \mathbf I_{\mathbf
x,1}$ and $b = b h^{-1}_2 \in 
\mathbf I_{\mathbf x,2}$ and let $(g_0,g_1,g_2) = (e,e,e)$.  Again 
in $(*)_0$ we get $a' = a h^{-1}_1,h_a = h_1$ and $b' = b h^{-1}_2,h_b
= h_2 = e$.  Now
$h_b h^{[b]}_a g^{[a][b]}_0 = e h^{[b]}_a e = h^{[b]}_a \ne h_a = h_a ee =
h_a h^{[a]}_b g^{[b][a]}_0$, the inequality by the choice of $h_a = h_1$.]
\mn
\begin{enumerate}
\item[$\oplus_4$]  $a,b$ do not commute in $G_3$ \when :
\sn
\begin{enumerate}
\item[$\bullet$]  $a$ normalizes $G_0$ in $G_1$,
\sn
\item[$\bullet$]  $b$ normalizes $G_0$ in $G_2$,
\sn
\item[$\bullet$]  $a$ does not commute with $G_0$ in $G_2$.
\end{enumerate}
\end{enumerate}
\mn
[Why?  Like $\oplus_3$.]

Next
\mn
\begin{enumerate}
\item[$\oplus_5$]  $a,b$ does not commute in $G_3$ \when \,:
\sn
\begin{enumerate}
\item[$\bullet$]  $a \in G_1 \backslash G_0$ does not normalize $G_0$.
\end{enumerate}
\end{enumerate}
\mn
Why?  Choose $h \in G_0$ such that $a^{-1} h a \notin G_0$ hence $h a
\notin a G_0$ and, of course, $ha \notin G_0$ as $a \notin G_0,h \in
G_0$ and similarly $b h^{-1} \in G_2 \backslash G_0$.  Let $a' =ha$ so
$a' \ne a$ because $h \ne e$.

Choose above $\mathbf x \in \mathbf X_{\mathbf K_{\text{lf}}}$ such that 
$b h^{-1} \in \mathbf I_{\mathbf x,2}$ and 
$a,a' \in \mathbf I_{\mathbf x,1}$.  Why can we
choose such $\mathbf I_{\mathbf x,1}$?  Because $a' = ha \in
G_1 \backslash G_0,a \in G_1 \backslash G_0$ and $a G_0 
\ne a' G_0$, as otherwise for some $h_1 \in G_0$ we have 
$a' = a h_1$, and so $a^{-1} h a = a^{-1} a' = 
h_1 \in G_0$, contradicting the choice of $h$.

Let $f_a,f_b$ be as above for this choice of $\mathbf x$.

Now consider $(e,e,e) \in \cU_{\mathbf x}$ so
\mn
\begin{enumerate}
\item[$(*)'_1$]  $f_a((e,e,e)) = (e,a,e)$.
\end{enumerate}
\mn
[Why?  As $a \in \mathbf I_{\mathbf x,1}$.]
\mn
\begin{enumerate}
\item[$(*)'_2$]  $f_b((e,a,e)) = (h,a, b h^{-1})$.
\end{enumerate}
\mn
[Why? Because $b h^{-1} \in \mathbf I_{\mathbf x,2},h \in G_0$.]
\mn
\begin{enumerate}
\item[$(*)'_3$]  $(f_b \circ f_a)(e,e,e) = (h,a,b h^{-1})$.
\end{enumerate}
\mn
[Why?  By $(*)'_1 + (*)'_2$.]
\mn
\begin{enumerate}
\item[$(*)'_4$]  $f_b((e,e,e)) = (h,e, b h^{-1})$.
\end{enumerate}
\mn
[Why?  Because $b h^{-1} \in \mathbf I_{\mathbf x,2}$ and $h \in G_0$.]
\mn
\begin{enumerate}
\item[$(*)'_5$]  $f_a((h,e,b h^{-1}) = (e,a',b h^{-1})$.
\end{enumerate}
\mn
[Why?  As $eha = ha = a' = a'e$ and $a' \in \mathbf I_{\mathbf x,1}$.]
\mn
\begin{enumerate}
\item[$(*)'_6$]  $(f_a \circ f_b)((e,e,e) = (e,a',bh^{-1}))$.
\end{enumerate}
\mn
[Why?  By $(*)'_4 + (*)'_5$.]

By $(*)'_3 + (*)'_6$, as $a' \ne a$ the triple 
$(e,e,e)$ exemplifies $\mathbf j_{\mathbf x,1}(a),\mathbf j_{\mathbf x,2}(b)$ 
do not commute in $G_{\mathbf x}$.

Lastly,
\mn
\begin{enumerate}
\item[$\oplus_6$]  $a,b$ do not commute in $G_3$ \when \,:
\sn
\begin{enumerate}
\item[$\bullet$]   $b \in G_2 \backslash G_0$ does not normalize $G_0$.
\end{enumerate}
\end{enumerate}
\mn
[Why?  As in $\oplus_5$.]

As we have covered all the cases we are done.
\end{PROOF}

\begin{claim}
\label{c73}
Assume $\gS \subseteq \Omega[\mathbf K_{\lf}]$ and $G_1
\le_{\gS} G_2,G_1$ is existentially closed 
and $d \in G_2$.  If conjugation by $d$
(in $G_2$) maps $G_1$ onto itself \then \, for some $c \in G_1$ we
have $a \in G_1 \Rightarrow c^{-1} ac = d^{-1} ad$,
i.e. $dc^{-1} a = adc^{-1}$, i.e. $dc^{-1}$, a commute in $G_1$ so
$dc^{-1}$ commute with $G_1$.
\end{claim}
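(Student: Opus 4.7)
The plan is to extract a finite subgroup $K \subseteq G_1$ over which $\tp_{\bs}(d, G_1, G_2)$ does not split, pick $c \in G_1$ realizing $d$'s conjugation action on $K$ by ex.cl., and then use the rigidity provided by non-splitting to conclude that $c$ in fact acts like $d$ on all of $G_1$.

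First I would invoke $G_1 \le_{\gS} G_2$ to obtain $\gs \in \gS$ and $\bar a^* \in {}^{k(\gs)}G_1$ realizing $p_{\gs}(\bar x_{\gs})$ with $\tp_{\bs}(d, G_1, G_2) = q_{\gs}(\bar a^*, G_1)$. By \ref{a15}(3) this type does not split over $K_0 := \langle \bar a^*\rangle_{G_1}$. Since $G_2 \in \bold K_{\lf}$, the cyclic group $\langle d\rangle$ is finite of some order $N$; using that $d$ normalizes $G_1$, the enlargement $K := \langle d^{-i} K_0 d^i : 0 \le i < N\rangle_{G_1}$ is a finite subgroup of $G_1$ satisfying $d^{-1}Kd = K$, and by monotonicity $\tp_{\bs}(d, G_1, G_2)$ still does not split over $K$.

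Second, since $G_1$ is existentially closed in $\bold K_{\lf}$, the finite-group inclusion $K \subseteq \langle K, d\rangle_{G_2}$ extends to an embedding of $\langle K, d\rangle_{G_2}$ into $G_1$ fixing $K$ pointwise; let $c \in G_1$ denote the image of $d$ under such an embedding. Then $c^{-1}xc = d^{-1}xd$ for every $x \in K$, so $e := dc^{-1} \in G_2$ centralizes $K$.

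The main step, and the principal obstacle, is to promote this agreement on $K$ to agreement on all of $G_1$, equivalently showing $e$ centralizes every $a \in G_1$. Given $a \in G_1$, set $b := d^{-1}ad \in G_1$. The non-splitting of $\tp_{\bs}(d, G_1, G_2)$ over $K$ supplies the rigidity I need: $\tp_{\bs}(a \char 94 b, K, G_1)$ is determined by $\tp_{\bs}(a, K, G_1)$, because any $K$-fixing isomorphism from $\langle K, a\rangle$ onto $\langle K, a'\rangle$ extends to one fixing $K$ and $d$, sending $b$ to $d^{-1}a'd$. To exploit this for the single $c$ already chosen, I would for each $a$ produce via ex.cl.\ an element $c_a \in G_1$ realizing $\tp_{\bs}(d, K \cup \{a,b\}, G_2)$, so that $c_a^{-1}ac_a = b$; then $cc_a^{-1}$ lies in $\Cm_{G_1}(K)$, and the remaining task is to argue, by the uniformity of the scheme $\gs$ combined with $K$-homogeneity of the exlf group $G_1$ (perhaps by iterating along an exhaustion of $G_1$ by finite subgroups), that $c$ and $c_a$ induce the same conjugation on $a$. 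Once $c^{-1}ac = b = d^{-1}ad$ holds for every $a \in G_1$, the element $dc^{-1}$ commutes with $G_1$, as required.
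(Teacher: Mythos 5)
Your set-up parallels the paper's: extract a scheme $\gs$ and parameter $\bar a$ with $\tp_{\bs}(d,G_1,G_2)=q_{\gs}(\bar a,G_1)$, observe the type does not split over the finite $K_0=\langle\bar a\rangle$, enlarge $K_0$ to a $d$-conjugation-invariant finite $K$, use existential closure of $G_1$ to find $c\in G_1$ agreeing with $d$-conjugation on $K$, and note the rigidity fact that $\tp_{\bs}(a\char 94 d^{-1}ad,K,G_1)$ depends only on $\tp_{\bs}(a,K,G_1)$. These steps are all correct, and the rigidity observation is exactly the key consequence of non-splitting that the paper leans on (together with \ref{a15}(4), which gives $d^{-1}xd\in c\ell(\bar a\char 94\langle x\rangle,G_1)$).

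However, the main step you flag as ``the principal obstacle'' is where a genuine gap remains, and the mechanism you sketch would not close it as stated. Having fixed $c$ by its action on the single finite $K$, there is no reason that $c$ should agree with $c_a$ (chosen to match $d$ on $K\cup\{a,b\}$) in its action on $a$: the two differ by an element of $\Cm_{G_1}(K)$, which need not centralize $a$. Nor does ``iterating along an exhaustion $K_0\subseteq K_1\subseteq\cdots$'' automatically produce a single witness: each stage only constrains $c$ to lie in the coset $c_n\Cm_{G_1}(K_n)$, and a descending chain of such cosets can have empty intersection — this is precisely how locally-inner but non-inner automorphisms of lf groups arise, so the bare existential-closure argument cannot finish the job. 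Some additional input is needed, and the paper's (admittedly terse) closing remark actually points at a different route: rather than building $c$ directly, observe that the putative conjugation map $x\mapsto d^{-1}xd$ is uniformly defined by $(\gs,\bar a)$, so its non-innerness would transfer to give a non-inner automorphism of every exlf group, which is then argued to be impossible. You should either develop that transfer argument or find a genuinely different reason the coset tower stabilizes; as written, the proposal identifies but does not resolve the crux of the claim.
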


\begin{PROOF}{\ref{c73}}
Easy.  Clearly there is $(\gs,\bar a) \in \deef(G_1)$ such that 
$\tp_{\bs}(d,G_1,G_2) = q_{\gs}(\bar a,G_1)$), hence if $b,b_1,c_1 \in G_1$
 and $\tp_{\bs}(\langle b_1,c_1\rangle,\bar a,G_1) = \tp_{\bs}(\langle
 b,d^{-1} b d \rangle,\bar a,G_1)$ then $d^{-1} b_1 d = c_1$.  Having disjoint
amalgamation we have $x \in G_1 \Rightarrow d^{-1} xd \in c \ell(\bar
a \char 94 \langle x \rangle,G_1)$.  We can continue or note that if
 there is no $c \in G_1$ as desired, then every existentially closed 
$G$ has a non-inner automorphism, contradiction.
\end{PROOF}
\newpage

\section {Symmetryzing}

Our intention is to start with $\gS \subseteq \Omega[\mathbf K]$ which may
contain $\gs_1,\gs_2$ failing symmetry but have the nice
conclusion as for symmetric $\gS$.  Towards this we define the
operation $\otimes$, related to $\oplus$ defined in Definition
\ref{a21}(4),(4A), and
$\gS-\otimes$-constructions (close but not the same as the constructions
in Definition \ref{a25}, \ref{a37}, \ref{a43}) and
$\gS-\oplus$-constructions.

Note that $\gS_{\atdf}$ has ``quasi symmetry", i.e. when the
parameter (= base of amalgamation) is the same, but when we allow
increasing the base this is not clear.  Now $\otimes$ is like $\oplus$
when we insist on it being symmetric.  We use the construction here
in \S4,\S5 where we sometimes give more details.
Recall def$(G)$ for $G \in \mathbf K$ is from Definition \ref{a14f}(1).  

\noindent
Recall
\begin{definition}
\label{a52}
For $t \in \deef(G)$ let $q_t(G) = q_{\gs_t}(\bar a_t,G)$ and
$n_t = n_{\gs_t},k_t = k_{\gs_t}$ and see Definition \ref{a14f}(6).
\end{definition}

\begin{definition}
\label{a53}
1) On $\deef(G)$ we define a (partial) operation $\otimes$ by $t_1
 \otimes t_2 = (\gs_{t_1} \otimes \gs_{t_2},\bar a_{t_1} \char 94
\bar a_{t_2})$, see below.

\noindent
2) ${\gs} = {\gs}_1 \otimes {\gs}_2$ means that 
 $\gs_1,\gs_2$ are disjoint\footnote{As we
use only invariant $\gS$, this is not a real restriction.},
 $\bar x_{\gs} = \bar x_{\gs_1} \char 94 
\bar x_{\gs_2},\bar z_{\gs} = \bar z_{\gs_1}
\char 94 \bar z_{\gs_2}$, so $k(\gs) = k(\gs_1) + k(\gs_n),
n({\gs}) = n({\gs}_1) + n({\gs}_2)$ and:
\mn
\begin{enumerate}
\item[$\boxplus$]  if $H \subseteq H^+ \in \mathbf K,
\bar a_\ell \in {}^{k(\gs_\ell)}H$ realizes $p_{\gs_\ell}(\bar
x_{\gs_\ell})$ in $H$ and $\bar c_\ell \in {}^{n({\gs}_\ell)}(H^+)$ 
for $\ell=1,2$, then $\bar c_1 \char 94 \bar c_2$ realizes 
$q_{\gs}(\bar a_1 \char 94 \bar a_2,H)$ \underline{iff}:
\mn
\begin{enumerate}
\item[$(a)$]  $\bar c_\ell$ realizes $q_{\gs_\ell}(\bar a_\ell,H)$ in $H^+$,
for $\ell=1,2$;
\sn
\item[$(b)$]   if $\sigma(\bar z_1,\bar z_2,\bar y)$ is a group-term,
$\ell g(\bar z_1) = n({\gs}_1),\ell g(\bar z_2) = n({\gs}_2)$
and $\bar b \in {}^{\ell g(\bar y)}(H)$, then $(\alpha)
\Leftrightarrow (\beta)$ where:
\sn
\item[${{}}$]  $(\alpha) \quad H^+ \models ``\sigma(\bar c_1,
\bar c_2,\bar b) = e_H"$,
\sn
\item[${{}}$]  $(\beta) \quad (\sigma(\bar z_1,\bar c_2,\bar b) = e) 
\in q_{\gs_1}(\bar a_1,H^+) \text{ and}$

\hskip25pt $(\sigma(\bar c_1,\bar z_2,\bar b) =e) \in q_{\gs_2}(\bar a_2,H^+)$.
\end{enumerate}
\end{enumerate}
\end{definition}

\begin{claim}
\label{a54}
1) If $\gs_1,\gs_2 \in \Omega[\mathbf K]$ \then \, 
$\gs = \gs_1 \otimes \gs_2$ belongs to
$\Omega[\mathbf K]$.

\noindent
2) If $G \in \mathbf K$ and $t_1,t_2 \in$ {\rm def}$(G)$ \then \, $t=t_1
   \otimes t_2 \in$ {\rm def}$(G)$.
\end{claim}

\begin{PROOF}{\ref{a54}}
Straightforward.
\end{PROOF}

\begin{definition}
\label{a56}
1) Let $\approx^*_G$ be the following two-place relation on 
def$(G):(\gs_1,\bar a_1) \approx^*_G (\gs_2,\bar a_2)$ if both are in
def$(G)$ and $G \subseteq G^+ \in \mathbf K \Rightarrow q_{\gs_1}
(\bar a_1,G^+) = q_{\gs_2}(\bar a_2,G^+)$, (compare with $\approx_G$
from \ref{a14f}(6)).

\noindent
2) For $t_1,t_2 \in \deef(G)$ let $t_1 \le t_2$ means $\dom(\bar
x_{t_1}) \subseteq \dom(\bar x_{t_2}),\dom(\bar z_{t_1}) \subseteq
\dom(z_{\bar t_2})$ and $\bar a_{t_1} = \bar a_{t_2} \rest \dom(\bar
x_{t_1})$, and if $G \subseteq G_1 \subseteq G_2$ and 
$\bar c_2$ realizes $q_{t_2}(G_1)$ in $G_2$ then
$\bar c_2 \rest \dom(\bar z_{t_1})$ realizes $q_{t_1}(G)$ in $G_2$.

\noindent
3) $t_1 \le_{\bar h} t_2$ is defined similarly as in \ref{a21}(7).
\end{definition}

\begin{claim}
\label{a57}
0) $\approx^*_G$ is an equivalence relation on $\deef(G)$.

\noindent
1) If $(\gs,\bar a) \in \deef(G_1)$ and $G_1 \subseteq G_2 \in
   \mathbf K$ \then \, $q_{\gs}(\bar a,G_1) \subseteq 
q_{\gs}(\bar a,G_2)$  and $(\gs,\bar a) \in \deef(G_2)$.

\noindent
2) If $G \in \mathbf K$ and $(\gs_\ell,\bar a_\ell) \in \deef(G)$ 
for $\ell=1,2$, \then \, the satisfaction of
$(\gs_1,\bar a_1) \approx^*_G (\gs_2,\bar a_2)$ depends just on
$\gs_1,\gs_2$ and $\tp_{\bs}(\bar a_1 \char 94 \bar a_2,\emptyset,G)$. 

\noindent
3) Transitivity: in Definition \ref{a56}(2), $\le$ is indeed a
partial order.

\noindent
4) Moreover if $(\gs_1,\bar a_1) \le_{\bar h_1}
(\gs_2,\bar a_2) \le_{\bar h_2} (\gs_3,\bar a_3)$ \then \, $(\gs_1,\bar a_1)
\le_{\bar h_2 \circ \bar h_1} (\gs_3,\bar a_3)$.  
\end{claim}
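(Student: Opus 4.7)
The plan is to handle the four parts separately, treating (0), (1), (3), (4) as bookkeeping and concentrating effort on (2). For (0), the relation $\approx_G$ is a pointwise equality of types $q_{\gs_\ell}(\bar a_\ell, G^+)$ quantified over all $G^+ \supseteq G$, so reflexivity, symmetry and transitivity are inherited directly from type-equality. For (1), I would invoke the defining characterization in Definition \ref{a14}(2): $q_\gs(\bar a, M)$ is determined by the family of schematic instances $\gs(\text{tp}_{\bs}(\bar a \char 94 \bar b, \emptyset, M))(\bar a, \bar b, \bar z)$ as $\bar b$ ranges over ${}^{\omega >}M$. Since a quantifier-free type over $\emptyset$ is intrinsic to the tuple -- enlarging the ambient structure does not change $\text{tp}_{\bs}(\bar a \char 94 \bar b, \emptyset, \cdot)$ -- each generator of $q_\gs(\bar a, G_1)$ remains a generator of $q_\gs(\bar a, G_2)$, so $q_\gs(\bar a, G_1) \subseteq q_\gs(\bar a, G_2)$ and $(\gs, \bar a) \in \text{def}(G_2)$ inherits consistency from being in $\text{def}(G_1)$.

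Part (2) will be the main obstacle. By (1) together with the defining equation, $q_{\gs_\ell}(\bar a_\ell, G^+)$ is completely recovered from the family $\{\gs_\ell(\text{tp}_{\bs}(\bar a_\ell \char 94 \bar b, \emptyset, G^+)) : \bar b \in {}^{\omega >}G^+\}$; hence the condition $q_{\gs_1}(\bar a_1, G^+) = q_{\gs_2}(\bar a_2, G^+)$ is equivalent to a purely syntactic assertion that, for every $\bar b$, the schematic outputs of $\gs_1$ and $\gs_2$ yield the same $\bar z$-formulas. The input to this assertion is the quantifier-free type $r := \text{tp}_{\bs}(\bar a_1 \char 94 \bar a_2 \char 94 \bar b, \emptyset, G^+)$. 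As $(G^+, \bar b)$ varies, the set of achievable $r$'s is exactly the set of consistent quantifier-free types over $\emptyset$ extending $\text{tp}_{\bs}(\bar a_1 \char 94 \bar a_2, \emptyset, G)$. The delicate point will be establishing this surjectivity; I would do it via the finite-amalgamation machinery of \S2 (notably \ref{c18}), which realizes any consistent finite extension of $\langle \bar a_1 \cup \bar a_2 \rangle_G$ inside some $G^+ \in \bold K$ amalgamating $G$ with it over the generated subgroup. The conclusion is that $\approx_G$ is a function of $\gs_1, \gs_2$ and $\text{tp}_{\bs}(\bar a_1 \char 94 \bar a_2, \emptyset, G)$ only.

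For (3), given $t_1 \le t_2 \le t_3$, transitivity of $\trianglelefteq$ gives $\bar a_{t_1} \trianglelefteq \bar a_{t_3}$, and the realization condition is obtained by a chase: if $G \subseteq G_1 \subseteq G_2$ and $\bar c_3$ realizes $q_{t_3}(G_1)$ in $G_2$, then $t_2 \le t_3$ yields that $\bar c_3 \rest n_{t_2}$ realizes $q_{t_2}(G_1)$, and $t_1 \le t_2$ applied to this tuple produces $\bar c_3 \rest n_{t_1}$ realizing $q_{t_1}(G_1)$. Finally for (4), writing $\bar h_\ell = (h'_\ell, h''_\ell)$ and setting $\bar h_2 \circ \bar h_1 := (h'_2 \circ h'_1,\, h''_2 \circ h''_1)$, the same kind of chase works: starting from $\bar c_3$ realizing $q_{\gs_3}(\bar a_3, G)$, I would form $\bar c_2 := \langle c_{3, h''_2(\ell)} : \ell \rangle$, $\bar a_2 := \langle a_{3, h'_2(\ell)} : \ell \rangle$ and apply $t_2 \le_{\bar h_2} t_3$; then form $\bar c_1 := \langle c_{3, (h''_2 \circ h''_1)(\ell)} : \ell \rangle$, $\bar a_1 := \langle a_{3, (h'_2 \circ h'_1)(\ell)} : \ell \rangle$ and apply $t_1 \le_{\bar h_1} t_2$ to conclude $t_1 \le_{\bar h_2 \circ \bar h_1} t_3$.
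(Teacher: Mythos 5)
Your argument is correct, and since the paper disposes of this claim with the single word ``Easy'' there is no written proof to compare against: what you have produced is the natural fleshing-out. Parts (0), (3), (4) are indeed routine chases directly from Definitions \ref{a56}, \ref{a21}(6)--(7), and part (1) follows from the observation (going back to Definition \ref{a14}(2)) that $q_\gs(\bar a,M)$ is generated by the instances $\gs(\operatorname{tp}_{\bs}(\bar a\char 94\bar b,\emptyset,M))$ as $\bar b$ ranges over $M$, and that $\operatorname{tp}_{\bs}(\cdot,\emptyset,\cdot)$ is intrinsic to the tuple. For (2), your reduction to a syntactic condition over the set of consistent quantifier-free extensions $r\supseteq\operatorname{tp}_{\bs}(\bar a_1\char 94\bar a_2,\emptyset,G)$ is the right invariant, and the surjectivity point you flag---that every such $r$ is realized by some $\bar b$ in some $G^+\supseteq G$---is exactly where one needs amalgamation over the finite subgroup $\langle\bar a_1\cup\bar a_2\rangle_G$; \ref{c18} (or plain Hall-style amalgamation of lf groups) furnishes this. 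One small point worth stating explicitly when writing this up: the comparison of $q_{\gs_1}(\bar a_1,G^+)$ and $q_{\gs_2}(\bar a_2,G^+)$ over a parameter tuple $\bar b$ depends on \emph{both} $\operatorname{tp}_{\bs}(\bar a_1\char 94\bar b,\emptyset,G^+)$ and $\operatorname{tp}_{\bs}(\bar a_2\char 94\bar b,\emptyset,G^+)$, and the reason the joint type $r$ of $\bar a_1\char 94\bar a_2\char 94\bar b$ is the right datum is that it determines both of these restrictions simultaneously; your sketch implicitly uses this but it is the only place where a reader might pause.
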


\begin{PROOF}{\ref{a57}}
Easy.
\end{PROOF}

\begin{claim}
\label{a59}
0) The operation $\otimes$ on disjoint pairs respects congruency
(see Definition \ref{a14f}(3), Claim \ref{a22f}(1)).

\noindent
1) The operation $\otimes$ respects $\approx^*_G$, i.e. if $t_1
\approx^*_G t'_1$ and $t_2 \approx^*_G t'_2$ \then \, $t_1 \otimes t_2
\approx^*_G t'_1 \otimes t'_2$ assuming the operations are well
   defined, of course.

\noindent
2) If $(\gs,\bar a) = (\gs_1,\bar a_1) \otimes (\gs_2,\bar a_2)$,
\then \, $(\bar{\gs}_\ell,\bar a_\ell) \le (\gs,\bar a)$.

\noindent
3) If in $\deef(G)$ we have $t_\ell \le t'_\ell$ for $\ell=1,2$ and
$t'_1 \otimes t'_2$ is well defined (i.e. $t'_1,t'_2$ are disjoint)
   \then \, $t_1 \otimes t_2 \le t'_1 \otimes t'_2$.

\noindent
4) The operation $\otimes$ is 
associative and is symmetric, e.g. symmetry means: if 
$G \subseteq G^+$ and 
$(\gs_\ell,\bar a_\ell) \in \deef(G)$ and $\bar c^\ell_\ell
 \char 94 \bar c^\ell_{3-\ell}$ realizes $q_{t_\ell}(G)$ in $G^+$,
where $t_\ell = (\gt_\ell,\bar b_\ell) = (\gs_\ell,\bar a_\ell)
 \otimes (\gs_{3-\ell},\bar a_{3-\ell})$, (so assuming disjointness 
for transparency), for $\ell=1,2$,  \then \, 
$\tp_{\bs}(\bar c^1_1 \char 94 \bar c^1_2,G,G^+) 
= \tp_{\bs}(\bar c^2_1 \char 94 \bar c^2_2,G,G^+)$.

\noindent
5) If in $\deef(G)$ we have $(\gs_\ell,\bar a_\ell) \le_{h_\ell}
(\gs'_\ell,\bar a'_\ell)$ for $\ell=1,2$ and $\Dom(h_1) \cap
   \Dom(h_2) = \emptyset,\Rang(h_1) \cap \Rang(h_2) = \emptyset$ 
\then \, $(\gs_1,\bar a_1)
\otimes (\gs_2,\bar a_2) \le_{h_1 \cup h_2} (\gs'_1,\bar a_1) \otimes
(\gs'_2,\bar a_2)$.
\end{claim}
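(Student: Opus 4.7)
All six items are routine consequences of unpacking Definition \ref{a55}, whose central clause $\boxplus$ is manifestly symmetric in the two factors and depends only on the group-term equations dictated by $q_{\gs_1}(\bar a_1,\cdot)$ and $q_{\gs_2}(\bar a_2,\cdot)$. The plan is therefore to verify each part by reducing it to this observation, relying on Claim \ref{a57} for the background facts about $\approx_G$ and $\le$.

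For (0), I would note that renaming the variables $\bar x_{\gs_\ell},\bar z_{\gs_\ell}$ in congruent copies does not change the set of group-term equations used in clause $\boxplus$ of Definition \ref{a55}; so $(\gs_1\otimes\gs_2)$ is congruent to $(\gs'_1\otimes\gs'_2)$ whenever $\gs_\ell\equiv\gs'_\ell$ and the required disjointness holds. Part (1) then follows because, by Claim \ref{a57}(2), the validity of $\boxplus$ for given $G\subseteq G^+$ is determined by the types $q_{t_\ell}(G^+)$, which are preserved under $\approx_G$ (using Claim \ref{a57}(1) to pass from $G$ to $G^+$). Part (2) is read off directly: taking $\bar c=\bar c_1\char94 \bar c_2$ realizing $q_{\gs_1\otimes\gs_2}(\bar a_1\char94 \bar a_2,H)$ in any $H^+\supseteq H$, clause $\boxplus(a)$ literally says that $\bar c_\ell$ realizes $q_{\gs_\ell}(\bar a_\ell,H^+)$, which, after identifying the index set $n_\ell$ with the initial segment of $n(\gs)$, is the definition of $t_\ell\le t_1\otimes t_2$.

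For (3), I would combine (2) with Claim \ref{a57}(3): if $t_\ell\le t'_\ell$ and $\bar c$ realizes $q_{t'_1\otimes t'_2}(G_1)$ in $G_2$, then by (2) each appropriate restriction of $\bar c$ realizes $q_{t'_\ell}(G_1)$, hence by $t_\ell\le t'_\ell$ realizes $q_{t_\ell}(G_1)$; applying clause $\boxplus$ in the reverse direction (whose conditions on group terms now hold automatically because they hold for the larger $t'_1\otimes t'_2$) yields that $\bar c$ restricted to the coordinates of $t_1\otimes t_2$ realizes $q_{t_1\otimes t_2}(G_1)$. Part (5) is the analogous statement under $\le_{\bar h}$: the disjointness of the two $h_\ell$'s on domain and range guarantees that the variable renaming $h_1\cup h_2$ is a well defined injection compatible with the concatenation $\bar z_{\gs_1}\char94 \bar z_{\gs_2}$, and plugging this into clause $\boxplus$ gives exactly the required comparison.

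Part (4) is the main point. Associativity is straightforward: both $(\gs_1\otimes\gs_2)\otimes\gs_3$ and $\gs_1\otimes(\gs_2\otimes\gs_3)$ can be characterized by the single clause, obtained by iterating $\boxplus$, that for every group term $\sigma(\bar z_1,\bar z_2,\bar z_3,\bar y)$ and parameters $\bar b$ from $H$, the equation $\sigma(\bar c_1,\bar c_2,\bar c_3,\bar b)=e$ holds iff for each $\ell\in\{1,2,3\}$ the equation with $\bar c_\ell$ replaced by $\bar z_\ell$ lies in $q_{\gs_\ell}(\bar a_\ell,H^+)$. Symmetry reduces to the $n=2$ case, and here I plan to exploit that $\boxplus$ is already stated symmetrically: clause $(b)$ treats $(\bar z_1,\bar c_2)$ and $(\bar c_1,\bar z_2)$ on equal footing, so the joint type of $(\bar c_1,\bar c_2)$ over $G$ produced by $\otimes$ is independent of which factor is viewed as ``first''. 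The only mild subtlety is to check that the corresponding extensions $G^+$ exist, for which I would invoke \ref{a15}(1) to realize each $q_{\gs_\ell}(\bar a_\ell,G)$ in succession and then amalgamate. The main obstacle, and the reason the author writes only ``Straight'', is the bookkeeping of which variable tuple plays which role; once one commits to treating $\otimes$ as a purely syntactic operation on congruency classes, everything falls out of $\boxplus$.
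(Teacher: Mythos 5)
Your plan is essentially the same as the paper's: the paper dismisses the claim as ``Straight,'' and your write-up is a careful unwinding of Definition \ref{a55}, Definition \ref{a56}, and Claim \ref{a57}, which is exactly what that label is asking for. Parts (0), (1), (2), (3), and (5) are handled correctly: (0) is pure variable bookkeeping, (1) combines \ref{a57}(1),(2) with the observation that clause $\boxplus$ only references types over extensions, (2) is read directly off clause $\boxplus$(a), (3) stacks (2) with the definition of $\le$ and the completeness of the types to pass the group-term biconditional downward, and (5) is the $\le_{\bar h}$ variant of the same.

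The one place you should slow down is the associativity half of (4). You assert that both bracketings are characterized by the symmetric clause ``for each $\ell\in\{1,2,3\}$, replacing $\bar c_\ell$ by $\bar z_\ell$ lands in $q_{\gs_\ell}(\bar a_\ell,H^+)$.'' Starting from, say, $(\gs_1\otimes\gs_2)\otimes\gs_3$ and applying $\boxplus$ once, the first conjunct you obtain is membership in $q_{\gs_1\otimes\gs_2}(\bar a_1{}^\frown\bar a_2,H^+)$ of a formula whose parameter $\bar c_3$ already lives in $H^+$ rather than $H$; unpacking this by a second application of $\boxplus$ then introduces fresh realizations $\bar c'_1,\bar c'_2$ in a further extension $H^{++}$, and one needs to check that the resulting condition does not depend on that choice and collapses to your symmetric clause with the original $\bar c_1,\bar c_2$. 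That step is true (it rests on \ref{a15}(3): the types in question do not split over finite pieces of $H$, so they extend canonically across $H\subseteq H^+\subseteq H^{++}$ and realization questions are insensitive to which extension you realize in), but it is a lemma, not an identity, and your plan currently treats it as one. Given that the paper itself offers no more than ``Straight,'' this is a small stylistic gap rather than an error; just be aware that the $n$-factor characterization in your plan is itself something you would have to prove, not something you may quote, before invoking it for both bracketings.

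One minor slip: in (4) you propose to ``invoke \ref{a15}(1) to realize each $q_{\gs_\ell}(\bar a_\ell,G)$ in succession and then amalgamate'' to produce $G^+$, but in the statement of \ref{a59}(4) the extension $G^+$ and the realizing tuples are given, so no existence argument is needed there; what is needed is exactly the uniqueness discussion above.
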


\begin{PROOF}{\ref{a59}}
Straightforward.
\end{PROOF}

\begin{remark}
\label{a60}
1) Also the operation $\oplus$ 
satisfies the parallels of \ref{a59}(1),(2),(3) and the first demand
in (4).

\noindent
2) We may phrase \ref{a59}(5) as in \ref{a59}(3) and vice versa.
\end{remark}

\begin{definition}
\label{a61}
Assume $\gS \subseteq \Omega[\mathbf K]$ is closed.

\noindent
1) We say $\gS \subseteq \Omega[\mathbf K]$ is $\otimes$-closed \when \,
(recalling it is invariant) if 
$\gs_\ell \in \gS$ for $\ell=1,2$ are disjoint \then \, $\gs = \gs_1
\otimes \gs_2 \in \gS$.

\noindent
2) The $\otimes$-closure of $\gS$ is the $\subseteq$-minimal
$\otimes$-closed $\gS' \subseteq \Omega[\mathbf K]$ such that $\gS
\subseteq \gS'$.

\noindent
3) Let $G_3 = G_1 \bigotimes\limits_{G_0}^{\gS} G_2$ or $G_3 =
\otimes_{\gS}(G_0,G_1,G_2)$ mean:
\mn
\begin{enumerate}
\item[$(*)$]
\begin{enumerate}
\item[(a)]  $G_0 \le_{\gS} G_2 \subseteq G_3 \in \mathbf K$ and 
$G_0 \le_{\gS} G_1 \subseteq G_3$ and $G_3 = \langle G_1 \cup G_2
\rangle_{G_3}$
\sn
\item[(b)]  if $\tp_{\bs}(\bar c_\ell,G_0,G_\ell) =
 q_{\gs_\ell}(\bar a_\ell,G_0)$ so $\bar c_\ell \in {}^{\omega
>}(G_\ell),\bar a_\ell \in {}^{\omega >}(G_0)$ 
for $\ell=1,2$, \then \,
$\tp_{\bs}(\bar c_1 \char 94 \bar c_2,G_0,G_3) = q_{\gs}(\bar a_1 
\char 94 \bar a_2,G_0)$ when
$(\gs,\bar a_1 \char 94 \bar a_2) = (\gs,\bar a_1) \otimes 
(\gs_2,\bar a_2)$; note that \wilog \,
$\gs_1,\gs_2$ are disjoint, (i.e. as in the proof of \ref{a23}).
\end{enumerate}
\end{enumerate}
\mn
4) $\NF^2_{\gS}(G_0,G_1,G_2,G_3)$ means that $G_0 \le_{\gS} G_\ell
   \le_{\gS} G_3$ for $\ell=1,2$ and the demands in (3) hold except that
 possibly $G_3 \ne \langle G_1 \cup G_2\rangle_{G_3}$.
\end{definition}

\begin{claim}
\label{a64}
Assume $\gS$ is closed and moreover $\otimes$-closed.

\noindent
1) $G_3 = \otimes_{\gS} (G_0,G_1,G_2)$ iff 
$\NF^2_{\gS}(G_0,G_1,G_2,G_3)$ and $G_3 = \langle G_1 \cup G_2\rangle_{G_3}$.

\noindent
2) (disjointness): $\NF^2_{\gS}(G_0,G_1,G_2,G_3)$ implies $G_1 \cap G_2 =
   G_0$.

\noindent
3) (uniqueness): If $G^\iota_3 =
   \otimes_{\gS}(G^\iota_0,G^\iota_1,G^\iota_2)$ for $\iota = 1,2$ and
   $f_\ell$ is an isomorphism from $G^1_\ell$ onto $G^2_\ell$ for
   $\ell = 1,2$ and $G^1_0 = G^2_0,f_1 \rest G^1_0 = f_2 \rest G^2_0$  
and $G_0$ is existentially closed\footnote{Why?  The problem is that $G
   \le_{\gS} H \in \mathbf K$ does not imply the existence of $\bar t = \langle
   t_{\bar c}:\bar c \in {}^{\omega >}H\rangle$ such that $t_{\bar c}
   \in$ {\rm def}$(G),\tp_{\bs}(\bar c,G,H) = q_t(G)$ and if
$\bar c^1,\bar c^2 \in {}^{\omega >}H,h:\ell g(\bar c^1) \rightarrow
\ell g(\bar c^2)$ and $\bar c^2 = 
\langle c^2_{h(i)}:i < \ell g(\bar c^1)\rangle$ then
   $t_{\bar c^1} \le_h t_{\bar c^2}$.  Moreover, even if there is such $\bar
   t$ we can ``amalgamate for it" but this is not enough as $\bar t$
 is not necessarily unique, which may give different results.
   Why \ref{a64}(3) is O.K.?  As in Definition \ref{a61}(3) we ask ``for
   every $\gs_1,\gs_2$".   In other words if $G_0 \subseteq G_1,G_0
   \subseteq G_2$ and $t_1,t_2 \in \deef(G_0),\tp_{\bs}(\bar c_\ell,
G_0,G_2) = q_{t_\ell}(G_0)$ for $\ell=1,2$ but 
$q_{t_1}(G_1) \ne q_{t_2}(G_1)$ we can amalgamate as
   in \ref{a61}(3).}  \then \, there
   is one and only one isomorphism from $G^1_3$ onto $G^2_3$ extending
   $f_1 \cup f_2$ (which is well defined by (2)).

\noindent
4) (symmetry): $\NF^2_{\gS}(G_0,G_1,G_2,G_3)$ iff 
$\NF^2_{\gS}(G_0,G_2,G_1,G_3)$.

\noindent
5) (monotonicity): If $\NF^2_{\gS}(G_0,G_1,G_2,G_3)$ and $G_0
   \subseteq G'_\ell \subseteq G_\ell$ for $\ell=1,2$ \then \, 
$\NF^2_{\gS}(G_0,G'_1,G'_2,G_3)$.

\noindent
6) (existence): If $G_0 \le_{\gS} G_\ell$ for $\ell=1,2$ and $G_0$ is
   existentially closed and $G_1 \cap G_2 = G_0$ \then
   \, for some $G_3 \in \mathbf K$ we have $\NF^2_{\gS}(G_0,G_1,G_2,G_3)$.
\end{claim}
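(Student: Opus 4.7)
The plan is to treat the six parts in a natural order, with (1), (4), (5) flowing almost directly from the definitions together with the structural properties of $\otimes$ from \ref{a59}, and with (2), (3), (6) requiring the main arguments. The main obstacle will be part (3), where the existential closure of $G_0$ is essential, as the footnote indicates.

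Part (1) is a bookkeeping unwinding: NF$^2_\gS$ and $\otimes_\gS$ impose the same type-prescription on pairs $(\bar c_1,\bar c_2) \in G_1 \times G_2$, but NF$^2$ requires $G_\ell \le_\gS G_3$ while $\otimes_\gS$ requires $G_3 = \langle G_1 \cup G_2\rangle_{G_3}$. Given the type condition and generation by $G_1 \cup G_2$, every tuple in $G_3$ is a group-term in tuples from $G_1 \cup G_2$, so $\gS$ being composition- and dominating-closed (combined with \ref{a22}(9)) yields $G_\ell \le_\gS G_3$. Part (4) is immediate from the symmetry of $\otimes$ on schemes, \ref{a59}(4), since NF$^2$ is defined by a joint-type condition involving $\gs_1 \otimes \gs_2$. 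For part (5), basic types are preserved under inclusion of substructures, so $\tp_{\bs}(\bar c,G_0,G'_\ell) = \tp_{\bs}(\bar c,G_0,G_\ell)$ for $\bar c \subseteq G'_\ell$, and the same schemes witness $G_0 \le_\gS G'_\ell$ and the joint-type clause.

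For part (2), suppose for contradiction that $c \in (G_1 \cap G_2) \setminus G_0$. Since $G_0 \le_\gS G_2$, we have $\tp_{\bs}((c),G_0,G_2) = q_{\gs_2}(\bar a_2,G_0)$ for some $\gs_2 \in \gS$ and $\bar a_2 \subseteq G_0$. Running the NF$^2$ joint-type condition via clause $\boxplus(b)$ of Definition \ref{a55} with arbitrary $\bar c_1 \subseteq G_1$ and $\bar c_2 = (c) \subseteq G_2$ shows that every polynomial relation between $(c)$ and elements of $G_1$ factors through $q_{\gs_2}$; that is, $\tp_{\bs}((c),G_1,G_3) = q_{\gs_2}(\bar a_2,G_1)$. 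Now \ref{a22}(8) applied with $H_0 = G_0$, $H_1 = G_1$, $H_2 = G_3$, $\bar c = (c)$, $\bar a = \bar a_2$ yields $\{c\} \cap G_1 = \{c\} \cap G_0$, forcing $c \in G_0$, a contradiction. For part (6), apply \ref{a23}(1) to $G_0 \le_\gS G_1$ and $G_0 \subseteq G_2$ (noting $G_0 \le_{\Sigma_1} G_2$ is automatic since $G_0$ is ex.cl.) to produce $G_3$ with $G_1 \le_\gS G_3$, $G_2 \subseteq G_3$, and $G_3 = \langle G_1 \cup G_2\rangle_{G_3}$. The key feature of that construction is that each type $\tp_{\bs}(\bar c,G_0,G_2) = q_{\gs}(\bar a,G_0)$ for $\bar c \subseteq G_2$ extends to $\tp_{\bs}(\bar c,G_1,G_3) = q_{\gs}(\bar a,G_1)$ via the same scheme. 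Combined with \ref{a15}(3), this shows that the joint type of $\bar c_1 \char 94 \bar c_2$ satisfies clause $\boxplus$ of Definition \ref{a55}: non-trivial relations between $\bar c_1$ and $\bar c_2$ propagate only through the parameters in $G_0$. Then $G_2 \le_\gS G_3$ follows by applying \ref{a59}(4) to the joint type.

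For part (3), fix the two configurations and attempt to extend $f_1 \cup f_2$ to an isomorphism $G^1_3 \to G^2_3$. Because $G^1_0$ is existentially closed, \ref{a15}(2) ensures that the type $q_t(G^1_0)$ attached to $t \in \deef(G^1_0)$ is canonical (modulo $\approx_{G^1_0}$). Applying part (1), every element of $G^1_3$ is a group-word $\sigma(\bar c^1_1,\bar c^1_2)$ with $\bar c^1_\ell \subseteq G^1_\ell$, and its type over $G^1_0$ is controlled by $q_{\gs_1 \otimes \gs_2}(\bar a^1_1 \char 94 \bar a^1_2,G^1_0)$, which depends only on $\gs_1,\gs_2$ and the parameters. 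Transporting via $f_0,f_1,f_2$ gives the analogous joint type in $G^2_3$, so the assignment $\sigma(\bar c^1_1,\bar c^1_2) \mapsto \sigma(f_1(\bar c^1_1),f_2(\bar c^1_2))$ is well-defined, injective, and surjective onto $\langle G^2_1 \cup G^2_2\rangle_{G^2_3} = G^2_3$. The main obstacle, flagged in the author's footnote, is the potential non-uniqueness of the scheme realized by a given tuple over a non-ex.cl.\ base: without existential closure of $G^1_0$, two pairs $t_1,t_2 \in \deef(G^1_0)$ could give the same type over $G^1_0$ but different types over extensions, breaking the transport argument; assuming $G_0$ is ex.cl.\ is precisely what \ref{a15}(2) needs to rule this out.
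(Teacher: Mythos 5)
The paper's own proof of this claim is a single line (``Straight, e.g.\ for disjointness (= part (2)) use Claim \ref{a15}(4)''), so the comparison has to be against the intended content rather than a printed argument.

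Parts (1), (4) are fine. In part (5) you verify $G_0 \le_{\gS} G'_\ell$ and the $\otimes$-clause but never address the requirement $G'_\ell \le_{\gS} G_3$, which is part of Definition~\ref{a61}(4); it does not follow merely from ``basic types restrict well''. Part (2) is correct in spirit, but your assertion that the NF$^2$ joint-type condition directly yields $\tp_{\bs}((c),G_1,G_3)=q_{\gs_2}(\bar a_2,G_1)$ skips a step: clause $\boxplus(b)$ relates ``$\sigma=e$'' to the \emph{conjunction} of two type-memberships, not to the $\gs_2$-membership alone. The way to close the gap is to apply $\boxplus(b)$ with $\bar c_1 = \bar c_2 = (c)$ and $\sigma = z_{1,0}z_{2,0}^{-1}$, which forces $(z_0 = c) \in q_{\gs_1}(\bar a_1,G_3)$, hence $q_{\gs_1}(\bar a_1,G_1)$ is the principal type of $c$ over $G_1$; then \ref{a15}(4) (the paper's suggested lemma) or \ref{a22}(8) gives $c \in c\ell(\bar a_1, G_1) = \langle \bar a_1\rangle \subseteq G_0$. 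Part (3) is correctly organized and the appeal to existential closure of $G_0$ via \ref{a15}(2) to make the scheme-transport well defined is exactly the right use of the hypothesis.

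Part (6) has a genuine gap. You invoke \ref{a23}(1), but that construction produces the \emph{one-sided} amalgamation: it arranges $\tp_{\bs}(\bar c_2, G_1, G_3) = q_{\gs_2}(\bar a_2, G_1)$, so the joint type over $G_0$ is the $\oplus$-type of Definition~\ref{a21}(4), not the $\otimes$-type of Definition~\ref{a55}. Concretely, for $\bar b \in G_0$ the construction yields ``$\sigma(\bar c_1,\bar c_2,\bar b)=e$ iff $(\sigma(\bar c_1,\bar z_2,\bar b)=e)\in q_{\gs_2}(\bar a_2,G_1)$'', whereas $\boxplus(b)$ of Definition~\ref{a55} requires this to hold iff \emph{both} $(\sigma(\bar z_1,\bar c_2,\bar b)=e)\in q_{\gs_1}(\bar a_1,G_3)$ \emph{and} $(\sigma(\bar c_1,\bar z_2,\bar b)=e)\in q_{\gs_2}(\bar a_2,G_3)$. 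The two agree exactly when the second conjunct implies the first, i.e.\ exactly when $\gS$ is symmetric --- which is the one thing not being assumed here: $\otimes$ was introduced precisely to impose the symmetry that $\oplus$ lacks (compare \ref{a21m}(1) with \ref{a59}(4)). Your appeal to ``\ref{a15}(3) combined with \ref{a59}(4)'' is applied to the $\oplus$-amalgam and does not turn it into the $\otimes$-amalgam. The correct route is to build $G_3$ directly, prescribing for every pair $\bar c_1 \in {}^{\omega>}G_1$, $\bar c_2 \in {}^{\omega>}G_2$ the joint type $q_{\gs_1 \otimes \gs_2}(\bar a_1 \char 94 \bar a_2, G_0)$, verifying consistency from \ref{a55.3} and the associativity/commutativity of $\otimes$ in \ref{a59}; the existential closure of $G_0$ enters (as the remark following the claim notes) to guarantee the prescription is independent of which scheme $(\gs_\ell,\bar a_\ell)$ represents $\tp_{\bs}(\bar c_\ell, G_0, G_\ell)$. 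This is the same mechanism as the one-step $\gS$-$\otimes$-construction of Definition~\ref{a67}(1), applied here to two lists at once.
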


\begin{remark}
For parts (3) and (6) of \ref{a64} recall: for such $G$, 
if $t_1,t_2 \in \deef(G),q_{t_1}(G) = q_{t_2}(G)$ and 
$G \subseteq G^+ \in \mathbf K$ then $q_{t_1}(G^+) = q_{t_2}(G^+)$. 
\end{remark}

\begin{PROOF}{\ref{a64}}
 Straightforward, e.g. for disjointness (= part (2)) use Claim \ref{a15}(4).  
\end{PROOF}

\noindent
Alternative to \S1 from \ref{a25} on is: we repeat it with 
changes being that we use $\otimes$ instead of $\oplus$ and we incorporated the
$\lambda$-fullness, also in \ref{a70}(3) we choose another version.
We have not sorted out whether we can generalize \ref{a34}(5) based on
\ref{a33} and \ref{a41}(2).
\begin{definition}
\label{a67}
1) We say that $\cA$ is a \underline{one step} 
$(\lambda,\gS)-\otimes$-construction \when \,
 $\cA = (G,H,\langle \bar c_\alpha,t_\alpha:\alpha <
 \alpha(\cA) = \alpha_{\cA}\rangle)$ satisfies:
\mn
\begin{enumerate}
\item[$(a)$]  $G \subseteq H \in \mathbf K$
\sn
\item[$(b)$]  $t_\alpha \in \deef_{\gS}(G)$ for $\alpha < \alpha(\cA)$;
\sn
\item[$(c)$]  if $\alpha_0,\dotsc,\alpha_{n-1} < \alpha(\cA)$ with no
repetitions then $\bar c_{\alpha_0} \char 94 \ldots \char 94 \bar
c_{\alpha_{n-1}}$ realizes $q_t(G_0)$ in $H$ where $t = 
t_{\alpha_0} \otimes \ldots \otimes t_{\alpha -1} \in \deef(G)$;
\sn
\item[$(d)$]  $H = \langle\cup\{\bar c_\alpha:\alpha <
\alpha(\cA)\} \cup G \rangle_H$;
\sn
\item[$(e)$]  $\langle t_\alpha:\alpha < \alpha(\cA)\rangle$ 
lists $\deef_{\gS}(G)$ each appearing exactly $\lambda$ times.
\end{enumerate}
\mn
2) In (1) we may use any index set instead of $\alpha(\cA)$,
e.g. $\deef_{\gS}(G)$ itself when $\lambda=1,\deef_{\gS}(G) \times
\lambda$ in general.

\noindent
3) We say $\cA$ is an
$\alpha(\cA)$-\underline{step}-$(\lambda,\gS)-\otimes$-construction or
$(\alpha(\cA),\lambda,\gS)-\otimes$-construction \when \,:
\mn
\begin{enumerate}
\item[$(a)$]  $\cA = \langle G_\alpha,\langle \bar
c_{\beta,s},t_{\beta,s}:s \in S_\beta\rangle:\alpha \le \alpha(\cA),
\beta < \alpha(\cA)\rangle$
\sn
\item[$(b)$]  $(G_\alpha:\alpha \le \alpha(\cA)\rangle$ is increasing
continuous (in $\mathbf K$)
\sn
\item[$(c)$]  $(G_\alpha,G_{\alpha +1},\langle \bar
c_{\alpha,s},t_{\alpha,s}:s \in S_\alpha\rangle)$ is a one step
$(\lambda,\gS)-\otimes$-construction.
\end{enumerate}
\mn
4) In part (3), let $G^{\cA}_\alpha = G_\alpha[\cA]$ be $G_\alpha$,
etc., and in part (1) let $G^{\cA} = G[\cA]$ be $G$, etc.

\noindent
5) In part (3) if $\alpha(\cA) = \omega$ then we may omit it; also
 for every $\alpha < \alpha(\cA)$ the sequence
   $(G^{\cA}_\alpha,G^{\cA}_{\alpha +1},\langle \bar
   c_{\alpha,s},t_{\alpha,s}:s \in S^{\cA}_\alpha\rangle)$ is called
the $\alpha$-th step of $\cA$.
\end{definition}

\begin{definition}
\label{a70}
1) We say $H$ is a $\lambda$-full \underline{one step} 
$\gS-\otimes$-closure of $G$ \when \, there is a one step
$(\lambda,\gS)-\otimes$-construction $\cA$ such that $G[\cA] =
G,H[\cA]=H$.  We may say $H$ is $\lambda$-full one step
$\gS-\otimes$-constructible over $G$; similarly in part (2).

\noindent
2) We say $H$ is $\lambda$-full $\alpha$-step $\gS$-closure
over $G$ or $H$ is $(\alpha,\lambda,\gS)$-closure of $G$ 
\when \, there is a $(\alpha,\lambda,\gS)-\otimes$-construction $\cA$
with $G = G^{\cA}_0,H =  G^{\cA}_{\ell g(\cA)}$.

\noindent
3) We say $G_*$ is $(\delta,\lambda,\gS)-\otimes$-full over $G$ \when
\, for some $\bar G = \langle G_i:i \le \delta\rangle$ increasing continuous
   sequence in $\mathbf K,G_0 = G,G_\delta = G_*$ and $G_{i+1}$ is
   $(1,\lambda,\gS)-\otimes$-full over $G_i$ which means some
   $G' \subseteq G_{i+1}$ is a one step $(\lambda,\gS)-\otimes$-construction
over $G_i$.  If $\delta = \omega$ one may omit it writing
$(\lambda,\gS)$ instead of $(\delta,\lambda,\gS)$.

\noindent
4) We may in part (3) replace $\otimes$ by $\oplus$. 
\end{definition}

\begin{claim}
\label{a74} 
Assume $\gS \subseteq \Omega[\mathbf K]$ is $\otimes$-closed, $\alpha$
an ordinal, $\lambda$ a cardinal.

\noindent
1) If $G \in \mathbf K$  \then \, there is a one step 
$(\lambda,\gS)-\otimes$-construction $\cA$ over $G$ (i.e. $G^{\cA}_0 = G$) of
   cardinality $\le \lambda + |G| + |\gS|$ and $\ge \lambda$.

\noindent
2) If in part (1), $\cA_1,\cA_2$ are 
one step-$(\lambda,\gS)-\otimes$-constructions over $G$ \then \,
$H[\cA_1],H[\cA_2]$ are isomorphic over $G$.

\noindent
3) For any $G \in \mathbf K$ there is an
$(\alpha,\lambda,\gS)-\otimes$-construction $\cA$ over 
$G$ and $G_\alpha[\cA]$ is unique up to isomorphism over $G$.

\noindent
4) If $\gS$ is dense, $H$ is an $(\alpha,\lambda,\gS)-\otimes$-closure of
 $G$ and $\alpha$ is a limit ordinal \then \, $H$ is existentially
 closed and is $(\alpha,\lambda,\gS)$-full over $G$.
\end{claim}

\begin{PROOF}{\ref{a74}}
Straightforward, as in \ref{a41}(3).
\end{PROOF}

\begin{discussion}
\label{a76}
Essentially we know that if ``$G_1 \subseteq G_2$" implies the
$(\alpha,\lambda,\gS)$-closure of $G_1$ is a subgroup of the
$(\alpha,\lambda,\gS)$-closure of $G_2$.

But we have a delicate problem: what if the
$(\alpha,\lambda,\gS)$-closure of $G_1$ is not disjoint to $G_2
\backslash G_1$?

We have similar problems with ``the algebraic closure of a field" or
``the field of quotients of a field", but there if $G_1 \subseteq G_2$
then the closure $G^+_1$ of $G_1$ inside $G_1$ is definable (from
$G_2,G_1$ and $G^+_2$).  Here this is not true, but clearly this is
not a serious problem.  Ways to circumvent this appear in \ref{y8}(2),
\ref{a26}(2) and below.
\end{discussion}

\begin{claim}
\label{a78}
1) We can choose $\hat G \in \mathbf K_{\exlf}$ such that $\hat G$
extends $G \in \mathbf K_{\lf},G_1 \cong G_2 \Rightarrow \hat G_1 \cong
\hat G_2$ and every embedding $f:G_1 \rightarrow G_2 \in \mathbf K_{\lf}$ can
be extended to $\hat f:\hat G_1 \rightarrow \hat G_2$ canonically.

\noindent
1A) Moreover $G_1 \subseteq G_2 \Rightarrow \hat G_1 \subseteq \hat
G_2$ but pedantically see (2).

\noindent
2) There is a set theoretic class function $\mathbf F$, that 
computes from $G \in \mathbf K,\alpha \in \Ord,\lambda \in \Card,\gamma
\in \Ord$ and $\gS \subseteq \Omega[\mathbf K]$ a 
group $H = \mathbf F(G,\alpha,\gS,\gamma)$ such that:
\mn
\begin{enumerate}
\item[$(a)$]  $\mathbf F(G,\alpha,\gS,\gamma) \in \mathbf K$ extends $G$,
  moreover;
\sn
\item[$(b)$]  $\mathbf F(G,\alpha,\gamma,\gS)$ is an
  $(\alpha,\lambda,\gS)$-closure of $G$;
\sn
\item[$(c)$]  [uniqueness]: if $G_1,G_2 \in \mathbf K$ and $g$ is an
  isomorphism from $G_1$ onto $G_2$ and $H_\ell = 
\mathbf F(G,\alpha,\gS,\gamma)$ for $\ell=1,2$ \then \, there is an
isomorphism $g$ from $H_1$ onto $H_2$ extending $g$;
\sn
\item[$(d)$]  we have $H_1 \subseteq H_2$ and $G_1 = H_1 \cap G_2$
  \when \, $G_1 \subseteq G_1 \in \mathbf K,\gamma > \alpha$
  and\footnote{Recalling tr-cl is the (set-theoretic) transitive closure.}
$\gamma > \sup(\Ord \cap \tr-c \ell(G_\ell))$ for $\ell=1,2$ and $H_\ell =
\mathbf F(G_\ell,\alpha,\gS,\gamma)$;
\sn
\item[$(e)$]  if we restrict ourselves to $G \in \mathbf K' = \{G \in
  \mathbf K$: if $x \in G$ then $x$ is a singleton$\}$ \then \, $G_1
  \subseteq G_2 \Rightarrow \mathbf F(G,\alpha,\gS) \subseteq
\mathbf F(G,\alpha,S,0)$.
\end{enumerate}
\end{claim}
\bigskip

\centerline {$* \qquad * \qquad *$}
\bigskip

In \S4,\S5 we intend to use also some relative of those constructions,
including:
\begin{definition}
\label{a82}
Assume $\bar H = \langle H_i:i < \delta\rangle$ is
$\subseteq$-increasing in $\mathbf K$ and $H_\delta = \cup\{H_i:i <
\delta\}$, (we shall use $\delta = \omega$).  We say $\cA$ is a one step
atomic $\gS-\otimes$-construction above $\bar H$, 
\when \, (and we may say $H$ is weakly atomically
$\gS-\otimes$-constructible over $\bar H$, omitting $\bar H$ means 
for some $\bar H$ of length $\omega$ and we may replace 
$\alpha_{\cA} = \alpha(\cA)$ by any index set) $\cA$ 
has the following objects satisfying the following additional conditions:
\mn
\begin{enumerate}
\item[$(A)$]  $(\bar H,H_\delta,H,\langle \bar c_\alpha,t_{\alpha,i},\alpha <
\alpha_{\cA},i < \delta \rangle)$;
\sn
\item[$(B)$]  $H_\delta \subseteq H \in \mathbf K$;
\sn
\item[$(C)$]  $t_{\alpha,i} \in \deef_{\gS}(H_i)$;
\sn
\item[$(D)$]  $H = \langle \cup\{\bar c_\alpha:\alpha < \alpha_{\cA}\}
\cup H_\delta \rangle_H$;
\sn
\item[$(E)$]  $\bar c_\alpha$ realizes $q_{t_{\alpha,i}}(H_i)$ in $H$ for
$\alpha < \alpha_{\cA},i < \delta$; 
\sn
\item[$(F)$]  $\bar c_{\alpha,i} \subseteq H_{i+1}$ realizes 
$q_{t_{\alpha,i}}(H_i)$ for $i < \delta,\alpha < \alpha_{\cA}$
and moreover;
\sn
\item[$(F)^+$]  assuming 
$\alpha(0) < \ldots < \alpha(n-1) < \alpha_{\cA}$ 
and $\ell g(\bar x_\alpha) = \ell g(\bar c_\alpha)$ and 
\newline
$\varphi =
\varphi(\bar x_{\alpha(0)},\dotsc,x_{\alpha(n-1)},\bar y)$ we
have\footnote{Yes!  $\tp_{\at}$ and not $\tp_{\bs}$.}
\newline
$\varphi(\bar x_{\alpha(0)},\dotsc,\bar x_{\alpha(n-1)},\bar b) 
\in \tp_{\text{at}}(\bar c_{\alpha(0)}
\char 94 \ldots \char 94 \bar c_{\alpha(n-1)},G_\delta,H)$ 
\newline
\Iff \, $\bar b \subseteq {}^{\ell g(\bar y)}G_\delta$ and 
for every permutation $\pi$ of $n$,
\newline
$(\forall^\infty i(0) < \delta)(\forall^\infty i(1) < \delta),\ldots,
(\forall^\infty i(n-1) < \delta)$
\newline 
$\varphi[\bar c_{\alpha(0),i(\pi(0)},\bar c_{\alpha(1),i(\pi(1))},
\dotsc,\bar c_{d(n-1),\pi(n-1)},\bar b]$
\newline
(used in the proof
of $(*)_{5.2}$ stage C in the proof of \ref{p73}); note that $\varphi$
is not necessarily atomic.
\end{enumerate}
\end{definition}

\begin{remark}
\label{a84}
1) We may consider replacing clause $(F)^+$ by:
\mn
\begin{enumerate}
\item[$(F)^*$]   $\bar c_{\alpha(0)} \char 94 \ldots \char 94 \bar
c_{\alpha(n-1)}$ realizes $q_{t_{\alpha(0)} \otimes \ldots \otimes
t_{\alpha(n-1)}}$ for $\alpha(0) < \ldots < \alpha(n-1) <
\alpha(\cA)$.  
\end{enumerate}
\mn
2) In this alternative version we do not need the existence of $\bar
c_{\alpha,i} \subseteq H_{i+1}$, so it is easier to prove existence
but the version above is the one we actually use.  In particular the
version in (1) would create problems in $(*)_{5.7}$ in the proof of
\ref{p73}; we may try to take care of this by changing the definition
of $L^*_\beta$ there.

\noindent
3) A sufficient condition for having the assumptions of \ref{a82} appear
in \ref{c64}.
\end{remark}

\begin{observation}
\label{a86}
Let $\gS$ be closed and $\otimes$-closed.  Assume $\langle G_i:i \le
\alpha\rangle$ is $\subseteq$-increasing continuous in $\mathbf K$.

\noindent
1) In \ref{a67}(1) we can prove $G^{\cA} 
\le_{\gS} H^{\cA}$ and in \ref{a67}(2), we can prove
   $\langle G^{\cA}_\alpha:\alpha \le \alpha_{\cA}\rangle$ is
$\le_{\gS}$-increasing continuous.

\noindent
2) In \ref{a82}, if $\bar H$ is $\le_{\gS}$-increasing \then \, 
we have $i < \delta \Rightarrow H_i \subseteq_{\gS} H$.

\noindent
3) Assume $S$ is a set of limit ordinals $< \delta,\langle G_i:i \le
\delta\rangle$ is a $\subseteq$-increasing continuous sequence of
members of $\mathbf K$ and $G_{i+1}$ is a one step 
$\gS-\otimes$-constructible over $G_i$ for $i \in \delta 
\backslash S$ and $G_{i+1}$ is weakly one step
   $\gS-\otimes$-constructible over $\bar G \rest C_i$ for some
   unbounded $C_i \subseteq i \backslash S$ for each $i \in S$, (hence $i$ is a
limit ordinal).  \Then \, $i < j \le \delta \wedge i \notin S 
\Rightarrow G_i \le_{\gS} G_j$.  
\end{observation}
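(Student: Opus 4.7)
The plan is to treat the three parts in order, leveraging the same core argument: any tuple in the constructed extension lies in the finite closure of finitely many $\bar c_\alpha$'s over a base, the $\otimes$-scheme combining the relevant $t_\alpha$'s is in $\gS$ by $\otimes$-closedness, and dominating-closedness of $\gS$ then delivers a single scheme in $\gS$ witnessing $\le_\gS$.

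For part (1), I would first establish $G^{\cA} \le_\gS H^{\cA}$. Given $\bar d \in {}^n H^{\cA}$, clause (d) of Definition \ref{a67}(1) places $\bar d$ in the closure of $G \cup \bar c_{\alpha_0} \cup \ldots \cup \bar c_{\alpha_{n-1}}$ for some finite increasing sequence $\alpha_0 < \ldots < \alpha_{n-1}$. Clause (c) says $\bar c_{\alpha_0} \char 94 \ldots \char 94 \bar c_{\alpha_{n-1}}$ realizes $t_{\alpha_0} \otimes \ldots \otimes t_{\alpha_{n-1}}$ over $G$; by $\otimes$-closedness of $\gS$ this scheme is in $\gS$, and by dominating-closedness $\tp_{\bs}(\bar d,G,H) = q_{\gs}(\bar a,G)$ for some $\gs \in \gS$ and $\bar a$ from $G$. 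For the increasing continuous conclusion of \ref{a67}(2), every successor step gives $G^{\cA}_\alpha \le_\gS G^{\cA}_{\alpha+1}$ by the same argument applied to the $\alpha$-th step, transitivity of $\le_\gS$ (Claim \ref{a22}(7), using composition-closedness) handles arbitrary $\alpha < \beta$ below a successor, and the limit case reduces to the observation that any $\bar c \in G^{\cA}_\beta$ already lies in some $G^{\cA}_\gamma$ with $\gamma < \beta$, so $\tp_{\bs}(\bar c,G^{\cA}_\alpha,G^{\cA}_\beta) = \tp_{\bs}(\bar c,G^{\cA}_\alpha,G^{\cA}_\gamma)$ and we apply the inductive hypothesis.

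Part (2) follows the same template with one additional bookkeeping step. For $\bar d \in H$, write it in the closure of $H_\delta \cup \bar c_{\alpha_0} \cup \ldots \cup \bar c_{\alpha_{n-1}}$; for fixed $i$, clauses $(E)$ and $(F)^+$ of Definition \ref{a82} combined with $\otimes$-closedness identify the joint type of $\bar c_{\alpha_0} \char 94 \ldots \char 94 \bar c_{\alpha_{n-1}}$ over $H_i$ as $q_{\gt_i}(\bar a_i,H_i)$ with $\gt_i = t_{\alpha_0,i}\otimes\ldots\otimes t_{\alpha_{n-1},i} \in \gS$. Parameters of $\bar d$ lying in $H_\delta \setminus H_i$ sit in some $H_j$ with $j > i$; the hypothesis that $\bar H$ is $\le_\gS$-increasing supplies an $\gS$-scheme describing their type over $H_i$, and composition-closedness folds these schemes into a single $\gS$-scheme witnessing $H_i \le_\gS H$.

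Part (3) proceeds by induction on $j$. The successor case $j = k+1$ with $k \notin S$ is immediate from part (1), the inductive hypothesis, and transitivity. For $k \in S$, $G_{k+1}$ is weakly one-step $\gS$-$\otimes$-constructible over $\bar G \rest C_k$ with $C_k \subseteq k \setminus S$ unbounded in $k$; since every index in $C_k$ avoids $S$, the inductive hypothesis makes $\bar G \rest C_k$ itself $\le_\gS$-increasing, so part (2) applies and yields $G_\ell \le_\gS G_{k+1}$ for each $\ell \in C_k$; picking $\ell \in C_k$ with $\ell > i$ and using transitivity with the inductive $G_i \le_\gS G_\ell$ closes the case. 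For $j$ a limit, any $\bar c \in G_j$ lies in some $G_{j'}$ with $i < j' < j$, reducing to the inductive hypothesis just as in part (1). The main obstacle is the successor step $k \in S$: it is the only place where transitivity alone does not suffice, forcing a combined use of the weak-atomic machinery of Definition \ref{a82} via part (2) together with the bookkeeping restriction $C_k \subseteq k \setminus S$, which is precisely why that restriction appears in the hypothesis.
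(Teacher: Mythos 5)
The paper gives no proof of Observation \ref{a86}, so I am comparing your argument against the surrounding definitions. Parts (1) and (3) are sound: for \ref{a67} the joint type really is stipulated to be the $\otimes$-product by clause (c), so dominating-closedness plus $\otimes$-closedness delivers $\le_\gS$ at successors, transitivity (Claim \ref{a22}(7)) handles the finite stacking, and the finitary character of Definition \ref{a18}(1) handles limits; and part (3) is a clean induction on $j$ reducing the $k\in S$ case to part (2) exactly as you describe, with the crucial observation that $\bar G\rest C_k$ is $\le_\gS$-increasing because $C_k$ avoids $S$.

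The gap is in part (2), specifically the sentence asserting that ``clauses $(E)$ and $(F)^+$ \dots combined with $\otimes$-closedness identify the joint type of $\bar c_{\alpha_0}\char 94\ldots\char 94\bar c_{\alpha_{n-1}}$ over $H_i$ as $q_{\gt_i}(\bar a_i,H_i)$ with $\gt_i=t_{\alpha_0,i}\otimes\ldots\otimes t_{\alpha_{n-1},i}$.'' That identification is exactly what the alternative clause $(F)^*$ of Remark \ref{a84}(1) would give, and the paper explicitly declines to adopt $(F)^*$: Remark \ref{a84}(2) says the iterated-limit clause $(F)^+$ is ``the one we actually use'' and that the $\otimes$-based $(F)^*$ ``would make problems in $(*)_{5.7}$.'' Unwinding $(F)^+$ for $n=2$, $j<\delta$, $\bar b\in H_j$, the condition $\sigma(\bar c_0,\bar c_1,\bar b)=e$ reduces (using clause (E) to replace the inner $\forall^\infty i_1$-quantifier) to ``$(\forall^\infty i_0)\;H\models\sigma(\bar c_{0,i_0},\bar c_1,\bar b)$ and $(\forall^\infty i_1)\;H\models\sigma(\bar c_0,\bar c_{1,i_1},\bar b)$''; the $\otimes$-condition of Definition \ref{a55}(2)(b) instead asks that $\sigma(\bar z_0,\bar c_1,\bar b)=e$ lie in the nonforking extension $q_{\gs_{t_{0,j}}}(\bar a_{t_{0,j}},H)$, and there is no reason why the approximating tuples $\bar c_{0,i_0}$ (whose types are controlled only over $H_{i_0}$, not over $\langle H_j+\bar c_1\rangle_H$) should eventually realize that nonforking extension. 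So the equality you assert is not a consequence of the definitions, and the paper itself signals (both in Remark \ref{a84} and in the unproved $(*)_{5.10}$ of Theorem \ref{p73}, which makes the analogous ``restriction to $G_\beta$ lies in $\bold S_\gS(G_\beta)$'' claim with only ``Why? Think.'') that something nontrivial is needed here. What part (2) actually requires is that the $(F)^+$-joint type restricted to each $H_j$, $j<\delta$, be \emph{some} $\gS$-type, and that has to be argued directly from the limit clause $(F)^+$ together with the coherence of the $t_{\alpha,i}$'s and the closure properties of $\gS$, not by appeal to the $\otimes$-product; since part (3) leans on part (2) in its successor-of-$S$ step, the same gap propagates there.
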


\begin{remark}
The idea of $\gs_1 \otimes \gs_2$ can be applied to one $\gs$ (and
is used in the end of the proof of $\boxplus_1$ in stage B the proof of
Theorem \ref{p73}). 
\end{remark}

\noindent
Toward this in \S4(B) we shall 
deal with finding such amalgamations and $\gs$'s.
\begin{dc}
\label{a89}
Assume $\gs \in \Omega[\mathbf K_{\lf}]$ and 
 $H_1 \subseteq H_2 \in \mathbf K$ are finite, 
$\bar a \in {}^{k(\gs)}(H_1),\bar c \in {}^{n(\gs)}(H_2)$
 and $\bar a,\bar c$ generate $H_1,H_2$ respectively, and $\bar a$
 realizes $p_{\gs}(\bar x_{\gs})$ in $H_1$ and $\bar c$ realizes
 $q_{\gs}(\bar a,H_1)$ in $H_2$.
Assume further $K$ is a group of automorphisms of $H_2$ mapping $H_1$
onto itself.  \Then \, there is a one and only one $\gt$ such that: 
\mn
\begin{enumerate}
\item[$(a)$]  $\gt \in \Omega[\mathbf K_{\lf}]$
\sn
\item[$(b)$]  $k(\gt) = k(\gs)$ and $p_{\gt}(\bar x_{\gt}) = 
\tp_{\qf}(\bar a,\emptyset,H_1)$
\sn
\item[$(c)$]  if $H_1 \subseteq G_1 \subseteq G_2,H_2 \subseteq G_2$ and
$\bar c$ realizes $q_{\gs}(\bar a,G_1)$ in $G_2$ and $\bar c' \in
{}^n(G_2)$ realizes $q_{\gt}(\bar a,G_2)$ \then \,
$\tp_{\at}(\bar c',G_1,G_2) = \cap\{\tp_{\at}(\pi(\bar c),G_1,G_2):
\pi \in K\}$.
\end{enumerate}
\end{dc}

\begin{remark}
Toward this in \S(4B) we deal with finding such amalgamations and $\gs$'s.
\end{remark}

\begin{PROOF}{\ref{a89}}
Straightforward.
\end{PROOF}
\newpage

\section {For fixing a distinguished subgroup}

In the construction of complete members of $\mathbf K_{\exlf}$
(and related aims) we fix large enough $\gS \subseteq \Omega[\mathbf K]$
and build a $\subseteq$-increasing continuous sequence $\langle
G_\alpha:\alpha < \lambda\rangle,|G_\alpha| < \lambda$; normally we
demand for $\alpha < \beta < \lambda$ that ``usually" $G_\alpha
\le_{\gS} G_\beta$ (i.e. except for $\delta \in S$, where $S
\subseteq S^\lambda_{\aleph_0}$).  But at some moment for $\alpha =
\delta + n$, we like to use $p = \tp_{\bs}(c,G_\alpha,G_{\alpha +1})$ which
extends some $r \in \mathbf S_{\bs}(K),
K \subseteq G_\alpha$ finite but such
that $c$ commutes with $G_\delta$.  Also toward this in \S(4A) we deal
with a relative $\NF^3$ of $\NF_f$, in which we demand $\mathbf C_{G_1}(G_3)$ is
large, this continues \S2 concentrating on the case $G_0$ is with
trivial center.  In \S(4B) we use this to define some schemes from
$\Omega[\mathbf K]$, see e.g. \ref{d93}.  

Another problem is that given $G_1$ instead of extending $G_1$ to
$G_2$ such that $q_t(G_1)$ is realized by $\bar c \in {}^{\omega
>}(G_2)$ for some $t \in \deef_{\gS}(G_1)$, we like to have an
infinite $\bar c = (\ldots \char 94 \bar c_i \char 94 \ldots)_{i \in
I}$, with $\tp(\bar c \rest u,G_1,G_2) \in q_{t_u}(G_1)$ for every
finite $u \subseteq I$; used in stage D of the proof of Theorem
\ref{p73}.  This is done in \S4(C).  

\subsection {Preserving Commutation} \
\bigskip

\begin{claim}
\label{d36}
The subgroups $H'_1,H'_2$ of $G_3$ commute \when \,:
\mn
\begin{enumerate}
\item[$(*)$]  
\begin{enumerate}
\item[(a)]  $\mathbf x \in \mathbf X_{\mathbf K}$;
\sn
\item[(b)]  $G_\ell = G_{\mathbf x,\ell},G'_\ell = 
\mathbf j_{\mathbf x,\ell}(G_\ell)$ for $\ell = 0,1,2$;
\sn
\item[(c)]  $G_3 = G_{\mathbf x}$;
\sn
\item[(d)]  $H_1 \subseteq G_1$ and
$H'_1 = \mathbf j_{\mathbf x,1}(H_\ell)$;
\sn
\item[(e)]  $H_1 = \cup\{b(H_1 \cap G_0):b \in \mathbf I_1\}$ 
where $\mathbf I_1 = \mathbf I_{\mathbf x,1} \cap H_1$;
\sn
\item[(f)]   if $g \in \mathbf I_{\mathbf x,1}$ and\footnote{As $G_1$ is
    locally finite, necessarily $\mathbf I_1$ is a subgroup of $H_1$.}
 $b \in \mathbf I_1$ then $gb \in \mathbf I_{\mathbf x,1}$;
\sn
\item[(g)]  the subgroups $G_0,H_1$ of $G_1$ commute;
\sn
\item[(h)]  $H_2 \subseteq G_2$ commutes with
$G_0 \cap H_1$ and $H'_2 = \mathbf j_{\mathbf x,2}(H_2)$;
\sn
\item[(i)]  $H_2 = \cup\{a(G_0 \cap H_2):a \in \mathbf I_2\}$ where
$\mathbf I_2 = \mathbf I_{\mathbf x,2} \cap H_2$.
\end{enumerate}
\end{enumerate}
\end{claim}

\begin{remark}
\label{d37}
1) Really here it suffices to deal with the case $G_0 \cap H_1 = \{e\}$.

\noindent
2) A natural case is $\mathbf Z(G_0) = \{e_{G_0}\},H_1 = 
\mathbf C_{G_1}(G_0),H_2 = G_2$.

\noindent
3) See the proof of \ref{p73}.
\end{remark}

\begin{notation}
\label{d38}
Let $\mathbf X^3_{\lf} = \mathbf X^3_{\mathbf K_{\lf}}$ be the class 
of tuple $(\mathbf x,H_1,H_2)$ which
satisfies $(*)$ of Claim \ref{d36}.
\end{notation}

\begin{PROOF}{\ref{d36}}
Let $a \in H_2,b \in H_1,f_a = \mathbf j_{\mathbf x,2}(a),f_b = \mathbf
j_{\mathbf x,1}(b)$, so by $(*)(d),(h)$ we just 
have to prove that $f_b f_a((g_0,g_1,g_2)) =
f_a f_b((g_0,g_1,g_2))$ for any $(g_0,g_1,g_2) \in \cU_{\mathbf x}$. 

Clearly
\mn
\begin{enumerate}
\item[$\bullet$]   if $a \in G_0$ or $b \in G_0$ this holds.  
\end{enumerate}
\mn
[Why?  First, if $a \in G_0$ then $f_a = \mathbf j_{\mathbf x,2}(a) = \mathbf
j_{\mathbf x,0}(a) = \mathbf j_{\mathbf x,1}(a) \in \mathbf j_{\mathbf x,1}
(G_1) = G'_1 \subseteq G_{\mathbf x}$ 
and as $b \in H_1 \subseteq G_{\mathbf x}$, by
$(*)(g)$ we have $G_1 \models ``a,b$ commute" hence $G_{\mathbf x}
\models ``\mathbf j_{\mathbf x,2}(a),\mathbf j_{\mathbf x,1}(b)$ commute" and so
$G_{\mathbf x} \models ``f_a,f_b$ commute".  Second, if $b \in G_0$ then
$b \in G_0 \cap H_1 \subseteq G_0 \subseteq G_2$ and $a \in H_2
\subseteq G_2$, so by clause $(*)(h)$ clearly $G_2 
\models ``a,b$ commute" and we finish as above.]

Moreover, as $H_1 = \langle (G_0 \cap H_1) \cup \mathbf I_1 \rangle_{G_1}$ by
clause $(*)(e)$, recalling $\bullet$ above, without loss of generality 
\mn
\begin{enumerate}
\item[$\boxplus_1$]  $b \in \mathbf I_1 \subseteq \mathbf I_{\mathbf x,1}$.
\end{enumerate}
\mn
Similarly as $H_2 = \langle (G_0 \cap H_2) \cup \mathbf I_1\rangle$.
By clause $(*)(i)$, recalling $\bullet$ above \wilog \,:
\mn
\begin{enumerate}
\item[$\boxplus_2$]  $a \in \mathbf I_2 \subseteq \mathbf I_{\mathbf x,2}$.
\end{enumerate}
\mn
Let\footnote{Note that $g^x_\ell$ is
  \underline{not} conjugation by $x$.} 
$f_x((g_0,g_1,g_2)) = (g^x_0,g^x_1,g^x_2)$ and $f_y
f_x((g_0,g_1,g_2)) = (g^{x,y}_0,g^{x,y}_1,g^{x,y}_2)$ for $x \in
\{a,b\}$ and $y \in \{a,b\} \backslash \{x\}$.

We shall prove that $g^{a,b}_\ell = g^{b,a}_\ell$ for $\ell=0,1,2$;
this suffices.

Clearly,
\mn
\begin{enumerate}
\item[$\bullet_1$]  $g^a_1 = g_1$ and $g_2 g_0 a = g^a_2 g^a_0$;
\sn
\item[$\bullet_2$]  $g^{a,b}_2 = g^a_2$ and $g^a_1 g^a_0 b = g^{a,b}_1
g^{a,b}_0$;
\sn
\item[$\bullet_3$]  $g^b_2 = g_2$ and $g_1 g_0 b = g^b_1 g^b_0$;
\sn
\item[$\bullet_4$]  $g^{b,a}_1 = g^b_1$ and $g^b_2 g^b_0 a = g^{b,a}_2
g^{b,a}_0$. 
\end{enumerate}
\mn
Now
\mn
\begin{enumerate}
\item[$\boxplus_3$]  $g^{a,b}_1 G_0 = 
g^{a,b}_1 g^{a,b}_0 G_0 = g^a_1 g^a_0 b G_0 = 
(g^a_1 b)(g^a_0 G_0) = (g^a_1 b) G_0$.
\end{enumerate}
\mn
[Why?  As $g^{a,b}_0 \in G_0$, by the second statement of $\bullet_2$, 
noting that $b,g^a_0$ commute by $(*)(g)$, and as $g^a_0 \in G_0$,
respectively.]

But $g^a_1 \in \mathbf I_{\mathbf x,1}$
(as $(g^a_0,g^a_1,g^a_2) \in \cU_{\mathbf x}$), and $b \in \mathbf I_1
\subseteq \mathbf I_{\mathbf x,1}$ by $\boxplus_1$, hence by $(*)(f)$ we have 
$g^a_1 b \in \mathbf I_{\mathbf x,1}$ and also $g^{a,b}_1 \in
\mathbf I_{\mathbf x,1}$ (as $(g^{a,b}_0,g^{a,b}_1,g^{a,b}_2) \in \cU_{\mathbf
x}$).  Now by $\boxplus_3$, $g^{a,b}_1 G_0 = (g^a_1 b)G_0$ and by the
last sentence $g^{a,b}_1,g^a_1 \in \mathbf I_{\mathbf x,1}$ and thus
\mn
\begin{enumerate}
\item[$\bullet_5$]  $g^{a,b}_1 = g^a_1 b$.
\end{enumerate}
\mn
So by $\bullet_5$ and the second equation in $\bullet_2$ we have
$g^a_1 b g^{a,b}_0 = g^{a,b}_1 g^{ab}_0 = g^a_1 g^a_0 b = g^a_1 b
g^a_0$, the last equality by recalling $b,g^a_0$ commute
by $(*)(g)$, hence we have:
\mn
\begin{enumerate}
\item[$\bullet_6$]  $g^{a,b}_0 = g^a_0$.
\end{enumerate}
\mn
Similarly to $\boxplus_3$ we have 
\mn
\begin{enumerate}
\item[$\boxplus_4$]  $g^b_1 G_0 = 
g^b_1 g^b_0 G_0 = g_1 g_0 b G_0 = (g_1 b)(g_0 G_0) = (g_1 b)G_0$.
\end{enumerate}
\mn
[Why?  As $g^b_0 \in G_0$, by $\bullet_3$ second statement,
 as $b,g_0$ commute by $(*)(g)$, and as $g_0 \in G_0$ respectively.]

Also $g_1 \in \mathbf I_{\mathbf x,1}$ as $(g_0,g_1,g_2) \in \cU_{\mathbf x}$ and
$b \in \mathbf I_1$ by $\boxplus_1$ so recalling $(*)(f)$ we deduce
$g_1,g_1 b \in \mathbf I_{\mathbf x,1}$ thus from $\boxplus_4$ we deduce:
\mn
\begin{enumerate}
\item[$\bullet_7$]  $g^b_1 = g_1 b$.
\end{enumerate}
\mn
Hence by $\bullet_7$ and $\bullet_3$ second statement we have $g_1 b
g^b_0 = g^b_1 g^b_0 = g_1 g_0 b = g_1 b g_0$, the last equation
 recalling $b,g_0$ commute (by $(*)(g)$), hence we have:
\mn
\begin{enumerate}
\item[$\bullet_8$]  $g^b_0 = g_0$.
\end{enumerate}
\mn
So by $\bullet_4,\bullet_7,\bullet_1,\bullet_6$, $b$ commuting with
$G_0$ and $\bullet_2$ second statement respectively, we have
\mn
\begin{enumerate}
\item[$\boxplus_5$]  $g^{b,a}_1 = g^b_1 = (g_1 b) = (g^a_1 b) = (g^a_1 b)
(g^a_0(g^{a,b}_0)^{-1}) = (g^a_1 g^a_0
b)(g^{a,b}_0)^{-1} = g^{a,b}_1$,
\end{enumerate}
\mn
and thus
\mn
\begin{enumerate}
\item[$\bullet_9$]  $g^{b,a}_1 = g^{a,b}_1$.
\end{enumerate}
\mn
Also by $\bullet_4,\bullet_3,\bullet_8,\bullet_1,\bullet_6,\bullet_2$
we have
\mn
\begin{enumerate}
\item[$\boxplus_6$]  $g^{b,a}_2 g^{b,a}_0 = g^b_2 g^b_0 a =
g_2 g^b_0 a = g_2 g_0 a = g^a_2 g^a_0 = g^a_2 g^{a,b}_0 =  
g^{a,b}_2 g^{a,b}_0$.
\end{enumerate}
\mn
So
\begin{enumerate}
\item[$\bullet_{10}$]  $g^{b,a}_2 g^{b,a}_0 = g^{a,b}_2 g^{a,b}_0$
\end{enumerate}
\mn
but $g^{b,a}_0,g^{a,b}_0 \in G_0$ and $g^{b,a}_2,g^{a,b}_2 \in \mathbf
I_{\mathbf x,2}$ hence recalling
$(g^{a,b}_0,g^{a,b}_1,g^{a,b}_2),(g^{b,a}_0,g^{b,a}_1,g^{b,a}_2) \in
\cU_{\mathbf x}$ we have:
\mn
\begin{enumerate}
\item[$\bullet_{11}$]  $g^{b,a}_2 = g^{a,b}_2$ and $g^{b,a}_0 = g^{a,b}_0$.
\end{enumerate}
\mn
But $\bullet_{11} + \bullet_9$ imply that we are done.
\end{PROOF}

The following claim is like Definition \ref{c10}, but now we preserve  
a large $\mathbf C_{G_1}(G_0)$ using \ref{d36}.
\begin{definition}
\label{d39}
Let $\NF^3(\bar G,H_1,L,H_2)$ mean:
\mn
\begin{enumerate}
\item[(A)] 
\begin{enumerate}
\item[(a)]  $\bar G = \langle G_\ell:\ell \le 3\rangle$
are from $\mathbf K_{\lf}$;
\sn
\item[(b)]  $G_0  \subseteq G_\ell$ for $\ell = 1,2$;
\sn
\item[(c)]  $G_0$ is finite;
\sn
\item[(d)]  $H_1 \subseteq \mathbf C_{G_1}(G_0),L
\subseteq H_1,L \cap G_0 = \{e_{G_0}\},H_1 = \langle L,G_0 \cap
H_1\rangle_{G_1}$;
\sn
\item[(e)]  $G_1 \cap G_2 = G_0$;
\sn
\item[(f)]  $H_2 \subseteq \mathbf C_{G_2}(H_1 \cap G_0)$;
\end{enumerate}
\sn
\item[(B)]
\begin{enumerate}
\item[(a)]  $G_\ell \subseteq G_3$ for $\ell = 1,2$;
\sn
\item[(b)]   for $\sigma(\bar x,\bar y)$ a
group-term, $\bar a \in {}^{\ell g(\bar x)}(G_1)$ and $\bar b \in
{}^{\ell g(\bar y)}(G_2)$ the following conditions are equivalent:
\begin{itemize}
\item  $G_3 \models ``\sigma(\bar a,\bar b) = e_{G_3}"$,
\sn
\item   if $(\mathbf x,H_1,H_2) \in \mathbf X^3_{\lf}$, see \ref{d38},
$G_\ell = G_{\mathbf x,\ell}$ for $\ell= 0,1,2$ 
and $\bar a' = \mathbf j_{\mathbf x,1}(\bar a)$ and\footnote{We may add
  $\mathbf I_1 = L$.} $\bar b' = \mathbf j_{\mathbf x,2}(\bar b)$
\then \, $G_{\mathbf x} \models ``\sigma(\bar a',\bar b') = e_{G_{\mathbf x}}"$.
\end{itemize}
\end{enumerate}
\end{enumerate}
\end{definition}

\begin{convention}
\label{d40}
In  \ref{d39}, if $H_1 = L$ we may in addition omit $L$.  We may omit
$L,H_2$ when $L=H_1,H_2 = \mathbf C_{G_2}(H_1 \cap G_0)$.  Lastly, if
$\mathbf Z(G_0) = \{e_{G_0}\},L = \mathbf C_{G_1}(G_0)$ and $H_1 = L$ and
$H_2 = G_2$, then we may omit $H_1,L$ and $H_2$; see \ref{d42}(3) below.
\end{convention}

\begin{claim}
\label{d42}
Assume $\bar G = \langle G_\ell:\ell < 3 \rangle,
H_1,L,H_2$ are as in \ref{d39}(A).

\noindent
1) We can find $\mathbf x$ such that $(\mathbf x,H_1,H_2) \in \mathbf X^3_{\lf}$.

\noindent
2) There is $G_3 \in \mathbf K$ such that $\NF^3(\langle
   G_0,G_1,G_2,G_3\rangle,H_1,L,H_2)$ and $G_3$ is unique up to
   isomorphism over $G_1 \cup G_2$.

\noindent
3) If $\bar G$ satisfies (A)(a),(b),(c) of Definition \ref{d39},
$\mathbf Z(G_0) = \{e_{G_0}\},H_1 = L = \mathbf C_{G_1}(G_0)$ and $H_2 =
G_2$, \then \, $(\bar G,H_1,L,H_2)$ satisfies \ref{d39}(A).

\noindent
4) The relation $\NF^3(\bar G),\bar G = \langle G_\ell:\ell \le
3\rangle$ satisfies the parallel of \ref{c18} omitting symmetry, so
having uniqueness, monotonicity and both sides definability, i.e. $G_1
\le_{\Omega[\mathbf K]} G_3,G_2 \le_{\Omega[\mathbf K]} G_3$.
\end{claim}

\begin{PROOF}{\ref{d42}}
1) It suffices to prove we can choose 
$\mathbf I^*_1,\mathbf I^*_2$ satisfying the demands
on $\mathbf I_{\mathbf x,1},\mathbf I_{\mathbf x,2}$ in \ref{d36}.

\noindent
Why can we do it?  For $\mathbf I^*_2$ the demands are just clauses
(b),(c) from \ref{c3}(1) and $(*)(i)$ of \ref{d36} so just choose
$\mathbf I_2 \subseteq H_2$ such that $e_{G_0} \in \mathbf I_2$ and
$\langle g(G_0 \cap H_2):g \in\mathbf I_2 \rangle$ is a partition of
$H_2$ and then let $\mathbf I^*_2$ be such that $\mathbf I_2 \subseteq 
\mathbf I^*_2 \subseteq G_2$ and $\langle g G_0:g \in \mathbf
I^*_2\rangle$ is a partition of $G_2$.  

For $\mathbf I^*_1$ we have to take care of clauses (b),(c) from \ref{c3}(1),
 of $(*)(e)$ (the parallel of $(*)(i)$) and of $(*)(f)$ from
 \ref{d36}.  For this let $H^+_1 := \langle G_0,H_1\rangle_{G_1}$.  
First, choose $\mathbf I'_1 = L$ so clearly $e_{G_0} \in \mathbf I'_1$ 
and thus $\langle g G_0:g \in \mathbf I'_1\rangle$ is a partition of
$H^+_1$.  Why?  Recalling that $L \subseteq H_1 \subseteq G_1,L \cap G_0 
= \{e_{G_0}\}$ and $H_1 = \langle L,G_0 \cap H_1\rangle_{G_1}$ and
$H_1$ commute with $G_0$ in $G_1$;
 by clause (A)(d) we know that this is satisfied.  
Also let $\mathbf J_1 \subseteq G_1$ be such that $e_{G_0}
= e_{G_1} \in \mathbf J_1$ and $\langle g H^+_1:g \in \mathbf J_1\rangle$
is a partition of $G_1$.  Now let 
$\mathbf I^*_1 = \{gb:g \in \mathbf J_1$ and $b \in \mathbf I'_1\}$.

Clearly $\langle g G_0:g \in \mathbf I^*_1\rangle = \langle g(b G_0):b \in
\mathbf I'_1,g \in \mathbf J_1\rangle$ is a partition of $G_1$ (refining
$\langle g H^+_1:g \in \mathbf J_1\rangle)$, so clause \ref{c3}(1)(b)
holds.  Furthermore, $\mathbf I^*_1 \cap H^+_2 = L = \mathbf I'_1$
 so clause \ref{d36}(e) holds.

Next as $e_{G_0} \in \mathbf J_1$ and $e_{G_0} \in \mathbf I'_1$ clearly
$e_{G_0} \in \mathbf I^*_1$, so $\mathbf I^*_1$ satisfies clause
\ref{c3}(1)(c).  Also if $g \in \mathbf I^*_1 \wedge b \in \mathbf
I'_1$ then for some $g_1 \in \mathbf J_1,b_1 \in \mathbf I'_1$ we have
$G_1 \models ``g = g_1 b_1"$ hence $G_1 \models ``gb = (g_1 b_1)b =
g_1(b_1 b)"$ and recall $g_1 \in \mathbf J_1$ and $b_1 b \in \mathbf I'_1$ as
$\mathbf I'_1 = L$ is closed under products.  Thus together $gb \in \mathbf
I^*_1$, hence clause \ref{d36}(1)(f) is satisfied.  So $\mathbf I^*_1,\mathbf
I^*_2$ are as required in \ref{c3}(1) and \ref{d36}.  Hence 
there is $\mathbf x \in \mathbf X_{\mathbf K}$ such that 
$G_{\mathbf x,\ell} = G_\ell$ for
$\ell=0,1,2$ and $\mathbf I_{\mathbf x,\ell} = \mathbf I^*_\ell$ for
$\ell=1,2$.

\noindent
2) Consider clause (B) of \ref{d39}, the ``if $\mathbf x \in \ldots"$ is
   not empty so $G_3$ is a well defined group.  Easily $G_1 \subseteq
   G_3$ and $G_2 \subseteq G_3$ but is $G_3$ locally finite?  This
   follows from the results in \S2, in particular \ref{c18}.  That is, 
as there if $G'_\ell$ is finite, $G_0 \subseteq G'_\ell \subseteq
   G_\ell$ for $\ell=1,2$ \then \, we have finitely many possible
   choices of $(\mathbf I_{\mathbf x,1} \cap x_1 G'_1,\mathbf I_{\mathbf x,2} \cap
   x_2 G'_2)$ for $x_1 \in G_1,x_2 \in G_2$
hence the group $G_3$ that we get is locally finite.  Probably better
this is $G'_3$ such that $\NF_f(G_0,G_1,G_2,G'_3)$, by the definition
there is a homomorphism from $G'_3$ onto $G_3$ over $G_1 \cup G_2$.
Now as $G'_3$ is $\lf$ so is $G_3$.

\noindent
3),4)  Should be clear.
\end{PROOF}
\bigskip

\subsection {Schemes and derived sets} \
\bigskip

\begin{definition}
\label{d89}
1) Let $\mathbf X_0$ be the set of $\mathbf x$ such that:
\mn
\begin{enumerate}
\item[$(a)$]  $\mathbf x$ has the form $(K_1,K_2,\bar a_2,\bar a_1) =
(K_1[\mathbf x],K_2[\mathbf x],\bar a_2[\mathbf x],\bar a_1[\mathbf x])$;
\sn
\item[$(b)$]  $K_1 \subseteq K_2$ are finite groups;
\sn
\item[$(c)$]  $\bar a_1$ is a finite sequence generating $K_1$;
\sn
\item[$(d)$]  $\bar a_2$ is a finite sequence from $K_2$ such that
$\bar a_2 \char 94 \bar a_1$ generates $K_2$ (if $\bar a_2 = \langle
a_2 \rangle$ we may write just $a_2$);
\sn
\item[$(e)$]  $K_1$ has trivial center.
\end{enumerate}
\mn
2) Let $\mathbf X_1$ be the set of $\mathbf x$ such that:
\mn
\begin{enumerate}
\item[$(a)$]  $\mathbf x = (K,\bar a) = (K[\mathbf x],\bar a[\mathbf x])$;
\sn
\item[$(b)$]  $K \in \mathbf K$ is finite; 
\sn
\item[$(c)$]  $\bar a$ is a finite sequence from $K$ generating
  $K,\ell g(\bar a) \ge 1$; let $a_* = a_*[\mathbf x] = a_0$, 
the first element of $\bar a$.
\end{enumerate}
\mn
3) Let $\mathbf X_2$ be the set of $\mathbf x \in \mathbf X_1$ such that:
\mn
\begin{enumerate}
\item[$(*)$]  $K$ has trivial center.
\end{enumerate}
\mn
4) Let $\mathbf X_3$ be the set of $\mathbf x \in \mathbf X_1$ such
that\footnote{So $\mathbf x_3 \supseteq \mathbf x_2$.}:
\mn
\begin{enumerate}
\item[$(*)$]  if $f$ is a non-trivial automorphism of $K$ \then \,
for some conjugate $b$ of $a_* = a_*[\mathbf x] = a_0[\mathbf x]$ we have
$f(b) \notin \langle a_* \rangle_K$; equivalently, for some conjugate $b$
of $a_*,\langle b \rangle_K \ne \langle a \rangle_K$.
\end{enumerate}
\end{definition}

\begin{observation}
\label{d91}
If $m \in \{2,3,\ldots\}$ then for some $\mathbf x \in \mathbf X_3$ the
element $a_*[\mathbf x] \in K[\mathbf x]$ has order $m$.
\end{observation}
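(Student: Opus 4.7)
The plan is to take $K$ to be a symmetric group and $a_*$ an $m$-cycle. Concretely, set $n := m+1$, let $K := S_n$, let $a_* := (1\,2\,\cdots\,m) \in K$, and let $\bar a := \langle a_* \rangle$; define $\bold x := (K, \bar a)$. Then $K$ is a finite (hence locally finite) group and $\bar a$ is a finite sequence, so $\bold x \in \bold X_1$; since $n = m+1 \geq 3$, the symmetric group $S_n$ has trivial center, placing $\bold x$ in $\bold X_2$. Moreover $a_* = a_0[\bold x]$ has order $m$ by construction.

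To verify the defining condition of $\bold X_3$, I would first count the conjugacy class $C$ of $a_*$ in $K$: since $C$ consists of all $m$-cycles in $S_{m+1}$, a standard enumeration yields $|C| = (m+1)!/m = (m+1)(m-1)!$, which exceeds $m$ for every $m \geq 2$ (one has $(m+1)(m-1)! \geq m+1 > m$). On the other hand $\langle a_* \rangle_K$ is cyclic of order $m$, so $|\langle a_* \rangle_K| = m < |C|$.

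The main argument is then: for any automorphism $f$ of $K$, the image $f(C)$ is a conjugacy class of $K$, and automorphisms permute conjugacy classes bijectively while preserving their cardinalities, so $|f(C)| = |C| > m = |\langle a_* \rangle_K|$. Hence $f(C) \not\subseteq \langle a_* \rangle_K$, so there exists $b \in C$---a conjugate of $a_*$---with $f(b) \notin \langle a_* \rangle_K$. This holds for every automorphism $f$, so a fortiori for every non-trivial one, giving $\bold x \in \bold X_3$.

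The only step requiring even minor care is the cardinality comparison $|C| > |\langle a_* \rangle_K|$; thereafter the argument is purely formal and insensitive to the fine structure of $\Aut(K)$. In particular the exotic outer automorphism of $S_6$ (the only case where cycle-structure is not automorphism-invariant) is handled uniformly by the size argument, since class-size preservation is all we exploit.
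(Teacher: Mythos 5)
The paper states this Observation without proof, so there is no author argument to compare against; your task is simply to check that the proposed construction works, and it does. Your choice $K=S_{m+1}$, $a_*$ an $m$-cycle, with $\bar a=\langle a_*\rangle$ is clean: membership in $\bold X_1$ and $\bold X_2$ is immediate (triviality of the center uses $m+1\ge 3$, which holds since $m\ge 2$), and $a_*$ has order $m$. The $\bold X_3$ verification is the only substantive step, and the counting is correct: the centralizer of an $m$-cycle in $S_{m+1}$ is $\langle a_*\rangle$ itself (of order $m$), so $|C|=(m+1)!/m=(m+1)(m-1)!>m$, and since any automorphism $f$ maps $C$ bijectively onto a conjugacy class of the same size, $f(C)\not\subseteq\langle a_*\rangle_K$, which is exactly what the definition of $\bold X_3$ requires. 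Your remark that the argument is insensitive to the $S_6$ outer automorphism is apt, since you only use class-size preservation rather than cycle-type preservation. The argument is correct and is a perfectly natural instantiation of the observation.
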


\begin{claim}
\label{d92}
If $\mathbf x \in \mathbf X_0$, \then \, there is one and only one
$\gs$, call it $\gs_{\cm} = \gs_{\cm}[\mathbf x]$ such that:
\mn
\begin{enumerate} 
\item[$(a)$]  $\gs \in \Omega[\mathbf K_{\lf}]$;
\sn
\item[$(b)$]  $k_{\gs} = \ell g(\bar a_1[\mathbf x])$ and $n_{\gs} =
\ell g(\bar a_2[\mathbf x])$;
\sn
\item[$(c)$]  $p_{\gs}(\bar x_{\gs}) = 
\tp_{\bs}(\bar a_1[\mathbf x],\emptyset,K[\mathbf x])$;
\sn
\item[$(d)$]  if $G_1 \subseteq G_3 \in \mathbf K_{\lf}$ and
$\tp_{\bs}(\bar a,\emptyset,G_1) = \tp_{\bs}(\bar a_1[\mathbf
x],\emptyset,K[\mathbf x])$ and $\bar c$ realizes $q_{\gs}(\bar a,G_1)$ 
in $G_3$ \then \, $\NF^3(\langle \bar a \rangle_{G_1},G_1,
\langle \bar a \char 94 \bar c\rangle_{G_3},G_3)$.
\end{enumerate}
\end{claim}

\begin{PROOF}{\ref{d92}}
As in \S2 using \S(4A).  Let $K_\ell = K_\ell[\mathbf x]$ for
$\ell=1,2$; and let $G_0 = K_1$ and $G_1 \in \mathbf K$ be
existentially closed, extend $K_1$ and be such that $K_2 \cap G_1 =
K_0$.  Let $L = \mathbf C_{G_0}(G_1)$, so as $G_0 = K_1$ has trivial
center (by \ref{d89}(1)(e)), clearly we have 
$L \cap G_0 = \{e_{G_0}\}$ and let $H_1 = c \ell(G_0
\cup L,G_1),H_0 = \{e_{K_1}\}$ and let $H_2 = G_2 := K_2$.  Now we
apply Claim \ref{d42}(2), so there is $G_3$ such that
$\NF^3(G_0,G_1,G_2,G_3)$ see Definition \ref{d39}.
By it, the type $\tp_{\bs}(\bar
a_2[\mathbf x],G_2,G_{\mathbf x})$ does not split over $G_0 = K_1$.  From
this it is easy to define $\gs$ and to prove it is as required. 
\end{PROOF}

\begin{dc}
\label{d93}
For $\mathbf x \in \mathbf X_1$ let $\gs = \gs_{\ab}[\mathbf x]$ be
such that:
\mn
\begin{enumerate}
\item[$(a)$]  $\gs \in \Omega[\mathbf K_{\lf}]$;
\sn
\item[$(b)$]  $k_{\gs} = 0$;
\sn
\item[$(c)$]  if $\bar c$ realizes $q_2(<>,G_1)$ in $G_2$ so $G_1
\subseteq G_2$ \then \, $\bar c$ realizes 
$\tp_{\bs}(\bar a[\mathbf x],\emptyset,K[\mathbf x])$ and commutes
with $G_1$, and $\langle \bar c \rangle_{G_2} \cap G_1 = \{e\}$.
\end{enumerate}
\end{dc}

\begin{PROOF}{\ref{d94}}
Easy.
\end{PROOF}

\begin{dc}
\label{d94}
For $\mathbf x \in \mathbf X_2$ we define $\gs = \gs_{\gmm}[\mathbf x]$
such that:
\mn
\begin{enumerate}
\item[$(a)$]  $\gs \in \Omega[\mathbf K_{\lf}]$; 
\sn
\item[$(b)$]  $k_{\gs} = 2 \ell g(\bar a[\mathbf x])$ and $n_{\gs}=1$;
\sn
\item[$(c)$]  if $G_1 \subseteq G_2 \in \mathbf K_{\lf}$ and 
$\tp_{\bs}(\bar a_\ell,\emptyset,G_1) = \tp_{\bs}(\bar a[\mathbf x],
\emptyset,K[\mathbf x])$ for $\ell=1,2$ and $\langle \bar a_1
\rangle_{G_1},\langle \bar a_2 \rangle_{G_1}$ commute in $G_1$
and\footnote{In fact this follows.} have
intersection $\{e_G\}$ then $p_{\gs}(\bar x_{\gs}) = 
\tp_{\bs}(\bar a_1 \char 94 \bar a_2,\emptyset,G_1)$;
\sn
\item[$(d)$]  moreover, in clause (c), if $c \in G_2$ realizes
$q_{\gs}(\bar a_1 \char 94 \bar a_2,G_1)$ in $G_2$ \then \,
conjugation by $c$ interchanges $\bar a_1,\bar a_2$ and is the
identity on $\mathbf C_{G_1}(\bar a_1 \char 94 \bar a_2)$.
\end{enumerate}
\end{dc}

\begin{PROOF}{\ref{d93}}
Let $G_2 \in \mathbf K_{\exlf}$ be an extension of $K[\mathbf x]$ in
which some $c$ realizes $q_{\gs_{\cg}}(K_{\mathbf x})$; let $\bar a_1 =
\bar a[\mathbf x],\bar a_2 = c^{-1} \bar a_1 c := \langle c^{-1} a_{1,\ell}
c:\ell < \ell g(\bar a_1)\rangle$ in $G_2$.

Note that, by inspection, $G_0 = \langle \bar a_1 \char 94 \bar
a_2\rangle_{G_1}$ is finite with trivial center and let $G_0 \subseteq
G_1 \in \mathbf K_{\lf}$.  Now use \ref{d36}
with $G_0,G_1,c \ell(\bar a_1 \char 94 \bar a_2 \char 94 \langle c
\rangle,G_2),\mathbf C_{G_1}(G_0),\mathbf C_{G_1}(G_0),c \ell(\bar a_1
\char 94 \bar a_2 \char 94 \langle c \rangle,G_2)$ here
standing for $G_0,G_1,G_2,G_1,H_1,L,H_2$ there. 
\end{PROOF}

\begin{definition}
\label{d96}
1) For $\gs \in \Omega[\mathbf K]$ and $G_1 \subseteq G_2$ let
$\cp_{\gs}(G_1,G_2) = \{c_0:\bar c \in {}^{n(\gs)}(G_2)$ realizes
$q_t(G_1)$ where $t \in \deef(G_1)$ satisfies $\gs_t = \gs\}$.

\noindent
2) For $\mathbf x \in \mathbf X_1$ and $G_1 \subseteq G_2$ let 
$\cp_{\mathbf x}(G_1,G_2) = \cp_{\gs_{\ab}[\mathbf x]}(G_1,G_2)$.

\noindent
3) For $G_1 \subseteq G_2 \in \mathbf K_{\text{lf}}$ and $\ell \in
\{1,2,3\}$ let $\cp_\ell(G_1,G_2) = \cup\{\cp_{\gs_{ab}[\mathbf x]}(G_1,G_2):
\mathbf x \in \mathbf X_\ell\}$; if $\ell=2$ we may omit it.
\end{definition}
\bigskip

\subsection {Larger Definable Types} \
\bigskip
 
\begin{definition}
\label{d81}
1) For $G \in \mathbf K,\gS \subseteq \Omega[\mathbf K]$ and set $I$ let
Def$_{I,< \kappa}(G,\gS)$ be the set of $t$ such that:
\mn
\begin{enumerate}
\item[$(a)$]  $t = \langle t_u:u \subseteq I$ finite$\rangle$;
\sn
\item[$(b)$]  $t_u \in \deef_{\gS}(G)$ with $\bar x_{t_u} = 
\langle x_i:i \in u \rangle$ and $\bar a_{t_u} = \bar a_t$ or
pedantically $\bar a_{t_u} = \bar a_t \rest w_u$ where $w_u \subseteq
\ell g(\bar a_t)$ is finite;
\sn
\item[$(c)$]  $\ell g(\bar a_t) := I$ has cardinality $< \kappa$ and
Rang$(\bar a_t) \subseteq G$;
\sn
\item[$(d)$]  if $G \subseteq H \subseteq L \in \mathbf K_{\lf}$
and $u \subseteq v \subseteq I$ are finite and $\bar b \in {}^v L$
realizes $q_{t_v}(H)$ then $\bar b \rest u$ realizes $q_{t_u}(H)$.
\end{enumerate}
\mn
2) We define $\Omega_{I,< \kappa}[\mathbf K,\gS]$ parallely and if $\gS
= \Omega[\mathbf K]$ \then \, we may omit it.

\noindent
3) If $t \in \text{ Def}_{I,< \kappa}(G,\gS)$ then $q_t(G) \in \mathbf
S^I_{\bs}(G)$ is defined by $\cup\{q_{t_u}(\langle x_i:i \in u\rangle):u
\subseteq I$ finite$\}$.

\noindent
4) Omitting $\kappa$ means $\aleph_0$.  We may replace ``$< \kappa^+$"
   by $\kappa$ and even a set $I_1$.  We may replace $I$ by ``$< \mu$"
   meaning ``some $\chi < \mu$".  Similarly for ``$\le \mu$".

\noindent
5) For $n < \omega$ and $\gs_0,\dotsc,\gs_{n-1} \in
\Omega_{< \mu,< \kappa}[\mathbf K]$ we define $\gs_0 \oplus \ldots
\oplus \gs_{n-1}$ and $\gs_0 \otimes \ldots \otimes \gs_{n-1}$ naturally.
\end{definition}

\begin{claim}
\label{d83}
1) If $G \in \mathbf K,\gS \subseteq \Omega[\mathbf K]$ and $t \in$ 
{\rm Def}$_I(G,\gS)$ \then \, for some pair $(\bar c,H)$ we have $G
   \subseteq H \in \mathbf K_{\lf},\bar c \in {}^I H,H =
   \langle G \cup \bar c \rangle_H$ and $\tp_{\bs}(\bar c,G,H) = q_t(G)$.

\noindent
2) If $\gS$ is closed \then \, above $G \le_{\gS} H$. 
\end{claim}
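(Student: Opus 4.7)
The plan is to realize the coherent family of types $\{q_{t_u}(G): u \subseteq I$ finite$\}$ simultaneously by building $H$ as a direct limit of witnesses, one for each finite $u$, glued together using the coherence clause (d) of Definition \ref{d81}(1).

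First I would invoke Observation \ref{a4}: for each finite $u \subseteq I$, since $q_{t_u}(G) \in \bold S^{|u|}_{\bs}(G)$, there exist $H_u \in \bold K_{\lf}$ with $G \subseteq H_u$ and $\bar c^u = \langle c^u_i : i \in u \rangle \in {}^u H_u$ realizing $q_{t_u}(G)$, with $H_u = c\ell(G + \bar c^u, H_u)$, and enjoying the uniqueness-over-$G$ property stated there. Now for $u \subseteq v$ finite, condition \ref{d81}(1)(d) says $\bar c^v \rest u$ realizes $q_{t_u}(G)$ in $H_v$; so $c\ell(G + \bar c^v \rest u, H_v)$ is another witness for $q_{t_u}(G)$, and the uniqueness part of \ref{a4} gives a unique embedding $f_{u,v}\colon H_u \hookrightarrow H_v$ over $G$ with $f_{u,v}(c^u_i) = c^v_i$ for $i \in u$. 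Uniqueness forces $f_{v,w} \circ f_{u,v} = f_{u,w}$ whenever $u \subseteq v \subseteq w$, so we have a directed system.

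Let $H = \varinjlim_{u} H_u$ (directed system over the finite subsets of $I$ under inclusion). Each $H_u$ embeds over $G$ into $H$, each $H_u$ is locally finite, and every finitely generated subgroup of $H$ lies in the image of some $H_u$; hence $H \in \bold K_{\lf}$. Let $c_i \in H$ be the image of $c^{\{i\}}_i$; then for each finite $u \subseteq I$, the tuple $\bar c \rest u$ is (the image of) $\bar c^u$, and therefore realizes $q_{t_u}(G)$ in $H$. Since every basic formula in finitely many of the $x_i$'s lies in some $q_{t_u}(G)$, we obtain $\tp_{\bs}(\bar c, G, H) = \bigcup_u q_{t_u}(G) = q_t(G)$. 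Replacing $H$ by $\langle G \cup \bar c \rangle_H$ yields the desired $H$ for part (1).

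For part (2), assume $\gS$ is closed. Given $\bar d \in {}^n H$, there is a finite $u \subseteq I$ and a finite $A \subseteq G$ with $\bar d \subseteq c\ell(A + \bar c \rest u, H)$. Pick $\gs_u \in \gS$ and $\bar a_u \in {}^{\omega >}G$ with $q_{t_u}(G) = q_{\gs_u}(\bar a_u, G)$; enlarge $\bar a_u$ to some $\bar a'$ from $G$ including the elements of $A$. Since $\gS$ is dominating-closed (Definition \ref{a21}(1B)), there is $\gs' \in \gS$ with $\tp_{\bs}(\bar d, G, H) = q_{\gs'}(\bar a', G)$. This witnesses $G \le_\gS H$, establishing part (2).

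The substantive point is the coherence of the direct system; it rests on the uniqueness clause in \ref{a4} and on hypothesis \ref{d81}(1)(d), so once those are combined the construction is essentially formal. The only place to be careful is local finiteness of $H$, but since the directed system consists of members of $\bold K_{\lf}$ under $\subseteq$-embeddings, the direct limit automatically lies in $\bold K_{\lf}$.
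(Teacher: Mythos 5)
The paper states Claim \ref{d83} without a proof, so there is nothing to compare against; what you have to show is that your argument is sound, and it is. The structure is exactly what one should do: take the canonical witness $H_u$ for each $q_{t_u}(G)$ via Observation \ref{a4}, use coherence clause \ref{d81}(1)(d) together with the uniqueness part of \ref{a4} to produce the transition maps $f_{u,v}$ over $G$ (and note that uniqueness of $f_{u,v}$ follows because $H_u$ is generated over $G$ by $\bar c^u$, so the prescription $c^u_i \mapsto c^v_i$ over $\id_G$ determines the map), verify the cocycle condition the same way, and pass to the directed colimit. Local finiteness of the colimit is indeed automatic since every finitely generated subgroup factors through some $H_u$, and since each $q_{t_u}$ is a complete basic type over $G$ in the variables $\bar x_u$, containment of types forces equality, giving $\tp_{\bs}(\bar c,G,H)=q_t(G)$. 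The replacement by $\langle G\cup\bar c\rangle_H$ at the end is actually redundant, since every element of the colimit already comes from some $H_u=\langle G\cup\bar c^u\rangle$, but it does no harm. Your part (2) applies dominating-closure (\ref{a21}(1B)) with $H_0=G$, $H_1=H$, $\bar c_1=\bar c\rest u$, $\bar a_1=\bar a_u$, $\bar c_2=\bar d$, and $\bar a_2=\bar a'$; the hypotheses are all in place — in particular $\tp_{\bs}(\bar c\rest u,G,H)=q_{\gs_u}(\bar a_u,G)$ comes from part (1), and $\bar d\subseteq c\ell(\bar a'+\bar c\rest u,H)$ because the witnessing finite sets $A$ and $u$ were chosen accordingly — and the conclusion is exactly what the definition of $G\le_\gS H$ (\ref{a18}(1)) requires. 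This is a complete and correct proof.
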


\begin{definition}
\label{d85}
Assume $\bar H = \langle H_i:i < \delta\rangle$ is
$\subseteq$-increasing in $\mathbf K$ and $H_\delta = \cup\{H_i:i <
 \delta\}$.  We say $\cA$ is a one step $(< \mu,< \kappa,
\delta,\gS)-\otimes$-construction (if $\delta = \omega$ we may omit
it) \when \,: as in \ref{a82} except that
\mn
\begin{enumerate}
\item[$(c)'$]   $t_{\alpha,i} \in \text{ Def}_{I_{\alpha,i,< \kappa}}(H_i,\gS)$
for some set $I_{\alpha,i}$ of cardinality $< \mu$.
\end{enumerate}
\end{definition}

\noindent
The case we shall actually use in \S5 is:
\begin{claim}
\label{d87}
Assume $K \subseteq L \in \mathbf K_{\lf},K$ is finite and $f$
embeds $K$ into $G_1 \in \mathbf K_{\lf}$ 
and $\langle c_i:i < \mu\rangle$ list the
members of $L$ and $\{c_\ell:\ell < n\}$ is the set of elements of
$K$.  \Then \, there is $t \in$ {\rm Def}$_{\le \mu}(G_1,\gS[\mathbf K])$ 
such that: 
if $\bar c^* = \langle c^*_i:i < \mu\rangle \in {}^\mu(G_2)$ realizes
$q_t(G_1)$ in $G_2$, so $G_1 \subseteq G_2$, \then \, $c_i \mapsto
c^*_i$ (for $i < \mu$) is an embedding of $L$ into $G_2$ extending $f$.
\end{claim}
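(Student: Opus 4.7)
The plan is to build the type $t$ directly from the NF$_f$-amalgamation of \S2, which is the only tool introduced so far that delivers canonical, monotonic amalgams of finite groups over a finite base. Let $\bar a_t = \langle f(c_\ell) : \ell < n\rangle$ enumerate $f(K)$ in $G_1$, and for each finite $u \subseteq \mu$ set $L_u = \langle \{c_i : i \in u\} \cup K\rangle_L$, a finite subgroup of $L$ containing $K$. For each such $u$ I intend to define a scheme $\gs_u \in \Omega[\bold K_{\lf}]$ with $k_{\gs_u} = n$, $n_{\gs_u} = |u|$, variables indexed by $u$, and $p_{\gs_u}(\bar x_{\gs_u}) = \tp_{\bs}(\bar a_t, \emptyset, G_1)$; then set $t_u = (\gs_u, \bar a_t)$ and $t = \langle t_u : u \subseteq \mu \text{ finite}\rangle$.

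The construction of $\gs_u$ goes as follows. Given $M \in \bold K_{\lf}$, any $\bar a \in {}^n M$ realizing $p_{\gs_u}$, and any $\bar b \in {}^{\omega >} M$, form $M_0 = \langle \bar a \char 94 \bar b\rangle_M$, identify $K$ with $\langle \bar a \rangle_{M_0}$ via $c_\ell \mapsto a_\ell$ (well defined since $\bar a$ realizes $p_{\gs_u}$), and apply Claim \ref{c18} to obtain the unique NF$_f$-amalgam $H$ of $M_0$ with $L_u$ over $K$. Let $\bar c^*_u \in {}^u H$ be the image of $\langle c_i : i \in u\rangle$ under the embedding $L_u \hookrightarrow H$; under the identification above, for $i \in u$ with $i < n$ one has $c^*_i = a_i$ automatically. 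Set $\gs_u(\tp(\bar a \char 94 \bar b, \emptyset, M)) := \tp_{\bs}(\bar a \char 94 \bar b \char 94 \bar c^*_u, \emptyset, H)$.

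The axioms of Definition \ref{a14}(1) for $\gs_u$ should then follow from Claim \ref{c14}: uniqueness of NF$_f$ (\ref{c14}(3)) shows the output depends only on $\tp(\bar a \char 94 \bar b, \emptyset, M)$, giving clauses (a)--(c), while the non-splitting statement (\ref{c14}(4)) is what makes this assignment genuinely a scheme rather than just a scattered family of types. Coherence of the family, i.e.\ clause (d) of Definition \ref{d81}(1), is the main point requiring care: for $u \subseteq v$, if $\bar b^v$ realizes $q_{t_v}(H)$ in some $H \supseteq G_1$, then $\bar b^v \rest u$ must realize $q_{t_u}(H)$. This should follow from the monotonicity of NF$_f$ (Claim \ref{c14}(2)) applied to $L_u \subseteq L_v$, together once more with uniqueness of NF$_f$ over a finite base.

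Granted this setup, suppose $\bar c^* = \langle c^*_i : i < \mu\rangle$ realizes $q_t(G_1)$ in $G_2 \supseteq G_1$; then for every finite $u \subseteq \mu$ the tuple $\bar c^* \rest u$ realizes $q_{t_u}(G_1)$, which by the construction of $\gs_u$ means that $\langle f(K) \cup \{c^*_i : i \in u\}\rangle_{G_2}$ is isomorphic to $L_u$ via $c_\ell \mapsto f(c_\ell)$ for $\ell < n$ and $c_i \mapsto c^*_i$ for $i \in u$, and in particular $c^*_\ell = f(c_\ell)$ for $\ell < n$. Taking the directed union over finite $u \subseteq \mu$ yields the desired embedding $L \to G_2$ extending $f$. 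The principal obstacle throughout is clause (d) of Definition \ref{d81}(1); the reason NF$_f$ is the right tool here is precisely that its uniqueness--monotonicity package is exactly what delivers this coherence across varying $u$.
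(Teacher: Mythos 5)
Your proposal is correct and takes essentially the same route the paper intends: the paper's one-line proof ``Straight by \S2'' is precisely a pointer to the NF$_f$-machinery (Claims \ref{c14}, \ref{c18}) that you unpack, with the scheme $\gs_u$ built by NF$_f$-amalgamating the finitely generated base with $L_u$ over $K$ and coherence (clause~(d) of Definition \ref{d81}(1)) following from monotonicity and uniqueness of NF$_f$ exactly as you say.
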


\begin{PROOF}{\ref{d87}}
Straightforward by \S2.
\end{PROOF}

\begin{discussion}
Those definable types are still locally definable over finite sets.
\end{discussion}
\newpage

\section {Constructing complete existentially closed $G$}

\begin{theorem}
\label{p73}
Assume if $G \in \mathbf K_{\lf}$ and $|G| \le \mu = \mu^{\aleph_0}$.

\noindent
1) There is a complete $G' \in \mathbf K_{\lf}$ which extend
$G$ such that $|G'| = \mu^+$ and $G'$ is existentially closed.

\noindent
2) Moreover $G \le_{\Omega[\mathbf K_{\lf}]} G'$ and $G'$ is full.

\noindent
3) There is $G'$ such that $G \le_{\gS} G'$ and $G' \in \mathbf
K^{\exlf}_{\mu^+}$ is complete and $\gS$-full provided
that $\gS$ satisfies:
\mn
\begin{enumerate}
\item[$(*)$]
\begin{enumerate}
\item[$(\alpha)$]  $\gS \subseteq \Omega[\mathbf K_{\lf}]$
\sn
\item[$(\beta)$]  $\gS$ is dense and $\otimes$-closed 
(for $\mathbf K_{\lf}$)
\sn
\item[$(\gamma)$]   some schemes introduced earlier
  belongs to $\gS$, specifically:
\begin{itemize}
\item  $\gs_{\ab(2)}$ from Definition \ref{c67}, used in the
  paragraph before $\boxplus_3$
\sn
\item  $\gs_{\cm}$ from Definition \ref{d92}, 
used in $(*)_{4.3}$
\sn
\item  $\gs_{\cg}$, from Definition \ref{c50}(1), \ref{c52}(2)
  used after $\boxplus_7$ Stage E
\sn
\item   $\gs_{\gl}$ from Definition \ref{c50}(2),\ref{c52}(3) 
\sn
\item   $\gs_{\gmm}$ from Definition \ref{d93}, see $(*)_{5.1}(f)$.
\end{itemize}
\end{enumerate}
\end{enumerate}
\end{theorem}

\begin{PROOF}{\ref{p73}}
\underline{Proof of \ref{p73}}

We let $\gS = \Omega[\mathbf K_{\lf}]$ for parts (1),(2) and fix $\gS$
for part (3) as there.
\medskip

\noindent
\underline{Stage A}:  Without loss of generality the universe of $G$
is an ordinal $\le \mu$ and let $\lambda = \mu^+$.

Let $S \subseteq S^\lambda_{\aleph_0} := \{\delta <
\lambda:\cf(\delta) = \aleph_0\}$ be a stationary subset of
$\lambda$ such that also $S^\lambda_{\aleph_0} \backslash S$ is
stationary in $\lambda$ and $\alpha \in S \Rightarrow$ ($\mu$ divides
$\alpha$).  Let $\langle S_\zeta:\zeta < \lambda\rangle$ be a 
partition of $S$ to stationary sets.  Let $S_* \subseteq \lambda
\backslash S$ be stationary and a set of limit ordinals.  

Let $C_\delta$ be an unbounded subset of $\delta$ of
order type $\omega$ for $\delta \in S$ such that $\bar C_\zeta =
\langle C_\delta:\delta \in S_\zeta\rangle$ guess clubs for each
$\zeta < \lambda$, this means that for every club $E$ of $\lambda$ the
set $\{\delta \in S_\zeta:C_\delta \subseteq E\}$ is a stationary
subset of $\lambda$; such $\langle C_\delta:\delta \in S_\zeta\rangle$
exists by \cite[Ch.III]{Sh:g} = \cite{Sh:365}.

Let $\alpha_\delta(n)$ be the $n$-th member of $C_\delta$.  

Let $\bar \tau$ be such that:
\mn
\begin{enumerate}
\item[$\bullet$]  $\bar\tau = \langle \tau_\zeta:\zeta < \lambda\rangle$
\sn
\item[$\bullet$]  $\tau_\zeta \subseteq \cH(\aleph_0)$ is a countable
vocabulary
\sn
\item[$\bullet$]  if $\tau \subseteq \cH(\aleph_0)$ is a countable
vocabulary \then \, $\{\zeta:\tau_\zeta = \tau\}$ has cardinality
$\lambda$.
\end{enumerate}
\mn
By \cite[3.26(3)=L6.11A,pg.31]{Sh:309} there is 
$\mathbf b_\zeta$, a BB, black box for 
$(S_\zeta,\bar C_\zeta)$ say $\mathbf b_\zeta = \langle
N^\delta_i:i \in \cT_\delta,\delta \in S_\zeta\rangle$,
that is:
\mn
\begin{enumerate}
\item[$\boxplus_{0,\zeta}$]
\begin{enumerate}
\item[(a)]  $N^\delta_i$ is a model of 
cardinality $\aleph_0$ with universe $\subseteq \delta = 
\sup(N^\delta_i)$ and vocabulary $\tau_\zeta \subseteq \cH(\aleph_0)$
\sn
\item[(b)]   if $N$ is a $\tau_\zeta$-model with
universe $\lambda$ \then \, for stationarily many 
$\delta \in E_N \cap S_\zeta$ for some $i \in \cT_\delta$ 
we have $C_\delta \subseteq E_N \backslash S$ where 
$E_N := \{\alpha:N \rest \alpha \prec N\}$
and $N^\delta_i \prec N$; moreover
\sn  
\item[(b)$^+$]   if $\tau = \tau_\zeta,\bar N = \langle
N_\eta:\eta \in \cT\rangle,\cT$ a non-empty subtree of ${}^{\omega >}\lambda$ 
such that $\tau(N_\eta) = \tau_\zeta,\eta \triangleleft \nu
\Rightarrow N_\eta \prec N_\nu$ and $|N_\eta| \in
[\lambda]^{\aleph_0}$ and $E$ a club 
of $\lambda,\eta \in \cT \Rightarrow
(\exists^\lambda \alpha)(\eta \char 94 \langle \alpha
\rangle \in \cT)$ and $\eta \triangleleft \nu \in \cT \Rightarrow
\sup(N_\eta) < \sup(N_\nu)$  \then \, for some
$\delta \in S_\zeta \cap E$ we have $C_\delta \subseteq E,
i \in \cT_\delta$ and $\eta \in \lim_\omega(\cT)$ we have 
$N^\delta_i= \cup\{N_{\eta \rest n}:n < \omega\}$;
\sn
\item[(c)]   if $i \ne j \in \cT_\delta$ then
$N^\delta_i \cap N^\delta_j$ is bounded in $\delta$ (used just after
$(*)_{5.5}$), moreover:
\sn
\item[(c)$^+$]  if $i \ne j \in \cT_\delta$ \then \,
the set $\{\beta < \delta:\beta$ a limit ordinal such that 
$\sup(N^\delta_i \cap \beta) = \beta =
\sup(N^\delta_j \cap \beta)\}$ is bounded in $\delta$;
\sn
\item[(d)]  $N^\delta_i \cap
(\alpha_\delta(n),\alpha_\delta(n+1)) \ne \emptyset$ and
$N^\delta_i \rest \alpha_\delta(n) \prec N^\delta_i$ for $n <
\omega,\delta \in S,i \in \cT_\delta$;
\sn
\item[(e)]   for notational simplicity we assume 
$\cT_\delta \subseteq \mu$.
\end{enumerate}
\end{enumerate}
\medskip

\noindent
\underline{Stage B}:  By induction on $\gamma < \lambda$ we shall
choose the following: 
\mn
\begin{enumerate}
\item[$\boxplus_1$]
\begin{enumerate}
\item[(a)]  $G_\gamma \in \mathbf K_{\lf}$ of cardinality $\mu$
and the universe of $G_\gamma$ is an ordinal $< \lambda$;
\sn
\item[(b)]  $G_0 = G$;
\sn
\item[(c)]  $\langle G_\beta:\beta \le
\gamma\rangle$ is increasing continuous;
\sn
\item[(d)]   if $\beta \in \gamma \backslash S$ then
$G_\beta \le_{\gS} G_\gamma$ 
\sn
\item[(e)]   if $\gamma = \beta +1,\beta \notin S$, then:
\begin{enumerate}
\item[$(\alpha)$]  $G_\gamma$ is generated by $\{\bar
c_{\beta,i}:i \in \cT_\beta\} \cup G_\beta$, where $\cT_\beta$ is a
set of cardinality $\le \mu$ (to be chosen),
\sn
\item[$(\beta)$]  $t_{\beta,i} \in \text{ Def}_{\le \mu}
(G_\beta,\gS)$, non-trivial (see Definition \ref{d81}(5)) for $i \in 
\cT_\beta$,
\sn
\item[$(\gamma)$]  $\tp_{\bs}(\bar c_{\beta,i},
G_\beta,G_\gamma) = q_{t_{\beta,i}}(G_\beta)$ for $i \in \cT_\beta$,
\sn
\item[$(\delta)$]   if $n < \omega$ and
$i(0),\dotsc,i(n-1) \in \cT_\beta$ are pairwise distinct, then
$\tp_{\bs}(\bar c_{\beta,i(0)} \char 94 \ldots \char 94 \bar
c_{\beta,i(n-1)},G_\beta,G_\gamma) = q_t(G_\beta)$,
where $t = t_{\beta,i(0)} \otimes \ldots \otimes
t_{\beta,i(n-1)}$,
\sn
\item[$(\varepsilon)$]   if $t = (\gs,\bar a) \in 
\deef_{\gS}(G_\beta)$ is non-trivial \then \, for some $i \in
\cT_\beta$ we have $t_{\beta,i} = t$;
\end{enumerate}
\sn
\item[(f)]   if $\gamma = \delta +1,\delta \in S$ \then \,:
\sn
\begin{enumerate}
\item[$(\alpha)$]  $G_\gamma$ is generated by 
$\{\bar c_{\delta,i}:i \in \cT_\delta\} \cup G_\delta$,
\sn
\item[$(\beta)$]  $\cA_\gamma = (G_{\delta +1},G_\delta,
\langle \bar c_\delta,t_{\delta,i,n}:i \in \cT_\delta\rangle)$
is a one step $(< \aleph_0,< \aleph_0,\gS)-\otimes$-construction over 
$\langle G_{\alpha_\delta(n)}:n < \omega\rangle$, 
see \ref{a82}; used in $(*)_{5.2}$'s
proof\footnote{Actually can use a one step $(\le \mu,<
\aleph_0,\gS)-\otimes$-construction.},
\end{enumerate}
\sn
\item[(g)]  $t_{\beta,i} = (\gs_{\beta,i},\bar a_{\beta,i})$ for
  $\beta \in \gamma \backslash S$.
\end{enumerate}
\end{enumerate}
\mn
First we shall show:
\mn
\begin{enumerate}
\item[$\boxplus_2$]   we can carry the induction.
\end{enumerate}
\mn
Why?  For $\gamma = 0$ we have nothing to do by clause $(b)$.

For $\gamma$ limit we let $G_\gamma = \cup\{G_\beta:\beta
< \gamma\}$.

For $\gamma = \beta +1,\beta \notin S$ we have some freedom, as we
have $t_{\beta,i} \in \text{ Def}_{\le \mu}(G_\beta,\gS)$ not just
$\deef(G_\beta,\gS)$.  So let $\cT_\beta = \mu,\{t_{\beta,i}:i \in
\cT_\delta\} \subseteq 
\text{ Def}_{\le \mu}(G_\beta,\gS)$ be of cardinality $\mu$ and
including $\deef(G_\beta,\gS)$ and so $\langle t_{\beta,i} = (\gs_{\beta,i},
\bar a_{\beta,i}):i < \mu\rangle$, possibly with
repetitions.  Clearly $\boxplus_1(e)(\varepsilon)$ holds.

Now as in Claim \ref{a74} we can find 
$G_\gamma,\langle \bar c_{\beta,i}:i < \mu\rangle$ such that:
\mn
\begin{enumerate}
\item[$\bullet$]  $G_\beta \le_{\gS} G_\gamma$;
\sn
\item[$\bullet$]  $G_\gamma = \langle \{\bar c_{\beta,i}:i < \mu\} 
\cup G_\beta \rangle_{G_\gamma}$;
\sn
\item[$\bullet$]  if $n < \omega$ and $i_k < \mu$
for $k < n$ and $\langle i_k:\ell < n\rangle$ is with no repetitions
\then \,
\[
\bar c_{\beta,i_0} \char 94 \ldots \char 94 \bar
c_{\beta,i_{n+1}} \text{ realizes } q_t(G_\beta) 
\text{ where } t = t_{\beta,i_0} \otimes \ldots \otimes t_{\beta,i_{n-1}}.
\]
\end{enumerate}
\mn
If $\gamma = \delta +1,\delta \in S$ we can let $\gs_{\delta,i} = 
\gs_{\ab(2)}$,
clearly we satisfy clause (f); but we may act differently.  Clearly,
as in the previous case, there is some freedom left: what 
we do for $\gamma = \delta +1,\delta
\in S$ and this will depend on the $\langle N^\delta_i:i \in
\cT_\delta \rangle$ from $\boxplus_0$.  
During the rest of the proof we shall use (some
of the freedom left) to guarantee that $G_*$ (see below) is as required.

Of course, we let:
\mn
\begin{enumerate}
\item[$\boxplus_3$]  $G_* = G_\lambda = \cup\{G_\alpha:\alpha < \lambda\}$.
\end{enumerate}
\mn
We now point out some useful properties of the construction:
\mn
\begin{enumerate}
\item[$(*)_{3.1}$]   there is a model $N_*$ expanding $G_*$, so with
universe $\lambda$, and a countable vocabulary such that for any $N
\subseteq N_*$ we have:
\sn
\begin{enumerate}
\item[$(a)$]  $G_* \rest N$ is a subgroup of $G_*$;
\sn
\item[$(b)$]  $\beta \in N$ \Iff \, $N \cap G_{\beta +1} \backslash
G_\beta \ne \emptyset$ \Iff \, $\beta +1 \in N$;
\sn
\item[$(c)$]  if $\gamma = \beta +1,\gamma \in N$ then $N \cap
G_\gamma = \langle \cup\{\bar c_{\beta,i}:i \in N \cap \cT_\beta\} 
\cup (N \cap G_\beta)\rangle_{G_\gamma}$;
\sn
\item[$(d)$]  if $i \in N \cap \cT_\beta$ and $\beta \in N$, \then \, $|\ell
g(\bar c_{\beta,i})| \le \omega \Rightarrow \bar c_{\beta,i} 
\subseteq N \cap G_{\beta +1}$ and $|\ell g(\bar a_{t_{\beta,i}})| \le
\omega \Rightarrow \bar a_{t_{\beta,i}} \subseteq N \cap G_\beta$;
\sn
\item[$(e)$]  $\tau(N_*) \subseteq \cH(\aleph_0)$, but $\cH(\aleph_0)
\backslash \tau(G_*)$ is infinite;
\sn
\item[$(f)$]  if $\delta \in N \cap S$ then $C_\delta \subseteq N$.
\end{enumerate}
\end{enumerate}
\mn
Now note
\mn
\begin{enumerate}
\item[$(*)_{3.2}$]   if $\alpha < \lambda$ is a limit ordinal, \then \,
$G_\alpha \in \mathbf K_{\exlf}$.
\end{enumerate}
\mn
[Why?  Recall clause $(e)(\varepsilon)$ of $\boxplus_1$ noting that
$S$ is a set of limit ordinals, hence $\alpha = \sup(\alpha \backslash S)$.]

We now assume:
\mn
\begin{enumerate}
\item[$\boxplus_4$]  $\mathbf h$ is an automorphism of $G_*$.
\end{enumerate}
\mn
We shall eventually prove that (if we suitably use the freedom left in
$\boxplus_1$, then) $\mathbf h$ is an inner automorphism,
i.e. $b \in G_* \Rightarrow \mathbf h(b) = a^{-1} b a$ for some 
$a \in G_*$, this clearly suffices noting that $G_*$ has trivial
center as $\gs_{\cg} \in \gS$.

We shall often use
\mn
\begin{enumerate}
\item[$(*)_{4.1}$]   for limit $\beta \in \lambda \backslash S$ let
$L^*_\beta = \cp(G_\beta,G_{\beta + \omega})$ (see Definition \ref{d96}(3)),
i.e. $c \in L^*_\beta$ \Iff \, for some finite $K \subseteq
\mathbf C_{G_{\beta + \omega}}(G_\beta)$ with trivial center we have $c \in
K$ and $K \cap G_\beta = \{e_{G_\beta}\}$.
\end{enumerate}
\mn
Note that
\mn
\begin{enumerate}
\item[$(*)_{4.2}$]  the last demand in $(*)_{4.1}$, ``$K \cap G_\beta
= \{e_{G_*}\}"$, is redundant.
\end{enumerate}
\mn
[Why?  Recall $\beta$ is a limit ordinal hence by $(*)_{3.2}$, $G_\beta$ has
trivial center.]

Note:
\mn
\begin{enumerate}
\item[$(*)_{4.3}$]  if $a \in L^*_\beta$ and $K$ witnesses it, \then \, $K
\subseteq L^*_\beta,K \cap G_\beta = \{e\}$, and 
moreover there is $L \in \mathbf K_{\exlf}$
included in $L^*_\beta$ and including $K$.
\end{enumerate}
\mn
[Why?  We can choose $\bar K = \langle K_n:n < \omega \rangle$ such that $K_0
= K,K_n$ is a finite group with trivial center, $K_n \subseteq K_{n+1}$
and $\bigcup\limits_{n} K_n \in \mathbf K_{\exlf}$.  We now choose by
induction on $n$ an embedding $f_n$ of $K_n$ into 
$G_{\beta + \omega}$ such that $f_0 =
\id_K,f_n \subseteq f_{n+1}$ and $\Rang(f_n) \subseteq L^*_\beta$; the
induction step is possible by \ref{d92}.  
Now $\bigcup\limits_{n} f_n(K_n)$ is as required.]

We shall use:
\mn
\begin{enumerate}
\item[$(*)_{4.4}$]  let $E_{\mathbf h} = \{\delta:\delta$ is a limit
ordinal and $\mathbf h$ maps $G_\delta$ onto $G_\delta$ and $(N^* \rest
\delta,\mathbf h \rest \delta) \prec (N^*,\mathbf h)\}$.
\end{enumerate}
\mn
Now
\mn
\begin{enumerate}
\item[$(*)_{4.5}$]   $E_{\mathbf h}$ is a club of $\lambda$.
\end{enumerate}
\mn
[Why?  Just look at $(*)_{4.4}$.]
\medskip

\noindent
\underline{Stage C}:  We shall prove
\mn
\begin{enumerate}
\item[$\boxplus_5$]   for some $\alpha(*) < \lambda$, for every 
$\beta \in S_* \cap E_{\mathbf h} \backslash \alpha(*)$ and 
$c \in L^*_\beta$ we have 
$\mathbf h(c) \in c \ell(G_{\alpha(*)} \cup \{c\},G_*)$.
\end{enumerate}
\mn
Why?  If not, for every $\alpha < \lambda$ there are $\beta_\alpha \in
S_* \cap E_{\mathbf h} \backslash \alpha,m(\alpha) = 
m_\alpha \in \{2,3,\ldots\}$ and $c_\alpha \in L^*_{\beta_\alpha}$ of order
$m_\alpha$ such that $\mathbf h(c_\alpha) \notin 
c \ell(G_\alpha \cup \{c_\alpha\},G_*)$.  Now let
$\bar c_\alpha$ witness that $c_\alpha \in L^*_{\beta_\alpha}$ 
with $c_{\alpha,0} = c_\alpha$, i.e. $\bar c_\alpha$
list the members of a finite subgroup of $G_{\beta_\alpha + \omega}$
 commuting with $G_{\beta_\alpha}$ with trivial center and so included in
$L^*_{\beta_\alpha}$.
\mn
\begin{enumerate}
\item[$(*)_{5.0}$]  \wilog \, $\mathbf h(c_\alpha) \in c \ell(G_\alpha
  \cup \bar c_\alpha)$.
\end{enumerate}
\mn
[Why?  Let $K_0$ be the subgroup of $G_{\beta_\alpha + \omega}$ with
universe $\bar c_\alpha$; we can find $K_1,K_2,K_3$ such that $K_3$
is a finite group and for $\ell = 0,1,2$ and $K_1 \cap K_2 = \langle
c_0\rangle_{K_0}$ \wilog \, $K_3 \subseteq G_{\beta_\alpha + \omega}$,
so we can replace $K_0$ by $K_1$ or by $K_2$.]

Let $\mathbf x_\alpha \in X_2$ be such that
$\bar c_\alpha$ realizes $q_{\gs_{\ab}[\mathbf x_\alpha]}
(\langle \rangle,G_{\beta_\alpha})$, see \ref{d89}(2) + \ref{d94}.
But if $\alpha_1 < \alpha_2$ then $(\beta_{\alpha_2},
c_{\alpha_2},m_{\alpha_2})$ can serve 
as $(\beta_{\alpha_1},c_{\alpha_1},m_{\alpha_1})$, hence, 
\wilog \,, $\mathbf x_\alpha = \mathbf x,m_\alpha = m_*$ for every $\alpha$.
\mn
\begin{enumerate}
\item[$(*)_{5.1}$]
\begin{enumerate}
\item[(a)]   Let $\bar b_{\alpha,1} = \bar c_\alpha$; let $k_{\alpha,1}
< \omega$ be such that $\bar b_{\alpha,1} \subseteq G_{\beta_\alpha +
k_{\alpha,1} +1},\bar b_{\alpha,1} \nsubseteq G_{\beta_\alpha + k_{\alpha,1}}$;
\sn
\item[(b)]  let $k_{\alpha,*} \in 
(k_{\alpha,1} +1,\omega)$ be such that: 
tp$_{\bs}(\mathbf h(\bar b_{\alpha,1}),G_{\beta_\alpha +
\omega},G_*) = q_{\gs_\alpha}(\bar a^\bullet_\alpha,
G_{\beta_\alpha + \omega})$ for some $\gs_\alpha \in \gS$ with
$\bar a^\bullet_\alpha \subseteq G_{\beta_\alpha + k_{\alpha,*}}$;
\sn
\item[(c)]  let $\bar b_{\alpha,2} \subseteq 
G_{\beta_\alpha + \omega}$ realize 
$q_{\gs_{\ab}[\mathbf x]}(\langle \rangle,G_{\beta_\alpha + k_{\alpha,*}})$;
\sn
\item[(d)]  let $k_{\alpha,2} < \omega$ 
be such that $\bar b_{\alpha,2} \subseteq
G_{\beta_\alpha + k_{\alpha,2}+1},\bar b_{\alpha,2} 
\nsubseteq G_{\beta_\alpha + k_{\alpha,2}}$, 
so actually \wilog \, $k_{\alpha,2} = k_{\alpha,*} +1$;
\sn
\item[(e)]  note that $\bar b_{\alpha,1} \char 94 \bar
b_{\alpha,2}$ realizes $p_{\gs_{\gem}}(\bar x)$, see \ref{d94};
\sn
\item[(f)]  let $k_{\alpha,3} < \omega$ be 
$> k_{\alpha,1},k_{\alpha,2}$ and let $b_{\alpha,3} \in 
G_{\beta_\alpha + k_{\alpha,3} +1}$ realizes 
\newline
$q_{\gs_{\gem}[\mathbf x]}(\bar b_{\alpha,1} \char 94 
\bar b_{\alpha,2},G_{\beta_\alpha +
k_{\alpha,3}},G_{\beta_\alpha})$, (see Definition \ref{d94});
so it commutes with $\mathbf C_{G_{\beta_\alpha} + 
k_{\alpha,2}+1}(\bar b_{\alpha,1} \char 94
\bar b_{\alpha,2})$, hence with $G_{\beta_\alpha}$ 
and conjugating by it 
interchange $\bar b_{\alpha,1},\bar b_{\alpha,2}$;
\sn
\item[(g)]   \wilog \, $\gs_\alpha = \gs_*$ and
$(\ell g(\bar b_{\alpha,1}),k_{\alpha,2},\ell g(\bar
b_{\alpha,2}))$ does not depend on $\alpha$.
\end{enumerate}
\end{enumerate}
\mn
Our intention (in this stage) is to find $\alpha_n < \lambda$
increasing with $n$ satisfying 
$\beta_{\alpha_n} < \alpha_{n+1}$ and element $d$
such that, on the one hand, conjugating with $d$ maps $c_{\alpha_n} =
b_{\alpha_n,1,0}$ to $b_{\alpha_n,2,0}$ for each $n$, and on the other
hand, $\tp_{\bs}(d,G_{\beta_n + \omega},G_\lambda)$ does not split over
$G_{\beta_n} + b_{\alpha_n,3}$, a contradiction.

Let $N$ be such that:
\mn
\begin{enumerate}
\item[$(*)_{5.2}$]   
\begin{enumerate}
\item[(a)]  $N$ is a model with universe $\lambda$;
\sn
\item[(b)]  $N$ is with countable vocabulary;
\sn
\item[(c)]  $N$ expands $N_*$ from $(*)_{3.1}$;
\sn
\item[(d)] 
\begin{itemize}
\item  $F^N_0 = \mathbf h$, so $F_0$ is a unary function symbol,
\sn
\item  $F^N_{1,\iota,\ell}(\alpha) =
b_{\alpha,\iota,\ell}$ for $\iota = 1,2$ and $\ell < \ell g(\bar
b_{\alpha,\iota})$, (if $\ell=0$ we may omit it),
\sn
\item  $F^N_{1,3}(\alpha) = b_{\alpha,3}$,
\sn
\item  $F^N_2(\alpha) = \beta_\alpha$,
\sn
\item  $F_{2,\iota}(\alpha) = \beta_\alpha +
k_{\alpha,\iota}$ for $\iota = 1,2,3$,
\sn
\item  $F^N_3(\alpha) = \beta_\alpha + \omega$,
\end{itemize}
\sn
\item[(e)]  $F^N_{4,n}$ is an $(n+1)$-place function such
that: if $\alpha_0 < \ldots < \alpha_n,c_{\alpha_\ell} \in 
G_{\alpha_{\ell +1}}$, each $\alpha_\ell$ is a limit ordinal then
$F^N_{4,n}(\alpha_0,\ldots,\alpha_n)$ is 
the product of $a_0 a_1 \ldots a_n$ where $a_k = F_{1,1,0}(\alpha_k)$;
\sn
\item[(f)]  $P^N = \{(\alpha,c):\alpha < \lambda$ and $c \in G_\alpha\}$.
\end{enumerate}
\end{enumerate}
\mn
Without loss of generality $\tau_N \subseteq \cH(\aleph_0)$, choose
$\zeta(1) < \lambda$ such that $\tau_{\zeta(1)} = \tau_N$ and for
each $\delta \in S_{\zeta(1)}$ we use the amount of freedom we are left
with (see before $\boxplus_3$), choosing $G_{\delta +1}$ such that:
\mn
\begin{enumerate}
\item[$(*)_{5.3}$]   if $\delta \in S_{\zeta(1)},i \in
\cT_\delta$, letting $\alpha_{\delta,i,n} := \min(N^\delta_i
\backslash \alpha_\delta(n))$ \then \, $(a) \Rightarrow (b)$ where:
\sn
\begin{enumerate}
\item[$(a)$]
\begin{itemize}
\item  $\beta_{\delta,i,n} :=
F^{N^\delta_i}_2(\alpha_{\delta,i,n})$ is $\ge \alpha_{\delta,i,n}$ but $<
\alpha_\delta(n+1)$,
\sn
\item  $F^{N^\delta_i}_3(\alpha_{\delta,i,n}) =
\beta_{\delta,i,n} + \omega$,
\sn
\item  $b_{\delta,i,n,\iota} = F^N_{1,\iota,\ell}(\alpha_{\delta,i,n})$
 for $\iota = 1,2$ and $\ell=0$,
\sn
\item  $k_{\delta,i,n,\iota} =
F^{N^\delta_i}_{2,\iota}(\alpha_{\delta,i,n}) - \alpha_{\delta,i,n}$ 
for $\iota = 1,2,3$,
\sn
\item  $b_{\delta,i,n,3} =
F^{N^\delta_i}_3(\beta_{\delta,i,n})$,
\sn
\item  $b_{\delta,i,n,\iota} \in
G_{\beta_{\delta,i,n}+k_{\delta,i,n,\iota}+1}$
commute with $G_{\beta_{\delta,i,n}}$ and conjugating by $b_{\delta,i,n,3}$
interchange $b_{\delta,i,n,1,\ell},b_{\delta,i,n,2,\ell}$,
\sn
\item  $\delta$ is the set of elements of
$G_\delta$, similarly $\alpha_{\delta,i,n}$ (as they $\in E_{\mathbf h}$),
\sn
\item   for every $\beta < \delta$ we have
$(G_{\beta +1} \backslash G_\beta) \cap N^\delta_i \ne \emptyset
\Leftrightarrow \beta \in N_{\delta,i}$,
\sn
\item  if $\beta \in N^\delta_i \backslash S$
and $\bar c \in {}^{\omega >}(N^\delta_i)$, so $\bar c \in 
{}^{\omega>}(G_\delta)$, then $\tp_{\bs}(\bar c,G_\beta,G_\delta) \in
q_t(G_\beta)$ for some $t \in \deef(G_\beta)$ satisfying 
$\bar a_t \in {}^{\omega >}(N^\delta_i \cap G_\beta)$,
\end{itemize}
\sn
\item[(b)]  $\bar c_{\delta,i} = \langle c_{\delta,i}\rangle$ and
$\tp_{\bs}(c_{\delta,i},G_\delta,G_{\delta +1})$ is as in claim
\ref{c64} with $G_{\alpha_\delta(n)}(n < \omega),G_\delta,
b_{\beta_{\delta,i,n,3}} \in N^\delta_i (n < \omega)$ here standing for 
$G_n(n < \omega),G_\omega,a^t_n(n < \omega)$ with $I = \{t\}$ there;
\end{enumerate}
\sn
\item[$(*)_{5.4}$]   let $\cT'_\delta = \{i \in \cT_\delta$: clause 
(a) of $(*)_{5.3}$ holds$\}$;
\sn
\item[$(*)_{5.5}$]    let $\beta_{\delta,i,n} =
  \alpha_{\delta,i,n} + \omega$ and $b_{\delta,i,n} = b_{\delta,i,n,\iota}
\in G_{\beta_{\delta,i,n}+1}$ for $\iota = 1,2,3$ realizes $q_{\gs_{\ab(2)}}
(\langle \rangle,G_{\beta_{\delta,i,n}})$ \when \, 
the assumption of clause (a) fails.
\end{enumerate}
\mn
Why can we fulfill $(*)_{5.3}$?  Let $\langle i_\ell:\ell 
< \ell(*)\rangle$ be a finite sequence of members of 
$\cT_\delta$.  For $\ell< \ell(*)$ and $n < \omega$ let 
$d_{\ell,n} = b_{\delta,i_\ell,n,3}$.

Now
\mn
\begin{enumerate}
\item[$(*)_{5.6}$]  $\langle d_{\ell,n}:n < \omega\rangle$ pairwise
commute if $i(\ell) \in \cT'_\delta$ for each $\ell < \ell(*)$.
\end{enumerate}
\mn
[Why?  As $b_{\delta,i(\ell),n,3} \in
\mathbf C(G_{\beta_{\delta,i(\ell),n}},
G_{\beta_{\delta,i(\ell),n}+\omega})$
for $n < \omega$ and
$\beta_{\delta,i(\ell),n} + \omega < \alpha_\delta(n+1) \le
\alpha_{\delta,i(\ell),n+1} \le \beta_{\delta,i(\ell),n+1}$, recalling
$N^i_\delta \rest \alpha_\delta(n+1) \prec N^i_\delta$ and $N^i_\delta
\cap (\alpha_\delta(n),\alpha_\delta(n+1)) \ne \emptyset$.]
\mn
\begin{enumerate}
\item[$(*)_{5.7}$]   $\langle d_{\ell,n}:n < \omega \rangle$ pairwise
  commute when $i(\ell) \notin \cT'_\delta$.
\end{enumerate}
\mn
[Why?  Even easier.]
\mn
\begin{enumerate}
\item[$(*)_{5.8}$]  if $\ell(1) \ne \ell(2)$ \then \, for every $n(1) < n(2)$
the elements $b_{\delta,i(\ell(1)),n(1),3},
b_{\delta,i(\ell(2)),n(2),3}$ commute.
\end{enumerate}
\mn
[Why?  Recall that $b_{\delta,i(\ell(1)),n(1),3} \in
G_{\alpha_\delta(n(2))} \subseteq G_{\beta_{\delta,i(\ell(2)),n(2)}}$; note
that $b_{\delta,i,n,\iota} \in G_{\beta_{\delta,i,n,\iota} + \omega}$
commute with $G_{\beta_{\delta,i,n}}$ rather than with
$G_{\alpha_{\delta,i,n}}$ but not used.] 
\mn
\begin{enumerate}
\item[$(*)_{5.9}$]  if $\ell(1),\ell(2) < \ell(*)$, \then \, for $n$ large
enough, for every $n(1),n(2) \in (n,\omega)$ the elements
$d_{\ell(1),n(1)},d_{\ell(2),n(2)}$ of $G_\delta$ commute.
\end{enumerate}
\mn
[Why?  Similarly, as $N^\delta_{i_{\ell(1)}} \cap
N^\delta_{i_{\ell(2)}}$ is bounded in $\delta$, but not used.]
\mn
\begin{enumerate}
\item[$(*)_{5.10}$]  The conditions in \ref{c64} hold hence we can
  fulfill $(*)_{5.3},(*)_{5.4}$, i.e. we can carry the induction
  in $\boxplus_1$.
\end{enumerate}
\mn
[Why?  Think.]

Next let
\mn
\begin{enumerate}
\item[$(*)_{5.11}$]  $E = \{\delta < \lambda:
\delta$ a limit ordinal is the universe of $G_\delta$ and 
$N \rest \delta \prec N$, hence $\mathbf h$ maps $G_\delta$ onto itself$\}$.
\end{enumerate}

Clearly $E$ is a club of $\lambda$, hence by $\boxplus_{0,\zeta(1)}$ 
from stage A, there is a pair $(\delta,i_*) = (\delta,i(*))$ such that
\mn
\begin{enumerate}
\item[$(*)_{5.12}$]  $\delta \in E \cap S_{\zeta(1)}$ and $i_* \in \cT_\delta$
and $N^\delta_{i_*} \prec N$.
\end{enumerate}
\mn
Let $d = \mathbf h(c_{\delta,i_*}) \in G_*$, so:
\mn
\begin{enumerate}
\item[$(*)_{5.13}$]
\begin{enumerate}
\item[(a)]   the pair $(\delta,i_*)$ satisfies the
demands in $(*)_{5.3}(a)$; 
\sn
\item[(b)]  for some finite set $u_* \subseteq
 \cT_\delta$ and $\bar b_* \in {}^{\omega >}(G_\delta)$, 
the type $\tp_{\bs}(d,G_{\delta +1},G_*)$ does not split over 
$\{c_{\delta,i}:i \in u_*\} \cup \bar b_*$;
\sn
\item[(c)]  \wilog \,  $i_* \in u_*$.
\end{enumerate}
\end{enumerate}
\mn
[Why?  For clause (a), as $\delta \in E$ and $N^\delta_{i(*)} \prec N$,
recalling the choice of $N$ (including $\mathbf h = F^N_0$).  For clause
(b), apply properties of the construction in $\boxplus_1$,
i.e. $G_{\delta +1} \le_{\gS} G_*$.]
\mn
\begin{enumerate}
\item[$(*)_{5.14}$]   conjugating by $d$ in $G_*$ interchange
$b_{\delta,i(*),n,1}$ with $b_{\delta,i(*),n,2}$ for $n < \omega$.
\end{enumerate}
\mn
[Why?  Should be clear as for $m \in \omega \backslash
\{n\}$ and $\iota(1),\iota(2) \in \{1,2,3\}$, the element
$b_{\delta,i(*),m,\iota(1)}$
commutes with $b_{\delta,i(*),m,\iota(2)}$.]

Recalling $\boxplus_0(c)$ there is 
$n(*) < \omega$ large enough such that:
\mn
\begin{enumerate}
\item[$(*)_{5.15}$]  $\bar b_* \subseteq G_{\beta_{\delta,i(*),n(*)}}$
and $j_1 \ne j_2 \in u_* \Rightarrow N_{j_1}
\cap N_{j_2} \subseteq G_{\alpha_\delta(n(*))}$ and $j_1,j_2$ are like
$i,j$ as in $\boxplus_{0,\zeta(1)}(c)^+$.
\end{enumerate}
\mn
Clearly for some $\beta(*) < \lambda$ we have 
$\mathbf h(b_{\delta,i(*),n(*),1}) \in G_{\beta(*)+1} \backslash
G_{\beta(*)}$.  As $\alpha_{\delta,i(*),n(*)} = 
\min(N^\delta_{i(*)} \backslash \alpha_\delta(n(*)) \in 
N^\delta_{i(*)}$, clause (d)
of $\boxplus_0$ and $N^\delta_{i_*} \prec N$, clearly:
\mn
\begin{enumerate}
\item[$(*)_{5.16}$] 
\begin{enumerate}
\item[(a)]   $\mathbf h$ maps $G_{\alpha_{\delta,i(*),n(*)}} \cap
N^\delta_{i_*}$ onto itself and so $\beta(*) \in N^\delta_{i_*} \backslash
\alpha_{\delta,i(*),n(*)}$
\sn
\item[(b)]   $\mathbf h$ maps $G_{\alpha_{\delta,i(*),n(*)+1}}$ 
onto itself hence $\beta(*) \in N^\delta_{i(*)} \cap
\alpha_{\delta,i(*),n(*)+1} \backslash \alpha_{\delta,i(*),n(*)}$.
\end{enumerate}
\end{enumerate}
\mn
Also,
\mn
\begin{enumerate}
\item[$(*)_{5.17}$]  if $\beta(*) < \beta_{\delta,i(*),n(*)}  +
\omega$ then $\beta(*) \le \beta_{\delta,i(*),n(*)} +
k_{\delta,i(*),n(*),2}$.
\end{enumerate}
\mn
[Why?  By $(*)_{5.1}$.]

Now,
\mn
\begin{enumerate}
\item[$(*)_{5.18}$]   there is $\beta \in N^\delta_{i_*} \cap
(\beta(*)+1) \backslash \alpha_\delta(n(*)) \backslash S$ such that
$[\beta,\beta(*) + \omega) \cap N^\delta_{i_*}$ is disjoint from
$N^\delta_j$ if $j \in u_*$ but $j \ne i_*$.  
\end{enumerate}
\mn
[Why?  First assume $\beta(*) \notin S$, let $\beta = \beta(*)$, so
clearly $\beta \in N^\delta_{i_*}$ by $(*)_{5.14},\beta \in
(\beta(*)+1)$, also $\beta \notin \alpha_\delta(n(*))$ as by
$(*)_{5.6}$ and the fact that 
$\beta \notin S$ by its choice.  Also $[\beta,\beta(*)
+ \omega) = [\beta(*),\beta(*) + \omega) \subseteq N^\delta_{i_*}$ as
$N$ is closed under $\alpha \mapsto \alpha +1$ by $(*)_{3.1}(b)$.  If
$j \in u_*$ but $j \ne i_*$ then $N^\delta_j \cap
N^\delta_{i_*} \subseteq \alpha_\delta(n(*)) \le \beta$, hence
$[\beta,\beta(*) + \omega) \cap N^\delta_j = \emptyset$, so we are
done.

Second, assume $\beta(*) \in S$, hence cf$(\delta) = \aleph_0$, and by
$(*)_{3.1}(f),\{\alpha_{\beta(*)}(n):n < \omega\} \subseteq
N^\delta_{i_*}$.  But by $\boxplus_0(c)^+$ we have $j \in u_*
\wedge j \ne i_* \Rightarrow \sup(N^\delta_j \cap \beta(*)) <
\beta(*)$.  As $u_*$ is finite there is $\beta \in
\{\alpha_{\beta(*)}(n):n < \omega\}$ such that $(\beta,\beta(*)) \cap
N^\delta_j = \emptyset$; hence as before also $(\beta,\beta(*) +
\omega) \cap N^\delta_j = \emptyset$, whenever $j \in u_* \wedge
j \ne i_*$.  So $(*)_{5.16}$ holds indeed.]

We finish the proof of $\boxplus_5$ by getting a contradiction as follows.
\medskip

\noindent
\underline{Case 1}:  $\beta(*) \ge \beta_{\delta,i(*),n(*)} + \omega$.

So by the choice of $\beta$ and the proof of $(*)_{5.3}$ the type
$\tp_{\bs}(d,G_{\beta(*)+\omega},G_*)$ does not split
over $G_\beta$, and even over some finite subset of it.

Now by $\boxplus_1(e)$ in $G_{\beta(*)+ \omega}$ there is $d' \ne \mathbf
h(b_{\beta_{\delta,i(*),n(*),1}})$ realizing 

\[
\tp_{\bs}(\mathbf h(b_{\beta_{\delta,i(*),n(*),1}}),
G_\beta,G_{\beta(*)+\omega}) \text{ so } 
\mathbf h(b_{\beta_{\delta,i(*),n(*),3}}) \notin c \ell(G_\beta
\cup\{d\}).
\]

\mn
However, $G_* \models d^{-1} \, 
\mathbf h(c_{\beta_{\delta,i(*),n(*),1}})d = 
\mathbf h(c_{\beta_{\delta,i(*),n(*),2}})$, contradiction.
\medskip

\noindent
\underline{Case 2}:  $\beta(*) < \beta_{\delta,i(*),n(*)} + \omega$.

Hence $\beta(*) \le \beta_{\delta,i(*),n(*)} + k_{\delta,i(*),n(*),2}$
and so $\{\tp_{\bs}(d),G_{\beta_{\delta,i(*),n(*)} + \omega},G_*)\}$
does not split over $G_{\beta_{\delta,i(*),n(*)}} \cup
\{b_{\delta,i(*),n(*),3}\})$ but
$\tp(b_{\delta,i(*),n(*),3},G_{\beta_{\delta,i(*),n(*)} +
k_{\delta,i(*),n(*),3}},G_*)$ does not split over
$G_{\beta_{\delta,i(*),n(*)}} \cup \Rang(\bar b_{\delta,i(*),n(*),1})
\cup \Rang(\bar b_{\delta,i(*),n(*),2})$.  

It follows that $\tp_{\bs}(d,G_{\beta_{\delta,i(*),n(*)} +
k_{\delta,i(*),n(*),2}},G_*)$ does not split over
\newline
$G_{\beta_{\delta,i(*),n(*)}} \cup \bar b_{\delta,i(*),n(*),1}$ and recall
  $\mathbf h(\bar b_{\beta_{\delta,i(*),n(*),1}}) \subseteq
  G_{\beta_{\delta,i(*),n(*)}+k_{\delta,i(*),n(*),2}}$,
contradiction by $(*)_{5.0}$.

So we have finished proving $\boxplus_5$.
\medskip

\noindent
\underline{Stage D}:
\mn
\begin{enumerate}
\item[$\boxplus_6$]  
\begin{enumerate}
\item[(a)]  for some stationary $S^*_1 \subseteq S_* 
(\subseteq \lambda \backslash S)$ for every $\beta \in S^*_1 \backslash
\alpha(*)$ if $b \in L^*_\beta$ 
 \then \, $\mathbf h(b) = \sigma^{G_*}(b,\bar a)$
for some $\bar a \in {}^{\omega >}(G_\beta)$ 
and group-term $\sigma(x,\bar y)$
\sn
\item[(b)]   moreover $\mathbf h(b) = \sigma^{G_*}(b)$ if $b \in L^*_\beta$.
\end{enumerate}
\end{enumerate}
\mn
Why?  
\mn
\begin{enumerate}
\item[$(*)_{6.1}$]   clause (a) of $\boxplus_6$ holds even for every
$\beta \in S^*_2 := S_* \cap E_{\mathbf h} \backslash \alpha(*)$.
\end{enumerate}
\mn
[Why?  By $\boxplus_5$.]
\mn
\begin{enumerate}
\item[$(*)_{6.2}$]   \Wilog \, if $\beta \in S^*_2$ and
$b \in L^*_\beta$, \then \, $\mathbf h(b) = \sigma_b(b) a_b$ for some $a_b
\in G_\beta$.
\end{enumerate}
\mn
[Why?  This by $(*)_{6.1}$ because $\mathbf h$ maps $G_\beta$ onto itself,
 $b$ commutes with $G_\beta$
whereas $\bar a_b \in {}^{\omega >}(G_\beta)$.]
\mn
\begin{enumerate}
\item[$(*)_{6.3}$]
\begin{enumerate}
\item[(a)]  $b \mapsto \sigma_b(b)$ is a
homomorphism from the set $L^*_\beta$ into $L^*_\beta$
(but we did not claim $L^*_\beta$ is a subgroup);
\sn
\item[(b)]  $b \mapsto a_b$ induces a homomorphism from
the set $L^*_\beta$ into the group 
$G_\beta$, that is if
$\sigma(x_0,\dotsc,x_{n-1})$ is a group term and $b_0,\dotsc,b_{n-1}
\in L^*_\beta$ and $G_{\beta + \omega} \models \sigma(b_0,\ldots,b_{n-1}) = e$
then $G_\beta \models \sigma(a_{b_0},\ldots,a_{b_{n-1}}) =e$.
\end{enumerate}
\end{enumerate}
\mn
[Why?  As $\mathbf h$ is an automorphism of $G_*$ and as
$a_{b_1},\sigma_{b_2}(b_2)$ commute for $b_1,b_2 \in L^*_\beta$.]

We try to get rid of the homomorphism from $(*)_{6.3}(b)$ in order to
prove $\boxplus_6(b)$.

Toward contradiction assume (for the rest of this stage):
\mn
\begin{enumerate}
\item[$(*)_{6.4}$]  $\gamma \in S^*_2 \subseteq
\lambda \backslash S^*_1$ is a limit ordinal and
$b_* \in L^*_\gamma$ and $a_{b_*} \ne e$.
\end{enumerate}
\mn
Now as $\gamma \in S_* \subseteq \lambda \backslash S$ 
we can find a sequence  
$\bar f^\gamma  = \langle f^\gamma_\eta:\eta \in {}^\omega \mu
\rangle$ satisfying $f^\gamma_\eta$ is a function from $\{\eta \rest n:n <
\omega\}$ into $G_\gamma$ such that for every $f:{}^{\omega >} \mu
\rightarrow G_\gamma$ for some $\eta \in {}^\omega \mu$ we have $f^\gamma_\eta
\subseteq f$; i.e. a simple black box, see \cite[Fact
1.5=L4.5A]{Sh:309}, it exists
as $\mu = \mu^{\aleph_0}$.  Now
generally for $\gamma \in \lambda \backslash S$ let $\cW_\gamma = \{\eta
\in {}^\omega \mu$: for some $c \in G_\gamma$ of order 2 we have $n < \omega
\Rightarrow c^{-1} f^*_\eta(\eta \rest (2n))c = f^*_\eta(\eta \rest (2n+1))\}$.

Let $K_*$ be the group of permutations of $I = {}^{\omega >} \mu \times
\{0,1\}$ with finite support, i.e. $\{f \in \Sym(I):
(\exists^{< \aleph_0}t \in I)(f(t) \ne t)\}$.  For $\eta \in 
{}^{\omega >}\mu$ let $h_\eta \in K_*$
be such that $h_\eta((\eta,\iota)) \equiv (\eta,1-\iota)$, for $\iota =
0,1$, and is the identity otherwise.  Let $K_\gamma$ be the group of 
permutations of $I = {}^{\omega >}\mu \times \{0,1\}$ 
generated by $K_* \cup \{y_\eta:\eta \in {}^{\omega >}\mu\}$, where:
\mn
\begin{enumerate}
\item[$(*)_{6.5}$]  
\begin{enumerate}
\item[(a)]  if $\eta \in \cW_\gamma$ then 
$y_\eta$ interchanges $(\eta \rest (2n+1),\iota),(\eta \rest (2n+2),\iota)$ 
for $n < \omega,\iota = 0,1$ and otherwise is the identity;
\sn
\item[(b)]   if $\eta \in {}^\omega \mu \backslash
\cW_\gamma$ then $y_\eta$ interchanges 
$(\eta \rest (2n),\iota)$ and $(\eta \rest (2n+1),\iota)$
for $n < \omega,\iota=0,1$, and is the identity otherwise.
\end{enumerate}
\end{enumerate}
\mn
Let
\mn
\begin{itemize}
\item    $d$ be the permutation of $I$
interchanging $(<>,0),(<>,1)$ and being the identity otherwise.
\end{itemize}
\mn

Now we shall use some of the amount of freedom left, clearly:
\mn
\begin{enumerate}
\item[$(*)_{6.6}$]
\begin{enumerate}
\item[(a)]   there is $K \subseteq \mathbf C_{G_{\gamma +
\omega}}(G_\beta)$ finite with trivial center such that $b_* \in K$;
\sn
\item[(b)]   there is $\bar b$ which lists the member of $K$
 such that $d_0 = b_*$;
\sn
\item[(c)]   there is $\bar d$, a finite sequence from $K_\gamma$
realizing $\tp(\bar b,\emptyset,G_*)$;
\sn
\item[(d)]   there is $n(*)$ such that $K \subseteq G_{\gamma + n(*)}$.
\end{enumerate}
\sn
\item[$(*)_{6.7}$]   There is an embedding $g_\gamma$ of $K_\gamma$
into $\mathbf C_{G_{\gamma + n(*)+1}}(G_\gamma)$ mapping $\bar d$ to $\bar b$
hence $d_0$ to $b_*$;
\sn
\item[$(*)_{6.8}$]   $b \mapsto a_b$ (for $b \in g_\gamma(K_\gamma))$ 
is a homomorphism from $g_\gamma(K_\gamma)$ into $G_\gamma$;
\sn
\item[$(*)_{6.9}$]   let $f:{}^{\omega >} \lambda \rightarrow G_\beta$
be defined by $f(\eta) = a_{g_\gamma(h_\eta)}$.  
\end{enumerate}
\mn
By the choice of $\langle f_\eta:\eta \in {}^{\omega >}\mu\rangle$
for some $\eta \in {}^\omega \mu$ we have $n < \omega \Rightarrow
f^\gamma_\eta(\eta \rest n) = f(\eta \rest n)$.  

Now does $\eta \in \cW_\gamma$?  First, assume $\eta \notin \cW_\gamma$,
then (by the choice of $K_\gamma$) $(g_\gamma(y_\eta) \in 
G_\gamma$ and) conjugating by $g_\gamma(y_\eta)$ for each $n$, 
interchanges $g_\gamma(h_{\eta \rest
  (2n)}),g_\gamma(h_{\eta \rest (2n+1)})$ which means that in
$K_\gamma$, conjugating by $h_\eta$ interchanges
$f^\gamma_\eta(\eta \rest (2n)),f^\gamma_\eta(\eta \rest (2n+1))$, but
by the choice of $\cW_\gamma$ this means $\eta \in \cW_\gamma$.  

Second, assume $\eta \in \cW_\gamma$,
by the definition of $\cW_\gamma$ there is 
$c \in G_\gamma$ of order 2 such that
conjugating by $c$ for each $n$ interchanges $g_\gamma(h_{\eta \rest
  (2n)}),g_\gamma(h_{\eta \rest (2n+1)})$.  But conjugating by
$g_\gamma(y_\eta)$ for $n$ interchange $g_\gamma(h_{\eta \rest
  (2n+1)}),g_\gamma(h_{\eta \rest (2n+2)})$.
So in $G_*$, the subgroup generated by
$\{c,g_\gamma(y_\eta),g_\gamma(h_{\eta \rest 1})\}$ includes
$g_\gamma(h_{\eta \rest n})$ for $\eta = 1,2,\ldots$; why?  just prove it by
induction on $n$.  But $\{g_\gamma(h_{\eta \rest n}):n = 1,2,\ldots\}
\subseteq G_*$ is infinite, contradiction.
\medskip

\noindent
\underline{Stage E}:
\mn
\begin{enumerate}
\item[$\boxplus_7$]    there is a finite sequence 
$\bar a_*$ such that for every $b \in G_*$ we have
$\mathbf h(b) \in c \ell(\bar a_* \cup\{(b,G_*)\}$.
\end{enumerate}
\mn
[Why?  For $\beta \in S^*_1$ let $d_\beta \in G_{\beta +1}$ realize
$\gs_{\text{cg}}(<>,G_\beta)$ in $G_{\beta +1}$.  
So for every $a \in G_\beta$ of order $m$
as $G_\beta$ is existentially closed there is a finite $K_a \subseteq
G_\beta$ with trivial center to which $a$ belongs.
Hence the element $d_\beta ad^{-1}_\beta$ commute with $G_\beta$
and belongs to $G_{\beta +1}$ and moreover to $L^*_\beta$.  Hence, by
$\boxplus_6(b)$, for some $k(a) < m$ we have:
\mn
\begin{enumerate}
\item[$(*)_{7.1}$]  $\mathbf h(d^{-1}_\beta a d_\beta) = (d^{-1}_\beta
ad_\beta)^{k(a)}$.
\end{enumerate}
\mn
Hence
\mn
\begin{enumerate}
\item[$(*)_{7.2}$]  $\mathbf h(a) = \mathbf h(d^{-1}_\beta) \mathbf
h(d^{-1}_\beta a d_\beta) \mathbf h(d^{-1}_\beta) = \mathbf
h(d^{-1}_\beta)(d^{-1}_\beta a d_\beta)^{k(a)} \mathbf h(d_\beta)$.
\end{enumerate}
\mn
Also, as $\beta \notin S$, there is a finite $K_\beta 
\subseteq G_\beta$ such that $\tp_{\bs}(\langle \mathbf h(d_\beta),
d_\beta \rangle,G_\beta,G_*;\mathbf K_{\lf})$ does not split over 
$K_\beta$.  By $(*)_{7.2},\tp_{\bs}(\mathbf h(a),G_\beta,G_*;\mathbf K_{\lf})$ 
does not split over $K_\beta \cup \{d\}$, but $\mathbf h(a) \in G_\beta$
hence $\mathbf h(a) \in \langle K \cup \{d\} \rangle_{G_*}$.  
By Fodor's lemma this is enough for $\boxplus_7$. 

Clearly we are done by \ref{c73}.
\end{PROOF}
\bigskip

\centerline {$* \qquad * \qquad *$}
\bigskip

\begin{question}
\label{p81}
1) In \ref{p73} we can easily get $2^\lambda$ pairwise non-isomorphic groups
$G'$.  But can they be pairwise far? (i.e. no $G \in \mathbf K_\lambda$,
can be embedded in two of them)?

\noindent
2) Even more basically can 
we demand $G_*$ has no uncountable Abelian subgroup (when
$G$ does not)?  Or at least no Abelian group of cardinality $\lambda$?

\noindent
3) Can we prove \ref{p73} for every $\lambda > \aleph_0$? or at least
   $\lambda \ge \beth_\omega$? 
\end{question}

\begin{discussion}
\label{p83}
1) Concerning \ref{p81}(1), the problem with our approach is using $p
   \in \mathbf S_{\gS}(G)$, so as $\lambda$ is regular we will get
   subgroups generated by indiscernible sequences, but let us elaborate.
Assume $G_* \in \mathbf K_\lambda,G_* = \cup\{G_\alpha:\alpha <
   \lambda\},G_\alpha$ increases with $\alpha$ and $|G_\alpha| <
   \lambda$.  Further, assume $\gs \in \Omega[\mathbf K]$ and $\bar a
   \in {}^{n(\gs)}G_*$ and $S = \{\alpha < \lambda:\bar a \subseteq
   G_\alpha$ and the type $q_{\gs}(\bar a,G_\alpha)$ is realized in
   $G_*\}$ is unbounded in $\lambda$ and thus it is an end segment.  Let
   $\bar c_\alpha \in {}^{k(\gs)}G_*$ realize $q_{\gs}(\bar
   a,G_\alpha)$ and so for some club $E$ of $\lambda,\alpha \in S \cap
   E \Rightarrow \bar c_\alpha \in G_{\min(E \backslash (\alpha
     +1)}$.  Now $\bar{\mathbf c} = \langle \bar c_\alpha:\alpha \in S
   \cap E\rangle$ satisfies: if $h$ is a partial increasing finite
   function from $S \cap E$ to $S \cap E$, then it induces a partial
   automorphism of $G_*:\bar c_\alpha \mapsto \bar c_{h(\alpha)}$.
   This is a case of indiscernible sequences.  Hence the isomorphism type of $c
   \ell(\cup\{\bar c_\alpha:\alpha \in S \cap E\},G_*)$ depends only
   on $\gs$ (and $\tp_{\bs}(\bar a,\emptyset,G_*)$.  Hence the number
   of pairwise far such $G_*$'s is $\le |\gS| + \aleph_0$.

\noindent
2) Concerning \ref{p81}(2), the problem with our approach is that we
use $\gs = \gs_{\ab(k)}$ and more
generally $\gs \in \Omega[\mathbf K]$ such that if $q_{\gs}(\bar a,G) =
 \tp_{\bs}(\bar c,G,H)$ \then \, some $c \in H \backslash G$ 
commute with every (or simply many) members of $G$.  Hence
in the construction above, $G_*$ has Abelian subgroups of 
cardinality $\lambda$.

\noindent
3) What about considering the class of $(G,F_h)_{h \in H},F_h \in
\aut(G),G \in \mathbf K_{\lf},h \mapsto F_h$ a homomorphism?  We
intend to deal with it in \cite{Sh:1098}.
\end{discussion}

\begin{discussion}
\label{p85}
1) Naturally the construction in the proof of \ref{p73} 
is not unique, the class has many
 complicated models.  In the construction in the proof of \ref{p73} we
 choose one where we realize many definable types.

\noindent
2) We may like in $\boxplus_5$ of Stage C in the proof of 
\ref{p73} to consider $c \in
   G_\lambda$, not necessarily from $G_{\beta + \omega}$; (so later
the role of $\gs_{\cg}$ in translating knowledge on $\mathbf h
   \rest G_{\beta + \omega}$ to knowledge on $G_\beta$ + use of Fodor
   is not necessary).  Presently the way we combine $\langle
   b_{\delta,i(\ell),n,3}:n < \omega,\ell < \ell(*)\rangle$ to one
   $n$-type in $\mathbf S_{\bs}(G_\delta)$ works using \ref{c64}.
\end{discussion}

\noindent
Concerning the existence of complete groups in $\mathbf K^{\lf}_\lambda$
extending any $G \in \mathbf K^{\lf}_\lambda$ there are some restrictions.
\begin{claim}
\label{p88}
Assume $\lambda > \cf(\lambda) = \aleph_0,\chi = \lambda^{\aleph_0}$.

\noindent
1) If $G \in \mathbf K^{\lf}_\lambda$ is full, \then \, its outer automorphism
group has cardinality $\ge \chi$.

\noindent
2) $G$ has $\ge \chi$ outer automorphisms \when \, $G \in
\mathbf K^{\lf}_\lambda$ and for some sequence 
$\bar a = \langle a_\alpha:\alpha <
\lambda\rangle$ listing the elements of $G$, letting $G_\alpha = c
\ell(\{a_\beta:\beta < \alpha\},G)$ we have:
\mn
\begin{enumerate}
\item[$(a)$]  for every $\alpha < \lambda$ for $\lambda$ ordinals
  $\beta < \lambda,a_\beta$ commutes with $G_\alpha$
\sn
\item[$(b)$]  for every $a \in G \backslash \{e_G\}$
some element $b \in G,a$ does not commute with $b$.
\end{enumerate}
\mn
3) Like (2) but $G_\alpha$ has center of cardinality $< \lambda$.

\noindent
4) Instead of (a),(b) we can use:
\mn
\begin{enumerate}
\item[$(a)'$]  for every $\alpha < \lambda$ we have $\lambda =
|\{a/\Cent(G):a \in G$ commute with $G_\alpha\}|$.
\end{enumerate}
\end{claim}

\begin{PROOF}{\ref{p88}}
1) We reduce it to part (2).  Let $\bar{\mathbf a} = \langle
a_\alpha:\alpha < \lambda\rangle$ witness fullness (so $\lambda \ge
2^{\aleph_0}$).  Now using the schemes $\gs = s_{\ab(2)}$, the pair
$(G,\bar a)$ satisfies clause (a) of part (2).  Using, e.g. the scheme $\gs
= \gs_{\cg}$ and the claim on non-commuting, \ref{c70}, 
also clause (b) there holds.

\noindent
2) Let $\lambda = \sum\limits_{n} \lambda_n,\lambda_n <
\lambda_{n+1}$.  For each $n$, by clause (a) we have $|S^1_n| =
\lambda$ where $S^1_n := \{\alpha:a_\alpha$ commute with $c
\ell(\{a_\beta:\beta < \lambda_n\},G\})\}$.  Hence for some $k_n > n$ we
have $S^3_n = \{\alpha < \lambda_{k_n}:\alpha \in S^1_n\}$ has
cardinality $> \lambda_n$.

Replacing $\langle \lambda_n:n < \omega\rangle$ by a subsequence
\wilog \, $\bigwedge\limits_{n} k_{2n} = 2n+1$.  Let $\langle
\alpha_{n,i}:i < \lambda_n\rangle$ be a sequence of pairwise distinct
members of $S^3_{2n} \backslash \lambda_{2n}$.  
Now for each $\eta \in \prod\limits_{\ell < n}
\lambda_{2 \ell}$ let $b_\eta = a_{\eta(0)} a_{\eta(1)} \ldots
a_{\eta(n-1)} \in G$ and so $h_\eta := \square_{b_\eta}$, conjugation by 
$b_\eta$, is an inner automorphism of $H$.  
Also $\nu \triangleleft \eta \in \prod\limits_{\ell
  < n} \lambda_{2 \ell} \Rightarrow \square_{b_\eta},\square_{b_\nu}$
agree on $\{a_\beta:\beta < \lambda_{2 \ell g(\nu)}\}$.

Hence if $\eta \in \prod\limits_{n} \lambda_{2n}$ then $\langle
h_{\eta \rest n}:n < \omega\rangle$ converge, i.e. for every $a \in
G$, the sequence $\langle h_{\eta \rest n}(a):n < \omega\rangle$ is
eventually constant and called the eventual value $h_\eta(a)$.

So $h_\eta$ is an automorphism of $G$ (for each $\eta \in
\prod\limits_{n} \lambda_{2n}$).  Now if $\eta_1,\eta_2 \in
\prod\limits_{n} \lambda_{2n},\eta_1(k) \ne \eta_2(k),\eta_1 \rest k =
\eta_2 \rest k$ and for some $\alpha < \lambda_{2k},a_\alpha$ does not
commute with $a_{\eta_1(k)} a^{-1}_{\eta_2(k)}$ then $h_{\eta_1} \ne
h_{\eta_2}$.  Hence we can easily find $2^{\aleph_0}$ pairwise
distinct $h_\eta$'s.  So if $\lambda < 2^{\aleph_0}$ we are done;
otherwise, let $\mu = \min\{\mu:\mu^{\aleph_0} \ge \lambda$
equivalently $\mu^{\aleph_0} = \lambda^{\aleph_0}\}$, so $2^{\aleph_0}
< \mu < \lambda$ and $\alpha < \mu \Rightarrow |\alpha|^{\aleph_0} <
\mu$.

Choose $\bar\mu = \langle \mu_n:n < \omega\rangle$ such that
$\sum\limits_{n} \mu_n = \mu,\mu_n < \mu_{n+1}$; moreover each $\mu_n$
regular and $\alpha < \mu_n \Rightarrow |\alpha|^{\aleph_0} < \mu_n$.
Now for $n<k$ let $E_{n,k} = \{(i,j):i,j < \mu_n$ and the conjugation
$\square_{a_{\alpha_{n,i}}},\square_{a_{\alpha_{n,j}}}$ agree on
$\{a_\beta:\beta < \lambda_{2k}\}\}$, an equivalence relation.  By clause
(b) in the assumption, $\bigcap\limits_{k>n} E_{n,k}$ is the equality
on $\mu_n$, hence for some $k(n) > n,\mu_n/E_{n,k}$ has $\mu_n$
equivalence class.  The rest should be clear.

\noindent
3),4)  Similarly.
\end{PROOF}
\newpage

\section {Other Classes}

Note that
\begin{theorem}
\label{n0}
The results of \S1 holds for any universal class $\mathbf K$ - see
\cite{Sh:300b}.
\end{theorem}

\noindent
However, we cannot in general prove the existence of dense $\gS
\subseteq \Omega[\mathbf K]$, in fact, possibly $\Omega[\mathbf K] = \emptyset$.
We refer the reader to \S0 before \ref{z1}, and to \ref{z8}, 
\ref{c1}.  We may expand an lf group by choosing representations for 
left cosets $\bK$, for $K$ a finite subgroup of $G,b \in G$.  
Then the density of $\Omega[\mathbf K]$ is easy.
\begin{definition}
\label{n7}
1) Let $\mathbf K_{\text{\rm clf}}$ be the class of structures $M$ such
   that $M$ is an expansion of an lf group $G = G_M$ by $F_n =
 F^M_n$ for $n  \ge 1$ such that:
\mn
\begin{enumerate}
\item[$(a)$]  $F^M_n$ is a partial $(n+1)$-place function from $G$ to $G$;
\sn
\item[$(b)$]  if $(a_0,\dotsc,a_n) \in \Dom(F^M_n)$ then
$(a_0,\dotsc,a_{n-1})$ list without repetitions the elements of a
subgroup of $G_M$ and $a_n \in G_M$, of course;
\sn
\item[$(c)$]  if $F^M_n(a_0,\dotsc,a_n) = b$ then $b \in \{a_n
a_\ell:\ell < n\}$;
\sn
\item[$(d)$]  if $K$ is a finite subgroup of $G_M$ with $n$ elements
and for some $(a_0,\dotsc,a_{n-1})$ listing its elements with no
repetitions and $b$ we have $(a_0,\dotsc,a_{n-1},b) \in 
\Dom(F^M_n)$, \then \, for every $(a'_0,\dotsc,a'_{n-1})$
listing the members of $K$ and $b' \in bK \subseteq G_M$  
we have $(a'_0,\dotsc,a'_{n-1},b') \in \Dom(F^M_n)$
and $b'K = bK \Rightarrow F^M_n(a_0,\dotsc,a_{n-1},b') =
F^M_n(a_0,\dotsc,a_{n-1},b)$; 
\sn
\item[$(e)$]  if $K_1,K_2$ are as in clause (d) then also $K_1 \cap
K_2$ is;
\sn
\item[$(f)$]  if $A \subseteq G_M$ is finite then there is a minimal $K$ as in
clause (d) which contains $A$ and if $A$ is empty then $K =
\{e_{G_M}\}$. 
\end{enumerate}
\end{definition}

\begin{definition}
\label{n9}
Let $\mathbf K_{\plf}$ be the class of structures $M$ such
that:  $M$ expands a lf group $G$ by $P^M_n$ for $n < \omega$ and
$F^M_n$ for $n < \omega$ (actually definable from the rest) such that:
\mn
\begin{enumerate}
\item[$(a)$]   $P^M_n$ is an $(n+3)$-place relation;
\sn
\item[$(b)$]  if $\bar a = (a_0,\dotsc,a_{n+2}) \in P^M_n$ then
$\{a_0,\dotsc,a_{n-1}\}$ list with no repetitions the elements of a
finite subgroup of $G_M$;
\sn
\item[$(c)$]  if $\{a_0,\dotsc,a_{n-1}\} = \{a'_0,\dotsc,a'_{n-1}\}$
  are as above and moreover $b,b' \in M$ and
$\{b a_0,\dotsc,b a_{n-1}\} = \{b' a'_0,\dotsc,b' a'_{n-1}\}$
then $M \models ``P_n(a_0,\dotsc,a_{n-1},b,c,d) =
P_n(a'_0,\dotsc,a'_{n-1},b',c,d)"$ for every $c,d \in M$;
\sn
\item[$(d)$]  if $(a_0,\dotsc,a_{n-1})$ list the members of a finite
subgroup $K$ of $G$ with no repetitions and $b \in G$ then
$\{(c,d):(a_0,\dotsc,a_{n-1},b,c,d) \in P^M_n\}$ is a linear order on
the right coset bK, which we denote by $<^M_{K,b}$;
\sn
\item[$(e)$]  if the sequence 
$(a_0,\dotsc,a_{n-1})$ is as above and $b \in G$ then
$F^M_n(a_0,\dotsc,a_{n-1},b)$ is the first element by the order there
in $\{b a_0,\dotsc,b a_{n-1}\}$.
\end{enumerate}
\end{definition}

\begin{definition}
\label{n11}
1) For $M \in \mathbf K_{\clf}$ let $\fsb(M)$ be the set of
finite subgroups $K$ of $G_M$ such that for some $a_0,\dotsc,a_{n-1}$
listing with no repetitions the elements of $K$ and for some $b \in G_M$ we
have $(a_0,\dotsc,a_{n-1},b) \in \Dom(F^M_n)$, i.e. they are as
in clause (d) of Definition \ref{n7}.

\noindent
2) In this case we may write $F^M_K(b) = F^M_n(a_0,\dotsc,a_{n-1},b)$.

\noindent
3) For $M,N \in \mathbf K_{\clf}$ let $M \le_{\text{elf}} N$ or
$M \subseteq N$ mean that $G_M \subseteq G_N$ and $F^M_n = F^N_n
\rest M$ hence $K \in \sfb(N) \wedge K \subseteq M \Rightarrow K \in
\fsb(M)$.  We define similarly $\le_{\plf},\le_{\olf}$, see Definition
\ref{n9}, \ref{z4}.  We
may write $M \le_{\mathbf K} N$ for the appropriate $\mathbf K$, etc.

\noindent
4) ``$M \in \mathbf K_{\clf}$ is (existentially closed)" is
   defined as in \ref{z1}(2).

\noindent
5) Let $\clf$-group mean a member of $K_{\text{\rm clf}}$ and similarly
an $\olf$-group.

\noindent
6) Similarly for ``$\olf$-groups" and ``$\plf$-groups".
\end{definition}

\begin{convention}
\label{n13}
1) Let $\mathbf K$ denote one of the classes defined above, but let it be
   $\mathbf K_{\clf}$ if not said otherwise.
\end{convention}

\begin{dc}
\label{n15}
1) For $M \in \mathbf K_{\olf}$ let $M^{[\clf]}$ be
   the unique $N \in \mathbf K_{\clf}$ such that: $G_N = G_M$
 and $\fsb(N) = \{K:K \subseteq G_M$ is finite$\}$ and $F^M_K(b)$
   is the $<_M$-first member of bK $\subseteq G$ (well defined as bK
   is finite non-empty).

\noindent
1A) For $M \in \mathbf K_{\olf}$ we define $M^{[\plf]}$ 
and for $M \in \mathbf K_{\plf}$ we define $M^{[\clf]}$ parallely.

\noindent
2) For $M \in \mathbf K_{\clf}$ and $A \subseteq M$, there is
   $N \subseteq M$ from $\mathbf K_{\clf}$ with universe $A$
   iff for every finite $A \subseteq B$ there is $K \in \fsb(M)$ 
such that $A \subseteq K \subseteq B$.

\noindent
2A) So if $M \in \mathbf K_{\clf}$ and $K \in {\fsb}(M)$ 
then $M \rest K \in \mathbf K_{\clf}$ and is finite.

\noindent
3) For $A \subseteq M \in \mathbf K$ let $c\ell(A,M)$ be the minimal $N
   \subseteq M$ such that $A \subseteq N$, equivalently $\cup\{K:K \in
\fsb(M)$ and there is no $L \in \fsb(M)$ 
such that $A \cap K \subseteq L \subset K\}$.

\noindent
4) For $A \subseteq M \in \mathbf K$ let $c \ell_{\gr}(A,M)$
   be the closure of $A$ under the group operations.  

\noindent
5) We call $M \in \mathbf K_{\clf}$ full when $\fsb(M)$ is the set
   of finite $K \subseteq G_M$.
\end{dc}

\begin{claim}
\label{n17}
1) The objects in \ref{n15} are well defined (in the right class).

\noindent
2) If $M \in \mathbf K_{\olf}$ or $M \in \mathbf K_{\plf}$
\then \, $M^{[\clf]} \in \mathbf K_{\clf}$ is full.

\noindent
3) $\gS(\mathbf K_{\olf})$ is dense.

\noindent
4) $\gS(\mathbf K_{\clf})$ is dense.
\end{claim}

\begin{PROOF}{\ref{n17}}
1) Straightforward, e.g. in part (3) for $K_{\clf}$ the closure is well
   defined because $\fsb(M)$ is closed under intersections.

\noindent
2) Easy, too.

\noindent
3),4)  As in \S2.
\end{PROOF}

\begin{remark}
\label{n19}
Call $M \in \mathbf K_{\clf}$ invariant \when \, for every finite
$K \subseteq G_M$ there is a function $F^M_K:G \rightarrow G$ such
that $F^M_K(g) \in gK$ and is equal to $F^M_n(a_0,\dotsc,a_{n-1})$
when $a_0,\dotsc,a_{n-1}$ list the members of $K$ with no
repetitions.  Restricting ourselves to such $M$ seems to cause
problems in amalgamations, whereas for $\mathbf K_{\plf}$ this is not so.
\end{remark}

\begin{definition}
\label{n23}
For $M \in \mathbf K$ and $n < \omega$ let $\mathbf S^n_{\ged}(M)$ 
be the set of good $n$-types $p(\bar x) \in 
\mathbf S^n_{\bs}(M)$ which means: $p = \tp(\bar a,M,N)$ where $M 
\subseteq N \in \mathbf K$ and $\bar a \in {}^n N$
   and $c \ell_{\gr}(\bar a + M,N) = c \ell(\bar a +M,N)$.
\end{definition}

\begin{claim}
\label{n25}
The classes $\mathbf K = \mathbf K_{\cfl},\mathbf K_{\plf},\mathbf K_{\olf}$ 
have dense closed $\gS \subseteq \Omega[K]$.
\end{claim}

\begin{PROOF}{\ref{n25}}
Straightforward.
\end{PROOF}
\bigskip

\centerline {$* \qquad * \qquad *$}
\bigskip

\noindent
\begin{definition}
\label{n28}
1) Let $\mathbf K_{\sel}$ be the class of locally finite
semi-groups, i.e. $G$, it has only one operation, binary which
is associative.

\noindent
2) Let $\mathbf K_{\usl}$ be defined similarly with an individual
   constant $e$ such that $G \models g e_G = g = e_G g$ for every $g \in G
   \in \mathbf K_{\usl}$.
\end{definition}
\newpage


\bibliographystyle{amsalpha}
\bibliography{shlhetal}

\end{document}